\newtheorem{thm}{Theorem}
\newtheorem{lem}[thm]{Lemma}
\newtheorem{prop}[thm]{Proposition}
\newtheorem{corollary}[thm]{Corollary}
\newtheorem{assump}[thm]{Assumption}
\theoremstyle{definition}
\newtheorem{rmk}[thm]{Remark}
\numberwithin{equation}{section}
\numberwithin{thm}{section}
\newcommand{\A}{\mathcal{A}}
\newcommand{\B}{\mathcal{B}}
\newcommand{\C}{\mathcal{C}}
\newcommand{\D}{\mathcal{D}}
\newcommand{\F}{\mathcal{F}}
\newcommand{\Hc}{\mathcal{H}}
\newcommand{\K}{\mathcal{K}}
\renewcommand{\L}{\mathcal{L}}
\newcommand{\N}{\mathbb{N}}
\renewcommand{\P}{\mathcal{P}}
\newcommand{\R}{\mathbb{R}}
\renewcommand{\S}{\mathcal{S}}
\newcommand{\T}{\mathcal{T}}
\newcommand{\V}{\mathcal{V}}
\newcommand{\Z}{\mathbb{Z}}
\newcommand{\p}{\partial}
\renewcommand{\epsilon}{\varepsilon}
\newcommand{\dx}{\: \mathrm{d}}
\newcommand{\ie}{\textit{i.e.}}
\newcommand{\nm}{\noalign{\smallskip}}
\newcommand{\ds}{\displaystyle}
\newcommand{\iu}{\mathrm{i}\mkern1mu}
\renewcommand{\Re}{\operatorname{Re}}
\newcommand{\neutralize}[1]{\expandafter\let\csname c@#1\endcsname\count@}
\title{Robust edge modes in dislocated systems of subwavelength resonators}
\author{Habib Ammari\thanks{\footnotesize Department of Mathematics, ETH Z\"urich, R\"amistrasse 101, CH-8092 Z\"urich, Switzerland (habib.ammari@math.ethz.ch).}\and Bryn Davies\thanks{\footnotesize Department of Mathematics, Imperial College London, 180 Queen's Gate, London SW7 2AZ, United Kingdom (bryn.davies@imperial.ac.uk).}  \and Erik Orvehed Hiltunen\thanks{\footnotesize Department of Mathematics, Yale University, 51 Prospect Street, New Haven CT 06511, USA  (erik.hiltunen@yale.edu).}.}
\date{}
\begin{document}
	\maketitle

	\begin{abstract}
		Robustly manipulating waves on subwavelength scales can be achieved by, firstly, designing a structure with a subwavelength band gap and, secondly, introducing a defect so that eigenfrequencies fall within the band gap. Such frequencies are well known to correspond to localized modes. We study a one-dimensional array of subwavelength resonators, prove that there is a subwavelength band gap, and show that by introducing a dislocation we can place localized modes at any point within the band gap. We complement this analysis by studying the stability properties of the corresponding finite array of resonators, demonstrating the value of being able to customize the position of eigenvalues within the band gap.
	\end{abstract}
\vspace{0.5cm}
	\noindent{\textbf{Mathematics Subject Classification (MSC2000):} 35J05, 35C20, 35P20.

\vspace{0.2cm}

	\noindent{\textbf{Keywords:}} subwavelength resonance, subwavelength phononic and photonic crystals, topological metamaterials, protected edge states, dislocation.
\vspace{0.5cm}
%

	\section{Introduction}
Recent breakthroughs in the field of wave manipulation have led to the creation of structures that can guide, localize and trap waves at \emph{subwavelength} scales (\emph{i.e.} at spatial scales that are significantly smaller than the operating wavelength) \cite{defectSIAM, linedefect, shvets2007guiding, experiment2013, superfocusing, brynCochlea, Lemoult_sodacan, smith2004metamaterials, kaina2015negative, phononic1, phononic2, ma2016acoustic}. The building blocks of these structures are  subwavelength resonators: objects exhibiting resonant phenomena in response to wavelengths much greater than their size. Examples include plasmonic particles, Minnaert bubbles and high-index dielectric particles.  The highly contrasting material parameters (relative to the background medium) of these objects are the crucial mechanism responsible for their subwavelength response (see \cite{first}, also \cite{ammari2021functional} for a general review). The goal for researchers, now, is to develop \emph{robust} versions of these designs, that retain their wave-manipulation properties even in the presence of structural imperfections \cite{ammari2019topological, Yves2, khanikaev2013photonic, khanikaev2017two, Yves1}.


An approach to creating materials with low-frequency localized modes is to start with an array of subwavelength resonators that exhibits a \emph{subwavelength band gap}, that is, a range of frequencies within the subwavelength regime that cannot propagate through the material. We then introduce a \textit{defect} to the structure. If done correctly, this perturbation creates subwavelength resonant frequencies that are inside the band gap and correspond to resonant modes whose amplitude decays exponentially away from the defect \cite{defectSIAM, linedefect, Lemoult_sodacan, moghaddam2019slow, cha2018experimental}. We will refer to these resonant frequencies as \emph{mid-gap frequencies} and the associated modes as \emph{localized modes}.

It is widely understood that both the rate at which the localized mode decays and the stability of the mid-gap frequency depend on the location of the frequency within the band gap \cite{combes1973asymptotic, kuchment2}. Typically, the localization is stronger if the frequency is closer to the middle of the band gap. Moreover, eigenvalues in the middle of the band gap are more robust to imperfections of the material, particularly since a small perturbation is likely to keep the eigenvalue inside the band gap. With this in mind, our aim is to introduce defects in such a way that we are able to place a mid-gap frequency at any given point in the subwavelength band gap, enabling controllable and robust wave guiding at subwavelength scales.


In this work, we will begin with a one-dimensional array of pairs of subwavelength resonators which, we prove, exhibits a band gap within the subwavelength regime. We will then introduce a defect by adding a dislocation within one of the resonator pairs (see \Cref{fig:infinite}). We will see that, as a result of this dislocation, mid-gap frequencies enter the band gap from either side and converge to a single frequency, within the band gap, as the dislocation becomes arbitrarily large (see \Cref{fig:gapcrossing}).

The localized modes studied in this work are, in particular, \emph{edge modes}. Localized modes are known as edge modes when the defect responsible for their existence is the interface between two materials with different \emph{bulk indices}. Edge modes will propagate along the interface without entering the bulk of the material. The bulk index of a material is a topological quantity associated with a periodic structure and  it is well known that the interface of two materials with different indices supports robust edge modes
%
\cite{zak, zak_experiment, ammari2019topological, top_review, yang2015topological, bulkbdy, shapiro2019strongly, haldane2008possible, halperin1982quantized, hatsugai1993chern}. A typical example of an edge mode is that occurring at the interface between a material with non-zero bulk index and free space (corresponding to the fact that free space has a bulk index of zero). It is in this sense that the two localized modes studied here are edge modes, since it was proved in \cite{ammari2019topological} that the corresponding array of resonator pairs has non-zero bulk index.


There are a plethora of different ways to introduce an interface capable of supporting edge modes. An example from the setting of the Schr\"odinger operator is to introduce dislocations to periodic potentials. This has been widely studied in both one \cite{drouot2, drouot1, korotyaev2000lattice, korotyaev2005schrodinger,dohnal2009localized} and two dimensions \cite{hempel2011spectral, hempel2011variational, hempel2012dislocation, hempel2015bound}. 
There are some important differences between the dislocation of an array of resonators (as studied here) and the dislocation of a periodic potential. Most notably, when a periodic potential is dislocated the original configuration will be recovered periodically. Then, a quantity of interest is the \emph{edge index}, which can be defined as the net number of eigenvalues which cross a band gap over a period of dislocation (see for example \cite{drouot2,Avila2013}). If the edge index is non-zero, it means that a mid-gap frequency can be placed at any given position within the band gap (which, we said, is our goal). Moreover, according to the \emph{bulk-edge correspondence} \cite{drouot2, drouot2019bulk, graf2013bulk2d, graf2018bulk, graf2018bulk2d, drouot2019microlocal}, the edge index coincides with the bulk index of the structure without dislocation.

In our setting we will not periodically recover the original structure as we increase the dislocation and will, instead, produce two coupled half-space arrays. As the dislocation is increased, the coupling between the two halves will diminish and both mid-gap frequencies will converge to a single frequency. This single frequency corresponds to the edge mode of a half-space array, the existence of which is predicted by the bulk-edge correspondence. In contrast to the dislocation of a periodic potential (as in \cite{dohnal2009localized,drouot2, drouot1, korotyaev2000lattice, korotyaev2005schrodinger}), there will always be either 0 or 2 edge modes in the present case. There are two main results of our analysis of the dislocated infinite structure. Firstly, we will show that when a dislocation is introduced, a mid-gap frequency enters the band gap from each edge (\Cref{prop:smalld}). Following this, we prove that there are two mid-gap frequencies which converge to a single frequency within the band gap as the dislocation becomes large (\Cref{thm:main}). These two frequencies correspond to the hybridized modes of two semi-infinite arrays.

Physical realizations of the infinite structures studied here are arrays of finitely many resonators, corresponding to truncated versions of the infinite structures. To complement the aforementioned analysis, we also study a finite array of resonator pairs to which a dislocation is introduced (\Cref{sec:finite}). We show that, similar to the infinite structure, the finite array decouples into two half-systems as the dislocation increases and the two half-system hybridize for intermediate dislocations. We also conduct a stability analysis to demonstrate that the edge-mode frequencies are more stable with respect to physical imperfections than frequencies in bulk of the bandgap. We also demonstrate that the optimal stability is achieved when the frequency is in the middle of the band gap.

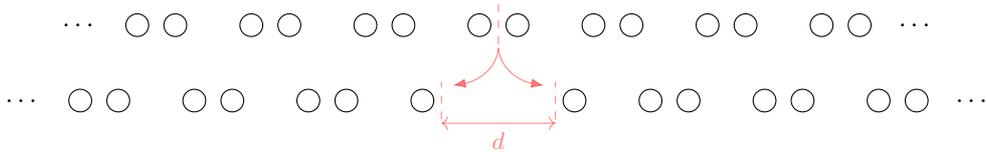
\begin{figure}
	\centering
	\begin{tikzpicture}[scale=0.5]
	\begin{scope}[shift={(1.5,2)}]
	\draw (-6,0) circle (0.3);
	\draw (-5,0) circle (0.3);
	\draw (-3,0) circle (0.3);
	\draw (-2,0) circle (0.3);
	\draw (0,0) circle (0.3);
	\draw (1,0) circle (0.3);
	\draw (3,0) circle (0.3);
	\draw (4,0) circle (0.3);
	\draw (6,0) circle (0.3);
	\draw (7,0) circle (0.3);
	\draw (9,0) circle (0.3);
	\draw (10,0) circle (0.3);
	\draw (12,0) circle (0.3);
	\draw (13,0) circle (0.3);
	\node at (14.5,0) {$\dots$};
	\node at (-7.5,0) {$\dots$};
	\draw[dashed,opacity=0.5,red] (3.5,-0.5) -- (3.5,0.6);
	\draw[-Latex,opacity=0.5,red] (3.5,-0.6) to[out=-95,in=10] (2.3,-1.6);
	\draw[-Latex,opacity=0.5,red] (3.5,-0.6) to[out=-85,in=170] (4.7,-1.6);
	\end{scope}
	\draw (-6,0) circle (0.3);
	\draw (-5,0) circle (0.3);
	\draw (-3,0) circle (0.3);
	\draw (-2,0) circle (0.3);
	\draw (0,0) circle (0.3);
	\draw (1,0) circle (0.3);
	\draw (3,0) circle (0.3);
	\draw[dashed,opacity=0.5,red] (3.5,-0.5) -- (3.5,0.5);
	\draw[dashed,opacity=0.5,red] (6.5,-0.5) -- (6.5,0.5);
	\draw (7,0) circle (0.3);
	\draw (9,0) circle (0.3);
	\draw (10,0) circle (0.3);
	\draw (12,0) circle (0.3);
	\draw (13,0) circle (0.3);
	\draw (15,0) circle (0.3);
	\draw (16,0) circle (0.3);
	\node at (17.5,0) {$\dots$};
	\node at (-7.5,0) {$\dots$};
	\draw[<->,opacity=0.5,red] (3.5,-0.6) -- (6.5,-0.6) node[pos=0.5,below]{\small $d$};
	\end{tikzpicture}
	\caption{We start with an array of pairs of subwavelength resonators, known to have a subwavelength band gap. A dislocation (with size $d>0$) is introduced to create mid-gap frequencies.} \label{fig:infinite}
\end{figure}

\begin{figure}
	\centering
	\begin{tikzpicture}
	\draw[line width=0.5mm,red] plot [smooth] coordinates {(0,0.5) (1,0.8) (5.5,1.05) (7.7,1.1)};
	\draw[line width=0.5mm,red] plot [smooth] coordinates {(0,2.1) (1,1.3) (4,1.12) (7.7,1.1)};
	\draw[gray!60!white,fill=gray!60!white] (0,0) -- (7.7,0) -- (7.7,0.5) --  (0,0.5);
	\draw (0,0.5) -- (7.7,0.5);
	\draw[gray!60!white,fill=gray!60!white] (0,2.6) -- (7.7,2.6) -- (7.7,2.1) -- (0,2.1);
	\draw (7.7,2.1) -- (0,2.1);
	\draw[->,line width=0.4mm] (0,0) -- (8,0) node[pos=0.95, yshift=-8pt]{$d$};
	\draw[->,line width=0.4mm] (0,0) -- (0,2.85) node[pos=0.85, xshift=-8pt]{$\omega$};
	\draw[<-,opacity=0.5] (3.5,0.95) to[out=80,in=180] (4,1.7);
	\node[right,opacity=0.5] at (4,1.7) {\small mid-gap frequencies};
	\draw[<-,opacity=0.5] (3.4,1.15) to[out=80,in=180] (4,1.7);
	\draw [decorate,opacity=0.5,decoration={brace,amplitude=10pt}]
	(7.8,2.1) -- (7.8,0.5) node [midway,right,xshift=3mm]{\small band gap};
	\node at (3.8,2.35){\color{white}\small essential spectrum};
	\node at (3.8,0.25){\color{white}\small essential spectrum};
	\draw [decorate,opacity=0.5,decoration={brace,amplitude=10pt}]
	(-0.6,0) -- (-0.6,2.6) node [midway,xshift=-4mm,align=right,left]{\small subwavelength\\ regime};
	\end{tikzpicture}
	\caption{As the dislocation size $d$ increases from zero, a mid-gap frequency appears from each edge of the subwavelength band gap. These two frequencies converge to a single value within the subwavelength band gap as $d\to\infty$.} \label{fig:gapcrossing}
\end{figure}
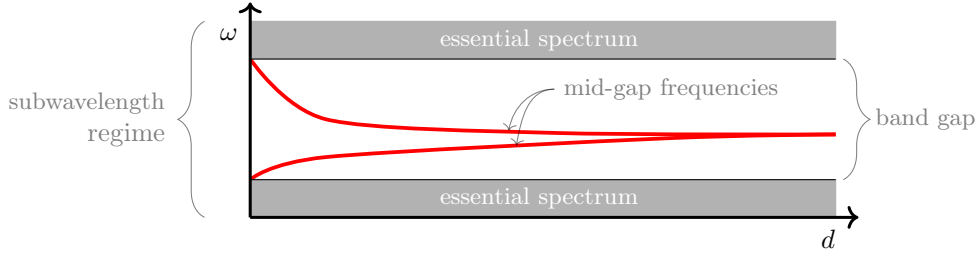

\section{Preliminaries}
In this section, we briefly review the layer potential operators and Floquet-Bloch theory that will be used in the subsequent analysis. More details on this material can, for example, be found in \cite{MaCMiPaP}.

\subsection{Layer potential techniques} \label{sec:layerpot}
Let $\Omega\subset \R^3$ be a bounded domain such that $\p \Omega$ is of class $\C^{1,s}$ for some $0<s<1$. Let $G^0$ and $G^k$  be the Laplace and outgoing Helmholtz Green's functions, respectively, defined by
$$
G^k(x,y) := -\frac{e^{\iu k|x-y|}}{4\pi|x-y|}, \quad x,y \in \R^3, x\neq y, \mathrm{Re}(k)\geq 0.
$$
We define the single layer potential $\S_{\Omega}^k: L^2(\partial \Omega) \rightarrow H_{\textrm{loc}}^1(\R^3)$ by
\begin{equation*}
\S_\Omega^k[\phi](x) := \int_{\partial \Omega} G^k(x,y)\phi(y) \dx \sigma(y), \quad x \in \R^3.
\end{equation*}
Here, the space $H_{\textrm{loc}}^1(\R^3)$ consists of functions that are square integrable on every compact subset of $\R^3$ and have a weak first derivative that is also square integrable. It is well known that the trace $\S_\Omega^0: L^2(\p \Omega) \rightarrow H^1(\p \Omega)$ is an invertible operator (see, for example, \cite{nedelec,MaCMiPaP}). Here $H^1(\p \Omega)$ denotes the set of functions that are square integrable on $\p \Omega$ and have a weak first derivative that is also square integrable.

We also define the Neumann-Poincar\'e operator $\K_\Omega^{k,*}: L^2(\partial \Omega) \rightarrow L^2(\partial \Omega)$ by
\begin{equation*}
\K_\Omega^{k,*}[\phi](x) := \int_{\partial \Omega} \frac{\partial }{\partial \nu_x}G^k(x,y) \phi(y) \dx \sigma(y), \quad x \in \partial \Omega,
\end{equation*}
where $\partial/\partial \nu_x$ denotes the outward normal derivative at $x\in\p D$.

The following so-called \emph{jump relations} describe the behaviour of the trace of $\S_\Omega^k$ on the boundary $\partial \Omega$ (see, for example, \cite{MaCMiPaP}):
\begin{equation*}
\S_\Omega^k[\phi]\big|_+ = \S_\Omega^k[\phi]\big|_-,
\end{equation*}
and
\begin{equation*}
\frac{\partial }{\partial \nu}\S_\Omega^k[\phi]\Big|_{\pm}  =  \left(\pm\frac{1}{2} I + \K_\Omega^{k,*}\right) [\phi],
\end{equation*}
where $|_+$ and $|_-$ are used to denote the limits from outside and inside $\Omega$, respectively, and $I$ is the identity. When $k$ is small, we have the following  low-frequency expansions \cite[Appendix A]{first}:
\begin{equation} \label{eq:Sk0exp}
	\S_\Omega^{k} =  \S_\Omega^{0} + O(k), \quad \mathcal{K}_\Omega^{k,*} = \mathcal{K}_\Omega^{0,*} + O(k^2).
\end{equation}
Here, the error terms are with respect to the operator norms in $\B\left(L^2(\p D),H^1(\p D) \right)$ and $\B\left(L^2(\p D)\right)$ respectively, where $\B(A,B)$ (respectively $\B(A)$) denotes the space of bounded linear operators $A\rightarrow B$ (respectively $A\rightarrow A$).

\subsection{Floquet-Bloch theory and quasiperiodic layer potentials}\label{sec:floquet}
A function $f(x)\in L^2(\R)$ is said to be $\alpha$-quasiperiodic, with quasiperiodicity $\alpha\in\mathbb{R}$, if $e^{-\iu \alpha x}f(x)$ is periodic. If the period is $L\in\mathbb{R}^+$, the quasiperiodicity $\alpha$ is an element of the torus $Y^*:= \R / \tfrac{2\pi}{L} \Z \simeq (-\pi/L, \pi/L]$, known as the \textit{Brillouin zone}. Given a function $f\in L^2(\R)$, the Floquet transform of $f$ is defined as
\begin{equation*}\label{eq:floquet}
\F[f](x,\alpha) := \sum_{m\in \Z} f(x-Lm) e^{\iu L\alpha m}.
\end{equation*}
$\F[f]$ is always $\alpha$-quasiperiodic in $x$ and periodic in $\alpha$. Let $Y_0 = [-L/2,L/2)$ be the unit cell for the $\alpha$-quasiperiodicity in $x$. The Floquet transform is an invertible map $\F:L^2(\R) \rightarrow L^2(Y_0\times Y^*)$, with inverse (see, for instance, \cite{MaCMiPaP, kuchment})
\begin{equation*}
\F^{-1}[g](x) = \frac{1}{2\pi}\int_{Y^*} g(x,\alpha) \dx \alpha, \quad x\in \R,
\end{equation*}
where $g(x,\alpha)$ is the quasiperiodic extension of $g$ for $x$ outside of the unit cell $Y_0$.

We will consider a three-dimensional problem which is periodic in one dimension. Define the unit cell $Y$ as $Y := Y_0\times \R^2$. The quasiperiodic Green's function $G^{\alpha,k}(x,y)$, for $x,y\in\R^3$, is formally defined as the Floquet transform of $G^k(x,y)$ in the $x_1$ direction with fixed $y$, \ie{},
$$G^{\alpha,k}(x,y) := -\sum_{m \in \Z} \frac{e^{\iu k|x-y-(Lm,0,0)|}}{4\pi|x-y-(Lm,0,0)|}e^{\iu \alpha Lm}.$$
If $k\neq |\alpha+\frac{2\pi}{L}m|$ for all $m\in \Z$, it is known that this series converges uniformly for $x$ in compact sets of $\R^3$, $x\neq 0$ (see \emph{e.g} \cite[Section 2.12]{MaCMiPaP}).

Let $\Omega$ be as above but with the additional assumption that $\Omega\Subset Y$. The quasiperiodic single layer potential $\mathcal{S}_\Omega^{\alpha,k}$ is defined analogously to $\S_\Omega^k$, by
$$\mathcal{S}_\Omega^{\alpha,k}[\phi](x) := \int_{\partial \Omega} G^{\alpha,k} (x,y) \phi(y) \dx\sigma(y),\quad x\in \mathbb{R}^3.$$
It is known that $\mathcal{S}_\Omega^{\alpha,0} : L^2(\p \Omega) \rightarrow H^1(\p \Omega)$ is invertible if $\alpha \neq  0$ \cite{MaCMiPaP}. There are also jump relations for the quasiperiodic single layer potential, given by
\begin{equation} \label{eq:jump1}
\S_\Omega^{\alpha,k}[\phi]\big|_+ = \S_\Omega^{\alpha,k}[\phi]\big|_-,
\end{equation}
and
\begin{equation} \label{eq:jump2}
\frac{\p}{\p\nu} \mathcal{S}_\Omega^{\alpha,k}[\phi] \Big|_{\pm} = \left( \pm \frac{1}{2} I +( \mathcal{K}_\Omega^{-\alpha,k} )^*\right)[\phi]\quad \mbox{on}~ \p \Omega,
\end{equation}
where $(\mathcal{K}_\Omega^{-\alpha,k})^*$ is the quasiperiodic Neumann-Poincaré operator, given by
$$ (\mathcal{K}_\Omega^{-\alpha, k} )^*[\phi](x):= \int_{\p \Omega} \frac{\p}{\p\nu_x} G^{\alpha,k}(x,y) \phi(y) \dx\sigma(y).$$
For small $k$, we have the following expansions \cite{MaCMiPaP}:
\begin{equation} \label{eq:Sexp}
\S_\Omega^{\alpha,k} =  \S_\Omega^{\alpha,0} + O(k^2), \quad (\mathcal{K}_\Omega^{-\alpha, k} )^* = (\mathcal{K}_\Omega^{-\alpha, 0} )^* + O(k^2).
\end{equation}
As before, the error terms are with respect to the operator norms in $\B\left(L^2(\p D),H^1(\p D) \right)$ and $\B\left(L^2(\p D)\right)$, respectively.

 	\section{Infinite dislocated system} \label{sec:infinite}
 	We will now study the problem of the dislocation of an infinite array of resonators. We will show that, in the case corresponding to non-zero bulk index, there are two mid-gap frequencies. These cover an interval in the middle of the band gap as the dislocation is varied. In \Cref{sec:periodic} we study the periodic system, \ie{} the system without dislocation, and prove that it has a subwavelength band gap. In \Cref{sec:smalldis} we study the dislocated system in the asymptotic case when the dislocation $d$ is arbitrarily small. We show that as the dislocation increases from zero, two mid-gap frequencies appear, one from each edge of the band gap. In \Cref{sec:compact} we study the case when the dislocation size is an integer number of unit cell lengths $L$, using the fact that this special case is equivalent to removing a finite number of resonators from the periodic structure. Here, we prove the existence of two mid-gap frequencies in the simplest case $d=L$, which corresponds to removing two resonators. We also show that in the limit when $d\rightarrow \infty$, any mid-gap frequency corresponds to two, hybridized, frequencies when $d$ is finite. Finally, in \Cref{sec:no//}, we study the dislocated system for a general dislocation that is larger than the width of one resonator. These values of $d$ include those in \Cref{sec:compact}, but the corresponding integral operator is significantly harder to analyse. The main goal of this section is to prove that all mid-gap frequencies will be bounded away from the edges of the band gap. In \Cref{sec:mainthm}, we combine the results of \Cref{sec:compact} and \Cref{sec:no//} to conclude that the two mid-gap frequencies found in \Cref{sec:compact} will converge to a single point as $d$ increases and therefore fill an interval in the middle of the band gap.

 	\begin{figure}[tbh]
 	\centering
 	\begin{tikzpicture}[scale=2]
 	\pgfmathsetmacro{\rb}{0.25pt}
 	\pgfmathsetmacro{\rs}{0.2pt}
 	\coordinate (a) at (0.25,0);
 	\coordinate (b) at (1.05,0);

 	\draw[dashed, opacity=0.5] (-0.5,0.85) -- (-0.5,-1);
 	\draw[dashed, opacity=0.5]  (1.8,0.85) -- (1.8,-1)node[yshift=4pt,xshift=-7pt]{};
 	\draw[{<[scale=1.5]}-{>[scale=1.5]}, opacity=0.5] (-0.5,-0.6) -- (1.8,-0.6)  node[pos=0.5, yshift=-7pt,]{$L$};
 	\draw plot [smooth cycle] coordinates {($(a)+(30:\rb)$) ($(a)+(90:\rs)$) ($(a)+(150:\rb)$) ($(a)+(210:\rs)$) ($(a)+(270:\rb)$) ($(a)+(330:\rs)$) } node[xshift=5pt, yshift=-5pt]{$D_1$};
 	\draw plot [smooth cycle] coordinates {($(b)+(30:\rb)$) ($(b)+(90:\rs)$) ($(b)+(150:\rb)$) ($(b)+(210:\rs)$) ($(b)+(270:\rb)$) ($(b)+(330:\rs)$) }  node[xshift=5pt, yshift=-5pt]{$D_2$};

 	\draw[{<[scale=1.5]}-{>[scale=1.5]}, opacity=0.5] (0.25,0.6) -- (1.05,0.6) node[pos=0.5, yshift=-5pt,]{$l$};
 	\draw[dotted,opacity=0.5] (0.25,0.7) -- (0.25,-0.8) node[at end, yshift=-0.2cm]{$p_1$};
 	\draw[dotted,opacity=0.5] (1.05,0.7) -- (1.05,-0.8) node[at end, yshift=-0.2cm]{$p_2$};

 	\begin{scope}[xshift=-2.3cm]
 	\coordinate (a) at (0.25,0);
 	\coordinate (b) at (1.05,0);
 	\draw plot [smooth cycle] coordinates {($(a)+(30:\rb)$) ($(a)+(90:\rs)$) ($(a)+(150:\rb)$) ($(a)+(210:\rs)$) ($(a)+(270:\rb)$) ($(a)+(330:\rs)$) };
	\draw plot [smooth cycle] coordinates {($(b)+(30:\rb)$) ($(b)+(90:\rs)$) ($(b)+(150:\rb)$) ($(b)+(210:\rs)$) ($(b)+(270:\rb)$) ($(b)+(330:\rs)$) };
 	\begin{scope}[xshift = 1.2cm]
 	\draw (-1.6,0) node{$\cdots$};
 	\end{scope};
 	\end{scope}
 	\begin{scope}[xshift=2.3cm]
 	\coordinate (a) at (0.25,0);
 	\coordinate (b) at (1.05,0);
 	\draw plot [smooth cycle] coordinates {($(a)+(30:\rb)$) ($(a)+(90:\rs)$) ($(a)+(150:\rb)$) ($(a)+(210:\rs)$) ($(a)+(270:\rb)$) ($(a)+(330:\rs)$) };
	\draw plot [smooth cycle] coordinates {($(b)+(30:\rb)$) ($(b)+(90:\rs)$) ($(b)+(150:\rb)$) ($(b)+(210:\rs)$) ($(b)+(270:\rb)$) ($(b)+(330:\rs)$) };
 	\begin{scope}[xshift = 1.1cm]
 	\end{scope}
 	\draw (1.7,0) node{$\cdots$};
 	\end{scope}

 	\begin{scope}[yshift=0.9cm]
 	\draw [decorate,opacity=0.5,decoration={brace,amplitude=10pt}]
 	(-0.5,0) -- (1.8,0) node [black,midway]{};
 	\node[opacity=0.5] at (0.67,0.35) {$Y$};
 	\end{scope}
 	\end{tikzpicture}
 	\caption{Example of the array in the case $d=0$. The resonators are drawn to illustrate the symmetry assumptions.} \label{fig:SSH}
 \end{figure}
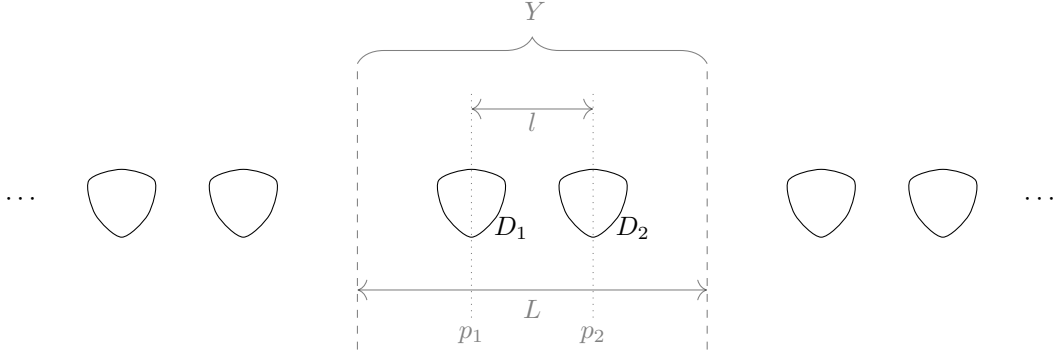

We first describe the geometry of the periodic structure, \ie{} the case without dislocation, depicted in \Cref{fig:SSH}. Let $Y=[-L/2,L/2]\times \R^2$ be the unit cell, $Y_1 = [-L/2,0]\times \R^2$ and $Y_2 = [0,L/2]\times \R^2$. For $j=1,2$, we assume that $Y_j$ contains a resonator $D_j$, which is a bounded domain $D_j\subset Y_j$ such that $\p D_j \in \C^{1,s}$ for some $0<s<1$. We denote a pair of resonators, a so-called dimer, by $D = D_1 \cup D_2$. We assume that the resonators in each dimer are separated by distance $l$ and that each individual resonator has reflection symmetry. More precisely, we assume that
\begin{equation} \label{eq:symmetry}
R_1 D_1 = D_1, \qquad R_0 D = D,
\end{equation}
where $R_1$ is the reflection in the plane $\{-l/2\} \times \R^2$ and $R_0$ is the reflection in the plane $\{0\} \times \R^2$. Observe that $R_2 := R_0R_1R_0$ describes reflection in the plane $\{l/2\} \times \R^2$ and therefore the assumptions \eqref{eq:symmetry} also imply that
$$R_2D_2 = D_2.$$

Starting from the periodic system, we assume that half of this structure is dislocated along the $x_1$-axis. Let $\mathbf{v} = (1,0,0)$ and let $d$ denote the dislocation size. We then define the periodic and dislocated systems, respectively, as
$$\C_0 = \bigcup_{\substack{m\in\Z \\ j = 1,2} } D_j^m, \qquad \C_d = \Bigg(\bigcup_{\substack{m\in\Z^- \\ j = 1,2} } D_j^m\Bigg)\cup\Bigg(\bigcup_{\substack{m\in\N \\ j = 1,2} } D_j^m+d \mathbf{v}\Bigg).$$
Here, we use the notation
$$ D_j^m = D_j + mL\mathbf{v}, \quad j=1,2, \ m\in\Z,$$
for the resonators in the $m^\text{th}$ unit cell. We introduce the notation $l_0 = l/L$, \ie{} $l_0$ is the ratio of the separation of the resonators to the unit cell length. There are two fundamentally different cases: $l_0 < 1/2$ and $l_0 > 1/2$. In the first case, the dislocation occurs between dimers of resonators, keeping each pair of resonators intact. The second case corresponds to the dislocation occurring within a dimer, splitting one pair of resonators into two ``edge'' resonators. The case $l_0 > 1/2$ was illustrated in \Cref{fig:infinite}, which, we will show, is the only case with mid-gap frequencies.

Wave propagation inside the infinite dislocated system is modelled by the Helmholtz problem
\begin{equation} \label{eq:scattering_translated}
\left\{
\begin{array} {ll}
\ds \Delta {u}+ \omega^2 {u}  = 0 & \text{in } \R^3 \setminus \p \C_d, \\
\nm
\ds  {u}|_{+} -{u}|_{-}  =0  & \text{on } \partial \C_d, \\
\nm
\ds  \delta \frac{\partial {u}}{\partial \nu} \bigg|_{+} - \frac{\partial {u}}{\partial \nu} \bigg|_{-} =0 & \text{on } \partial \C_d, \\
\nm
\ds u(x_1,x_2,x_3) & \text{satisfies the outgoing radiation condition as } \sqrt{x_2^2+x_3^2} \rightarrow \infty.
\end{array}
\right.
\end{equation}
Here, $\p/\p \nu$ denotes the outward normal derivative and $|_\pm$ indicates the limits from outside and inside $D$, respectively. Moreover, $\omega$ corresponds to the frequency of the waves. We refer to \cite{AmmariBao, Gerard1989} for the definition of the outgoing radiation condition for the scattering from compactly perturbed periodic structures. For non-compactly perturbed structures, the outgoing radiation condition amounts to choosing the \emph{outgoing} Green's function in \Cref{sec:smalldis} and thereafter \cite{thompson2018direct,thompson2008interaction}. The material parameter $\delta$ represents the contrast between the material inside the resonators and the background medium.

We assume that $\delta$ satisfies the high-contrast condition
\begin{equation*} \label{data2}
\delta \ll 1.
\end{equation*}
This assumption is crucial for subwavelength resonant modes to exist, see \emph{e.g} \cite{first,ammari2021functional}. Physically, it means that the resonators interact strongly with waves whose wavelength is considerably larger than the resonators themselves.

We denote the spectrum corresponding to the problem \eqref{eq:scattering_translated} by $\Lambda(d)$, and $\omega$ such that $\omega^2\in \Lambda(d)$ are called \emph{resonant frequencies}.  We say that a resonant frequency $\omega$ is \textit{subwavelength} if $\omega$ scales as $O(\sqrt{\delta})$ as $\delta \rightarrow 0$. By a \textit{mid-gap frequency}, we mean a value $\omega > 0$ that is in the subwavelength regime and is such that $\omega^2 \in \Lambda(d)$ but $\omega^2 \notin \Lambda(0)$.  Here, the condition $\omega^2 \notin \Lambda(0)$ means that $\omega$ is within the band gap of the periodic system. Corresponding edge mode $u$ is $L^2$-localized in $x_1$, \ie{} $\int_{\R}|u(x_1,x_2,x_3)|^2\dx x_1<\infty$, and satisfies the outgoing radiation condition as $\sqrt{x_2^2+x_3^2} \rightarrow \infty$.

It is worth emphasizing that, due to radiation in $x_2$- and $x_3$-directions, the resonant frequencies are complex with negative imaginary parts when the quasiperiodicity $\alpha$ is around the origin. Nevertheless, as we will see in Theorem \ref{thm:char_approx_infinite}, the resonant frequencies are real at leading order and, moreover, the mid-gap frequencies are real.


 	\subsection{Periodic system} \label{sec:periodic}
 	This section concerns the infinite system in the case of no dislocation. We first state some preliminary results from \cite{ammari2019topological} concerning the capacitance matrix. In \Cref{sec:Asing} we prove the existence of a band gap between the first and the second band, which is a strengthening of a result from \cite{ammari2019topological}. Moreover, we derive the asymptotic behaviour of the integral operator corresponding to the periodic problem as the frequency $\omega$ approaches the first or the second band.

 Taking the Floquet transform of the solution $u$ to \eqref{eq:scattering_translated}, the $\alpha$-quasiperiodic component $u^\alpha$ satisfies the Helmholtz problem
 \begin{equation} \label{eq:scattering_quasi_d=0}
 \left\{
 \begin{array} {ll}
 \ds \Delta u^\alpha+ \omega^2 u^\alpha  = 0 &\text{in } Y \setminus \p D, \\
 \nm
 \ds  {u^\alpha}|_{+} -{u^\alpha}|_{-}  =0  & \text{on } \partial D, \\
 \nm
 \ds  \delta \frac{\partial {u^\alpha}}{\partial \nu} \bigg|_{+} - \frac{\partial {u^\alpha}}{\partial \nu} \bigg|_{-} =0 & \text{on } \partial D, \\
 \nm
 \ds e^{-\iu  \alpha x_1}  u^\alpha(x_1,x_2,x_3)  \,\,\,&  \mbox{is periodic in } x_1, \\
 \nm
 \ds u^\alpha(x_1,x_2,x_3)& \text{satisfies the $\alpha$-quasiperiodic outgoing radiation} \\ &\hspace{0.5cm} \text{condition as } \sqrt{x_2^2+x_3^2} \rightarrow \infty.
 \end{array}
 \right.
 \end{equation}
It is well-known (see, \emph{e.g.} \cite{honeycomb, ammari2018honeycomb, highfrequency}), that \eqref{eq:scattering_quasi_d=0} has two subwavelength resonant frequencies $\omega_j^\alpha$, $j=1,2$.
 We refer to \cite{MaCMiPaP} for the definition of the $\alpha$-quasiperiodic outgoing radiation condition. Crucially, when $\omega$ is real and $|\alpha| > \omega$, the $\alpha$-quasiperiodic outgoing radiation condition states that the function is exponentially decaying as $\sqrt{x_2^2+x_3^2} \rightarrow \infty$. Recall that we are studying the subwavelength regime $\omega = O(\sqrt{\delta})$. Therefore, when $|\alpha|>K>0$ for some constant $K$, and for small enough $\delta$, the subwavelength resonant problem \eqref{eq:scattering_quasi_d=0} can be viewed as the spectral problem for a self-adjoint operator. Consequently, the subwavelength resonant frequencies $\omega_j^\alpha$, $j=1,2$, are real-valued for $|\alpha| > K >0$ (see also \cite{ammari2021functional} for a detailed discussion of this).

 Next, we formulate the quasiperiodic resonance problem \eqref{eq:scattering_quasi_d=0} as an integral equation. The solution $u^\alpha$ of \eqref{eq:scattering_quasi_d=0} can be represented as
 \begin{equation*} \label{eq:helm-solution_quasi}
 u^\alpha(x) = \begin{cases} \S_{D_1}^\omega[\phi_1^{\alpha,i}](x), \quad &x\in D_1, \\
 \S_{D_2}^\omega[\phi_2^{\alpha,i}](x), \quad &x\in D_2, \\
 \S_{D}^{\alpha,\omega}[\phi^{\alpha,o}](x), \quad &x\in Y\setminus D,
 \end{cases}
 \end{equation*}
 for some densities $\phi_1^{\alpha,i} \in  L^2(\p D_1), \ \phi_2^{\alpha,i} \in L^2(\p D_2)$ and $\phi^{\alpha,o} \in L^2(\p D)$ (here, the superscripts $i$ and $o$ indicate \textit{inside} and \textit{outside}, respectively). Throughout, we will identify $L^2(\p D) = L^2(\p D_1) \times L^2(\p D_2)$. With this identification, we write $\phi^{\alpha,i} = (\phi_1^{\alpha,i}, \phi_2^{\alpha,i})$.

 Using the jump relations \eqref{eq:jump1} and \eqref{eq:jump2}, it can be shown that~\eqref{eq:scattering_quasi_d=0} is equivalent to the boundary integral equation
 \begin{equation*}  \label{eq:boundary_quasi}
 \mathcal{A}^\alpha(\omega, \delta)[\Phi^\alpha] =0,
 \end{equation*}
 where
 \begin{equation} \label{eq:A_quasi_defn}
 \A^\alpha(\omega,\delta) =
 \begin{pmatrix}
 \hat\S_{D}^{\omega} & -\S_{D}^{\alpha,\omega} \\
 -\frac{1}{2}I + \hat\K_{D}^{\omega,*} & -\delta\left(\frac{1}{2}I + \left(\mathcal{K}_D^{ -\alpha,\omega}\right)^*\right) \\
 \end{pmatrix}, \qquad \Phi^\alpha =  \begin{pmatrix}
 \phi^{\alpha,i} \\
 \phi^{\alpha,o}
 \end{pmatrix},
 \end{equation}
 and the operators $\hat{\S}_D^\omega$ and $\hat\K_{D}^{\omega,*}$ are defined on $L^2(\p D) = L^2(\p D_1) \times L^2(\p D_2)$ as
 \begin{equation} \label{add1} \hat{\S}_D^\omega = \begin{pmatrix}
 \S_{D_1}^{\omega} & 0 \\ 0 & \S_{D_2}^{\omega} \end{pmatrix}, \qquad \hat{\K}_D^{\omega,*} = \begin{pmatrix}
 \K_{D_1}^{\omega,*} & 0 \\ 0 & \K_{D_2}^{\omega,*} \end{pmatrix}.\end{equation}
Here, and throughout this work, the block-matrix definition is used to reconcile the domains of the operators; both $\hat{\S}_D^\omega$ and $\S_D^{\alpha,\omega}$ are operators on $L^2(\p D)$, however $\hat{\S}_D^\omega$ is defined piecewise on $L^2(\p D_1)$ and $L^2(\p D_2)$ through \eqref{add1}. 

\begin{rmk}
	Here, we use the standard single-layer potential to represent the solution inside the resonators. This leads to a block $2\times 2$ integral equation, which might seem more complicated than the scalar integral equation studied in \cite{ammari2019topological}. However, this representation will, in fact, simplify the analysis of the fictitious sources used later in this paper. Another advantage of this representation is that it easily generalizes to the case of different wave speeds inside and outside the resonators.
\end{rmk}

 	\subsubsection{Quasiperiodic capacitance matrix}\label{sec:cap_quasi}
 	In this section, we state some results from \cite{ammari2019topological} on the quasiperiodic capacitance matrix.
 	Let $V_j^\alpha$ be the solution to
 	\begin{equation*} \label{eq:V_quasi}
 	\begin{cases}
 	\ds \Delta V_j^\alpha =0 \quad &\mbox{in } \quad Y\setminus D,\\
 	\ds V_j^\alpha = \delta_{ij} \quad &\mbox{on } \quad \partial D_i,\\
 	\ds e^{-\iu  \alpha_1 x_1}   V_j^\alpha (x_1,x_2,x_3)  \,\,\,&  \mbox{is periodic in } x_1, \\
 	\ds V_j^\alpha(x_1,x_2,x_3) = O\left(\tfrac{1}{\sqrt{x_2^2+x_3^2}}\right) \quad &\text{as } \sqrt{x_2^2+x_3	^2}\to\infty, \text{ uniformly in } x_1,
 	\end{cases}
 	\end{equation*}
 	where $\delta_{ij}$ is the Kronecker delta.
 	We then define the quasiperiodic capacitance matrix $C^\alpha=(C_{ij}^\alpha)$ by
 	\begin{equation*} \label{eq:qp_capacitance}
 	C_{ij}^\alpha := \int_{Y\setminus D}  \overline{\nabla V_i^\alpha}\cdot{\nabla V_j^\alpha}  \dx x,\quad i,j=1, 2.
 	\end{equation*}
 	The main motivation for studying the capacitance matrix is given in the following theorem, proved in \cite{honeycomb, bandgap}.

\begin{thm} \label{thm:char_approx_infinite}
	The subwavelength resonant frequencies $\omega_j^\alpha=\omega_j^\alpha(\delta),~j=1,2$, of the operator $\mathcal{A}^{\alpha}(\omega,\delta)$, defined in \eqref{eq:A_quasi_defn}, can be approximated as
	\begin{equation*} 
	\omega_j^\alpha= \sqrt{\frac{\delta \lambda_j^\alpha }{|D_1|}}  + O(\delta), \end{equation*}
	where $\lambda_j^\alpha,~j=1,2$, are eigenvalues of the quasiperiodic capacitance matrix $C^\alpha$ and $|D_1|$ is the volume of each individual resonator.
\end{thm}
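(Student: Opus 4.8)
This is the standard characterisation of subwavelength resonances through the capacitance matrix, so the plan is to show that the characteristic values of $\A^\alpha(\omega,\delta)$ in the regime $\omega = O(\sqrt{\delta})$ are governed, at leading order, by the eigenvalues of $C^\alpha$. First I would rewrite the kernel condition $\A^\alpha(\omega,\delta)[\Phi^\alpha]=0$ as the coupled system
\begin{align*}
\hat\S_D^\omega[\phi^{\alpha,i}] &= \S_D^{\alpha,\omega}[\phi^{\alpha,o}], \\
\left(-\tfrac12 I + \hat\K_D^{\omega,*}\right)[\phi^{\alpha,i}] &= \delta\left(\tfrac12 I + (\K_D^{-\alpha,\omega})^*\right)[\phi^{\alpha,o}].
\end{align*}
Since $\hat\S_D^0$ is invertible and $\hat\S_D^\omega = \hat\S_D^0 + O(\omega)$ by \eqref{eq:Sk0exp}, the operator $\hat\S_D^\omega$ is invertible for $\omega$ small, so the first equation gives $\phi^{\alpha,i} = (\hat\S_D^\omega)^{-1}\S_D^{\alpha,\omega}[\phi^{\alpha,o}]$ and the problem reduces to a single equation for $\phi^{\alpha,o}$. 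The essential structural fact is that the leading operator $-\tfrac12 I + \hat\K_D^{0,*}$ on $L^2(\p D_1)\times L^2(\p D_2)$ has a two-dimensional kernel, spanned by $\psi_j := (\S_{D_j}^0)^{-1}[\chi_{\p D_j}]$ extended by zero to the other component, because $\S_{D_j}^0[\psi_j]\equiv 1$ on $\p D_j$ forces the interior normal derivative to vanish. This is precisely why exactly two subwavelength frequencies appear.

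The leading-order condition is then extracted by testing the second equation against the constants $\chi_{\p D_i}$, $i=1,2$. The key identity, from the divergence theorem and the Helmholtz equation satisfied by $\S_{D_i}^\omega[\phi_i^{\alpha,i}]$ in $D_i$, is
\begin{equation*}
\int_{\p D_i}\left(-\tfrac12 I + \K_{D_i}^{\omega,*}\right)[\phi_i^{\alpha,i}]\dx\sigma = -\omega^2 \int_{D_i}\S_{D_i}^\omega[\phi_i^{\alpha,i}]\dx x,
\end{equation*}
which produces the crucial factor $\omega^2$ on the left. On the right, testing $\delta\bigl(\tfrac12 I + (\K_D^{-\alpha,\omega})^*\bigr)[\phi^{\alpha,o}]$ against $\chi_{\p D_i}$ and using \eqref{eq:Sexp} together with the layer-potential representation of the capacitance matrix, namely $C_{ij}^\alpha = -\int_{\p D_i}(\tfrac12 I + (\K_D^{-\alpha,0})^*)[\psi_j^\alpha]\dx\sigma$ with $\psi_j^\alpha = (\S_D^{\alpha,0})^{-1}[\chi_{\p D_j}]$, reproduces the entries of $C^\alpha$ at leading order.

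Collecting the leading amplitudes of $u^\alpha$ in the two resonators into a vector $\mathbf{x}\in\mathbb{C}^2$, and noting that the resonant mode is constant in each resonator to leading order so that $\int_{D_i}\S_{D_i}^\omega[\phi_i^{\alpha,i}]\dx x = |D_i|\,x_i + O(\omega)$, while $\phi^{\alpha,o} = \sum_j x_j\psi_j^\alpha + O(\omega)$, the two tested equations collapse to the algebraic eigenvalue problem $C^\alpha \mathbf{x} = \tfrac{\omega^2 |D_1|}{\delta}\mathbf{x}$. Solving for $\omega^2$ gives $\omega_j^\alpha = \sqrt{\delta\lambda_j^\alpha/|D_1|}+O(\delta)$, as claimed.

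To make this rigorous rather than formal, I would set $\omega = \sqrt{\delta}\,\nu$, treat $\A^\alpha$ as an analytic operator-valued function of $(\omega,\delta)$, and apply generalized Rouch\'e / Gohberg--Sigal theory to track the two characteristic values bifurcating from the two-dimensional kernel at $(\omega,\delta)=(0,0)$, with the matched-asymptotics computation above fixing their leading terms. The main obstacle, and the part demanding the most care, is the two-parameter bookkeeping with $\omega\sim\sqrt{\delta}$: one must verify that the $O(\omega)$ and $O(\omega^2)$ corrections in \eqref{eq:Sk0exp}--\eqref{eq:Sexp} feed only into the $O(\delta)$ remainder and do not perturb the leading eigenvalue relation, and that the kernel projection is uniform in $\alpha$ away from $\alpha=0$ (where $\S_D^{\alpha,0}$ degenerates and radiation makes the frequencies slightly complex, consistent with the remark preceding the statement).
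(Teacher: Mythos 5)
Your proposal is correct and is essentially the standard argument: the paper itself does not prove \Cref{thm:char_approx_infinite} but cites \cite{honeycomb, bandgap}, and the proof given there proceeds exactly as you outline — reduction of $\A^\alpha(\omega,\delta)[\Phi^\alpha]=0$ via the two-dimensional kernel of $-\frac{1}{2}I+\hat\K_D^{0,*}$, the divergence-theorem identity producing the $\omega^2$ factor, collapse to the eigenvalue problem $C^\alpha\mathbf{x}=\frac{\omega^2|D_1|}{\delta}\mathbf{x}$, and Gohberg--Sigal theory to make the characteristic-value asymptotics rigorous. Your error bookkeeping ($\omega^2=\delta\lambda_j^\alpha/|D_1|+O(\delta^{3/2})$, hence $\omega_j^\alpha=\sqrt{\delta\lambda_j^\alpha/|D_1|}+O(\delta)$) and the caveat about $\alpha$ bounded away from $0$ are both consistent with the cited treatment.
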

 	In other words, this theorem says that the continuous spectral problem \eqref{eq:scattering_quasi_d=0} can be approximated, to leading order in $\delta$, by the discrete eigenvalue problem for $C^\alpha$.

 	\begin{lem} \label{lem:quasi_matrix_form}
 		The matrix $C^\alpha$ is Hermitian with constant diagonal, \ie{},
 		$$C_{11}^\alpha = C_{22}^\alpha \in \R, \quad C_{12}^\alpha = \overline{C_{21}^\alpha} \in \mathbb{C}.$$
 	\end{lem}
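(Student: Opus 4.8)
The plan is to split the statement into two parts that are proved by quite different means: the Hermitian property (equivalently, $C_{12}^\alpha = \overline{C_{21}^\alpha}$ together with the reality of the diagonal), which follows directly from the defining integral, and the equality of the diagonal entries $C_{11}^\alpha = C_{22}^\alpha$, which is precisely where the symmetry assumption \eqref{eq:symmetry} must enter.

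For the Hermitian property I would simply conjugate the defining integral and interchange the indices: since $\overline{C_{ji}^\alpha} = \int_{Y\setminus D} \nabla V_j^\alpha \cdot \overline{\nabla V_i^\alpha} \dx x = C_{ij}^\alpha$, the matrix is Hermitian, so $C_{12}^\alpha = \overline{C_{21}^\alpha}$, and the diagonal entries $C_{jj}^\alpha = \int_{Y\setminus D} |\nabla V_j^\alpha|^2 \dx x$ are real (in fact non-negative). This part is routine and uses nothing beyond the definition.

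The substantive step is $C_{11}^\alpha = C_{22}^\alpha$. Here I would exploit the reflection $R_0$ in the plane $\{0\}\times\R^2$: by the assumption $R_0 D = D$, together with $D_j \subset Y_j$, it interchanges the two resonators, $R_0 D_1 = D_2$ and $R_0 D_2 = D_1$, while fixing the unit cell $Y$ and hence $Y\setminus D$. I would then define $W(x) := \overline{V_1^\alpha(R_0 x)}$ and check that $W$ solves exactly the defining boundary-value problem for $V_2^\alpha$: it is harmonic in $Y\setminus D$ (composition with the isometry $R_0$ preserves harmonicity, and conjugation preserves it since the Laplacian is real); it carries the correct boundary data, since on $\partial D_2$ we have $R_0 x \in \partial D_1$ so $W = \overline{1} = 1$, while on $\partial D_1$ we get $W = \overline{0} = 0$; and it satisfies the decay condition, which only constrains the $x_2,x_3$ behaviour and is untouched by $R_0$ and by conjugation. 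The one point requiring care is the quasiperiodicity: because $R_0$ flips $x_1 \mapsto -x_1$, the translation $x_1 \mapsto x_1 + L$ becomes $x_1 \mapsto -x_1 - L$ inside $V_1^\alpha$, producing a factor $e^{-\iu\alpha L}$, and the subsequent complex conjugation turns this into $e^{\iu\alpha L}$; the net effect is that $W$ is again $\alpha$-quasiperiodic (not $(-\alpha)$-quasiperiodic). By uniqueness of the solution to the quasiperiodic Laplace problem, $W = V_2^\alpha$.

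With the identity $V_2^\alpha(x) = \overline{V_1^\alpha(R_0 x)}$ established, I would conclude by a change of variables. Since $\nabla W(x) = \overline{R_0\,(\nabla V_1^\alpha)(R_0 x)}$ and $R_0$ is orthogonal, one has $|\nabla W(x)| = |(\nabla V_1^\alpha)(R_0 x)|$, so substituting $y = R_0 x$ (which maps $Y\setminus D$ onto itself with unit Jacobian) yields $C_{22}^\alpha = \int_{Y\setminus D} |\nabla V_2^\alpha|^2 \dx x = \int_{Y\setminus D} |(\nabla V_1^\alpha)(y)|^2 \dx y = C_{11}^\alpha$. The main obstacle is entirely bookkeeping: correctly tracking the interplay between the sign flip of $x_1$ under $R_0$ and the complex conjugation so that the quasiperiodicity index is preserved. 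Getting this wrong would instead relate $C_{11}^\alpha$ to $C_{22}^{-\alpha}$, and one would need a supplementary time-reversal argument (namely $\overline{V_j^\alpha} = V_j^{-\alpha}$, since conjugation maps an $\alpha$-quasiperiodic solution with real boundary data to the $(-\alpha)$-quasiperiodic one) to close the gap.
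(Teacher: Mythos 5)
Your proof is correct. Note that the paper itself states \Cref{lem:quasi_matrix_form} without proof, quoting it from \cite{ammari2019topological}, so there is no in-paper argument to compare against; your route --- the Hermitian property read off directly from the defining integral, and $C_{11}^\alpha = C_{22}^\alpha$ obtained from the identity $V_2^\alpha(x) = \overline{V_1^\alpha(R_0 x)}$ (using only the assumption $R_0D=D$ from \eqref{eq:symmetry}), verified via uniqueness for the quasiperiodic Dirichlet problem and an orthogonal change of variables --- is the standard argument and is essentially what the cited reference does. You also handled the one genuinely delicate point correctly: reflection alone would yield a $(-\alpha)$-quasiperiodic function and relate $C_{22}^\alpha$ to $C_{11}^{-\alpha}$, and it is precisely the complex conjugation in the definition of $W$ that restores $\alpha$-quasiperiodicity and closes the argument without any supplementary time-reversal step.
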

 	Using the jump conditions, in the case $\alpha\neq0$, it can be shown that the capacitance coefficients $C_{ij}^\alpha$ are also given by
 	$$ C_{ij}^\alpha = - \int_{\partial D_i} \psi_j^\alpha \dx \sigma,\quad i,j=1, 2,$$
 	where $\psi_j^\alpha$ are defined by
 	$$\psi_j^\alpha = (\S_D^{\alpha,0})^{-1}[\chi_{\p D_j}].$$

 	Since $C^\alpha$ is Hermitian, the following lemma follows directly.
 	\begin{lem} \label{lem:evec}
 		The eigenvalues and corresponding eigenvectors of the quasiperiodic capacitance matrix are given by
 		\begin{align*}
 		\lambda_1^\alpha &= C_{11}^\alpha - \left|C_{12}^\alpha \right|, \qquad
 		\begin{pmatrix}
 		a_1  \\ b_1
 		\end{pmatrix} = \frac{1}{\sqrt{2}}\begin{pmatrix}
 		- e^{\iu \theta_\alpha}  \\ 1
 		\end{pmatrix}, \\
 		\lambda_2^\alpha &= C_{11}^\alpha + \left|C_{12}^\alpha \right|, \qquad
 		\begin{pmatrix}
 		a_2  \\ b_2
 		\end{pmatrix} = \frac{1}{\sqrt{2}}\begin{pmatrix}
 		e^{\iu \theta_\alpha}  \\ 1
 		\end{pmatrix},
 		\end{align*}
 		where, for $\alpha$ such that $C_{12}^\alpha\neq0$, $\theta_\alpha\in[0,2\pi)$ is defined to be such that
 		\begin{equation*}
 		e^{\iu \theta_\alpha} = \frac{C_{12}^\alpha}{|C_{12}^\alpha|}.
 		\end{equation*}
 	\end{lem}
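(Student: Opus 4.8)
The plan is to read off the eigenpairs directly from the special structure established in \Cref{lem:quasi_matrix_form}. By that lemma, $C^\alpha$ is Hermitian with equal diagonal entries, so it has the form
$$C^\alpha = \begin{pmatrix} C_{11}^\alpha & C_{12}^\alpha \\ \overline{C_{12}^\alpha} & C_{11}^\alpha \end{pmatrix}, \qquad C_{11}^\alpha \in \R.$$
First I would compute the characteristic polynomial. Because the diagonal is constant, $\det(C^\alpha - \lambda I) = (C_{11}^\alpha - \lambda)^2 - C_{12}^\alpha\,\overline{C_{12}^\alpha} = (C_{11}^\alpha - \lambda)^2 - |C_{12}^\alpha|^2$. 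Setting this to zero gives $C_{11}^\alpha - \lambda = \pm |C_{12}^\alpha|$, so the two eigenvalues $\lambda_{1,2}^\alpha = C_{11}^\alpha \mp |C_{12}^\alpha|$ are real and symmetric about $C_{11}^\alpha$, exactly as claimed.

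For the eigenvectors I would substitute each eigenvalue back in, working on the range of $\alpha$ for which $C_{12}^\alpha \neq 0$ so that $\theta_\alpha$ is well defined. For $\lambda_2^\alpha = C_{11}^\alpha + |C_{12}^\alpha|$ the first row of $(C^\alpha - \lambda_2^\alpha I)v = 0$ reads $-|C_{12}^\alpha| v_1 + C_{12}^\alpha v_2 = 0$, whence $v_1 = (C_{12}^\alpha/|C_{12}^\alpha|) v_2 = e^{\iu\theta_\alpha} v_2$; taking $v_2 = 1$ and normalizing by $1/\sqrt{2}$ (since $|e^{\iu\theta_\alpha}|^2 + 1 = 2$) yields the stated eigenvector. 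For $\lambda_1^\alpha$ the sign of the diagonal entry flips, giving $v_1 = -e^{\iu\theta_\alpha} v_2$ and hence $\frac{1}{\sqrt{2}}(-e^{\iu\theta_\alpha},1)^\top$. A one-line remark that the second row is then automatically satisfied — consistency of the two scalar equations being guaranteed once $\lambda$ is an exact root of the characteristic polynomial — completes the verification, and Hermiticity confirms the two eigenvectors are orthogonal.

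This statement carries essentially no obstacle of its own: the real work lives in \Cref{lem:quasi_matrix_form}, where the reflection symmetries \eqref{eq:symmetry} force the diagonal entries to coincide and Hermiticity to hold. The only point needing mild care is the phase bookkeeping — one must record $e^{\iu\theta_\alpha} = C_{12}^\alpha/|C_{12}^\alpha|$ rather than its conjugate, and pair the signs correctly (the $+|C_{12}^\alpha|$ eigenvalue with $+e^{\iu\theta_\alpha}$ and the $-|C_{12}^\alpha|$ eigenvalue with $-e^{\iu\theta_\alpha}$) — which is precisely what the direct substitution above pins down.
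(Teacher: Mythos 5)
Your proposal is correct and matches the paper's approach: the paper simply asserts that the lemma ``follows directly'' from the Hermitian, constant-diagonal structure established in \Cref{lem:quasi_matrix_form}, and your explicit characteristic-polynomial computation and back-substitution is exactly that routine verification, with the phase $e^{\iu\theta_\alpha} = C_{12}^\alpha/|C_{12}^\alpha|$ and sign pairings handled correctly.
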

 	Using these eigenvectors, we define bases $\{u_1^\alpha$, $u_2^\alpha\}$, $\{\chi_1^\alpha$, $\chi_2^\alpha\}$ of $\ker{\left(-\frac{1}{2}I + \left(\K_D^{-\alpha,0}\right)^*\right)}$ \\ and~$\ker{\left(-\frac{1}{2}I + \K_D^{\alpha,0}\right)}$, respectively, as
 	\begin{align*}
 	u_1^\alpha &= \frac{1}{\sqrt{2}}\left(-e^{\iu \theta_\alpha}\psi_1^\alpha + \psi_2^\alpha\right),
	&u_2^\alpha &= \frac{1}{\sqrt{2}}\left(e^{\iu \theta_\alpha}\psi_1^\alpha + \psi_2^\alpha\right), \\
  	\chi_1^\alpha &= \frac{1}{\sqrt{2}}\left(-e^{\iu \theta_\alpha}\chi_{\p D_1} + \chi_{\p D_2}\right),
	&\chi_2^\alpha &= \frac{1}{\sqrt{2}}\left(e^{\iu \theta_\alpha}\chi_{\p D_1} + \chi_{\p D_2}\right).
 	\end{align*}
 	Observe that $\langle \chi_i^\alpha, u_j^\alpha \rangle = -\delta_{i,j} \lambda_i^\alpha$ for $i,j=1,2$. Here, $\langle \cdot, \cdot \rangle$ denotes the $L^2(\p D)$ inner product
 	$$\langle u, v \rangle = \int_{\p D} \overline{u(y)}v(y)\dx \sigma (y).$$

 	In the so-called \emph{dilute} regime, the capacitance coefficients can be computed explicitly. This regime is defined by expressing the two resonators $D_1$ and $D_2$ as rescalings of the two fixed domains $B_1$ and $B_2$:
 	\begin{equation}\label{eq:dilute}
 	D_1=\epsilon B_1 - \frac{l}{2}\mathbf{v}, \quad  D_2=\epsilon B_2 + \frac{l}{2}\mathbf{v},
 	\end{equation}
 	for some small parameter $\epsilon > 0$.

 	We define the capacitance $\textrm{Cap}_{B_i}$ of the fixed domains as
 	$$
 	\textrm{Cap}_{B_i} := -\int_{\p B_i}\psi_{B_i} \dx \sigma,
 	$$
 	where $\psi_{B_i} := (\mathcal{S}_{B_i}^0)^{-1}[\chi_{\p B_i}]$. Due to symmetry, the capacitance is the same for the two domains and therefore will simply be denoted by $\textrm{Cap}_{B}$;
 	$$\textrm{Cap}_{\epsilon B_i} = \textrm{Cap}_{\epsilon B_i} =: \textrm{Cap}_{\epsilon B}.$$
 	Rescaling the domain, we have that
 	\begin{equation*}\label{eq:cap_scale}
 	\textrm{Cap}_{\epsilon B_i} = \epsilon \textrm{Cap}_{B} \quad i=1,2.
 	\end{equation*}
 	Similarly, by rescaling, we find that the capacitance coefficients satisfy
 	\begin{equation}\label{eq:capcoeff_scale}
 	|C_{i,j}^\alpha| \leq \epsilon C \quad i,j=1,2,
 	\end{equation}
 	for some constant $C$ independent of $\alpha \in Y^*$.

 	\begin{lem}\label{lem:cap_estim_quasi}
 		We assume that the resonators are in the dilute regime specified by \eqref{eq:dilute}. Then, for every $\epsilon_0 > 0$ and $p\in \N$ there exists a constant $A_p$ such that we have the following asymptotics of the capacitance matrix $C_{ij}^\alpha$ for $\epsilon < \epsilon_0$:
 		\begin{align*}
 		C_{11}^\alpha &= \epsilon \mathrm{Cap}_B - \frac{(\epsilon \mathrm{Cap}_B)^2}{4\pi}\sum_{m \neq 0}  \frac{e^{\iu m \alpha L}}{  |mL| } + o(\epsilon^2), 
 		\\
 		C_{12}^\alpha &= -\frac{(\epsilon \mathrm{Cap}_B)^2}{4\pi}\sum_{m =-\infty}^\infty \frac{e^{\iu m \alpha L} }{  |mL+l| } + o(\epsilon^2), 
 		\end{align*}
 		uniformly in $\alpha$ for $|\alpha| \geq A_p\epsilon^p$.
 	\end{lem}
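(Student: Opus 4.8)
The plan is to use the dilute scaling \eqref{eq:dilute} to expand the inverse $(\S_D^{\alpha,0})^{-1}$ that appears in $\psi_j^\alpha=(\S_D^{\alpha,0})^{-1}[\chi_{\p D_j}]$ as a Neumann series about its self-interaction part. Writing $\hat\S_D^0=\mathrm{diag}(\S_{D_1}^0,\S_{D_2}^0)$ for the operator retaining only the singular $m=0$ term of $G^{\alpha,0}$ on each resonator with itself, I decompose $\S_D^{\alpha,0}=\hat\S_D^0+\mathcal R^\alpha$, where $\mathcal R^\alpha$ has the smooth kernel collecting all interactions between \emph{distinct} resonators: the $D_1$--$D_2$ coupling and the coupling of each resonator to its lattice translates $D_k+(mL,0,0)$. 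Since $\hat\S_D^0$ is invertible (block diagonal, with invertible Laplace single layer potentials), $(\S_D^{\alpha,0})^{-1}=(\hat\S_D^0)^{-1}-(\hat\S_D^0)^{-1}\mathcal R^\alpha(\hat\S_D^0)^{-1}+\cdots$, and I substitute this into $C_{ij}^\alpha=-\int_{\p D_i}\psi_j^\alpha\dx\sigma$.

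At leading order $(\hat\S_D^0)^{-1}[\chi_{\p D_j}]=\psi_{D_j}$ is supported on $\p D_j$, so the zeroth-order term gives $C_{ij}^{(0)}=-\delta_{ij}\int_{\p D_j}\psi_{D_j}\dx\sigma=\delta_{ij}\,\epsilon\,\mathrm{Cap}_B$, yielding the diagonal leading term and a vanishing off-diagonal one. For the first correction I use self-adjointness of $\S_{D_i}^0$ to rewrite the relevant term as $\int_{\p D_i}\psi_{D_i}\,\mathcal R^\alpha[\psi_{D_j}]\dx\sigma$. The key step is a monopole approximation: each resonator has diameter $O(\epsilon)$ while the distances $|mL|$ ($m\neq0$) and $|mL+l|$ between distinct resonators are bounded below, so the smooth kernel of $\mathcal R^\alpha$ is, to leading order, constant over $\p D_i\times\p D_j$, equal to its value between the centres $p_i,p_j$. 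Pulling out this constant and using $\int_{\p D_k}\psi_{D_k}\dx\sigma=-\epsilon\,\mathrm{Cap}_B$ twice produces exactly
\[
C_{11}^{(1)}=-\frac{(\epsilon\,\mathrm{Cap}_B)^2}{4\pi}\sum_{m\neq0}\frac{e^{\iu m\alpha L}}{|mL|},\qquad
C_{12}^{(1)}=-\frac{(\epsilon\,\mathrm{Cap}_B)^2}{4\pi}\sum_{m=-\infty}^{\infty}\frac{e^{\iu m\alpha L}}{|mL+l|},
\]
the lattice sums being precisely the summation of $G^{\alpha,0}$ over the translates (with the $m=0$ self term omitted in $C_{11}^\alpha$, as it sits in $\hat\S_D^0$).

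It remains to show that two error sources are $o(\epsilon^2)$, uniformly in $\alpha$ down to $|\alpha|\geq A_p\epsilon^p$. The non-monopole (dipole and higher) corrections within $C_{ij}^{(1)}$ replace one factor $\int\psi_{D_k}$ by a moment of size $O(\epsilon^2)$ paired against $\nabla G^{\alpha,0}$ between the centres; the resulting sums $\sum_m e^{\iu m\alpha L}/|mL+l|^2$ converge \emph{absolutely} and are bounded uniformly in $\alpha$, so these contributions are $O(\epsilon^3)$. The delicate part --- which I expect to be the main obstacle --- is the uniform control of the Neumann tail as $\alpha\to0$. The conditionally convergent sum $\sum_{m\neq0}e^{\iu m\alpha L}/|mL|$ grows like $\log(1/|\alpha|)$, and in the monopole picture each application of $(\hat\S_D^0)^{-1}\mathcal R^\alpha$ multiplies by a factor of order $\epsilon\log(1/|\alpha|)$, so the $n$-th term of the series is $O\!\big(\epsilon^{n+1}(\log(1/|\alpha|))^n\big)$. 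The lower bound $|\alpha|\geq A_p\epsilon^p$, together with the freedom to choose $p$, is exactly what turns the logarithmic $\alpha$-singularity into a harmless factor $\log(1/|\alpha|)=O(\log(1/\epsilon))$: then every $n\geq2$ term is $o(\epsilon^2)$ and the tail is summable with remainder $o(\epsilon^2)$. Making these bounds rigorous --- quantifying $\|\mathcal R^\alpha\|$ through the lattice sums and the $H^1\!\to\!L^2$ mapping of $(\hat\S_D^0)^{-1}$, and tracking the $\log(1/|\alpha|)$ factors through the series --- is the technical crux; the crude estimate $|C_{ij}^\alpha|\leq\epsilon C$ of \eqref{eq:capcoeff_scale} seeds the argument.
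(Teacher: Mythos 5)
Your proposal is correct and follows essentially the same route as the paper: the paper gives no detailed argument, instead stating that the lemma ``can be proved by following the steps in \cite{ammari2019topological}'' (a dilute-regime Neumann-series expansion of $(\S_D^{\alpha,0})^{-1}$ with monopole approximation, exactly as you reconstruct) combined with the observation that $\sum_{m \neq 0} e^{\iu m\alpha L}/|m| = -\log\big(2-2\cos(\alpha L)\big)$, so that the constraint $|\alpha| \geq A_p \epsilon^p$ turns each factor $\log(1/|\alpha|)$ into $O(\log(1/\epsilon))$ and renders the higher-order terms $o(\epsilon^2)$. This logarithmic bookkeeping is precisely the ``technical crux'' you identify, so your outline captures both the expansion and the uniformity mechanism that the paper relies on.
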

 \Cref{lem:cap_estim_quasi} is a generalisation of a result from \cite{ammari2019topological} and shows, essentially, that for smaller $\epsilon$, the asymptotic formulas are valid for $\alpha$ closer to 0. \Cref{lem:cap_estim_quasi} can be proved by following the steps in \cite{ammari2019topological} under the additional observation that the sums have a logarithmic behaviour as $\alpha \rightarrow 0$:
 $$
 \sum_{m \neq 0}\frac{e^{\iu m\alpha L}}{|m|} = -\log\big(2-2\cos(\alpha L)\big).
 $$
\subsubsection{Bandgap opening and singularity of $\A^\alpha$} \label{sec:Asing}
The next theorem describes the subwavelength band gap opening and the edge points of the bands.
\begin{thm}\label{thm:bandgap}
	In the dilute regime, we have
	$$\max_{\alpha \in Y^*} \lambda_1^\alpha = \lambda_1^{\pi/L}, \qquad \min_{\alpha \in Y^*} \lambda_2^\alpha = \lambda_2^{\pi/L},$$
	for $\epsilon$ small enough.
\end{thm}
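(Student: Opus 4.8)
The plan is to analyse the two capacitance eigenvalues directly rather than the resonant frequencies. By \Cref{lem:evec} the bands are $\lambda_1^\alpha = C_{11}^\alpha - |C_{12}^\alpha|$ and $\lambda_2^\alpha = C_{11}^\alpha + |C_{12}^\alpha|$, with $C_{11}^\alpha\in\R$ by \Cref{lem:quasi_matrix_form}. Since $C_{11}^{-\alpha}=C_{11}^\alpha$ and $C_{12}^{-\alpha}=\overline{C_{12}^\alpha}$, each $\lambda_j^\alpha$ is even and $\tfrac{2\pi}{L}$-periodic in $\alpha$; combining these two symmetries shows $\lambda_j^\alpha$ is symmetric about $\alpha=\pi/L$, so it suffices to work on $[0,\pi/L]$ and $\alpha=\pi/L$ is automatically a critical point. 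I would then feed in the dilute asymptotics of \Cref{lem:cap_estim_quasi}. The $\alpha$-dependence lives entirely in the $O(\epsilon^2)$ terms, since the leading contribution $\epsilon\,\mathrm{Cap}_B$ to $C_{11}^\alpha$ is constant in $\alpha$. Using the identity $\sum_{m\neq0}e^{\iu m\alpha L}/|m| = -\log(2-2\cos(\alpha L))$ recorded after \Cref{lem:cap_estim_quasi}, one obtains
\begin{equation*}
 C_{11}^\alpha = \epsilon\,\mathrm{Cap}_B + \frac{(\epsilon\,\mathrm{Cap}_B)^2}{4\pi L}\log\!\big(2-2\cos(\alpha L)\big) + o(\epsilon^2),
\end{equation*}
which is strictly increasing on $(0,\pi/L]$ because $2-2\cos(\alpha L)$ is, so $C_{11}^\alpha$ attains its maximum at $\pi/L$. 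It remains to understand $|C_{12}^\alpha| = \tfrac{(\epsilon\,\mathrm{Cap}_B)^2}{4\pi}\,|S(\alpha)| + o(\epsilon^2)$, where $S(\alpha):=\sum_{m\in\Z} e^{\iu m\alpha L}/|mL+l|$.

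The single fact that drives both bands is that $|C_{12}^\alpha|$, equivalently $|S(\alpha)|$, is minimised over $[0,\pi/L]$ at $\alpha=\pi/L$. Granting this, the first band is immediate: $\lambda_1^\alpha = C_{11}^\alpha - \tfrac{(\epsilon\,\mathrm{Cap}_B)^2}{4\pi}|S(\alpha)| + o(\epsilon^2)$ is the difference of a function maximised at $\pi/L$ and one minimised at $\pi/L$, so $\lambda_1^\alpha \le \lambda_1^{\pi/L}$ and the maximum is at $\pi/L$. To prove the claim on $|S(\alpha)|$, I would split off the two nearest-neighbour contributions, coming from the separations $l$ and $L-l$,
\begin{equation*}
 S(\alpha) = \frac{1}{l} + \frac{e^{-\iu\alpha L}}{L-l} + \tau(\alpha),
\end{equation*}
and note that the modulus squared of the first two terms, namely $l^{-2}+(L-l)^{-2}+2\,l^{-1}(L-l)^{-1}\cos(\alpha L)$, is strictly decreasing on $(0,\pi/L)$ and hence minimal at $\pi/L$; the work is then to show that the tail $\tau(\alpha)$ does not destroy this extremum.

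The second band is the genuinely delicate case, since $\lambda_2^\alpha = C_{11}^\alpha + \tfrac{(\epsilon\,\mathrm{Cap}_B)^2}{4\pi}|S(\alpha)|$ now pits the increasing term $C_{11}^\alpha$ against the decreasing term $|C_{12}^\alpha|$, so minimality at $\pi/L$ cannot follow from the location of the two extrema alone. Here the crucial structural observation is that the singularities cancel as $\alpha\to0$: writing
\begin{equation*}
 S(\alpha) = -\frac{1}{L}\log\!\big(2-2\cos(\alpha L)\big) + \rho(\alpha), \qquad \rho(\alpha):=\frac{1}{l}+\sum_{m\neq0}e^{\iu m\alpha L}\Big(\frac{1}{|mL+l|}-\frac{1}{|mL|}\Big),
\end{equation*}
the remainder $\rho$ is an absolutely convergent series (its coefficients are $O(m^{-2})$) and is continuous up to $\alpha=0$, so $C_{11}^\alpha+|C_{12}^\alpha|$ stays bounded as $\alpha\to0$ even though each summand diverges. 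This already shows that discarding the long-range tail — keeping only the two nearest neighbours — would be fatal, as it would send $\lambda_2^\alpha\to-\infty$ at $\alpha=0$. To finish I would reduce the claim $\min_\alpha\lambda_2^\alpha=\lambda_2^{\pi/L}$ to the explicit inequality
\begin{equation*}
 |S(\alpha)| - |S(\pi/L)| \ \ge\ -\frac{2}{L}\log\sin\!\Big(\frac{\alpha L}{2}\Big), \qquad \alpha\in(0,\pi/L),
\end{equation*}
which has equality at $\pi/L$ and exactly compensates the growth of $C_{11}^\alpha$; it can be obtained by showing $\frac{d}{d\alpha}\lambda_2^\alpha<0$ on $(0,\pi/L)$, i.e. that the decay rate of $|S(\alpha)|$ beats the growth rate $\cot(\alpha L/2)$ of the normalised $C_{11}^\alpha$.

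The main obstacle is common to both bands: $|C_{12}^\alpha|$ is the modulus of a genuinely complex, fully long-range lattice sum, whose minimiser and whose precise logarithmic blow-up at $\alpha=0$ cannot be read off term by term. Concretely, after the splitting above the hard estimate is a tight bound on $\rho$ and its derivative on $(0,\pi/L)$, strong enough that the clean two-neighbour behaviour survives the addition of the tail in both the qualitative statement (minimum of $|S|$ at $\pi/L$) and the quantitative one (the displayed inequality). The last technical point is the range $|\alpha|<A_p\epsilon^p$ where \Cref{lem:cap_estim_quasi} no longer applies; since the extremum of interest sits at $\pi/L$, bounded away from $0$, this sliver can be dealt with by a crude bound and continuity once $\epsilon$ is small. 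It is precisely this control of the tail that upgrades the band-gap existence established in \cite{ammari2019topological} to the sharp identification of the band edges at $\alpha=\pi/L$.
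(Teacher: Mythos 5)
Your proposal takes a genuinely different route from the paper (direct analysis of the dilute lattice sums rather than variational characterisations), but as written it has two gaps, and one of them is structural. The first is simply that the two estimates carrying all the weight are never proved: the claim that $|S(\alpha)|$ is minimised at $\alpha=\pi/L$, and the displayed inequality $|S(\alpha)|-|S(\pi/L)|\geq -\tfrac{2}{L}\log\sin(\alpha L/2)$ on $(0,\pi/L)$. You correctly identify these as ``the work'' and ``the hard estimate'', but the proposal stops exactly where the difficulty begins; the paper's Appendix B, which manipulates the very same sums via Lerch transcendents and integral representations for a different purpose, shows that this kind of control is a substantial computation rather than a finishing touch.

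The structural problem is that the tools you invoke cannot yield the theorem as stated. \Cref{lem:cap_estim_quasi} provides only $C^0$ asymptotics, uniform for $|\alpha|\geq A_p\epsilon^p$, with an error $o(\epsilon^2)$ and no derivative control. The $\alpha$-dependent leading term vanishes quadratically at its extremum $\pi/L$, so an arbitrary uniform $o(\epsilon^2)$ perturbation can displace the maximiser/minimiser away from $\pi/L$: such expansions can show the extremum lies \emph{near} $\pi/L$ with value within $o(\epsilon^2)$ of $\lambda_j^{\pi/L}$, but not that it is attained \emph{at} $\pi/L$, which is what \Cref{thm:bandgap} asserts. The same defect already affects your ``immediate'' first-band step, since the exact maximality of $C_{11}^\alpha$ at $\pi/L$ is inferred from the same expansion; and your proposed finish, showing $\tfrac{d}{d\alpha}\lambda_2^\alpha<0$ on $(0,\pi/L)$, requires derivative bounds on the error terms that the lemma does not supply. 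The paper avoids all of this by never using asymptotics for the extremality: it proves the exact bounds $\lambda_1^\alpha\leq C_{11}^\alpha+\Re(C_{12}^\alpha)=\tfrac12\mathrm{Cap}_D^\alpha$ and $\lambda_2^\alpha\geq C_{11}^\alpha-\Re(C_{12}^\alpha)$, locates the extrema of these two quantities exactly at $\pi/L$ by variational characterisations (the minimiser over all quasiperiodicities is, by the symmetry of $D$, odd in $x_1$ and hence $\pi/L$-quasiperiodic, and similarly for $\mathrm{Cap}_D^\alpha$), and uses the dilute regime only for the single fact that $C_{12}^{\pi/L}$ is real and non-positive, which turns both bounds into equalities at $\alpha=\pi/L$. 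To salvage your route you would need exact, or at least $C^1$-uniform, representations of the lattice sums, i.e. substantially stronger input than \Cref{lem:cap_estim_quasi}.
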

\begin{proof}
	Observe first that if $l_0 > 1/2$, we can redefine the unit cell so that $l_0 < 1/2$, without changing the band structure. Therefore, it is enough to consider the case $l_0 \leq 1/2$. We have
	\begin{align*}
	\lambda_1^\alpha &= C_{11}^\alpha - |C_{12}^\alpha| \\
	& \leq C_{11}^\alpha + \Re\left( C_{12}^\alpha\right) \\
	&= \frac{1}{2}\text{Cap}_D^{\alpha},
	\end{align*}
	where $\text{Cap}_D^{\alpha}$ is the capacitance of $D$ defined by
	$$\text{Cap}_D^{\alpha} = \int_{\p D}\left(\S_D^{\alpha,0} \right)^{-1} [\chi_{\p D}] \dx \sigma.$$
	Using the variational characterisation of $\text{Cap}_D^{\alpha}$, in the same way as in \cite{highfrequency}, it is shown that the maximum of $\text{Cap}_D^{\alpha}$ is attained at $\alpha=\pi/L$. Moreover, in the dilute regime, $C_{12}^{\pi/L}$ is a non-positive real number \cite{ammari2019topological}. We therefore have
	$$\lambda_1^{\pi/L} = \frac{1}{2}\text{Cap}_D^{\pi/L},$$
	so the maximum of $\lambda_1^\alpha$ is attained at $\alpha=\pi/L$.

	We now turn to the second eigenvalue $\lambda_2^\alpha$. Similarly, we have
	\begin{align}
	\lambda_2^\alpha &= C_{11}^\alpha + |C_{12}^\alpha| \nonumber \\
	& \geq C_{11}^\alpha - \Re  \left( C_{12}^\alpha\right). \label{eq:lambda2}
	\end{align}
	We can formulate a variational characterisation for $C_{11}^\alpha - \text{Re} \left( C_{12}^\alpha\right)$ in terms of the Dirichlet energy.	Let $\mathcal{C}_\alpha^\infty$ be the set of functions in $\C^\infty(Y)$ that can be extended to $\alpha$-quasiperiodic functions in $\C^\infty(\R^3)$ decaying as $O\left((x_2^2+x_3^2)^{-1/2}\right) \ \mathrm{as} \ \sqrt{x_2^2+x_3^2}\rightarrow \infty$. Let
	$$\Hc = \left\{ v\in H^1_{\text{loc}}(Y) \ \Big| \ v(x_1,x_2,x_3) = O\left(\frac{1}{\sqrt{x_2^2+x_3^2}}\right) \ \mathrm{as} \ \sqrt{x_2^2+x_3^2}\rightarrow \infty \right\}$$
	and let $\Hc_\alpha$ be the closure of $\mathcal{C}_\alpha^\infty$ in $\Hc$. Then define (see, for instance, \cite{nedelec})
	$$\V_\alpha = \left\{v\in \Hc_\alpha \ \Big| \ v=-\frac{1}{\sqrt{2}} \text{ on } \p D_1, v=\frac{1}{\sqrt{2}}\text{ on } \p D_2\right\}.$$
	We then have the variational characterisation
	\begin{equation}\label{eq:var2}
	C_{11}^\alpha - \text{Re} \left( C_{12}^\alpha\right) = \min_{v\in \V_\alpha} \int_{Y\setminus D}|\nabla v|^2 \dx x.
	\end{equation}
	Indeed, the minimiser $v_0$ satisfies $\Delta v_0 = 0$ in $Y\setminus D$ and therefore $v_0 = \frac{1}{\sqrt{2}}\left(-V_1^\alpha + V_2^\alpha\right)$. Equation \eqref{eq:var2} then follows by expanding the integral.

	Define $\V = \cup_{\alpha\in Y^*}\V_\alpha$. From \eqref{eq:var2} we find
	$$\min_{\alpha\in Y^*} \bigg[ C_{11}^\alpha - \text{Re}\left( C_{12}^\alpha\right) \bigg]= \min_{v\in \V} \int_{Y\setminus D}|\nabla v|^2 \dx x.$$
	Because of the symmetry of $D$, the corresponding minimizer $v_1$ is an odd function in $x_1$. In other words, $v_1$	is a $\pi/L$-quasiperiodic function, so
	\begin{equation} \label{eq:min2}
	\min_{\alpha\in Y^*} \bigg[ C_{11}^\alpha - \Re\left( C_{12}^\alpha\right) \bigg] = C_{11}^{\pi/L} - \Re\left( C_{12}^{\pi/L}\right).
	\end{equation}
	At $\alpha=\pi/L$, \eqref{eq:lambda2} is an equality. This, together with \eqref{eq:min2}, proves that the minimum of $\lambda_2^\alpha$ is attained at $\alpha=\pi/L$.
\end{proof}

\begin{corollary} \label{cor:bandgap}
	In the dilute regime and with $\delta$ sufficiently small, there exists a subwavelength band gap between the first two bands if $l_0\neq 1/2$, \ie{}
	\begin{equation*}
		\max_{\alpha \in Y^*} \Re(\omega_1^\alpha) = \omega_1^{\pi/L} <  \omega_2^{\pi/L} = \min_{\alpha \in Y^*} \Re(\omega_2^\alpha),
	\end{equation*}
	for $\epsilon$ and $\delta$ small enough.
\end{corollary}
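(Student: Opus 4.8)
The plan is to transfer the spectral information about the capacitance eigenvalues established in \Cref{thm:bandgap} to the resonant frequencies themselves, using the asymptotic characterisation of \Cref{thm:char_approx_infinite}, and then to verify that the two bands do not touch. First I would invoke \Cref{thm:char_approx_infinite} to write
\begin{equation*}
\Re(\omega_j^\alpha) = \sqrt{\frac{\delta \lambda_j^\alpha}{|D_1|}} + O(\delta), \qquad j=1,2,
\end{equation*}
uniformly in $\alpha \in Y^*$. Here the leading term is real because the $\lambda_j^\alpha$ are the eigenvalues of the Hermitian, positive semi-definite matrix $C^\alpha$ (it is a Gram matrix of Dirichlet energies), so they are real and non-negative and the square root is well defined. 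Since $t\mapsto\sqrt t$ is strictly increasing on $[0,\infty)$, the leading-order band functions $\alpha\mapsto\sqrt{\delta\lambda_j^\alpha/|D_1|}$ inherit the extremal structure of \Cref{thm:bandgap}: the first is maximised and the second minimised at $\alpha=\pi/L$.

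Next I would show that the gap is strictly open at the band edge $\alpha=\pi/L$, i.e. $\lambda_1^{\pi/L}<\lambda_2^{\pi/L}$. By \Cref{lem:evec} the eigenvalues differ by $\lambda_2^{\pi/L}-\lambda_1^{\pi/L}=2|C_{12}^{\pi/L}|$, so it suffices to check $C_{12}^{\pi/L}\neq 0$. As in the proof of \Cref{thm:bandgap} I may assume $l_0\leq 1/2$ after redefining the unit cell. Evaluating the dilute asymptotics of \Cref{lem:cap_estim_quasi} at $\alpha=\pi/L$, where $e^{\iu m\pi}=(-1)^m$ and $|mL+l|=L|m+l_0|$, gives
\begin{equation*}
C_{12}^{\pi/L} = -\frac{(\epsilon\,\mathrm{Cap}_B)^2}{4\pi L}\sum_{m=-\infty}^{\infty}\frac{(-1)^m}{|m+l_0|} + o(\epsilon^2).
\end{equation*}
Pairing the index $m$ with $-1-m$ shows the series vanishes precisely when $l_0=1/2$ and is strictly positive for $l_0\in(0,1/2)$, so for $l_0\neq 1/2$ and $\epsilon$ small the leading $O(\epsilon^2)$ term dominates and $C_{12}^{\pi/L}<0$, whence $\lambda_1^{\pi/L}<\lambda_2^{\pi/L}$ with a separation of order $\epsilon^2$.

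Finally I would combine these facts. The separation of the leading-order bands at $\pi/L$ equals $\sqrt{\delta\lambda_2^{\pi/L}/|D_1|}-\sqrt{\delta\lambda_1^{\pi/L}/|D_1|}$, which is of order $\sqrt\delta$, whereas the approximation error in the first paragraph is only $O(\delta)$; hence for $\delta$ small enough the $O(\delta)$ remainders cannot close the gap and the stated ordering of edge values holds. I expect the main obstacle to be this last reconciliation: because a flat maximum of the leading-order band can in principle be displaced by an order-$\delta$ perturbation, asserting that the extrema of the \emph{exact} functions $\Re(\omega_j^\alpha)$ sit \emph{exactly} at $\pi/L$ needs more than monotonicity of the square root. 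I would address this by using that, in the dilute regime, the explicit capacitance sums of \Cref{lem:cap_estim_quasi} render $\alpha\mapsto\lambda_j^\alpha$ strictly monotone on $[0,\pi/L]$, so the extremum is pinned at the symmetry point $\pi/L$ and is robust under the perturbation; the quantity that actually matters for the subsequent analysis, namely the existence of the $O(\sqrt\delta)$ band gap, then follows immediately since $O(\sqrt\delta)\gg O(\delta)$.
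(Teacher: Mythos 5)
There is a genuine gap, and it sits exactly at the point you flagged as the main obstacle. The corollary does not merely assert that some interval of frequencies is avoided by both bands; it asserts the \emph{exact} equalities $\max_{\alpha\in Y^*}\Re(\omega_1^\alpha)=\omega_1^{\pi/L}$ and $\min_{\alpha\in Y^*}\Re(\omega_2^\alpha)=\omega_2^{\pi/L}$. Your argument lives entirely at the level of the leading-order approximation of \Cref{thm:char_approx_infinite}, and your proposed repair --- strict monotonicity of $\alpha\mapsto\lambda_j^\alpha$ on $[0,\pi/L]$ --- cannot pin the extrema of the \emph{exact} band functions at $\pi/L$. The obstruction is structural: by time-reversal symmetry and periodicity the band functions are even about $\alpha=\pi/L$, so their derivative vanishes there (this is exactly the quadratic expansion in \Cref{prop:band}); monotonicity therefore degenerates at the very point where you need it, and \Cref{thm:char_approx_infinite} gives no control on the $\alpha$-derivative of the $O(\delta)$ remainder. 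An $O(\delta)$ perturbation of a function with a quadratic maximum can move the maximiser off $\pi/L$ and raise the maximum value strictly above the value at $\pi/L$. The most your estimates yield is $\max_{\alpha}\Re(\omega_1^\alpha)\le \omega_1^{\pi/L}+O(\delta)$ and $\min_{\alpha}\Re(\omega_2^\alpha)\ge \omega_2^{\pi/L}-O(\delta)$, which (since the edge separation is of order $\sqrt{\delta}$ for fixed $\epsilon$) proves a weaker ``gap exists'' statement, but not the corollary as stated. There is also a secondary soft spot: the dilute expansions of \Cref{lem:cap_estim_quasi} that you invoke for monotonicity are only valid for $|\alpha|\geq A_p\epsilon^p$, so they cannot by themselves control the whole Brillouin zone.

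The paper closes this gap with a tool your proposal is missing: an exact, non-asymptotic variational argument. For $|\alpha|>K>0$ the quasiperiodic problem is self-adjoint and the frequencies are real, and they admit a min-max characterisation via the Rayleigh quotient $R(v)=\int_Y\rho|\nabla v|^2\dx x\,/\int_Y\rho|v|^2\dx x$ with $\rho=1+(\delta^{-1}-1)\chi_D$. Since the Bloch mode $v_1^{\pi/L}$ is even in $x_1$, it vanishes on $\p Y$ and is hence admissible in $\Hc_\alpha$ for \emph{every} $\alpha$, giving $\omega_1^\alpha\le\omega_1^{\pi/L}$ exactly (not up to $O(\delta)$); a complementary argument with orthogonal complements gives $\omega_2^\alpha\ge\omega_2^{\pi/L}$. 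The region $|\alpha|\le K$, where the frequencies are complex, is then handled by leading-order asymptotics, since there $\lambda_1^\alpha$ is bounded away from $\lambda_1^{\pi/L}$ uniformly in $\delta$. The sound parts of your proposal --- the reduction to $l_0\le 1/2$, the alternating-series computation showing $C_{12}^{\pi/L}\neq 0$ (a nice self-contained substitute for the paper's citation of prior work), and the $O(\sqrt{\delta})$-versus-$O(\delta)$ comparison at the band edge --- can all be kept, but they must be supplemented by an exact argument of the above kind to obtain the stated equalities.
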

\begin{proof}
	Again, it is enough to consider the case $l_0 < 1/2$.
	Given a constant $K>0$, we know that we can choose $\delta$ small enough so that $\omega_1^\alpha$ and $\omega_2^\alpha$ are real-valued for $|\alpha|>K$. Corresponding Bloch modes are exponentially decaying away from the resonators. From \Cref{thm:bandgap} and \Cref{thm:char_approx_infinite}, it follows that $$\max_{\alpha \in Y^*} \Re(\omega_1^\alpha) = \max_{|\alpha|>K} \omega_1^{\alpha}, \quad \min_{\alpha \in Y^*} \Re(\omega_2^\alpha) = \min_{|\alpha|>K} \omega_2^{\alpha}.$$
	
	Let $\C_\alpha^\infty$ be the set of functions in $\C^\infty(Y)$ that can be extended to $\alpha$-quasiperiodic functions in $\C^\infty(\R^3)$ decaying exponentially as $\sqrt{x_2^2+x_3^2}\rightarrow \infty$. Let $\Hc$ be the set of functions in $H^1(Y)$ which decay exponentially as $\sqrt{x_2^2+x_3^2}\rightarrow \infty$
	and let $\Hc_\alpha$ be the closure of $\mathcal{C}_\alpha^\infty$ in $\Hc$. Let $\rho(x) = 1+ (\delta^{-1} - 1)\chi_D$, and let $R(v)$ denote the Rayleigh quotient
	$$R(v) =  \frac{\int_Y \rho |\nabla v|^2 \dx x}{\int_Y \rho |v|^2\dx x }.$$
	The Bloch mode $v_1^{\pi/L}$ at $\alpha=\pi/L$ is an even function and hence $v_1^{\pi/L}|_{\p Y} = 0$. Therefore $v_1^{\pi/L}\in \Hc_\alpha$ for each $\alpha$ with $|\alpha| >K$, and so
	$$\omega_1^\alpha =  \min_{v\in \Hc_\alpha} R(v) \leq  \min_{v\in \Hc_{\pi/L}}  R(v) = \omega_1^{\pi/L}.$$
	Next, take $v\in \operatorname{span}(v_1^\alpha)^\perp$ and write
	$$v = w_1 + w_2, \qquad w_1\in \operatorname{span}(v_1^{\pi/L})^{(\perp,\alpha)}, \ w_2 \in \operatorname{span}(v_1^{\pi/L}),$$
	where $^{(\perp,\alpha)}$ denotes the orthogonal complement with respect to $\Hc_\alpha$. Since $R(v) \geq R(w_1)$ we have
	$$\omega_2^\alpha =  \min_{v\in \operatorname{span}(v_1^\alpha)^\perp} R(v) \geq \min_{v\in \operatorname{span}(v_1^{\pi/L})^{(\perp,\alpha)}}  R(v)\geq   \min_{v\in \operatorname{span}(v_1^{\pi/L})^{(\perp,\pi/L)}}  R(v) = \omega_2^{\pi/L}.$$
	Finally, from \cite{ammari2019topological}, we know that if $l_0\neq 1/2$ then $\lambda_1^{\pi/L}< \lambda_2^{\pi/L}$. Hence, \Cref{thm:char_approx_infinite} gives us that $\omega_1^{\pi/L}< \omega_2^{\pi/L}$, which concludes the proof.
\end{proof}
\begin{rmk}
	If $l_0 = 1/2$, it follows using arguments analogous to those in \cite{honeycomb} that the band gap closes at $\alpha = \pi/L$:
	$$\omega_1^{\pi/L} = \omega_2^{\pi/L}.$$
\end{rmk}

Next, we will explicitly describe the behaviour of $\left(\A^\alpha(\omega,\delta)\right)^{-1}$ as $\omega$ approaches the edge of the first or the second band. The results are similar to Lemmas 4.1 and 4.2 of \cite{linedefect}, but generalized to the case when $D$ consists of two connected domains of general shape. Throughout the remainder of this section, we assume that $|\alpha| > K > 0$ for some $K$.

Using $u_1^\alpha, u_2^\alpha, \chi_1^\alpha$ and $\chi_2^\alpha$ as defined in \Cref{sec:cap_quasi}, we decompose the operator $\frac{1}{2} I + (\K_D^{-\alpha,0})^*$ as
$$\frac{1}{2} I + (\K_D^{-\alpha,0})^* = P_\alpha + Q_\alpha,$$
where
$$P_\alpha = -\frac{\langle \chi_1^\alpha, \cdot \rangle}{\lambda_1^\alpha}u_1^\alpha -\frac{\langle \chi_2^\alpha, \cdot\rangle}{\lambda_2^\alpha}u_2^\alpha, \qquad Q_\alpha = \frac{1}{2} + (\K_D^{-\alpha,0})^* - P_\alpha.$$
Then it follows that $Q_\alpha[u_i^\alpha] = 0$ and $Q_\alpha^*[\chi_i^\alpha] = 0$ for $i = 1,2$. Here, $Q_\alpha^*$ denotes the $L^2(\p D)$-adjoint of $Q_\alpha$. As we will see, the reason for using this decomposition is that $Q_\alpha$ will only contribute to higher-order terms when computing the inverse $\left(\A^\alpha(\omega,\delta)\right)^{-1}$.

We consider the limit as $\delta$ goes to zero. Recall that for $\omega$ inside the corresponding band gap, we have $\omega = O(\sqrt{\delta})$. Using the operators $P_\alpha$ and $Q_\alpha$, along with the expansions in \eqref{eq:Sk0exp} and \eqref{eq:Sexp}, we can decompose the operator $\A^\alpha(\omega,\delta)$ as
$$\A^\alpha(\omega,\delta) =
\begin{pmatrix}
\hat\S_{D}^{\omega} & -\S_{D}^{\alpha,\omega} \\
-\frac{1}{2}I + \hat\K_{D}^{\omega,*} & 0 \\
\end{pmatrix}  - \delta \begin{pmatrix} 0 & 0 \\ 0 & P_\alpha \end{pmatrix} - \delta \begin{pmatrix} 0 & 0 \\ 0 & Q_\alpha \end{pmatrix} + O(\delta^{3/2}),
$$
with respect to the operator norm in $\B\left(  \left(L^2(\p D) \right)^2,L^2(\p D) \times H^2(\p D)\right)$. We define
$$A_0(\omega) =
\begin{pmatrix}
\hat\S_{D}^{\omega} & -\S_{D}^{\alpha,\omega} \\
-\frac{1}{2}I + \hat\K_{D}^{\omega,*} & 0 \\
\end{pmatrix}, \qquad
A_1(\omega,\delta) = I - \delta A_0^{-1} \begin{pmatrix} 0 & 0 \\ 0 & P_\alpha \end{pmatrix}.
$$

We introduce the basis $\{u_1, u_2\}$ of $\ker{\left(-\frac{1}{2}I + \hat{\K}_D^{0,*} \right)} \subset L^2(\p D)$ as
$$u_1 = \frac{1}{\sqrt{2}}\left(-e^{\iu \theta_\alpha} \psi_1 + \psi_2\right), \qquad u_2 = \frac{1}{\sqrt{2}}\left(e^{\iu \theta_\alpha} \psi_1 + \psi_2\right),$$
where $\psi_j$ are defined by
$$\psi_j = (\S_{D_j}^{0})^{-1}[\chi_{\p D_j}].$$
We then have the following result.
\begin{lem}
	\begin{itemize}
		\item[(i)] For $\omega \neq 0$, $A_0: \left(L^2(\p D)\right)^2 \rightarrow \left(L^2(\p D)\right)^2$ is invertible and, as $\omega \rightarrow 0$,
		$$A_0^{-1} = \begin{pmatrix}
		0 &  \ds -\frac{\langle  \chi_{\p D_1}, \cdot\rangle \psi_1 + \langle  \chi_{\p D_2}, \cdot\rangle \psi_2}{\omega^2|D_1|}  +O\left(\frac{1}{\omega}\right) \\
		\nm
		-\left(\mathcal{S}_D^{\alpha,0}\right)^{-1} + O(\omega^2) & \ds -\frac{\langle \chi_{\p D_1}, \cdot \rangle \psi_1^\alpha + \langle \chi_{\p D_2}, \cdot \rangle \psi_2^\alpha}{\omega^2|D_1|}  +O\left(\frac{1}{\omega}\right)
		\end{pmatrix},$$
		with respect to the operator norm in $\B\left(L^2(\p D)\right)$, where $|D_1|$ denotes the volume of $D_1$.
		\item[(ii)]  For $\omega \neq \sqrt{\frac{\delta\lambda_j^\alpha}{|D_1|}}$, $A_1: L^2(\p D) \rightarrow L^2(\p D)$ is invertible. As $\delta \rightarrow 0$ and $\omega = C\sqrt\delta$ for $C\neq \sqrt{\frac{\lambda_j^\alpha}{|D_1|}}$ we have
		$$A_1^{-1} = \begin{pmatrix}
		I & -P\left(P_\alpha^\perp\right)^{-1}\\
		0 & \left(P_\alpha^\perp\right)^{-1}
		\end{pmatrix}+O(\omega),$$
		with respect to the operator norm in $\B\left(\left(L^2(\p D)\right)^2\right)$, where $$ \ds P = \frac{\delta}{\omega^2|D_1|}\big( \langle \chi_1^\alpha, \cdot \rangle u_1  + \langle \chi_2^\alpha, \cdot\rangle u_2 \big),\qquad P_\alpha^\perp =I+\frac{\delta}{\omega^2|D_1|}\big( \langle \chi_1^\alpha, \cdot\rangle u_1^\alpha  + \langle \chi_2^\alpha, \cdot \rangle u_2^\alpha \big).$$
	\end{itemize}
\end{lem}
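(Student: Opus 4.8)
The plan is to exploit the $2\times2$ block structure of $A_0$ and $A_1$ and to reduce both statements to the low-frequency behaviour of the block-diagonal operator $-\tfrac12 I+\hat\K_D^{\omega,*}$. For part (i), write $B=-\S_D^{\alpha,\omega}$ and $C=-\tfrac12 I+\hat\K_D^{\omega,*}$, so that the lower-right block of $A_0$ vanishes. For $\omega\neq0$ and $|\alpha|>K$ both $B$ and $C$ are invertible, and solving $A_0\left(\begin{smallmatrix}x\\y\end{smallmatrix}\right)=\left(\begin{smallmatrix}f\\g\end{smallmatrix}\right)$ row by row gives the block inverse
\begin{equation*}
A_0^{-1}=\begin{pmatrix} 0 & C^{-1}\\ B^{-1} & -B^{-1}\hat\S_D^\omega C^{-1}\end{pmatrix}.
\end{equation*}
The lower-left entry is immediate from \eqref{eq:Sexp}: since $\S_D^{\alpha,0}$ is invertible for $\alpha\neq0$, one has $B^{-1}=-(\S_D^{\alpha,0})^{-1}+O(\omega^2)$.

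The asymptotics of $C^{-1}$ are the crux and, I expect, the main obstacle. As $C$ is block-diagonal it suffices to treat each $C_j=-\tfrac12 I+\K_{D_j}^{\omega,*}$. At $\omega=0$ this operator has one-dimensional kernel $\mathrm{span}\{\psi_j\}$ while its adjoint annihilates the constant $\chi_{\p D_j}$, so $C_j$ is singular and $C_j^{-1}$ develops a pole as $\omega\to0$. Refining \eqref{eq:Sk0exp} to $\K_{D_j}^{\omega,*}=\K_{D_j}^{0,*}+\omega^2\K_{D_j,2}^*+O(\omega^3)$ and using the divergence theorem together with $\Delta\S_{D_j}^\omega[\psi_j]=-\omega^2\S_{D_j}^\omega[\psi_j]$ and $\S_{D_j}^0[\psi_j]\equiv1$ in $D_j$,
\begin{equation*}
\int_{\p D_j}\Big(-\tfrac12 I+\K_{D_j}^{\omega,*}\Big)[\psi_j]\dx\sigma=\int_{D_j}\Delta\,\S_{D_j}^\omega[\psi_j]\dx x=-\omega^2|D_j|+O(\omega^3),
\end{equation*}
which pins down the normalisation $\langle\chi_{\p D_j},\K_{D_j,2}^*\psi_j\rangle=-|D_j|$. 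A Lyapunov--Schmidt projection onto $\mathrm{span}\{\psi_j\}$ then yields $C_j^{-1}[\phi]=-\langle\chi_{\p D_j},\phi\rangle\,\psi_j/(\omega^2|D_j|)+O(1/\omega)$. Assembling the two blocks and using $|D_1|=|D_2|$ gives the upper-right entry of $A_0^{-1}$. For the lower-right entry I would substitute this into $-B^{-1}\hat\S_D^\omega C^{-1}=(\S_D^{\alpha,\omega})^{-1}\hat\S_D^\omega C^{-1}$; since $\hat\S_D^0[\psi_j]=\chi_{\p D_j}$ and $(\S_D^{\alpha,0})^{-1}[\chi_{\p D_j}]=\psi_j^\alpha$, each $\psi_j$ is converted into $\psi_j^\alpha$, producing exactly the claimed $(2,2)$ block.

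For part (ii), multiplying out $A_0^{-1}\left(\begin{smallmatrix}0&0\\0&P_\alpha\end{smallmatrix}\right)$ shows that $A_1$ is block upper-triangular,
\begin{equation*}
A_1=\begin{pmatrix} I & -\delta C^{-1}P_\alpha\\ 0 & I+\delta B^{-1}\hat\S_D^\omega C^{-1}P_\alpha\end{pmatrix},
\end{equation*}
so it is invertible exactly when its lower-right block $N$ is, and then $A_1^{-1}=\left(\begin{smallmatrix}I&-MN^{-1}\\0&N^{-1}\end{smallmatrix}\right)$ with $M=-\delta C^{-1}P_\alpha$. The key identity is $C^{-1}[u_j^\alpha]=\lambda_j^\alpha u_j/(\omega^2|D_1|)+O(1/\omega)$, which I would obtain from the eigenvector formulas of \Cref{lem:evec} and $\langle\chi_{\p D_i},\psi_k^\alpha\rangle=-C_{ik}^\alpha$ after expanding $u_j^\alpha$ in the $\psi_k$. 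Expanding $P_\alpha$ in the basis $\{u_1^\alpha,u_2^\alpha\}$ then gives $M=-\delta C^{-1}P_\alpha=P+O(\omega)$, and, using $(\S_D^{\alpha,0})^{-1}\hat\S_D^0[u_j]=u_j^\alpha$ (because $\hat\S_D^0[u_j]=\chi_j^\alpha$ and $(\S_D^{\alpha,0})^{-1}[\chi_j^\alpha]=u_j^\alpha$), it gives $N=P_\alpha^\perp+O(\omega)$. Finally, the relation $\langle\chi_i^\alpha,u_j^\alpha\rangle=-\delta_{ij}\lambda_i^\alpha$ shows that $P_\alpha^\perp$ acts on $\mathrm{span}\{u_1^\alpha,u_2^\alpha\}$ with eigenvalues $1-\delta\lambda_j^\alpha/(\omega^2|D_1|)$ and as the identity on the complement; these are nonzero precisely when $\omega\neq\sqrt{\delta\lambda_j^\alpha/|D_1|}$, which gives the invertibility criterion and, for $\omega=C\sqrt\delta$ with $C\neq\sqrt{\lambda_j^\alpha/|D_1|}$, the boundedness of $(P_\alpha^\perp)^{-1}$. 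The delicate point throughout is the bookkeeping of remainders: because $\omega=O(\sqrt\delta)$, the $O(1/\omega)$ tail of $C^{-1}$ is multiplied by $\delta$ and thereby reduced to $O(\sqrt\delta)=O(\omega)$, matching the error claimed for $A_1^{-1}$, and confirming that no part of the $1/\omega^2$ singularity survives into a lower-order term is the step requiring the most care.
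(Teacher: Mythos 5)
Your proposal is correct and follows essentially the same route as the paper: block inversion of $A_0$, the low-frequency expansions of $(\S_D^{\alpha,\omega})^{-1}$ and the pole of $\left(-\tfrac12 I+\hat\K_D^{\omega,*}\right)^{-1}$, conversion of $\psi_j$ into $\psi_j^\alpha$ via $\hat\S_D^\omega[\psi_j]=\chi_{\p D_j}+O(\omega)$, and reduction of the invertibility of the block-triangular $A_1$ to that of $P_\alpha^\perp$ through its eigenvalues $1-\delta\lambda_j^\alpha/(\omega^2|D_1|)$. The only difference is cosmetic: you derive the singular expansion of $\left(-\tfrac12 I+\K_{D_j}^{\omega,*}\right)^{-1}$ yourself (Lyapunov--Schmidt plus the divergence-theorem normalization $\langle\chi_{\p D_j},\K_{D_j,2}^{*}\psi_j\rangle=-|D_j|$), whereas the paper quotes this fact from the literature.
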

\noindent \textit{Proof of (i).}
Using block matrix inversion, we find that
\begin{equation}\label{eq:Alem1}A_0^{-1} = \begin{pmatrix}
0 &  \ds \left(-\frac{1}{2}I+ \hat{\K}_{D}^{\omega, *}\right)^{-1} \\
\nm
-\left(\S_D^{\alpha,\omega}\right)^{-1} &  \ds \left(\S_D^{\alpha,\omega}\right)^{-1} \hat{\S}_D^{\omega} \left(-\frac{1}{2}I+ \hat{\K}_D^{\omega, *}\right)^{-1}
\end{pmatrix},\end{equation}
which is well-defined since $-\frac{1}{2}I+ \mathcal{K}_{D_i}^{\omega, *}: L^2(\p D) \rightarrow L^2(\p D)$ is invertible for $\omega\neq0$ for both $i=1,2,$ see, for instance, \cite{MaCMiPaP}. Here, $\hat{\S}_D^{\omega}$ and $\hat{\K}_D^{\omega, *}$ are defined in (\ref{add1}).

From the low-frequency expansion \eqref{eq:Sexp} of $\mathcal{S}_D^{\alpha,\omega}$  we have
\begin{align} \label{eq:Alem2}
\left(\mathcal{S}_D^{\alpha,\omega}\right)^{-1} &=\left(\mathcal{S}_D^{\alpha,0}\right)^{-1} +O(\omega^2)
\end{align}
in the operator norm. The operator $\left(-\frac{1}{2}I+ \mathcal{K}_{D_i}^{\omega,*}\right)^{-1}$ is known to be singular at $\omega= 0$, see \cite{MaCMiPaP}. Explicitly, we have
$$\left(-\frac{1}{2}I+ \mathcal{K}_{D_i}^{\omega,*}\right)^{-1} = -\frac{\langle \chi_{\p D_i}, \cdot  \rangle}{\omega^2 |D_i|}\psi_i + R_i(\omega),$$
where $R_i(\omega) = O(1)$ as $\omega \rightarrow 0$. Since $|D_1| = |D_2|$, we have
\begin{equation} \label{eq:Alem3}
\left(-\frac{1}{2}I+ \hat{\K}_D^{\omega, *}\right)^{-1} = -\frac{\langle   \chi_{\p D_1}, \cdot \rangle \psi_1 + \langle \chi_{\p D_2}, \cdot \rangle \psi_2}{\omega^2|D_1|}  +O\left(1\right),
\end{equation}
with respect to the operator norm in $\B(L^2(\p D))$,
where we, as before, identify $L^2(\p D) = L^2(\p D_1) \times L^2(\p D_2)$. Moreover, we know that $\S_{D_i}^{\omega}[\psi_i] = \chi_{\p D_i} + O(\omega)$ and so
\begin{equation}\label{eq:Alem4}
\left(\mathcal{S}_D^{\alpha,\omega}\right)^{-1} \hat{\S}_D^{\omega} \left(-\frac{1}{2}I+ \hat{\K}_D^{\omega, *}\right)^{-1} =  -\frac{\langle  \chi_{\p D_1}, \cdot \rangle \psi_1^\alpha + \langle  \chi_{\p D_2}, \cdot \rangle \psi_2^\alpha}{\omega^2|D_1|}  +O\left(\frac{1}{\omega}\right).
\end{equation}
Combining equations \eqref{eq:Alem1}, \eqref{eq:Alem2}, \eqref{eq:Alem3} and \eqref{eq:Alem4} proves \textit{(i)}.
\qed
\bigskip

\noindent \textit{Proof of (ii).}
From \eqref{eq:Alem3} we have
$$
\left(-\frac{1}{2}I+ \hat{\K}_D^{\omega, *}\right)^{-1}P_\alpha  = -\frac{\langle \chi_1^\alpha, \cdot \rangle u_1  + \langle  \chi_2^\alpha, \cdot \rangle u_2 }{\omega^2|D_1|} +O\left(\frac{1}{\omega}\right).
$$
Similarly, we have
$$
\left(\mathcal{S}_D^{\alpha,\omega}\right)^{-1} \hat{\S}_D^{\omega} \left(-\frac{1}{2}I+ \hat{\K}_D^{\omega, *}\right)^{-1}P_\alpha = -\frac{\langle  \chi_1^\alpha, \cdot \rangle u_1^\alpha  + \langle  \chi_2^\alpha , \cdot \rangle u_2^\alpha }{\omega^2|D_1|} +O\left(\frac{1}{\omega}\right).
$$
We then find that
$$A_1 = \begin{pmatrix}
I &  \ds \frac{\delta}{\omega^2|D_1|}\big( \langle \chi_1^\alpha, \cdot \rangle u_1  + \langle  \chi_2^\alpha , \cdot \rangle u_2 \big)\\
\nm
0 & \ds I+\frac{\delta}{\omega^2|D_1|}\big( \langle \chi_1^\alpha, \cdot \rangle u_1^\alpha  + \langle \chi_2^\alpha , \cdot \rangle u_2^\alpha \big)
\end{pmatrix}  +O(\omega) .$$
Define
$$P = \frac{\delta}{\omega^2|D_1|}\big( \langle  \chi_1^\alpha, \cdot \rangle u_1  + \langle \chi_2^\alpha , \cdot \rangle u_2 \big),$$
and
$$P_\alpha^\perp =I+\frac{\delta}{\omega^2|D_1|}\big( \langle  \chi_1^\alpha, \cdot \rangle u_1^\alpha  + \langle \chi_2^\alpha, \cdot  \rangle u_2^\alpha \big).$$
The leading order of $A_1$ is invertible precisely when $P_\alpha^\perp$ is invertible. This occurs precisely when $P_\alpha^\perp u_i^\alpha \neq 0$ for $i=1,2$, \ie{} when
$$\omega\neq \sqrt{\frac{\delta \lambda_i^\alpha}{|D_1|}} \quad \mbox{for } i=1,2.$$
Moreover, we have
$$A_1^{-1} = \begin{pmatrix}
I & -P\left(P_\alpha^\perp\right)^{-1}\\
0 & \left(P_\alpha^\perp\right)^{-1}
\end{pmatrix}+O(\omega).$$
This shows \textit{(ii)}.
\qed

The following result can be proved by using the same arguments as those in \cite{linedefect}.
\begin{lem} \label{lem:invAa}
	As $\delta \rightarrow 0$ and $\omega = C\sqrt\delta$ for $C\neq \sqrt{\frac{\lambda_j^\alpha}{|D_1|}}$, we have
	\begin{align*}
	(\A^\alpha(\omega, \delta))^{-1} &= A_1^{-1}A_0^{-1}\big(I + O(\delta)\big),
	\end{align*}
	where the error term is with respect to the operator norm in $\B\left(\left(L^2(\p D)\right)^2\right)$.
\end{lem}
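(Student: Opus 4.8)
The plan is to construct $(\A^\alpha(\omega,\delta))^{-1}$ by perturbing off the pieces $A_0$ and $A_1$ that were inverted in the preceding lemma. Since $A_0 A_1 = A_0 - \delta\left(\begin{smallmatrix}0&0\\0&P_\alpha\end{smallmatrix}\right)$ by the definition of $A_1$, the decomposition of $\A^\alpha(\omega,\delta)$ established earlier can be rewritten as
\[
\A^\alpha(\omega,\delta) = A_0 A_1 - \delta\begin{pmatrix}0&0\\0&Q_\alpha\end{pmatrix} + O(\delta^{3/2}),
\]
where the remainder occupies only the lower-right block and, by \eqref{eq:Sexp}, is in fact of the finer form $\delta\cdot O(\omega^2) = O(\delta^2)$ (it is the contribution of $(\K_D^{-\alpha,\omega})^* - (\K_D^{-\alpha,0})^*$). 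Factoring $A_0 A_1$ out on the right gives $\A^\alpha = (I-R)A_0A_1$ with
\[
R = \left(\delta\begin{pmatrix}0&0\\0&Q_\alpha\end{pmatrix} - O(\delta^2)\right)A_1^{-1}A_0^{-1},
\]
so that a Neumann series yields $(\A^\alpha)^{-1} = A_1^{-1}A_0^{-1}(I-R)^{-1}$. The lemma therefore reduces to showing $R = O(\delta)$, since then $(I-R)^{-1} = I + O(\delta)$ and the claimed factorisation follows.

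Proving $R = O(\delta)$ is the crux, and a crude norm bound does not suffice: $A_0^{-1}$ carries blocks of size $O(\omega^{-2}) = O(\delta^{-1})$, so $A_1^{-1}A_0^{-1}$ is only $O(\delta^{-1})$ and a naive estimate of $R$ gives $O(1)$. The improvement is structural. First, because $A_0^{-1}$ composed with the lower-right $P_\alpha$ block has vanishing first column, $A_1$ is exactly block upper-triangular and hence so is $A_1^{-1}$; in particular $(A_1^{-1})_{21}=0$, so the lower block of $A_1^{-1}A_0^{-1}$ is simply $(A_1^{-1})_{22}(A_0^{-1})_{2j}$. Second, combining \eqref{eq:Alem3} with the identities $\hat\S_D^0\psi_i = \chi_{\p D_i}$ and $(\S_D^{\alpha,0})^{-1}\chi_{\p D_i} = \psi_i^\alpha$, and with the fact that the first-order (in $\omega$) term of $\hat\S_D^\omega$ sends each $\psi_i$ to a constant multiple of $\chi_{\p D_i}$, one checks that every negative power of $\omega$ in $(A_0^{-1})_{22}$ — both the $O(\omega^{-2})$ and the $O(\omega^{-1})$ contributions — has range inside $\mathrm{span}(\psi_1^\alpha,\psi_2^\alpha) = \mathrm{span}(u_1^\alpha,u_2^\alpha)$. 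Since $(A_1^{-1})_{22} = (P_\alpha^\perp)^{-1}+O(\omega)$ maps this span into itself modulo an $O(\omega^2)$ remainder, and $Q_\alpha$ annihilates $\mathrm{span}(u_1^\alpha,u_2^\alpha)$ because $Q_\alpha u_i^\alpha = 0$, the operator $Q_\alpha(A_1^{-1}A_0^{-1})_{2j}$ is only $O(1)$; hence the $\delta Q_\alpha$ contribution to $R$ is $O(\delta)$. The remaining contribution is handled by order-matching: being $\delta\cdot O(\omega^2)$ in the lower-right block, its composition with the $O(\omega^{-2})$ singularity of $A_1^{-1}A_0^{-1}$ is again $O(\delta)$.

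I expect the main obstacle to be the second structural point — verifying that $Q_\alpha$ removes not merely the leading $O(\omega^{-2})$ term but every negative power of $\omega$ in the lower block. The $O(\omega^{-1})$ term is the delicate one: it survives only because the $O(\omega)$ coefficient of $\hat\S_D^\omega$ returns a multiple of $\chi_{\p D_i}$, which keeps its range inside $\mathrm{span}(u_1^\alpha,u_2^\alpha)$; were this to fail, $R$ would only be $O(\sqrt\delta)$ and the stated factorisation would collapse to a weaker one. This range bookkeeping is precisely where the scalar argument of \cite{linedefect} has to be reorganised for the present block operator, after which the Neumann series and the estimate $(I-R)^{-1}=I+O(\delta)$ are routine.
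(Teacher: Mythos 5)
Your proposal is correct, and it is essentially the argument the paper intends: the paper's own ``proof'' of \Cref{lem:invAa} is nothing more than a pointer to the arguments of \cite{linedefect}, so what you have done is reconstruct that scalar argument and carry out the block-operator bookkeeping it requires. The three structural facts you isolate are exactly the ones that make the Neumann series close: $A_1$ is genuinely block upper-triangular (so $(A_1^{-1})_{21}=0$ and the lower row of $A_1^{-1}A_0^{-1}$ is $(A_1^{-1})_{22}(A_0^{-1})_{2j}$); every negative power of $\omega$ in $(A_0^{-1})_{22}$ has range in $\mathrm{span}(\psi_1^\alpha,\psi_2^\alpha)=\mathrm{span}(u_1^\alpha,u_2^\alpha)$, the $O(\omega^{-1})$ term arising precisely because the first-order term of $\hat\S_D^\omega$ is rank-one onto constants, so it sends $\psi_i$ to a multiple of $\chi_{\p D_i}$ and then $(\S_D^{\alpha,0})^{-1}[\chi_{\p D_i}]=\psi_i^\alpha$; and $Q_\alpha$ annihilates that span. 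Two points deserve emphasis in a written-up version. First, your sharpening of the decomposition remainder from the paper's stated $O(\delta^{3/2})$ to $\delta\cdot O(\omega^2)=O(\delta^2)$ (legitimate by \eqref{eq:Sexp}, since the only approximation made in the decomposition sits in the lower-right block) is not cosmetic but essential: with only $O(\delta^{3/2})$ your $R$ would be $O(\delta^{1/2})$ and the conclusion would degrade to $I+O(\sqrt{\delta})$, so this step belongs in the proof proper. Second, the claim that $(A_1^{-1})_{22}$ preserves $\mathrm{span}(u_1^\alpha,u_2^\alpha)$ modulo $O(\omega^2)$ does not follow from the coarse expansion $(A_1^{-1})_{22}=(P_\alpha^\perp)^{-1}+O(\omega)$ alone (an arbitrary $O(\omega)$ error would leave you with $O(\omega^{-1})$ after composing with the singular part); the correct justification is to invert $I-\delta(A_0^{-1})_{22}P_\alpha$ directly, noting that by your second structural fact this perturbation is an $O(1)$ operator with range in the span up to an $O(\delta)$ error, so its Neumann inverse preserves the span up to $O(\delta)=O(\omega^2)$. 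Since you identify this as the delicate point and supply the right mechanism, the argument is sound; it just needs this one implication spelled out in the stated order.
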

Based on this lemma, we can explicitly compute $\left(\A^\alpha(\omega, \delta)\right)^{-1}$. We have as $\delta \to 0$ with $\omega = C\delta^{1/2}$,
$$\begin{array}{l}
\left(\A^\alpha(\omega, \delta)\right)^{-1} = A_1^{-1}A_0^{-1}\big(I + O(\delta)\big) \\
\nm
= \begin{pmatrix}
I & -P\left(P_\alpha^\perp\right)^{-1}\\
0 & \left(P_\alpha^\perp\right)^{-1}
\end{pmatrix}
\begin{pmatrix}
0 &  \ds -\frac{\langle  \chi_{\p D_1}, \cdot  \rangle \psi_1 + \langle \chi_{\p D_2}, \cdot  \rangle \psi_2}{\omega^2|D_1|}  +O\left(\frac{1}{\omega}\right) \\
\nm
-\left(\mathcal{S}_D^{\alpha,0}\right)^{-1} + O(\omega^2) & \ds -\frac{\langle  \chi_{\p D_1}, \cdot  \rangle \psi_1^\alpha + \langle \chi_{\p D_2}, \cdot \rangle \psi_2^\alpha}{\omega^2|D_1|}  +O\left(\frac{1}{\omega}\right)
\end{pmatrix} \\
\nm
=
\begin{pmatrix}
P\left(P_\alpha^\perp\right)^{-1}\left(\mathcal{S}_D^{\alpha,0}\right)^{-1} + O(\omega) & \ds -\frac{\langle \chi_{\p D_1}, \cdot \rangle \psi_1 + \langle \chi_{\p D_2}, \cdot  \rangle \psi_2}{\omega^2|D_1|}   + P\left(P_\alpha^\perp\right)^{-1}\frac{\langle \chi_{\p D_1}, \cdot \rangle \psi_1^\alpha + \langle  \chi_{\p D_2}, \cdot \rangle \psi_2^\alpha}{\omega^2|D_1|} +O\left(\frac{1}{\omega}\right) \\
\nm
-\left(P_\alpha^\perp\right)^{-1}\left(\mathcal{S}_D^{\alpha,0}\right)^{-1} + O(\omega) & \ds -\left(P_\alpha^\perp\right)^{-1}\frac{\langle  \chi_{\p D_1}, \cdot \rangle \psi_1^\alpha + \langle  \chi_{\p D_2}, \cdot \rangle \psi_2^\alpha}{\omega^2|D_1|}  +O\left(\frac{1}{\omega}\right)
\end{pmatrix},
\end{array}$$
where the error terms are now with respect to the operator norm in $\B(L^2(\p D))$. We will simplify the elements in the right column in the above expression, which is the part of $\left(\A^{\alpha}\right)^{-1}$ that is relevant for the rest of the work. Define
$$\left(\A^\alpha(\omega, \delta)\right)^{-1} = \begin{pmatrix} A_{11} & A_{12} \\ A_{21} & A_{22} \end{pmatrix}.$$
We can compute
\begin{align*}
\left(P_\alpha^\perp\right)^{-1} \psi_1^\alpha &= -\frac{e^{-\iu \theta_\alpha}}{\sqrt{2}}\left( \frac{\omega^2}{\omega^2-\left(\omega_1^\alpha\right)^2}\right) u_1^\alpha + \frac{e^{-\iu \theta_\alpha}}{\sqrt{2}}\left( \frac{\omega^2}{\omega^2-\left(\omega_2^\alpha\right)^2}\right) u_2^\alpha + O(\omega), \\
\left(P_\alpha^\perp\right)^{-1} \psi_2^\alpha &= \frac{1}{\sqrt{2}}\left( \frac{\omega^2}{\omega^2-\left(\omega_1^\alpha\right)^2}\right) u_1^\alpha + \frac{1}{\sqrt{2}}\left( \frac{\omega^2}{\omega^2-\left(\omega_2^\alpha\right)^2}\right) u_2^\alpha + O(\omega),
\end{align*}
with respect to the $L^2(\p D)$-norm. Then we obtain
\begin{align*}
P u_1^\alpha = -\frac{\left(\omega_1^\alpha\right)^2}{\omega^2} u_1 + O(\omega), \qquad
P u_2^\alpha = -\frac{\left(\omega_2^\alpha\right)^2}{\omega^2} u_2 + O(\omega).
\end{align*}
Consequently, we have


\begin{align} \label{eq:A12}
A_{12} & \ds = -\frac{\langle \chi_{\p D_1}, \cdot \rangle \psi_1 + \langle  \chi_{\p D_2}, \cdot \rangle \psi_2}{\omega^2|D_1|}   + P\left(P_\alpha^\perp\right)^{-1}\frac{\langle \cdot,  \chi_{\p D_1} \rangle \psi_1^\alpha + \langle \chi_{\p D_2}, \cdot \rangle \psi_2^\alpha}{\omega^2|D_1|} +O\left(\frac{1}{\omega}\right) \nonumber \\
\nm
& \ds = -\frac{\langle \chi_{\p D_1}, \cdot \rangle \psi_1 + \langle  \chi_{\p D_2}, \cdot \rangle \psi_2}{\omega^2|D_1|}
- \frac{\langle \chi_1^\alpha, \cdot \rangle u_1}{\omega^2|D_1|} \left(\frac{\left(\omega_1^\alpha\right)^2}{\omega^2-\left(\omega_1^\alpha\right)^2}\right)- \frac{\langle \chi_2^\alpha, \cdot \rangle u_2}{\omega^2|D_1|}\left(\frac{\left(\omega_2^\alpha\right)^2}{\omega^2-\left(\omega_2^\alpha\right)^2}\right) \nonumber \\
\nm & \ds +O\left(\frac{1}{\omega}\right),
\end{align}
and
\begin{align}\label{eq:A22}
A_{22} & \ds = -\left(P_\alpha^\perp\right)^{-1}\frac{\langle  \chi_{\p D_1}, \cdot \rangle \psi_1^\alpha + \langle  \chi_{\p D_2}, \cdot \rangle \psi_2^\alpha}{\omega^2|D_1|} +O\left(\frac{1}{\omega}\right) \nonumber \\
\nm
& \ds =- \frac{\langle \chi_1^\alpha, \cdot \rangle u_1^\alpha}{\omega^2|D_1|} \left(\frac{\omega^2}{\omega^2-\left(\omega_1^\alpha\right)^2}\right) - \frac{\langle \chi_2^\alpha, \cdot \rangle u_2^\alpha}{\omega^2|D_1|}\left(\frac{\omega^2}{\omega^2-\left(\omega_2^\alpha\right)^2}\right)  +O\left(\frac{1}{\omega}\right),
\end{align}
with respect to the norm in $\B(L^2(\p D))$. The singularity of $\A^\alpha$ as $\omega \rightarrow \omega_1^\alpha$ or $\omega \rightarrow \omega_2^\alpha$ is, to leading order, described by the operator $P_\alpha^\perp$. Defining
\begin{equation*}\label{eq:psiphi}
\Psi^\alpha_j = \begin{pmatrix} u_j \\ u^\alpha_j \end{pmatrix} \qquad   \Phi^\alpha_j = \begin{pmatrix} -\delta u_j^\alpha \\ \chi_j^\alpha \end{pmatrix},
\end{equation*}
the above computations imply the following result.
\begin{prop}\label{prop:Asing}
	As $\omega\rightarrow \omega_j^\alpha, j=1,2$, we have
	$$\big(\A^\alpha(\omega,\delta)\big)^{-1} = -\frac{1}{2\omega_j^\alpha|D_1|} \frac{\langle\Phi_j^\alpha,\cdot \rangle \Psi_j^\alpha}{\omega-\omega_j^\alpha} + \mathcal{R}_j^\alpha(\omega),$$
	where $\mathcal{R}_j^\alpha(\omega)$ is holomorphic for $\omega$ in a neighbourhood of $\omega_j^\alpha$.
\end{prop}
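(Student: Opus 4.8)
The plan is to read off the residue of $\big(\A^\alpha(\omega,\delta)\big)^{-1}$ at $\omega=\omega_j^\alpha$ directly from the block representation obtained via \Cref{lem:invAa}, keeping $\delta$ (and hence $\omega_j^\alpha\neq0$) fixed. The key observation is that, among all the operators appearing in the product $A_1^{-1}A_0^{-1}$, the only one that is singular at the nonzero point $\omega_j^\alpha$ is $\big(P_\alpha^\perp\big)^{-1}$, whose singular behaviour is encoded in $\big(P_\alpha^\perp\big)^{-1}u_i^\alpha=\tfrac{\omega^2}{\omega^2-(\omega_i^\alpha)^2}u_i^\alpha$. Every other constituent — the operator $(\S_D^{\alpha,0})^{-1}$, the entries of $A_0^{-1}$ carrying a $1/\omega^2$ singularity at $\omega=0$, the prefactor $P\sim\delta/\omega^2$, and the various $O(1/\omega)$ and $O(\omega)$ remainders — is holomorphic in a punctured neighbourhood of $\omega_j^\alpha$ and regular there, since $\omega_j^\alpha\neq0$ and, by \Cref{cor:bandgap}, $\omega_j^\alpha\neq\omega_i^\alpha$ for $i\neq j$. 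As $\tfrac{1}{\omega^2-(\omega_j^\alpha)^2}$ contributes only a first-order pole, the singularity is simple, and everything holomorphic is collected into $\mathcal{R}_j^\alpha$.

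For the right column I extract residues from the explicit formulas \eqref{eq:A12} and \eqref{eq:A22}. Writing $\tfrac{1}{\omega^2-(\omega_j^\alpha)^2}\sim\tfrac{1}{2\omega_j^\alpha(\omega-\omega_j^\alpha)}$ and using $\tfrac{(\omega_j^\alpha)^2}{\omega^2}\to1$ as $\omega\to\omega_j^\alpha$, the residues of $A_{12}$ and $A_{22}$ are $-\tfrac{1}{2\omega_j^\alpha|D_1|}\langle\chi_j^\alpha,\cdot\rangle u_j$ and $-\tfrac{1}{2\omega_j^\alpha|D_1|}\langle\chi_j^\alpha,\cdot\rangle u_j^\alpha$ respectively; the $O(1/\omega)$ terms and the $\omega=0$ singular term in \eqref{eq:A12} do not contribute.

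The left column is the part requiring genuine input. From the identity above one computes $\big(P_\alpha^\perp\big)^{-1}=I-\tfrac{\delta}{|D_1|}\sum_i\tfrac{\langle\chi_i^\alpha,\cdot\rangle u_i^\alpha}{\omega^2-(\omega_i^\alpha)^2}$, using $(\omega_i^\alpha)^2=\delta\lambda_i^\alpha/|D_1|$ from \Cref{thm:char_approx_infinite}, so its residue at $\omega_j^\alpha$ is $-\tfrac{\delta}{2\omega_j^\alpha|D_1|}\langle\chi_j^\alpha,\cdot\rangle u_j^\alpha$. Since $A_{21}=-\big(P_\alpha^\perp\big)^{-1}(\S_D^{\alpha,0})^{-1}+O(\omega)$, its residue is $\tfrac{\delta}{2\omega_j^\alpha|D_1|}\langle\chi_j^\alpha,(\S_D^{\alpha,0})^{-1}\,\cdot\,\rangle u_j^\alpha$. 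The crucial simplification is that $\S_D^{\alpha,0}$ is self-adjoint, because $\overline{G^{\alpha,0}(x,y)}=G^{-\alpha,0}(x,y)=G^{\alpha,0}(y,x)$; combined with $u_j^\alpha=(\S_D^{\alpha,0})^{-1}[\chi_j^\alpha]$ this gives $\langle\chi_j^\alpha,(\S_D^{\alpha,0})^{-1}v\rangle=\langle u_j^\alpha,v\rangle$, so the residue of $A_{21}$ is $\tfrac{\delta}{2\omega_j^\alpha|D_1|}\langle u_j^\alpha,\cdot\rangle u_j^\alpha$. For $A_{11}=P\big(P_\alpha^\perp\big)^{-1}(\S_D^{\alpha,0})^{-1}+O(\omega)$ I compose the previous residue on the left with $P$ evaluated at $\omega_j^\alpha$ and use $P(\omega_j^\alpha)u_j^\alpha=-\tfrac{\delta\lambda_j^\alpha}{(\omega_j^\alpha)^2|D_1|}u_j=-u_j$ (again by \Cref{thm:char_approx_infinite}), obtaining residue $\tfrac{\delta}{2\omega_j^\alpha|D_1|}\langle u_j^\alpha,\cdot\rangle u_j$.

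Finally I assemble the four block residues. Since $\delta$ is real they combine into the single rank-one operator
$$-\frac{1}{2\omega_j^\alpha|D_1|}\begin{pmatrix}-\delta\langle u_j^\alpha,\cdot\rangle u_j & \langle\chi_j^\alpha,\cdot\rangle u_j\\ -\delta\langle u_j^\alpha,\cdot\rangle u_j^\alpha & \langle\chi_j^\alpha,\cdot\rangle u_j^\alpha\end{pmatrix}=-\frac{1}{2\omega_j^\alpha|D_1|}\langle\Phi_j^\alpha,\cdot\rangle\Psi_j^\alpha,$$
with $\Phi_j^\alpha=(-\delta u_j^\alpha,\chi_j^\alpha)^{\mathsf T}$ and $\Psi_j^\alpha=(u_j,u_j^\alpha)^{\mathsf T}$, while all remaining terms are holomorphic near $\omega_j^\alpha$ and define $\mathcal{R}_j^\alpha(\omega)$. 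The main obstacle is precisely the left column: the appearance of $u_j^\alpha$ in the first slot of $\Phi_j^\alpha$ and of $u_j$ in the first slot of $\Psi_j^\alpha$ hinges on the self-adjointness of $\S_D^{\alpha,0}$ together with the leading-order eigenvalue relation $(\omega_j^\alpha)^2=\delta\lambda_j^\alpha/|D_1|$, which are what convert the raw residues into the correctly normalized rank-one form.
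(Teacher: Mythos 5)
Your proposal is correct and follows essentially the same route as the paper: the residue is read off block-by-block from the expansion $(\A^\alpha)^{-1} = A_1^{-1}A_0^{-1}\big(I+O(\delta)\big)$ of \Cref{lem:invAa} together with the explicit formulas \eqref{eq:A12}--\eqref{eq:A22}, with the simple pole carried entirely by $\left(P_\alpha^\perp\right)^{-1}$. The only place you go beyond what the paper writes out is the left column, where your use of the self-adjointness of $\S_D^{\alpha,0}$ (giving $\langle\chi_j^\alpha,(\S_D^{\alpha,0})^{-1}\,\cdot\,\rangle = \langle u_j^\alpha,\cdot\,\rangle$) and of $P(\omega_j^\alpha)u_j^\alpha = -u_j$ correctly supplies the details behind the paper's terse assertion that the preceding computations imply the rank-one form $\langle\Phi_j^\alpha,\cdot\,\rangle\Psi_j^\alpha$.
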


\subsection{Dislocated system for small dislocation}\label{sec:smalldis}
In this section, we study the problem when a dislocation is introduced so that half of the array of resonators is translated in the $x_1$-direction. We will model the defect problem using the fictitious source superposition method \cite{defectSIAM}.

\subsubsection{Fictitious sources for dislocated resonator with a small dislocation} \label{sec:fic_small}
Here, we briefly describe the method of fictitious sources for a single translated resonator, in the asymptotic limit when the translation $d \rightarrow 0$. This will be developed for use on a dislocated array in \Cref{sec:inteq_smalldis}. Throughout this subsection, $\Omega$ denotes a bounded domain such that $\p \Omega \in \C^{1,s}$,  $\Omega_d := \Omega + d\mathbf{v}$ and $U$ is a neighbourhood of $\Omega\cup \Omega_d$.
Although this subsection is phrased for a general domain $\Omega$, we think of $\Omega$ as a pair of resonators in the dislocated array.

\begin{figure}[tbh]
	\centering
	\begin{tikzpicture}[scale=3]
	\draw[dotted, opacity=0.5] (-0.25,0.5) -- (-0.25,-0.7);
	\draw[dotted, opacity=0.5]  (1.5,0.5) -- (1.5,-0.7)node[yshift=4pt,xshift=-7pt]{};

	\pgfmathsetmacro{\rb}{0.25pt}
	\pgfmathsetmacro{\rs}{0.2pt}
	\coordinate (a) at (0.2,0);
	\coordinate (b) at (.29,0);

	\draw[double] plot [smooth cycle] coordinates {($(a)+(30:\rb)$) ($(a)+(90:\rs)$) ($(a)+(150:\rb)$) ($(a)+(210:\rs)$)  ($(a)+(270:\rb)$) ($(a)+(330:\rs)$) };
	\draw  ($(a)+(210:\rs)$) node[xshift=-6pt, yshift=-6pt]{$\Omega$};
	\node at (0.2,-0.35) {$f,g$};
	\draw[dashed] plot [smooth cycle] coordinates {($(b)+(30:\rb)$) ($(b)+(90:\rs)$) ($(b)+(150:\rb)$) ($(b)+(210:\rs)$) ($(b)+(270:\rb)$) ($(b)+(330:\rs)$) };
	\draw[|->,opacity=0.5] ($(a)+(0,0.3)$) -- ($(b)+(0.05,0.3)$) node[pos=0.5,above]{$d$};

	\begin{scope}[xshift=0.8cm]
	\coordinate (a) at (0.2,0);
	\coordinate (b) at (.29,0);

	\draw[double] plot [smooth cycle] coordinates {($(a)+(30:\rb)$) ($(a)+(90:\rs)$) ($(a)+(150:\rb)$) ($(a)+(210:\rs)$)  ($(a)+(270:\rb)$) ($(a)+(330:\rs)$) };
	\draw  ($(a)+(210:\rs)$);
	\node at (0.2,-0.35) {$f,g$};
	\draw[dashed] plot [smooth cycle] coordinates {($(b)+(30:\rb)$) ($(b)+(90:\rs)$) ($(b)+(150:\rb)$) ($(b)+(210:\rs)$) ($(b)+(270:\rb)$) ($(b)+(330:\rs)$) }  node[xshift=6pt, yshift=-6pt]{$\Omega_d$};
	\draw[|->,opacity=0.5] ($(a)+(0,0.3)$) -- ($(b)+(0.05,0.3)$) node[pos=0.5,above]{$d$};
	\end{scope}

	\begin{scope}[yshift=0.6cm]
	\draw [decorate,opacity=0.5,decoration={brace,amplitude=10pt}]
	(-0.25,0) -- (1.5,0) node [midway,yshift=0.6cm]{$U$};
	\end{scope}
	\end{tikzpicture}
	\caption[Single]{A dislocated pair of resonators in the case of a small dislocation $d$.
	Legend:
	\raisebox{2pt}{\tikz{\draw[double,thick] (0,0) -- (0.5,0)}} resonator with fictitious sources,
	\raisebox{2pt}{\tikz{\draw[dashed] (0,0) -- (0.5,0)}} dislocated resonator.
	} \label{fig:DDeps}
\end{figure}
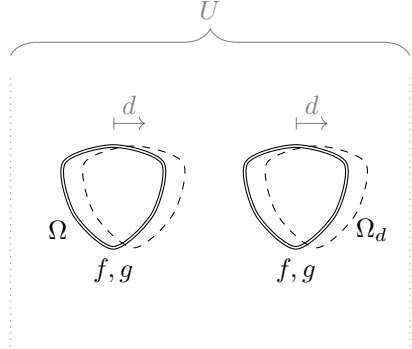

We define the maps
\begin{equation}\label{eq:pq}
	p : \p \Omega \rightarrow \p \Omega_d, \ x \mapsto x+ d \mathbf{v}, \qquad q : L^2(\p \Omega) \rightarrow L^2(\p \Omega_d), \ q(\phi)(y) = \phi(p^{-1}(y)).
\end{equation}
Since the gradient of the single-layer potential potential has a jump across $\p \Omega$, we introduce the notation
$$\nabla \S_\Omega^\omega[\phi] \big |_\pm  = \begin{cases} \nabla \S_\Omega^\omega[\phi] \big |_+  & \nu_x\cdot \mathbf{v}  \geq 0, \\ \nabla \S_\Omega^\omega[\phi] \big |_-  & \nu_x\cdot \mathbf{v}  < 0,\end{cases}$$
where $\nu_x$ is the outward unit normal to $\p \Omega$ at $x$. We remark that if $\nu_x \cdot \mathbf{v} = 0$ we have that \cite{ColtonKress}
\begin{equation}\label{eq:jumpT}
	\mathbf{v}\cdot \nabla \S_\Omega^\omega[\phi] \big |_+(x) = \mathbf{v}\cdot \nabla \S_\Omega^\omega \big|_-[\phi](x).
\end{equation}
\begin{lem} \label{lem:expTrans}
	Let $x\in \p \Omega$ and let $p$ be defined as in \eqref{eq:pq}. For $\phi \in L^2(\p \Omega)$, and for $d$ small enough, we have
		$$\S_{\Omega}^\omega[\phi](p(x)) = \S_\Omega^\omega[\phi](x) + d \mathbf{v}\cdot \nabla \S_\Omega^\omega[\phi] \big|_\pm(x) + O(d^2).$$
This estimate is valid in $L^2(\p \Omega)$ in the sense that there is a constant $C$, independent of $d$, such that
$$\left\| \S_{\Omega}^\omega[\phi]\circ p - \big(\S_\Omega^\omega[\phi] + d \mathbf{v}\cdot \nabla \S_\Omega^\omega[\phi] \big |_\pm \big)\right\|_{L^2(\p \Omega)} \leq Cd^2$$
for $d$ small enough.
\end{lem}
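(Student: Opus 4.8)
The statement is a second-order Taylor expansion of the single-layer potential $u := \S_\Omega^\omega[\phi]$ along the segment joining $x$ to $p(x)=x+d\mathbf{v}$, and the only genuine subtlety is that $\nabla u$ jumps across $\p\Omega$, so one must decide from which side the segment approaches. The plan is therefore to split $\p\Omega$ according to the sign of $\nu_x\cdot\mathbf{v}$. Since $\p\Omega\in\C^{1,s}$ and $\p\Omega$ is compact, there is a uniform one-sided collar: for $d$ small enough, if $\nu_x\cdot\mathbf{v}>0$ the whole open segment $\{x+t\mathbf{v}:0<t\le d\}$ lies in the exterior $\R^3\setminus\overline{\Omega}$, while if $\nu_x\cdot\mathbf{v}<0$ it lies in the interior $\Omega$. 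On each open side $u$ solves $\Delta u+\omega^2u=0$ and is therefore smooth, so a Taylor expansion with integral remainder is available there.

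First I would treat the non-tangential points. Fixing $x$ with $\nu_x\cdot\mathbf{v}>0$ and using Taylor's theorem on the exterior side,
\begin{equation*}
u(p(x)) = u\big|_+(x) + d\,\mathbf{v}\cdot\nabla u\big|_+(x) + d^2\int_0^1(1-t)\,(\mathbf{v}\cdot\nabla)^2 u\big(x+td\mathbf{v}\big)\dx t.
\end{equation*}
By continuity of the single-layer potential across $\p\Omega$ (the jump relation \eqref{eq:jump1}) the boundary value $u|_+(x)$ equals $\S_\Omega^\omega[\phi](x)$, and the first-order coefficient is $\mathbf{v}\cdot\nabla\S_\Omega^\omega[\phi]|_+(x)$, which is precisely the quantity $\mathbf{v}\cdot\nabla\S_\Omega^\omega[\phi]|_\pm(x)$ prescribed by the definition in this regime. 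The case $\nu_x\cdot\mathbf{v}<0$ is identical with $|_+$ replaced by $|_-$, and on the tangential set $\{\nu_x\cdot\mathbf{v}=0\}$ the two one-sided first-order coefficients agree by \eqref{eq:jumpT}, so the stated formula is unambiguous and consistent across the whole of $\p\Omega$.

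It remains to upgrade the pointwise expansion to the claimed $L^2(\p\Omega)$ estimate, which is the heart of the matter. Writing the remainder as a function of $x$,
\begin{equation*}
R_d(x) = d^2\int_0^1(1-t)\,(\mathbf{v}\cdot\nabla)^2 u\big(x+td\mathbf{v}\big)\dx t,
\end{equation*}
one needs $\|R_d\|_{L^2(\p\Omega)}\le Cd^2$, which reduces to the uniform bound $\sup_{0<s\le d}\big\|(\mathbf{v}\cdot\nabla)^2 u(\cdot+s\mathbf{v})\big\|_{L^2(\p\Omega)}\le C$ for the second directional derivatives restricted to the parallel surfaces $\p\Omega+s\mathbf{v}$. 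I would obtain this by viewing $R_d$ as an integral operator with kernel $K_d(x,y)=G^\omega(p(x),y)-G^\omega(x,y)-d\,\mathbf{v}\cdot\nabla_xG^\omega(x,y)$, splitting the integral over $\p\Omega$ into a far part $|x-y|\ge\rho$ and a near-diagonal part $|x-y|<\rho$. On the far part the kernel is smooth along the segment and $|K_d|\lesssim d^2|x-y|^{-3}$, so a Schur estimate gives an $O(d^2)$ operator norm immediately.

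The main obstacle is the near-diagonal contribution, where the segment $[x,p(x)]$ passes close to the singularity of $G^\omega$: the relevant second derivatives are only borderline integrable against the surface measure, and it is exactly the jump term built into $\nabla\S_\Omega^\omega[\phi]|_\pm$ that must cancel the leading singularity. I would carry out this estimate in rescaled coordinates $y=x+dz$, in which the three terms of $K_d$ combine into a bounded multiplier acting on $\phi$, and conclude by a Mikhlin/Fourier-multiplier argument that this piece has operator norm $O(d^2)$ on $L^2(\p\Omega)$. This is also where the quadratic rate is genuinely tied to the smoothness of the layer density: a flat-interface model shows that the remainder is only $O(d)$ for a density that is merely $L^2$, while the quadratic gain uses the extra regularity of $\phi$ (equivalently, that the single-layer trace is one Sobolev order smoother, via the mapping properties of $\S_\Omega^0$). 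The bound on $C$ is then uniform in $d$ once this regularity is built in, and the already-established $L^2$-boundedness of $\S_\Omega^\omega$ and of the Neumann–Poincaré operator closes the argument.
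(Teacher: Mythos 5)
There is a genuine gap at the very first step: the ``uniform one-sided collar'' you assert does not exist. You claim that, for $d$ small enough, $\nu_x\cdot\mathbf{v}>0$ implies the whole segment $\{x+t\mathbf{v}:0<t\le d\}$ lies in $\R^3\setminus\overline{\Omega}$, while $\nu_x\cdot\mathbf{v}<0$ implies it lies in $\Omega$. Already the unit sphere with $\mathbf{v}=(1,0,0)$ refutes this: for $x\in\p\Omega$ with $\nu_x\cdot\mathbf{v}=-\epsilon$ and $0<\epsilon<d/2$, one has $|x+t\mathbf{v}|^2=1-2\epsilon t+t^2>1$ for $t>2\epsilon$, so the segment leaves $\Omega$ long before $t=d$. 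Whatever $d>0$ you fix, there remains a set of near-tangential boundary points of positive surface measure on which the segment crosses $\p\Omega$; there, neither one-sided Taylor expansion is valid along the full segment, so your case analysis never covers these points. Your only concession to tangential behaviour is the measure-zero set $\{\nu_x\cdot\mathbf{v}=0\}$, where you check via \eqref{eq:jumpT} that the formula is unambiguous; that does not address the crossing band.

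That band is precisely what the paper's proof is organized around. It defines $U_\pm$ as the sets of points whose segments genuinely stay outside (resp.\ inside) for all $d_0\le d$ --- note the implication runs in the opposite direction to yours, $x\in U_+\implies\nu_x\cdot\mathbf{v}\ge 0$, not conversely --- and sets $V=\p\Omega\setminus(U_+\cup U_-)$. On $U_\pm$ it uses $L^2$-Taylor expansions, much as you do; on $V$ it observes that $\nu_x\cdot\mathbf{v}=O(d)$ uniformly, so that (since the jump of $\mathbf{v}\cdot\nabla\S_\Omega^\omega[\phi]$ across $\p\Omega$ is proportional to $\nu_x\cdot\mathbf{v}$, degenerating to continuity \eqref{eq:jumpT} in the tangential case) the two one-sided derivatives differ by $O(d)$ in $L^2(V)$; expanding from the ``wrong'' side over a length at most $d$ therefore costs only $O(d)\cdot d=O(d^2)$. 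Without this step your argument cannot close. Two secondary issues: the Schur/Mikhlin treatment of the near-diagonal kernel on $U_\pm$ is asserted rather than carried out, whereas the paper simply invokes the $L^2$-Taylor theorem for weakly differentiable functions; and your closing claim that the $O(d^2)$ rate fails for densities that are ``merely $L^2$'' contradicts the lemma you are proving, which is stated for $\phi\in L^2(\p\Omega)$ --- the constant $C$ is allowed to depend on $\phi$, only not on $d$.
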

\begin{proof}
	We let $U_-\subset \p \Omega$ be the set of points $x$ such that $x+d_0 \mathbf{v}\in \Omega$ for all $d_0\leq d$, and $U_+$ be the set of points $x$ such that $x+d_0 \mathbf{v} \notin \Omega$ for all $d_0\leq d$. Moreover, we let $V = \p \Omega \setminus(U_+ \cup U_- )$.  Since $\p \Omega\in \C^1$ we have the following implications:
	$$x\in U_+ \implies \nu_x\cdot \mathbf{v} \geq 0 ,\quad x\in U_- \implies \nu_x\cdot \mathbf{v} \leq 0.$$
	In $U_+$ (and $U_-$), we have Taylor expansions in the $L^2$-sense (see, \emph{e.g}, \cite[Theorem 3.4.2]{weakdifffcn}), so that
	\begin{equation}\label{eq:est}
		\left\| \S_{\Omega}^\omega[\phi]\circ p - \big(\S_\Omega^\omega[\phi] + d \mathbf{v}\cdot \nabla \S_\Omega^\omega[\phi] \big |_\pm \big)\right\|_{L^2(U_\pm)} \leq C_\pm d^2
	\end{equation}
	for some constants $C_+$ and $C_-$. Moreover, we have that $\nu_x \cdot \mathbf{v} = O(d)$ uniformly for $x\in V$. From \eqref{eq:jumpT} we therefore have
	$$ \mathbf{v}\cdot \nabla \S_\Omega^\omega[\phi] \big |_+(x) = \mathbf{v}\cdot \nabla \S_\Omega^\omega[\phi] \big|_-(x) + O(d),$$
	with respect to the norm in $L^2(V)$. Therefore,
	for some constant $C_0$,
	$$\left\| \S_{\Omega}^\omega[\phi]\circ p - \big(\S_\Omega^\omega[\phi] + d \mathbf{v}\cdot \nabla \S_\Omega^\omega[\phi] \big |_\pm \big)\right\|_{L^2(V)} \leq C_0d^2,$$
	which, together with \eqref{eq:est} proves the claim.
\end{proof}

We now assume $\Omega = \Omega_1 \cup \Omega_2$ for two connected domains $\Omega_i, i=1,2$. To study the problem for the dislocated resonator, we consider the problem when the resonator $\Omega$ has its original position, along with fictitious sources $f, g$ on the boundary. Explicitly, we consider the problem
\begin{equation} \label{eq:scattering_sources}
\left\{
\begin{array} {ll}
\ds \Delta \widetilde{u}+ \omega^2 \widetilde{u}  = 0 & \text{in } U\setminus \p \Omega, \\
\nm
\ds  \widetilde{u}|_{+} -\widetilde{u}|_{-}  = f  & \text{on } \partial \Omega, \\
\nm
\ds  \delta \frac{\partial \widetilde{u}}{\partial \nu} \bigg|_{+} - \frac{\partial \widetilde{u}}{\partial \nu} \bigg|_{-} =g & \text{on } \partial \Omega.
\end{array}
\right.
\end{equation}
We assume we have a reference solution $u$ satisfying
\begin{equation} \label{eq:scattering}
\left\{
\begin{array} {ll}
\ds \Delta {u}+ \omega^2 {u}  = 0 & \text{in } U\setminus \p \Omega_d, \\
\nm
\ds  {u}|_{+} -{u}|_{-}  =0  & \text{on } \p \Omega_d, \\
\nm
\ds  \delta \frac{\partial {u}}{\partial \nu} \bigg|_{+} - \frac{\partial {u}}{\partial \nu} \bigg|_{-} =0 & \text{on } \p \Omega_d.
\end{array}
\right.
\end{equation}
We want to determine the fictitious sources $f,g$ such that
\begin{align}
u&=\widetilde u \quad \text{in } U\setminus \left(\Omega\cup \Omega_d\right), \label{eq:id1} \\
u&=\widetilde u \quad \text{in } \Omega\cap \Omega_d. \label{eq:id2}
\end{align}
Inside $U$, the two solutions $u$ and  $\widetilde u$ can be respectively represented  as
\begin{equation}\label{eq:u}
u = \begin{cases} \hat\S_{\Omega_d}^{\omega}[\phi^{i,d}] & \text{in } \Omega_d, \\[0.3em]
\S_{\Omega_d}^{\omega}[\phi^{o,d}] + H & \text{in } U\setminus \Omega_d, \end{cases}
\end{equation}
and
\begin{equation} \label{eq:utilde}
\widetilde u = \begin{cases} \hat\S_{\Omega}^{\omega}[\phi^i] & \text{in } \Omega, \\[0.3em]
\S_\Omega^{\omega}[\phi^o] + \widetilde H & \text{in } U\setminus \Omega, \end{cases}
\end{equation}
for some functions $H$ and  $\widetilde H$ satisfying $\Delta H + \omega^2 H = 0$ and $\Delta \widetilde H + \omega^2 \widetilde H = 0$ in $U$. $H$ and $\widetilde H$ can be thought of as background solutions, while the single layer potentials account for the local effect of the resonators. From \eqref{eq:id1} it follows that $H=\widetilde H$. Using the jump relations and the boundary conditions in \eqref{eq:scattering} and \eqref{eq:scattering_sources} we find that
\begin{equation} \label{eq:A_dA}
\A_d(\omega,\delta) \Phi_d = \begin{pmatrix} H \big|_{\p \Omega_d} \\ \delta \p_\nu H \big|_{\p \Omega_d} \end{pmatrix}, \qquad \A(\omega,\delta) \Phi = \begin{pmatrix} H \big|_{\p \Omega} \\ \delta \p_\nu H \big|_{\p \Omega} \end{pmatrix} - \begin{pmatrix} f \\ g\end{pmatrix},
\end{equation}
where
$$\A_d(\omega,\delta) = \begin{pmatrix}
\hat\S_{\Omega_d}^{\omega} & - \S_{\Omega_d}^{\omega} \\ -\frac{1}{2}I + \hat\K_{\Omega_d}^{\omega,*} & -\delta\left(\frac{1}{2}I + (\K_{\Omega_d}^{\omega})^* \right)
\end{pmatrix}, \quad
\A(\omega,\delta) = \begin{pmatrix}
\hat\S_\Omega^{\omega} & - \S_\Omega^{\omega} \\ -\frac{1}{2}I + \hat\K_\Omega^{\omega,*}  & -\delta\left(\frac{1}{2}I + (\K_\Omega^{\omega})^* \right)
\end{pmatrix},$$
and
$$\Phi_d = \begin{pmatrix} \phi^{i,d} \\ \phi^{o,d} \end{pmatrix}, \quad \Phi = \begin{pmatrix} \phi^i \\ \phi^o \end{pmatrix}.$$

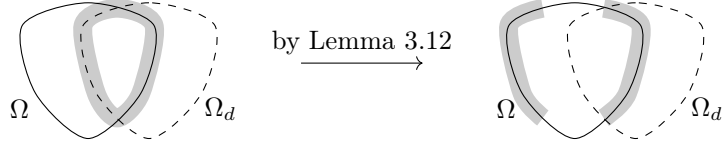
\begin{figure}
	\centering
	\begin{tikzpicture}[scale=4]
	\pgfmathsetmacro{\rb}{0.25pt}
	\pgfmathsetmacro{\rs}{0.2pt}
	\pgfmathsetmacro{\rspec}{0.21pt}
	\coordinate (a) at (0.2,0);
	\coordinate (b) at (.4,0);

	\draw plot [smooth cycle] coordinates {($(a)+(30:\rb)$) ($(a)+(90:\rs)$) ($(a)+(150:\rb)$) ($(a)+(210:\rs)$)  ($(a)+(270:\rb)$) ($(a)+(330:\rs)$) };
	\draw  ($(a)+(210:\rs)$) node[xshift=-6pt, yshift=-6pt]{$\Omega$};
	\draw[dashed] plot [smooth cycle] coordinates {($(b)+(30:\rb)$) ($(b)+(90:\rs)$) ($(b)+(150:\rb)$) ($(b)+(210:\rs)$) ($(b)+(270:\rb)$) ($(b)+(330:\rs)$) } node[xshift=7pt, yshift=-6pt]{$\Omega_d$};

	\draw[opacity=0.2,line width=2.5mm] plot [smooth cycle] coordinates {($(b)+(120:\rspec)$) ($(b)+(150:\rb)$) ($(b)+(210:\rs)$) ($(b)+(240:\rspec)$) ($(a)+(330:\rs)$) ($(a)+(30:\rb)$)};

	\draw[->] (0.9,0) -- (1.3,0) node[pos=0.5,above]{by \Cref{lem:expTrans}};

	\begin{scope}[xshift=1.6cm]
	\coordinate (a) at (0.2,0);
	\coordinate (b) at (.4,0);

	\draw plot [smooth cycle] coordinates {($(a)+(30:\rb)$) ($(a)+(90:\rs)$) ($(a)+(150:\rb)$) ($(a)+(210:\rs)$)  ($(a)+(270:\rb)$) ($(a)+(330:\rs)$) };
	\draw  ($(a)+(210:\rs)$) node[xshift=-6pt, yshift=-6pt]{$\Omega$};
	\draw[dashed] plot [smooth cycle] coordinates {($(b)+(30:\rb)$) ($(b)+(90:\rs)$) ($(b)+(150:\rb)$) ($(b)+(210:\rs)$) ($(b)+(270:\rb)$) ($(b)+(330:\rs)$) } node[xshift=7pt, yshift=-6pt]{$\Omega_d$};

	\draw[opacity=0.2,line width=2.5mm] plot [smooth] coordinates { ($(b)+(240:\rspec)$) ($(a)+(330:\rs)$) ($(a)+(30:\rb)$) ($(b)+(120:\rspec)$)};
	\draw[opacity=0.2,line width=2.5mm] plot [smooth] coordinates {($(a)+(120:\rspec)$) ($(a)+(150:\rb)$) ($(a)+(210:\rs)$)  ($(a)+(240:\rspec)$) };
	\end{scope}

	\end{tikzpicture}
	\caption[Single]{In the fictitious sources approach, for the case of a small dislocation, we seek solutions that match on the shaded region. In \eqref{eq:1} and \eqref{eq:2}, equality is imposed on the region highlighted in the left image. Using \Cref{lem:expTrans} this is mapped to a subset of $\p\Omega$. After this transformation, the length of the part of $\p\Omega$ not included will be $O(d)$, where $d$ is the size of the dislocation.
		Legend:
		\raisebox{2pt}{\tikz{\draw (0,0) -- (0.5,0)}} original resonator,
		\raisebox{2pt}{\tikz{\draw[dashed] (0,0) -- (0.5,0)}} dislocated resonator,
		\raisebox{0pt}{\tikz{\draw[opacity=0.2,line width=2.5mm] (0,0) -- (0.7,0)}} region of enforced equality.
	} \label{fig:disloc_labels}
\end{figure}

By equations \eqref{eq:id1} and \eqref{eq:id2}, we have
\begin{align}
\hat\S_{\Omega_d}^{\omega}[\phi^{i,d}](\widetilde x) = \hat\S_\Omega^{\omega}[\phi^i](\widetilde x), \quad &\widetilde x \in \p \Omega_d \cap \Omega,  \label{eq:1} \\
\hat\S_{\Omega_d}^{\omega}[\phi^{i,d}](x) = \hat\S_\Omega^{\omega}[\phi^i](x), \quad & x \in \p \Omega \cap \Omega_d.  \label{eq:2}
\end{align}
We decompose the boundaries of the resonators as $\p \Omega_d^i = \p \Omega_d \cap \Omega$ and $\p \Omega_d^o = \p \Omega_d \setminus \p \Omega_d^i$ and define $\p \Omega^i = \p \Omega_d^i - d \mathbf{v}$ and $\p \Omega^o = \p \Omega_d^o - d \mathbf{v}$. Because of translation invariance, we have $\hat\S_{\Omega_d}^{\omega}\left[\phi^{i,d}\right](\widetilde x) = \hat\S_\Omega^{\omega}\left[q^{-1}(\phi^{i,d})\right](x),$ where $\widetilde x = p(x)$. Therefore, using \Cref{lem:expTrans}, we obtain
$$\begin{cases}
\hat\S_\Omega^{\omega}\left[q^{-1}(\phi^{i,d})\right] =  \hat\S_\Omega^{\omega}[\phi^i] + d \mathbf{v}\cdot \nabla \hat\S_\Omega^{\omega}[\phi^i]\big |_- + O(d^2) \quad &\mathrm{on} \ \p \Omega^i, \\
\nm
\hat\S_\Omega^{\omega}\left[q^{-1}(\phi^{i,d})\right] - d \mathbf{v}\cdot \nabla \hat\S_\Omega^{\omega}\left[q^{-1}(\phi^{i,d})\right]\big |_- = \hat\S_\Omega^{\omega}\left[\phi^{i}\right] + O(d^2) \quad &\mathrm{on} \ \p \Omega \cap \Omega_d, \end{cases}
$$
where $q$ is defined in \eqref{eq:pq} and the error terms are with respect to corresponding $L^2$-norm. This transformation is depicted in \Cref{fig:disloc_labels}. The boundary $\p \Omega$ is decomposed into disjoint parts $\p \Omega^i$ and $\p \Omega^o$, and (since $\p \Omega$ is $\C^1$) the length of the ``missing'' part of the boundary, $\p \Omega^o \setminus \left( \p \Omega \cap \Omega_d\right)$, scales as $O(d)$. Moreover, on this part \eqref{eq:2} holds to order $O(d)$.
Using the Neumann series, we can invert the second equation to obtain
$$q^{-1}(\phi^{i,d}) =  \phi^i + d \left(\hat\S_\Omega^{\omega}\right)^{-1} \mathbf{v}\cdot \nabla \hat\S_\Omega^{\omega}[\phi^i]\big |_- + O(d^2),
$$
with respect to the $L^2(\p \Omega)$-norm. We define $Q: L^2(\p \Omega)^2\rightarrow L^2(\p \Omega_d)^2$ by
$$Q(u,v) = \begin{pmatrix}q(u) \\ q(v)\end{pmatrix}.$$
By analogous computations for $\S_\Omega^{\omega}[\phi^{o,d}](x)$ as those for $\hat\S_\Omega^{\omega}[\phi^{i,d}](x)$, we find that
\begin{equation} \label{eq:P1_B}
Q^{-1}\Phi_d = \P_1 \Phi, \qquad \P_1 = I + d\begin{pmatrix}
\left(\hat\S_\Omega^{\omega}\right)^{-1} \mathbf{v}\cdot \nabla \hat\S_\Omega^{\omega}\big |_- &  0  \\
0  & \left(\S_\Omega^{\omega}\right)^{-1} \mathbf{v}\cdot \nabla \S_\Omega^{\omega}\big |_+ \end{pmatrix} + O(d^2),
\end{equation}
where $\P_1: L^2(\p \Omega)^2 \rightarrow L^2(\p \Omega)^2$. We denote the linear term in $d$ by $\P_1^{(1)}$, so that $\P_1 = I + d\P_1^{(1)} + O(d^2)$ with respect to the operator norm in $\B(L^2(\p \Omega)^2)$.

We now use Taylor series expansions to relate $ H |_{\partial \Omega}$ and $ H |_{\partial \Omega_d}$. If we let $\frac{\partial}{\partial T} := (\mathbf{v}- (\mathbf{v}\cdot \nu)\nu) \cdot \nabla$ denote the tangential derivative in the direction specified by $\mathbf{v}$ we have that
\begin{align*}
H |_{\partial \Omega} &=  H |_{\partial \Omega_d} - d \mathbf{v} \cdot \nabla  H |_{\partial \Omega_d} +O(d^2) \\
 &= H |_{\partial \Omega_d} - d \left( (\mathbf{v}\cdot \nu) \frac{\partial}{\partial \nu}  H |_{\partial \Omega_d} + \frac{\partial}{\partial T}  H |_{\partial \Omega_d} \right) +O(d^2),
\end{align*}
where the error term is a continuous function of $x$ in the compact domain $\p \Omega$, and hence valid uniformly in $x$. Moreover,
$$\frac{\partial}{\partial \nu}  H |_{\partial \Omega} = \frac{\partial}{\partial \nu}  H |_{\partial \Omega_d} - d \left( (\mathbf{v}\cdot \nu)\frac{\partial^2}{\partial \nu^2}H |_{\partial \Omega_d} + \frac{\partial^2}{\partial T \partial \nu}H |_{\partial \Omega_d} \right)+O(d^2).$$
The Laplacian in local coordinates defined by the normal and tangential directions of $\partial \Omega_d$ can be written as
\begin{equation*}\label{eq:lapcurve}
\Delta = \frac{\partial^2}{\partial \nu^2} + 2\tau(\widetilde x)\frac{\partial}{\partial \nu} + \Delta_{\partial \Omega_d},
\end{equation*}
where $\tau$ denotes the mean curvature of $\p \Omega_d$ and $\Delta_{\partial \Omega_d}$ denotes the Laplace-Beltrami operator on $\p \Omega_d$. Since $H$ satisfies the Helmholtz equation $(\Delta + \omega^2)H=0$,  we get
$$ \frac{\partial^2}{\partial \nu^2}H |_{\partial \Omega_d} = -\left(\omega^2 + \Delta_{\partial \Omega_d}\right) H |_{\partial \Omega_d} - 2\tau \frac{\partial}{\partial \nu}  H |_{\partial \Omega_d}.$$
Hence, we have
\begin{equation} \label{eq:H_B}
\begin{pmatrix} H \big|_{\p \Omega} \\ \delta \p_\nu H \big|_{\p \Omega} \end{pmatrix}
=  \mathcal{P}_2 Q^{-1}
\begin{pmatrix} H \big|_{\p \Omega_d} \\ \delta \p_\nu H \big|_{\p \Omega_d} \end{pmatrix},
\end{equation}
where the operator $\mathcal{P}_2: L^2(\p \Omega)^2\rightarrow L^2(\p \Omega)^2 $ is given by
$$
\mathcal{P}_2 = I + d \P_2^{(1)} + O(d^2), \qquad \P_2^{(1)} =  \begin{pmatrix}
\ds - \p_{T}
& -\frac{(\mathbf{v}\cdot \nu)}{\delta}
\\
\delta(\mathbf{v}\cdot \nu)\left(\omega^2 + \Delta_{\partial \Omega}\right) & 2\tau - \p_{T}
\end{pmatrix},
$$
with respect to the norm in $\B(L^2(\p \Omega)^2)$. Since $\Omega_d$ and $\Omega$ only differ by a translation, we have that
\begin{equation} \label{eq:A_B}
\A_d = Q \A Q^{-1}.
\end{equation}
Combining \eqref{eq:A_dA}, \eqref{eq:P1_B}, \eqref{eq:H_B} and \eqref{eq:A_B}, we arrive at the following result.
\begin{prop} \label{prop:B}
	The layer densities $\phi^i$ and $\phi^o$ and the fictitious sources $f$ and $g$ satisfy
\begin{equation*} \label{add3}
\begin{pmatrix}
f \\ g
\end{pmatrix} = B(\omega,\delta,d) \begin{pmatrix} \phi^i \\ \phi^o \end{pmatrix}, \qquad B(\omega,\delta,d) = \P_2 \A \P_1 - \A.
\end{equation*}
\end{prop}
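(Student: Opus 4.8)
The plan is to obtain the formula by a purely algebraic elimination, chaining together the four operator identities \eqref{eq:A_dA}, \eqref{eq:P1_B}, \eqref{eq:H_B} and \eqref{eq:A_B} already established above, so as to remove the dislocated quantities $\Phi_d$ and the Cauchy data on $\p\Omega_d$ in favour of their counterparts on $\p\Omega$. The only point requiring genuine attention is bookkeeping of domains: $\A_d$, $\Phi_d$ and the pair $\big(H|_{\p\Omega_d},\,\delta\,\p_\nu H|_{\p\Omega_d}\big)^\top$ act on functions on $\p\Omega_d$, whereas $\A$, $\P_1$, $\P_2$, $\Phi=(\phi^i,\phi^o)^\top$ and $(f,g)^\top$ all live on $\p\Omega$, and it is the intertwiner $Q$ (and $Q^{-1}$) that shuttles between the two spaces.

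Concretely, I would start from the first equation in \eqref{eq:A_dA}, namely $\A_d\Phi_d = \big(H|_{\p\Omega_d},\,\delta\,\p_\nu H|_{\p\Omega_d}\big)^\top$, and apply $Q^{-1}$ on the left. The conjugation identity \eqref{eq:A_B}, $\A_d = Q\A Q^{-1}$, turns the left-hand side into $Q^{-1}\A_d\Phi_d = \A\, Q^{-1}\Phi_d$, and then \eqref{eq:P1_B} replaces $Q^{-1}\Phi_d$ by $\P_1\Phi$. This yields the intermediate identity $\A\P_1\Phi = Q^{-1}\big(H|_{\p\Omega_d},\,\delta\,\p_\nu H|_{\p\Omega_d}\big)^\top$, in which every operator now acts on $L^2(\p\Omega)^2$. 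Applying $\P_2$ on the left and invoking \eqref{eq:H_B}, which states exactly that $\P_2 Q^{-1}\big(H|_{\p\Omega_d},\,\delta\,\p_\nu H|_{\p\Omega_d}\big)^\top = \big(H|_{\p\Omega},\,\delta\,\p_\nu H|_{\p\Omega}\big)^\top$, gives $\P_2\A\P_1\Phi = \big(H|_{\p\Omega},\,\delta\,\p_\nu H|_{\p\Omega}\big)^\top$. Finally, the second equation in \eqref{eq:A_dA} reads $\big(H|_{\p\Omega},\,\delta\,\p_\nu H|_{\p\Omega}\big)^\top = \A\Phi + (f,g)^\top$; substituting and rearranging produces $(f,g)^\top = (\P_2\A\P_1 - \A)\Phi$, which is precisely the claim with $B(\omega,\delta,d) = \P_2\A\P_1 - \A$.

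Since every step is a direct substitution of an already-proved identity, there is no real analytic obstacle in this proposition; the content is entirely \emph{bookkeeping}. The only place where care is needed is the ordering of the left-multiplications by $Q^{-1}$ and then $\P_2$, and verifying that after using \eqref{eq:A_B} to intertwine $\A_d$ with $\A$ all factors genuinely reduce to operators on $L^2(\p\Omega)^2$. All of the substantive work — the $L^2$ Taylor expansions behind \eqref{eq:H_B}, the Neumann-series inversion behind \eqref{eq:P1_B}, and \Cref{lem:expTrans} underlying both — has already been carried out in deriving the four identities, so the proof here is simply their composition.
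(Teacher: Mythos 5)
Your proposal is correct and follows exactly the route the paper intends: the paper gives no separate proof body for \Cref{prop:B}, stating only that it follows by ``combining \eqref{eq:A_dA}, \eqref{eq:P1_B}, \eqref{eq:H_B} and \eqref{eq:A_B}'', and your elimination chain (conjugating $\A_d$ through $Q$, replacing $Q^{-1}\Phi_d$ by $\P_1\Phi$, applying $\P_2$ and \eqref{eq:H_B}, then subtracting the second identity of \eqref{eq:A_dA}) is precisely that combination, carried out with the correct domain bookkeeping.
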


\subsubsection{Integral equation for the dislocated system} \label{sec:inteq_smalldis}
In this section, we use \Cref{prop:B} to derive an integral equation for the dislocated system when the dislocation size is small.

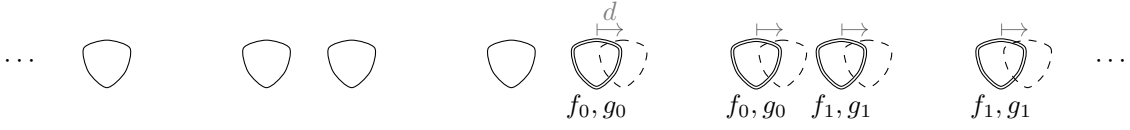
\begin{figure}
	\centering
	\begin{tikzpicture}[scale=1.4]
	\pgfmathsetmacro{\rb}{0.25pt}
	\pgfmathsetmacro{\rs}{0.2pt}
	\coordinate (a) at (0.25,0);
	\coordinate (b) at (1.05,0);

	\draw plot [smooth cycle] coordinates {($(a)+(30:\rb)$) ($(a)+(90:\rs)$) ($(a)+(150:\rb)$) ($(a)+(210:\rs)$) ($(a)+(270:\rb)$) ($(a)+(330:\rs)$) };
	\draw plot [smooth cycle] coordinates {($(b)+(30:\rb)$) ($(b)+(90:\rs)$) ($(b)+(150:\rb)$) ($(b)+(210:\rs)$) ($(b)+(270:\rb)$) ($(b)+(330:\rs)$) };

	\begin{scope}[xshift=-2.3cm]
	\coordinate (a) at (0.25,0);
	\coordinate (b) at (1.05,0);
	\draw plot [smooth cycle] coordinates {($(b)+(30:\rb)$) ($(b)+(90:\rs)$) ($(b)+(150:\rb)$) ($(b)+(210:\rs)$) ($(b)+(270:\rb)$) ($(b)+(330:\rs)$) };
	\draw (a) node{$\cdots$};
	\end{scope}
	\begin{scope}[xshift=2.3cm]
	\coordinate (a) at (0.25,0);
	\coordinate (b) at (1.05,0);
	\draw plot [smooth cycle] coordinates {($(a)+(30:\rb)$) ($(a)+(90:\rs)$) ($(a)+(150:\rb)$) ($(a)+(210:\rs)$) ($(a)+(270:\rb)$) ($(a)+(330:\rs)$) };
	\draw[double] plot [smooth cycle] coordinates {($(b)+(330:\rs)$) ($(b)+(30:\rb)$) ($(b)+(90:\rs)$) ($(b)+(150:\rb)$) ($(b)+(210:\rs)$) ($(b)+(270:\rb)$) } node[xshift=0pt, yshift=-7pt]{$f_0,g_0$};
	\draw[|->,opacity=0.5] ($(b)+(0,0.3)$) -- ($(b)+(0.25,0.3)$) node[pos=0.5,above]{$d$};
	\end{scope}
	\begin{scope}[xshift=4.6cm]
	\coordinate (a) at (0.25,0);
	\coordinate (b) at (1.05,0);
	\draw[double] plot [smooth cycle] coordinates {($(a)+(330:\rs)$) ($(a)+(30:\rb)$) ($(a)+(90:\rs)$) ($(a)+(150:\rb)$) ($(a)+(210:\rs)$) ($(a)+(270:\rb)$) } node[xshift=0pt, yshift=-7pt]{$f_0,g_0$};
	\draw[|->,opacity=0.5] ($(a)+(0,0.3)$) -- ($(a)+(0.25,0.3)$);
	\draw[double] plot [smooth cycle] coordinates {($(b)+(330:\rs)$) ($(b)+(30:\rb)$) ($(b)+(90:\rs)$) ($(b)+(150:\rb)$) ($(b)+(210:\rs)$) ($(b)+(270:\rb)$) } node[xshift=0pt, yshift=-7pt]{$f_1,g_1$};
	\draw[|->,opacity=0.5] ($(b)+(0,0.3)$) -- ($(b)+(0.25,0.3)$);
	\end{scope}
	\begin{scope}[xshift=6.9cm]
	\coordinate (a) at (0.25,0);
	\coordinate (b) at (1.05,0);
	\draw[double] plot [smooth cycle] coordinates {($(a)+(330:\rs)$) ($(a)+(30:\rb)$) ($(a)+(90:\rs)$) ($(a)+(150:\rb)$) ($(a)+(210:\rs)$) ($(a)+(270:\rb)$) } node[xshift=0pt, yshift=-7pt]{$f_1,g_1$};
	\draw[|->,opacity=0.5] ($(a)+(0,0.3)$) -- ($(a)+(0.25,0.3)$);
	\end{scope}

	\begin{scope}[xshift=0.25cm]
	\begin{scope}[xshift=2.3cm]
	\coordinate (a) at (0.25,0);
	\coordinate (b) at (1.05,0);
	\draw[dashed] plot [smooth cycle] coordinates {($(b)+(30:\rb)$) ($(b)+(90:\rs)$) ($(b)+(150:\rb)$) ($(b)+(210:\rs)$) ($(b)+(270:\rb)$) ($(b)+(330:\rs)$) };
	\end{scope}
	\begin{scope}[xshift=4.6cm]
	\coordinate (a) at (0.25,0);
	\coordinate (b) at (1.05,0);
	\draw[dashed] plot [smooth cycle] coordinates {($(a)+(30:\rb)$) ($(a)+(90:\rs)$) ($(a)+(150:\rb)$) ($(a)+(210:\rs)$) ($(a)+(270:\rb)$) ($(a)+(330:\rs)$) };
	\draw[dashed] plot [smooth cycle] coordinates {($(b)+(30:\rb)$) ($(b)+(90:\rs)$) ($(b)+(150:\rb)$) ($(b)+(210:\rs)$) ($(b)+(270:\rb)$) ($(b)+(330:\rs)$) };
	\end{scope}
	\begin{scope}[xshift=6.9cm]
	\coordinate (a) at (0.25,0);
	\coordinate (b) at (1.05,0);
	\draw[dashed] plot [smooth cycle] coordinates {($(a)+(330:\rs)$) ($(a)+(30:\rb)$) ($(a)+(90:\rs)$) ($(a)+(150:\rb)$) ($(a)+(210:\rs)$) ($(a)+(270:\rb)$) };
	\draw (b) node{$\cdots$};
	\end{scope}
	\end{scope}


	\end{tikzpicture}
	\caption[Dislocated]{The dislocated system is equivalent to the original array with the addition of so-called fictitious sources $f_m$, $g_m$, on the boundary of $D^m$ for $m\in\N$. Legend:
		\raisebox{2pt}{\tikz{\draw (0,0) -- (0.5,0)}} untouched resonator,
		\raisebox{2pt}{\tikz{\draw[double,thick] (0,0) -- (0.5,0)}} resonator with fictitious sources,
		\raisebox{2pt}{\tikz{\draw[dashed] (0,0) -- (0.5,0)}} dislocated resonator.
	 } \label{fig:disloc_fictsources}
\end{figure}

To study the dislocated problem \eqref{eq:scattering_translated}, we consider the problem with periodic geometry, along with fictitious sources $f_m,g_m$ placed on the boundary of $D^m = D_1^m \cup D_2^m$. Explicitly, we consider the problem
\begin{equation} \label{eq:scattering_translated_sources}
\left\{
\begin{array} {ll}
\ds \Delta \widetilde{u}+ \omega^2 \widetilde{u}  = 0 & \text{in } \R^3 \setminus \p \C_0, \\
\nm
\ds  \widetilde{u}|_{+} -\widetilde{u}|_{-}  =f_m  & \text{on } \p D^m, m\in\N, \\
\nm
\ds  \delta \frac{\partial \widetilde{u}}{\partial \nu} \bigg|_{+} - \frac{\partial \widetilde{u}}{\partial \nu} \bigg|_{-} =g_m & \text{on } \partial D^m, m\in\N, \\
\nm
\ds \widetilde u(x_1,x_2,x_3) & \text{satisfies the outgoing radiation condition as } \sqrt{x_2^2+x_3^2} \rightarrow \infty.
\end{array}
\right.
\end{equation}
Assume we have a non-zero solution $u$ to \eqref{eq:scattering_translated}. Inside $Y^m := Y + md\mathbf{v}$, we can represent the solution as in \eqref{eq:u} with the choices $\Omega = D^m$ and $U=Y^m$. In this way we define the layer densities $\phi^{i,d}$ and $\phi^{o,d}$. Since $\P_1$ is invertible for small enough $d$, we can define the layer densities $\phi^i_m$ and  $\phi^o_m$ as
$$
\begin{pmatrix} \phi^i_m \\ \phi^o_m \end{pmatrix}
= \left(\P_1\right)^{-1} Q^{-1} \begin{pmatrix} \phi^{i,d} \\ \phi^{o,d} \end{pmatrix}.
$$
We then set the fictitious sources as
\begin{equation}\label{eq:sources}
\begin{pmatrix}
f_m \\ g_m
\end{pmatrix} =\begin{cases} 0, \quad &m < 0,\\
 B^m\begin{pmatrix} \phi^i_m \\ \phi^o_m \end{pmatrix}, \quad &m \geq 0,\end{cases}
\end{equation}
where $B^m$ is defined as in \Cref{prop:B} with the choice $\Omega = D^m$. We then define the solution $\widetilde u$ by \eqref{eq:utilde} with $\widetilde H = H$, and because of \eqref{eq:sources} this coincides with $u$ in $\Big(Y^m \setminus \big(D^m \cup (D^m +d\mathbf{v})\big) \Big)\cup \big(D^m \cap (D^m +d\mathbf{v})\big)$.

Conversely, if we have a non-zero solution $\widetilde u$ to \eqref{eq:scattering_translated_sources}, represented as \eqref{eq:utilde} in $Y^m$ and with sources satisfying \eqref{eq:sources}, we can define  $\phi^{i,d}$ and $\phi^{o,d}$ to get a non-zero solution $u$ to \eqref{eq:scattering_translated} coinciding with $\widetilde u$ in $\Big(Y^m \setminus \big(D^m \cup (D^m +d\mathbf{v})\big) \Big)\cup \big(D^m \cap (D^m +d\mathbf{v})\big)$.

From the above arguments, it follows that the spectral problem \eqref{eq:scattering_translated} is equivalent to \eqref{eq:scattering_translated_sources}. So, in the remainder of this subsection we will only study the latter problem. For simplicity, since the solutions coincide, we will omit the superscript $\ \widetilde{} \ $ and simply write $u$ for $\widetilde u$.

We define $u^\alpha$ as the Floquet transform of $u$, \ie{},
$$
u^\alpha = \sum_{m\in \Z} u(x-mL\mathbf{v}) e^{\iu \alpha m}.
$$
The transformed solution $u^\alpha$ satisfies
\begin{equation} \label{eq:scattering_quasi}
\left\{
\begin{array} {ll}
\ds \Delta u^\alpha+ \omega^2 u^\alpha  = 0 & \text{in } Y \setminus \p D, \\
\nm
\ds  u^\alpha|_{+} -u^\alpha|_{-}  =f^\alpha  & \text{on } \p D, \\
\nm
\ds  \delta \frac{\partial u^\alpha}{\partial \nu} \bigg|_{+} - \frac{\partial u^\alpha}{\partial \nu} \bigg|_{-} =g^\alpha & \text{on } \partial D, \\
\nm
\ds e^{-\iu  \alpha x_1}  u^\alpha(x_1,x_2,x_3)  \,\,\,&  \mbox{is periodic in } x_1, \\
\nm
\ds u^\alpha(x_1,x_2,x_3)& \text{satisfies the $\alpha$-quasiperiodic outgoing radiation condition} \\ &\hspace{0.5cm} \text{as } \sqrt{x_2^2+x_3^2} \rightarrow \infty,
\end{array}
\right.
\end{equation}
where
\begin{equation}\label{eq:fg}
f^\alpha =  \sum_{m\in \Z} f_me^{-\iu \alpha m}, \qquad g^\alpha =  \sum_{m\in \Z} g_me^{-\iu \alpha m}.
\end{equation}
From now on, we identify functions $u_m \in L^2(\p D^m)$, for any $m$, with $u_0\in L^2(\p D)$ by translating the argument, \ie{}, by $u_0(x) = u_m(x+mL\mathbf{v})$. Observe that under this identification, all operators $B^m, m \in \N$ coincide and will be denoted by $\B_0$.

The solution $u^\alpha$ can be represented using quasiperiodic layer potentials as
\begin{align*}
u^\alpha
=
\begin{cases}
\hat\S_D^{\omega}[\phi^{\alpha,i}] &\quad \mbox{in } D,
\\
\S_D^{\alpha,\omega}[\phi^{\alpha,o}] &\quad \mbox{in } Y\setminus \overline{D},
\end{cases}
\end{align*}
where the pair $(\phi^{\alpha,i},\phi^{\alpha,o})\in L^2(\p D)^2$ is the solution to
\begin{equation}\label{eq:phipsialpha}
\mathcal{A}^\alpha(\omega,\delta)
\begin{pmatrix}
	\phi^{\alpha,i}
	\\[0.3em]
	\phi^{\alpha,o}
\end{pmatrix}
=
\begin{pmatrix}
\hat\S_D^{\omega} & - \S_D^{\alpha,\omega}
\\[0.3em]
\ds -\frac{1}{2}I +\hat\K_D^{\omega,*}
&
\ds -\delta \left( \frac{1}{2}I +\left(\K_D^{-\alpha,\omega}\right)^* \right)
\end{pmatrix}
\begin{pmatrix}
\phi^{\alpha,i}
\\[0.3em]
\phi^{\alpha,o}
\end{pmatrix}
=
\begin{pmatrix}
- f^\alpha
\\[0.3em]
- g^\alpha
\end{pmatrix}.
\end{equation}
Then the original solution $u$ can be recovered by the inverse Floquet transform,
\begin{equation*}
u(x)=\frac{1}{2\pi}\int_{Y^*} u^\alpha(x) \dx \alpha.
\end{equation*}
Because of the quasiperiodicity of $u^\alpha$, the solution $u$ inside the region $D^m$ satisfies
\begin{equation} \label{eq:psijm}
u= \hat\S_{D^m}^{\omega}\left[\frac{1}{2\pi}\int_{Y^*}e^{\iu \alpha m}\phi^{\alpha,i} \dx \alpha \right].
\end{equation}
Similarly, inside the region $Y^m\setminus \overline{D^m}$, we have
\begin{align} \label{eq:phijm}
\ds
u&=\frac{1}{2\pi}\int_{Y^*}\S_D^{\alpha,\omega}[\phi^{\alpha,o}] \dx \alpha \nonumber \\
&=\S_{D^m}^{\omega}\left[\frac{1}{2\pi}\int_{Y^*}e^{\iu \alpha m}\phi^{\alpha,o}  \dx \alpha\right]  + \frac{1}{2\pi}\int_{Y^*} \sum_{n\in\mathbb{Z},n\neq m}\S_D^{\omega}[\phi^{\alpha,o}](\,\cdot - nL\mathbf{v})e^{\iu n\alpha} \dx \alpha.
\end{align}
The last term in the right-hand side of (\ref{eq:phijm}) satisfies the homogeneous Helmholtz equation $(\Delta + \omega^2 )u=0$ in $Y^m$. Therefore, combining \eqref{eq:psijm} and \eqref{eq:phijm} together with  \eqref{eq:utilde}, we can identify $\phi^i=\phi^i_m$, $\phi^o=\phi^o_m$ and $\widetilde H$ as follows:
\begin{equation}\label{eq:psimphim}
\phi_m^{i} = \frac{1}{2\pi}\int_{Y^*}e^{\iu \alpha m} \phi^{\alpha,i} \dx \alpha,
\quad
\phi_m^{o} = \frac{1}{2\pi}\int_{Y^*}e^{\iu \alpha m} \phi^{\alpha,o} \dx \alpha,
\end{equation}
and
$$\widetilde H = \frac{1}{2\pi}\int_{Y^*} \sum_{n\in\mathbb{Z},n\neq m}\S_D^{\omega}[\phi^{\alpha,o}](\,\cdot - nL\mathbf{v})e^{\iu n\alpha} \dx \alpha.$$
We define the operator $I_m: L^2(\p D\times Y^*) \rightarrow L^2(\p D),$ by
$$I_m[\varphi](x) = \frac{1}{2\pi} \int_{Y^*}\varphi(x,\alpha) e^{\iu \alpha m} \dx \alpha.$$

Since the operator $\mathcal{A}_\alpha$ is invertible for $\omega$ in the band gap,
we have from \eqref{eq:phipsialpha} that
$$
\begin{pmatrix}
\phi^{\alpha,i}
\\[0.3em]
\phi^{\alpha,o}
\end{pmatrix}
= \mathcal{A}^\alpha(\omega,\delta)^{-1}\begin{pmatrix}
- f^\alpha
\\[0.3em]
- g^\alpha
\end{pmatrix}.
$$
Combining this together with \eqref{eq:psimphim} and \eqref{eq:fg}, we obtain the following result.
\begin{prop}
	For small enough $d > 0 	$, the mid-gap frequencies of \eqref{eq:scattering_translated} are precisely the values $\omega$ such that there is a non-zero solution $\phi^{\alpha,i}, \phi^{\alpha,o} \in L^2(\p D\times Y^*)$ to the equation
\begin{equation} \label{eq:int_eq}
\begin{pmatrix}
\phi^{\alpha,i}
\\[0.3em]
\phi^{\alpha,o}
\end{pmatrix}
= -\big(\mathcal{A}^\alpha(\omega,\delta)\big)^{-1}\left(\sum_{m=0}^\infty e^{-\iu m\alpha} \B_0 I_m\right)
\begin{pmatrix}
\phi^{\alpha,i}
\\[0.3em]
\phi^{\alpha,o}
\end{pmatrix}.
\end{equation}
\end{prop}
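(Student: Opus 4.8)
The plan is to prove the stated equivalence by assembling, through the Floquet transform, the identities already established in this subsection, the point being that every link in the chain connecting \eqref{eq:scattering_translated} to \eqref{eq:int_eq} is reversible. I would therefore treat both directions of the ``precisely'' statement at once, and reserve the real work for checking that the operator appearing on the right-hand side of \eqref{eq:int_eq} is a well-defined bounded map on $L^2(\p D\times Y^*)$.

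First I would invoke the equivalence between the dislocated spectral problem \eqref{eq:scattering_translated} and the fictitious-source problem \eqref{eq:scattering_translated_sources}, valid for $d$ small enough so that $\P_1$ is invertible and the expansions underlying \Cref{prop:B} hold. A mid-gap frequency of \eqref{eq:scattering_translated} is then a value $\omega$ in the band gap admitting a non-zero, $L^2$-localized $\widetilde u$ solving \eqref{eq:scattering_translated_sources}, with sources $f_m,g_m$ tied to the densities through \eqref{eq:sources}. Taking the Floquet transform reduces this to the quasiperiodic problem \eqref{eq:scattering_quasi}; its quasiperiodic layer-potential representation gives \eqref{eq:phipsialpha}, i.e. $\A^\alpha(\omega,\delta)(\phi^{\alpha,i},\phi^{\alpha,o})^\top=(-f^\alpha,-g^\alpha)^\top$. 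Because $\omega$ lies in the band gap, $\A^\alpha(\omega,\delta)$ is invertible for every $\alpha$: its only singularities in $\omega$ are at the band values $\omega=\omega_j^\alpha$ (cf. \Cref{prop:Asing}), which are complex near $\alpha=0$ and real but outside the gap for $|\alpha|>K$ (by \Cref{cor:bandgap}), so $\omega\neq\omega_j^\alpha$ throughout $Y^*$. Hence I may write $(\phi^{\alpha,i},\phi^{\alpha,o})^\top=-(\A^\alpha)^{-1}(f^\alpha,g^\alpha)^\top$.

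The core step is to re-express the right-hand side in terms of the unknown densities. Combining \eqref{eq:fg} with the source prescription \eqref{eq:sources}, the identification of all $B^m$ with the single operator $\B_0$, and the inverse-Floquet identities \eqref{eq:psimphim} (which state exactly that $(\phi_m^i,\phi_m^o)^\top=I_m(\phi^{\alpha,i},\phi^{\alpha,o})^\top$), I obtain
$$\begin{pmatrix} f^\alpha \\ g^\alpha \end{pmatrix}=\sum_{m=0}^\infty e^{-\iu m\alpha}\,\B_0\,I_m\begin{pmatrix}\phi^{\alpha,i}\\ \phi^{\alpha,o}\end{pmatrix},$$
and substituting this into $(\phi^{\alpha,i},\phi^{\alpha,o})^\top=-(\A^\alpha)^{-1}(f^\alpha,g^\alpha)^\top$ yields \eqref{eq:int_eq}. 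The converse direction runs the same identities backwards: a non-zero solution of \eqref{eq:int_eq} defines $f^\alpha,g^\alpha$, hence sources $f_m,g_m$ supported on $m\ge 0$, hence via the inverse Floquet transform a non-zero $\widetilde u$ solving \eqref{eq:scattering_translated_sources}, and finally a mid-gap mode of \eqref{eq:scattering_translated}.

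The step I expect to be the main obstacle is making rigorous sense of $\sum_{m=0}^\infty e^{-\iu m\alpha}\B_0 I_m$ and of its composition with $(\A^\alpha)^{-1}$ as a bounded operator on $L^2(\p D\times Y^*)$. For the half-sum I would argue on the Fourier side: by Parseval the coefficients $\{I_m[\varphi]\}_{m\in\Z}$ of any $\varphi\in L^2(\p D\times Y^*)$ are square-summable, so truncating to $m\ge 0$ and applying the bounded operator $\B_0$ again produces a square-summable sequence whose reconstruction lies in $L^2(\p D\times Y^*)$; thus the half-sum is just $\B_0$ composed with the Riesz projection onto non-negative modes and is bounded. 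The genuinely delicate point is the control of $(\A^\alpha)^{-1}$ as $\alpha\to 0$, where $\S_D^{\alpha,0}$ degenerates and the relevant block of $(\A^\alpha)^{-1}$ inherits a $(\S_D^{\alpha,0})^{-1}$ factor; here I would use the logarithmic (hence integrable) nature of the small-$\alpha$ behaviour, together with the band-gap bound $|\omega-\omega_j^\alpha|\gtrsim 1$, to show that the composite operator still maps $L^2(\p D\times Y^*)$ into itself. Smallness of $d$ enters only through the invertibility of $\P_1$ and the validity of \Cref{prop:B}; it plays no further role in the equivalence.
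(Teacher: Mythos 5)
Your proposal follows essentially the same route as the paper: establish the equivalence of \eqref{eq:scattering_translated} with the fictitious-source problem \eqref{eq:scattering_translated_sources} for small $d$ (via invertibility of $\P_1$ and \Cref{prop:B}), pass to the quasiperiodic problem \eqref{eq:scattering_quasi} and its layer-potential formulation \eqref{eq:phipsialpha} by the Floquet transform, use the invertibility of $\A^\alpha(\omega,\delta)$ for $\omega$ in the band gap, and then combine \eqref{eq:sources}, \eqref{eq:fg} and \eqref{eq:psimphim} to rewrite the sources as $\sum_{m=0}^\infty e^{-\iu m\alpha}\B_0 I_m(\phi^{\alpha,i},\phi^{\alpha,o})^\top$ and substitute, with the converse obtained by running the identities backwards. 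Your additional care about the boundedness of the half-sum operator on $L^2(\p D\times Y^*)$ and the behaviour of $(\A^\alpha)^{-1}$ as $\alpha\to 0$ addresses points the paper leaves implicit, but it is an elaboration of the same argument rather than a different one.
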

It is clear that $\B_0 = O(d)$. As $d\rightarrow 0$, it follows from \Cref{prop:Asing} that any subwavelength resonant frequency $\omega = \omega(d)$ satisfies $\omega(d) \rightarrow \omega_j^\alpha$ for some $\omega_j^\alpha$. Denote
$$\omega_1^\diamond = \max_{\alpha \in Y^*} \Re(\omega_1^\alpha), \qquad \omega_2^\diamond = \min_{\alpha \in Y^*} \Re(\omega_2^\alpha).$$
The following lemma follows from \Cref{thm:bandgap}.
\begin{lem}\label{prop:band}
	The critical values $\omega_1^\diamond$ and $\omega_2^\diamond$ are attained at $\alpha^\diamond = \pi/L$. Further, for $\alpha$ close to $\alpha^\diamond$ we have
	$$\omega_1^\alpha = \omega_1^\diamond - c_1(\alpha-\alpha^\diamond)^2 + O\left(|\alpha-\alpha^\diamond|^3 \right), \qquad \omega_2^\alpha = \omega_2^\diamond - c_2(\alpha-\alpha^\diamond)^2 + O\left(|\alpha-\alpha^\diamond|^3\right),$$
	for some constants $c_1, c_2$.
\end{lem}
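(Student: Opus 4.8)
The plan is to split the statement into its two assertions. The first --- that $\omega_1^\diamond$ and $\omega_2^\diamond$ are attained at $\alpha^\diamond = \pi/L$ --- is immediate from \Cref{cor:bandgap}, which already gives $\max_\alpha \Re(\omega_1^\alpha) = \omega_1^{\pi/L}$ and $\min_\alpha \Re(\omega_2^\alpha) = \omega_2^{\pi/L}$; equivalently, \Cref{thm:bandgap} locates the extrema of $\lambda_j^\alpha$ at $\pi/L$ and \Cref{thm:char_approx_infinite} transfers this to $\omega_j^\alpha$ through $\omega_j^\alpha = \sqrt{\delta\lambda_j^\alpha/|D_1|} + O(\delta)$. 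Thus $\omega_j^\diamond = \omega_j^{\pi/L}$ and it remains only to expand $\omega_j^\alpha$ about $\pi/L$.

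For the expansion I would first establish that $\alpha \mapsto \omega_j^\alpha$ is \emph{even} about $\pi/L$. Two symmetries are available: periodicity, $\omega_j^{\alpha + 2\pi/L} = \omega_j^\alpha$, and the time-reversal symmetry $\omega_j^{-\alpha} = \omega_j^\alpha$. The latter holds because complex conjugation maps an $\alpha$-quasiperiodic solution of \eqref{eq:scattering_quasi_d=0} at a (real) frequency $\omega$ to a $(-\alpha)$-quasiperiodic solution at the same $\omega$, so the ordered bands are preserved; at the level of the capacitance matrix this is the statement $C_{ij}^{-\alpha} = \overline{C_{ij}^\alpha}$, whence $C^{-\alpha}$ and $C^\alpha$ have identical eigenvalues. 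Writing $s = \alpha - \pi/L$, combining the two relations gives $\omega_j^{\pi/L - s} = \omega_j^{\pi/L + s}$. In particular $\pi/L$ is a genuine interior critical point of the (periodically extended) band function, not merely a boundary extremum of the Brillouin zone, so the first-order term in any Taylor expansion there vanishes.

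Next I would show that $\omega_j^\alpha$ is real-analytic --- in particular $C^3$ --- in $\alpha$ near $\pi/L$. Since $|\alpha| = \pi/L$ is bounded away from the origin, the problem \eqref{eq:scattering_quasi_d=0} is self-adjoint with real eigenvalues, and because the band gap is open for $l_0 \neq 1/2$ (\Cref{cor:bandgap}) the two bands are simple at $\pi/L$, i.e. $\lambda_1^{\pi/L} \neq \lambda_2^{\pi/L}$. Analytic perturbation theory then provides analytic dependence of $\omega_j^\alpha$ on $\alpha$ in a neighbourhood of $\pi/L$. The same conclusion can be read off \Cref{lem:cap_estim_quasi}: $C_{11}^\alpha$ and $C_{12}^\alpha$ are real-analytic in $\alpha$ away from $\alpha = 0$ (the only singularity being the logarithmic one at the origin noted after that lemma), and since the open gap forces $|C_{12}^{\pi/L}| > 0$, so that $C_{12}^{\pi/L} \neq 0$, the modulus $|C_{12}^\alpha|$ and hence $\lambda_j^\alpha = C_{11}^\alpha \mp |C_{12}^\alpha|$ are analytic near $\pi/L$.

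Finally, Taylor's theorem applied to the even, $C^3$ function $\omega_j^{\pi/L + s}$ gives $\omega_j^{\pi/L + s} = \omega_j^{\pi/L} + \tfrac12\, \partial_\alpha^2\omega_j^\alpha|_{\alpha = \pi/L}\, s^2 + O(|s|^3)$, and setting $c_j := -\tfrac12\, \partial_\alpha^2\omega_j^\alpha|_{\alpha=\pi/L}$ together with $\omega_j^{\pi/L} = \omega_j^\diamond$ yields the claimed form; the extremal nature of $\pi/L$ fixes $c_1 \geq 0$ and $c_2 \leq 0$. I expect the main obstacle to be the analyticity step: it genuinely relies on the eigenvalue simplicity furnished by the open band gap, and if the gap were to close ($l_0 = 1/2$) the bands would touch at $\pi/L$ and the expansion would fail to be smooth. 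The most economical way to secure this step is to verify $C_{12}^{\pi/L} \neq 0$ directly from \Cref{lem:cap_estim_quasi} rather than to appeal to abstract perturbation theory.
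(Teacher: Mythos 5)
Your proposal is correct, and it is in fact more complete than the paper's own treatment: the paper dispatches this lemma with the single remark that it ``follows from \Cref{thm:bandgap}'', which only justifies the first assertion (the location of the extrema); the quadratic expansion is left entirely implicit. Your argument supplies exactly the missing ingredients: evenness of the band functions about $\pi/L$ (from time-reversal symmetry $\omega_j^{-\alpha}=\omega_j^\alpha$, which the paper itself invokes in the proof of \Cref{prop:N=1}, combined with $2\pi/L$-periodicity so that $\pi/L$ is an interior critical point), and smoothness near $\pi/L$ from simplicity of the two subwavelength bands, which the open gap of \Cref{cor:bandgap} guarantees, so that Taylor's theorem with vanishing linear term gives the stated form. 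One caveat on your ``more economical'' alternative: reading analyticity off \Cref{lem:cap_estim_quasi} only gives smoothness of the leading-order eigenvalues $\lambda_j^\alpha$ of the capacitance matrix; since \Cref{thm:char_approx_infinite} carries an $\alpha$-dependent $O(\delta)$ remainder, that route yields the expansion only to leading order in $\delta$, not for $\omega_j^\alpha$ itself. It is the perturbation-theoretic route — analytic dependence on $\alpha$ of the exact, real eigenvalues of the self-adjoint problem for $|\alpha|>K$ — that genuinely closes the argument, so keep that as the primary justification rather than the capacitance-matrix shortcut.
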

In what follows, we will consistently use the superscript $^\diamond$ to denote corresponding quantity evaluated at the critical point $\alpha^\diamond = \pi/L$.
\begin{lem}\label{lem:C}
	Assume that $D_1$ and $D_2$ are strictly convex. Then, in the dilute regime, we have the following:
	\begin{align*}
		\text{Case} \ l_0 < 1/2: \qquad \langle \Phi_1^\diamond, \B_0 \Psi_1^\diamond\rangle < 0 \quad \text{and} \quad \langle \Phi_2^\diamond, \B_0 \Psi_2^\diamond\rangle > 0, \\
		\text{Case} \ l_0 > 1/2: \qquad \langle \Phi_1^\diamond, \B_0 \Psi_1^\diamond\rangle > 0 \quad \text{and} \quad \langle \Phi_2^\diamond, \B_0 \Psi_2^\diamond\rangle < 0,
	\end{align*}
	for small enough $\epsilon, \delta$ and $d$.
\end{lem}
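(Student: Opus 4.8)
The plan is to work to leading order in the dislocation size $d$ and then reduce the resulting pairing to the dilute-regime capacitance asymptotics of \Cref{lem:cap_estim_quasi}, all evaluated at the band-edge frequency $\omega=\omega_j^\diamond$. First I would expand $\B_0$ via \Cref{prop:B}: since $\P_1 = I + d\P_1^{(1)} + O(d^2)$ and $\P_2 = I + d\P_2^{(1)} + O(d^2)$, the identity $\B_0 = \P_2\A\P_1 - \A$ gives
$$\B_0 = d\left(\P_2^{(1)}\A + \A\P_1^{(1)}\right) + O(d^2),$$
so for small $d$ the sign of $\langle \Phi_j^\diamond, \B_0\Psi_j^\diamond\rangle$ is that of $d\,\langle \Phi_j^\diamond, (\P_2^{(1)}\A + \A\P_1^{(1)})\Psi_j^\diamond\rangle$, and it suffices to determine the sign of the bracketed pairing.

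Next I would exploit that, by \Cref{prop:Asing}, $\Psi_j^\diamond$ and $\Phi_j^\diamond$ are respectively the kernel and cokernel vectors of the \emph{quasiperiodic} operator $\A^{\pi/L}(\omega_j^\diamond,\delta)$, which I abbreviate $\A^\diamond$. Writing $\langle \Phi_j^\diamond, \A\P_1^{(1)}\Psi_j^\diamond\rangle = \langle \A^*\Phi_j^\diamond, \P_1^{(1)}\Psi_j^\diamond\rangle$ and $\langle \Phi_j^\diamond, \P_2^{(1)}\A\Psi_j^\diamond\rangle = \langle (\P_2^{(1)})^*\Phi_j^\diamond, \A\Psi_j^\diamond\rangle$, and using $\A^\diamond\Psi_j^\diamond = 0 = (\A^\diamond)^*\Phi_j^\diamond$, both pairings collapse onto the difference $\A - \A^\diamond$, whose only nonzero blocks are the single-layer and Neumann--Poincar\'e differences $\S_D^{\diamond,\omega} - \S_D^{\omega}$ and $(\K_D^{-\diamond,\omega})^* - (\K_D^{\omega})^*$. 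By definition of the quasiperiodic Green's function these are exactly the alternating lattice sums $\sum_{m\neq 0}(-1)^m(\,\cdot\,-mL\mathbf v)$, i.e. the objects controlling the off-diagonal capacitance in \Cref{lem:cap_estim_quasi}. Two structural simplifications enter here: the bottom component $\chi_j^\diamond$ of $\Phi_j^\diamond$ is constant on each $\p D_i$, so every term in which $\P_2^{(1)}$ applies $\p_T$ or $\Delta_{\p D}$ to it drops out; and the factor $-\delta$ in the top component of $\Phi_j^\diamond$ exactly cancels the $\delta^{-1}$ in the $(1,2)$ entry of $\P_2^{(1)}$, so that the surviving contribution is $O(1)$ in $\delta$ rather than singular.

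I would then insert the low-frequency expansions \eqref{eq:Sk0exp} and \eqref{eq:Sexp} (recalling $\omega^2 = O(\delta)$) and pass to the dilute scaling \eqref{eq:dilute}. After these reductions the leading contribution factorises into (i) a purely geometric boundary integral of the form $\int_{\p D}(\mathbf v\cdot \nu)(\cdots)$ against the equilibrium densities $\psi_i$, which strict convexity of $D_1,D_2$ forces to be sign-definite, and (ii) a real cross-capacitance term $\Re C_{12}^\diamond$ arising from the alternating lattice sum. The decisive point is that, in the eigenvector formulas of \Cref{lem:evec}, $\theta_\diamond$ is fixed by $e^{\iu\theta_\diamond} = C_{12}^\diamond/|C_{12}^\diamond|$, while \Cref{lem:cap_estim_quasi} gives $C_{12}^\diamond = -\tfrac{(\epsilon\,\mathrm{Cap}_B)^2}{4\pi L}\sum_{m}(-1)^m/|m+l_0| + o(\epsilon^2)$, whose sign is negative for $l_0 < 1/2$ and positive for $l_0 > 1/2$ (the topological transition at $l_0 = 1/2$). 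Consequently $u_1^\diamond$ is the symmetric combination $\tfrac{1}{\sqrt2}(\psi_1^\diamond+\psi_2^\diamond)$ and $u_2^\diamond$ the antisymmetric one when $l_0<1/2$, with the two roles interchanged when $l_0>1/2$; since $\mathbf v\cdot\nu$ is odd under the reflection $R_0$, the geometric factor couples to the symmetric and antisymmetric modes with opposite, definite signs, and this interchange is precisely what flips the sign pattern between the two cases, yielding the four claimed inequalities.

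The main obstacle is the middle step: carefully tracking which entries of $\P_2^{(1)}\A + \A\P_1^{(1)}$ survive at leading order once the two opposite powers of $\delta$ in $\P_2^{(1)}$ (the $\delta^{-1}$ entry and the $\delta(\omega^2+\Delta_{\p D})$ entry) are combined with $\omega^2=O(\delta)$ and the $-\delta$ weight in $\Phi_j^\diamond$, and reducing the differential operators $\p_T$ and $\Delta_{\p D}$ (through integration by parts on $\p D$, keeping the curvature term $2\tau$) to explicit boundary integrals. Establishing that the resulting geometric factor is genuinely sign-definite, rather than merely generically nonzero, is where strict convexity is used, and is the only place the detailed shape of the resonators enters.
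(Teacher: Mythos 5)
Your opening step, $\B_0 = d\bigl(\P_2^{(1)}\A + \A\P_1^{(1)}\bigr) + O(d^2)$, is exactly how the paper's proof begins, and several of your structural instincts are sound: the sign of $C_{12}^\diamond$ from \Cref{lem:cap_estim_quasi}, the monopole/dipole interchange of $u_1^\diamond,u_2^\diamond$ across $l_0=1/2$, and the fact that strict convexity must supply a sign-definite geometric factor all appear in the paper's argument. The central reduction, however, does not hold at the order required. You treat $\Psi_j^\diamond$ and $\Phi_j^\diamond$ as \emph{exact} kernel and cokernel vectors of $\A^\diamond := \A^{\pi/L}(\omega_j^\diamond,\delta)$, but they are assembled from the $\omega=0$ equilibrium densities, so the identities $\A^\diamond\Psi_j^\diamond=0$ and $(\A^\diamond)^*\Phi_j^\diamond=0$ hold only to leading order. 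Concretely, since $\bigl(\tfrac12 I+(\K_D^{-\diamond,0})^*\bigr)[u_j^\diamond]=u_j^\diamond$, the second component of $\A^\diamond\Psi_j^\diamond$ equals $\omega^2\hat\K_{D,2}[u_j]-\delta u_j^\diamond+O(\omega^3)$, where $\hat\K_{D,2}$ is the $O(\omega^2)$ coefficient of $\hat\K_D^{\omega,*}$; the resonance condition at $\omega=\omega_j^\diamond$ only forces the scalar projection $\langle\chi_j^\diamond,\cdot\rangle$ of this residual to vanish, not the residual itself. Because $\P_2^{(1)}$ carries the entry $-(\mathbf{v}\cdot\nu)/\delta$ and the curvature weight $2\tau$, and because the answer you are after is itself proportional to $\delta$ (the paper's leading term is $-\delta\epsilon\, u_j^{(1)}\langle\chi_j^\diamond,2\tau u_j\rangle$), the terms you discard, e.g. $\langle\chi_j^\diamond, 2\tau\,(\omega^2\hat\K_{D,2}[u_j]-\delta u_j^\diamond)\rangle$, are of the same order in $\delta$ as the answer: the curvature weight destroys the cancellation that the resonance condition guarantees only for the \emph{unweighted} pairing. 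So the collapse onto $\A-\A^\diamond$ cannot be justified without precisely the term-by-term dilute expansion the paper performs, in which the surviving contribution comes from $\A^{(0)}\Psi^{(1)}$, i.e. the decoupled operator acting on the $O(\epsilon)$ quasiperiodic correction $\epsilon u_j^{(1)}u_j$ of the density, pushed through $\P_2^{(1)}$.

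The second gap is the claimed form of the leading term. You propose it factors into a sign-definite integral of the form $\int_{\p D}(\mathbf{v}\cdot\nu)(\cdots)$ times $\Re\,C_{12}^\diamond$, with the case-flip produced by $\mathbf{v}\cdot\nu$ coupling with opposite signs to the symmetric and antisymmetric modes. In fact every $(\mathbf{v}\cdot\nu)$-weighted pairing of the relevant densities vanishes identically, because each resonator is invariant under its own reflection by \eqref{eq:symmetry} while $\mathbf{v}\cdot\nu$ is odd under it; this vanishing is exactly how the paper disposes of the entire $\A\P_1^{(1)}$ half of $\B_0$ (via $\langle u_j,(\mathbf{v}\cdot\nu)u_j\rangle=0$), and the same symmetry would annihilate your proposed leading term for \emph{both} modes rather than give them opposite signs. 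In the paper the sign-definite factor is instead the curvature pairing $\langle\chi_j^\diamond,2\tau u_j\rangle<0$, which is where strict convexity ($\tau\geq\tau_0>0$) enters and which has the \emph{same} sign in all four cases; the entire case-dependence sits in the scalar $u_j^{(1)}$, the $O(\epsilon)$ lattice-sum coefficient in the expansion $u_j^\diamond = u_j + \epsilon u_j^{(1)}u_j + \cdots$, which is negative for monopole modes and positive for dipole modes. Your (correct) identification of the sign of $C_{12}^\diamond$ and the resulting swap of monopole/dipole roles between $j=1$ and $j=2$ then yields the four inequalities, but only once the sign is routed through $u_j^{(1)}$ times the curvature integral, not through an odd geometric integral.
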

We refer to \Cref{app:lem1} for the proof of Lemma \ref{lem:C}. We will also need the following lemma.
\begin{lem}\label{lem:intsum}
	We have
	$$\mathrm{Re}\left(\frac{1}{2\pi}\sum_{m=0}^\infty e^{-\iu m\alpha}\int_0^{2\pi} \frac{e^{\iu m\alpha'}}{1+c^2(\alpha'-\pi)^2}\dx \alpha'\right) = \frac{1}{2}\left(\frac{1}{1+c^2(\alpha-\pi)^2} + \frac{1}{\pi c}\arctan(\pi c)\right).$$
\end{lem}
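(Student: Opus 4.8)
The plan is to recognize the inner integral as a (rescaled) Fourier coefficient and then resum the series. Fix $c>0$ and $\alpha\in[0,2\pi]$, and let $f$ denote the $2\pi$-periodic function whose restriction to $[0,2\pi]$ is $f(\beta)=\frac{1}{1+c^2(\beta-\pi)^2}$. Its periodic extension is continuous, since $f(0)=f(2\pi)=\frac{1}{1+c^2\pi^2}$, and piecewise $\C^1$. Writing
$$a_m := \int_0^{2\pi} \frac{e^{\iu m\alpha'}}{1+c^2(\alpha'-\pi)^2}\dx \alpha',$$
the left-hand side of the lemma equals $\Re\big(\frac{1}{2\pi}\sum_{m=0}^\infty e^{-\iu m\alpha}a_m\big)$. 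First I would substitute $\beta=\alpha'-\pi$ to obtain $a_m=(-1)^m\int_{-\pi}^\pi \frac{\cos(m\beta)}{1+c^2\beta^2}\dx \beta$, the sine contribution vanishing by oddness; this shows $a_m\in\R$ and $a_{-m}=a_m$. Hence $\Re\big(e^{-\iu m\alpha}a_m\big)=a_m\cos(m\alpha)$, and the task reduces to evaluating $\frac{1}{2\pi}\sum_{m\geq0}a_m\cos(m\alpha)$.

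The key step is Fourier inversion. The quantities $\frac{a_m}{2\pi}$ are precisely the Fourier coefficients of $f$ on $[0,2\pi]$, and since $f$ is continuous and piecewise $\C^1$ its coefficients decay like $O(m^{-2})$, so the Fourier series converges absolutely and uniformly and the interchange of $\Re$ with the sum is justified. Thus, for $\alpha\in[0,2\pi]$,
$$f(\alpha)=\frac{1}{2\pi}\sum_{m\in\Z}a_m e^{\iu m\alpha}=\frac{1}{2\pi}\Big(a_0+2\sum_{m=1}^\infty a_m\cos(m\alpha)\Big),$$
using $a_{-m}=a_m\in\R$. Solving for the cosine series gives $\sum_{m=1}^\infty a_m\cos(m\alpha)=\pi f(\alpha)-\frac{a_0}{2}$, so that
$$\frac{1}{2\pi}\sum_{m=0}^\infty a_m\cos(m\alpha)=\frac{1}{2\pi}\Big(\frac{a_0}{2}+\pi f(\alpha)\Big)=\frac{f(\alpha)}{2}+\frac{a_0}{4\pi}.$$

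It remains to compute the constant: $a_0=\int_{-\pi}^\pi \frac{\dx \beta}{1+c^2\beta^2}=\frac{2}{c}\arctan(\pi c)$, whence $\frac{a_0}{4\pi}=\frac{1}{2\pi c}\arctan(\pi c)$, while $\frac{f(\alpha)}{2}=\frac{1}{2}\frac{1}{1+c^2(\alpha-\pi)^2}$. Adding these two terms recovers exactly the claimed right-hand side. The only genuine obstacle is the justification of pointwise and termwise convergence together with the $\Re$--$\sum$ interchange; this is why I would emphasize the continuity of the periodic extension of $f$ (which removes any jump/Gibbs issues at the endpoints) and the $O(m^{-2})$ decay of the coefficients, making the argument elementary rather than requiring an Abel-summation device. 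I would also note that the identity is meaningful only for $\alpha$ in a single period $[0,2\pi]$, consistent with the fact that the right-hand side is not $2\pi$-periodic in $\alpha$.
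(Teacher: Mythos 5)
Your proof is correct and takes essentially the same route as the paper: both arguments rest on Fourier inversion for the continuous periodic extension of $f(\beta)=1/(1+c^2(\beta-\pi)^2)$ together with the coefficient symmetry $a_{-m}=\overline{a_m}=a_m$ (which is exactly what the paper's terse ``completing the Fourier series'' step, $I(\alpha)+\overline{I(\alpha)}-\tfrac{a_0}{2\pi}=f(\alpha)$, encodes), followed by the explicit evaluation $a_0=\tfrac{2}{c}\arctan(\pi c)$. Your additional justification of the $O(m^{-2})$ decay of the coefficients and the resulting uniform convergence is a detail the paper leaves implicit.
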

\begin{proof}
	Define $I(\alpha)$ as
	$$
	I(\alpha) = \frac{1}{2\pi}\sum_{m=0}^\infty e^{-\iu m\alpha}\int_0^{2\pi} \frac{e^{\iu m\alpha'}}{1+c^2(\alpha'-\pi)^2}\dx \alpha'.
	$$
	Completing the Fourier series, we have
	$$I(\alpha) + \overline{I(\alpha)} - \frac{1}{2\pi}\int_0^{2\pi} \frac{1}{1+c^2(\alpha'-\pi)^2}\dx \alpha' = \frac{1}{1+c^2(\alpha-\pi)^2}.$$
	Since $I(\alpha) + \overline{I(\alpha)} = 2\mathrm{Re} (I(\alpha))$, and since
	$$ \frac{1}{2\pi}\int_0^{2\pi} \frac{1}{1+c^2(\alpha'-\pi)^2}\dx \alpha' = \frac{1}{\pi c} \arctan(\pi c),$$
	the lemma follows.
\end{proof}From \Cref{lem:intsum} we find that
\begin{equation} \label{eq:intsum}
\frac{1}{2\pi}\sum_{m=0}^\infty (-1)^m\int_0^{2\pi} \frac{e^{\iu m\alpha'}}{1+c^2(\alpha'-\pi)^2}\dx \alpha' = \frac{1}{2} + \frac{1}{2\pi c}\arctan(\pi c).\end{equation}

The next theorem, which is the main result of this section, describes how the mid-gap frequencies emerge from the edges of the band gap. At this point, we observe that any mid-gap frequency is necessarily real-valued. This can be seen from \eqref{eq:scattering_quasi}: a mid-gap frequency $\omega$ is a solution to this equation for any $\alpha \in Y^*$. At $\alpha = \pi/L$, and small enough $\delta$, this correspond to a self-adjoint spectral problem, and it is clear that any subwavelength resonant frequency is real-valued.
\begin{thm}\label{prop:smalld}
Assume that $D_1$ and $D_2$ are strictly convex. For small enough $d$ and $\delta$, and in the case $l_0 > 1/2$, there are two mid-gap frequencies $\omega_1(d), \omega_2(d)$ such that $\omega_j(d) \rightarrow \omega_j^\diamond, j=1,2$ as $d\rightarrow 0$. In the case $l_0 < 1/2$, there are no mid-gap frequencies as $d, \delta \rightarrow 0$.
\end{thm}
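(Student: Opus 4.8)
The plan is to convert the self-consistency equation \eqref{eq:int_eq} into a scalar characteristic equation near each edge of the band gap, and then read off existence from the sign of the coupling $\langle\Phi_j^\diamond,\B_0\Psi_j^\diamond\rangle$ supplied by \Cref{lem:C}. First I would fix a band index $j\in\{1,2\}$ and restrict attention to real $\omega$ in the gap close to the edge $\omega_j^\diamond$, where only the $j$-th pole of $\big(\A^\alpha(\omega,\delta)\big)^{-1}$ is active. Substituting the residue expansion of \Cref{prop:Asing} into \eqref{eq:int_eq} and using that $\B_0=O(d)$ is small, the only way a nonzero solution can survive is if it is, to leading order, slaved to the resonant direction, so that the unknown pair is $(\phi^{\alpha,i},\phi^{\alpha,o})\approx c(\alpha)\Psi_j^\alpha$ for a scalar amplitude $c\in L^2(Y^*)$; the regular part $\mathcal R_j^\alpha$ and the neighbouring band contribute only higher order corrections. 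Projecting \eqref{eq:int_eq} onto $\Phi_j^\alpha$ and replacing the slowly varying factors $\Psi_j^\alpha,\Phi_j^\alpha,\omega_j^\alpha$ by their values at $\alpha^\diamond=\pi/L$ (legitimate by \Cref{prop:band}, since the relevant mass concentrates there) yields a closed scalar equation
\[ c(\alpha)\,\big(\omega-\omega_j^\alpha\big) \;=\; \gamma_j \,(P_+c)(\alpha), \qquad \gamma_j=\frac{\langle\Phi_j^\diamond,\B_0\Psi_j^\diamond\rangle}{2\omega_j^\diamond|D_1|}, \]
where $P_+$ is the Szeg\H{o} projection onto nonnegative Fourier modes, which appears precisely because the fictitious sources in \eqref{eq:int_eq} are carried only by the resonators with $m\ge 0$, i.e.\ the one-sided sum $\sum_{m=0}^\infty$.

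Next I would solve this equation. Using \Cref{prop:band} I write $\omega-\omega_j^\alpha = E_j + c_j(\alpha-\pi/L)^2 + O\!\left(|\alpha-\pi/L|^3\right)$ with $E_j=\omega-\omega_j^\diamond$, so the multiplier is a Lorentzian-type weight concentrated at $\alpha^\diamond$ as $E_j\to 0$ (here $c_1>0$, $E_1>0$ on the gap side of the lower edge, while $c_2<0$, $E_2<0$ on the gap side of the upper edge). Evaluating the self-consistency at the concentration point $\alpha=\pi/L$ and taking the real part (justified since mid-gap frequencies are real, as noted just before the theorem) is exactly the computation carried out in \Cref{lem:intsum}; with the scaling $c^2=c_j/E_j$, identity \eqref{eq:intsum} gives the characteristic equation
\[ 1 \;=\; \frac{\gamma_j}{2E_j}\left(1+\frac{\arctan\!\left(\pi\sqrt{c_j/E_j}\,\right)}{\pi\sqrt{c_j/E_j}}\right)\;=:\;\gamma_j\,\mathcal I_j(E_j). \]
The function $\mathcal I_j$ is real, and as $E_j\to 0$ on the gap side of the edge it tends to $+\infty$ for $j=1$ and to $-\infty$ for $j=2$, while $|\mathcal I_j|$ decays as $\omega$ moves into the bulk of the gap.

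Consequently, by the intermediate value theorem the characteristic equation has a root with $\omega$ on the gap side of $\omega_j^\diamond$ if and only if $\gamma_j$ has the same sign as $\mathcal I_j$ near that edge, i.e.\ $\gamma_1>0$ at the lower edge and $\gamma_2<0$ at the upper edge. This is exactly the dichotomy provided by \Cref{lem:C}: when $l_0>1/2$ we have $\langle\Phi_1^\diamond,\B_0\Psi_1^\diamond\rangle>0$ and $\langle\Phi_2^\diamond,\B_0\Psi_2^\diamond\rangle<0$, so a root $\omega_j(d)$ appears at each edge with $E_j\to 0$, hence $\omega_j(d)\to\omega_j^\diamond$, as $d\to 0$; when $l_0<1/2$ both signs are reversed and no root exists on the gap side of either edge. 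To upgrade ``near each edge'' to ``nowhere in the gap'' in the case $l_0<1/2$, I would note that away from fixed neighbourhoods of $\omega_1^\diamond$ and $\omega_2^\diamond$ the inverse $\big(\A^\alpha(\omega,\delta)\big)^{-1}$ is uniformly bounded (the gap is open by \Cref{cor:bandgap}), so the operator in \eqref{eq:int_eq} has norm $O(d)<1$ and admits no nontrivial kernel; thus any mid-gap frequency must lie near an edge, where it has just been excluded.

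The main obstacle is the rigorous justification of the scalar reduction and of the evaluation at $\alpha=\pi/L$. Two points need care. First, the reduced equation $c\,(\omega-\omega_j^\alpha)=\gamma_j\,P_+c$ is a Wiener--Hopf problem whose solvability is governed by the sign of $\omega-\omega_j^\alpha$, and one must verify that the leading-order balance captured by \Cref{lem:intsum} genuinely decides whether an $L^2(Y^*)$ amplitude exists, rather than merely a formal evaluation at the peak. Second, one must control, uniformly as $E_j\to 0$, the errors from replacing $\Psi_j^\alpha,\Phi_j^\alpha,\omega_j^\alpha$ by their values at $\alpha^\diamond$, from the discarded regular part $\mathcal R_j^\alpha$ and neighbouring band, and from the $O(d^2)$ remainder in $\B_0$. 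Since the coupling $\gamma_j=O(d)$ competes against a projected integral that blows up as $E_j^{-1/2}$, these estimates must be shown not to overturn the sign-determined existence; this is precisely where the strict convexity hypothesis (entering through \Cref{lem:C}) and the dilute regime are essential.
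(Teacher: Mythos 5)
Your proposal is correct and follows essentially the same route as the paper's proof: reduce \eqref{eq:int_eq} near each band edge to a scalar characteristic equation using the residue expansion of \Cref{prop:Asing}, the quadratic edge behaviour of \Cref{prop:band}, and the one-sided sum identity \eqref{eq:intsum}, then decide existence from the signs in \Cref{lem:C} (your leading-order root $E_j=\gamma_j/2=\langle\Phi_j^\diamond,\B_0\Psi_j^\diamond\rangle/(4\omega_j^\diamond|D_1|)$ matches the paper's expansion of $\omega_j(d)$ exactly). Your Wiener--Hopf/Szeg\H{o}-projection phrasing and the intermediate-value argument are only cosmetic variants of the paper's direct solution of $\langle\Phi_j^\diamond,\B_0\Psi_j^\diamond\rangle/\bigl(4\omega_j^\diamond|D_1|(\omega-\omega_j^\diamond)\bigr)=1+O\bigl(d/\sqrt{\omega-\omega_j^\diamond}\bigr)$, and the technical caveats you flag are handled at the same level of rigour there.
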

\begin{proof}
We seek solutions to \eqref{eq:int_eq} as $d \rightarrow 0$, corresponding to solutions $\omega$ in a small neighbourhood of  $\omega_j^\diamond$ for $j=1$ or $j=2$. By \Cref{prop:Asing} and \Cref{prop:band}, $\left(\A^\alpha\right)^{-1}$ has a pole at $\omega^\diamond$. Recall that we seek solutions $\omega =O(\sqrt{\delta})$ as $\delta \rightarrow 0$. At $\delta = 0$ and $\omega = 0$, the problem \eqref{eq:scattering_translated} decouples into a Neumann problem on each resonator, with constant solution inside each resonator. Since $\hat\S_D^0[u_j]$ is constant inside $D$, we find that
$$ \phi^{\alpha,i} = c_1(\alpha)u_1 + c_2(\alpha)u_2$$
for some coefficients $c_1(\alpha)$ and $c_2(\alpha)$. It follows that the root function is such that the singularity of $(\A^\alpha)^{-1}$ does not vanish. Hence, from \Cref{prop:Asing} and \Cref{prop:band}, we can find a non-zero $h = h(\omega,\delta,d)$ such that the solution can be written, for $\alpha$ close to $\alpha^\diamond$, as
$$\begin{pmatrix}
\phi^{\alpha,i}\\\phi^{\alpha,o}
\end{pmatrix} = \frac{\Psi_j^\diamond}{\omega - \omega_j^\diamond + c_j|\alpha-\alpha^\diamond|^2}h(\omega,\delta,d) + K_1(\omega,\alpha,\delta,d),$$
where $K_1(\omega,\alpha)$ is bounded uniformly in $d$ for $(\omega,\alpha)$ in a neighbourhood of $(\omega_j^\diamond,\alpha^\diamond)$. Applying \eqref{eq:intsum}, we then find that
$$\sum_{m=0}^\infty e^{-\iu \alpha m} \B_0 I_m\begin{pmatrix}
\phi^{\alpha,i}\\\phi^{\alpha,o}
\end{pmatrix} =  \frac{\B_0\Psi_j^\diamond}{2(\omega-\omega_j^\diamond)}h(\omega,\delta,d) + K_2$$
for some $K_2$ with norm of order $O(d)$ in a neighbourhood of $(\omega_j^\diamond,\alpha^\diamond)$. We then have
$$
-\big(\mathcal{A}^\alpha(\omega,\delta)\big)^{-1}\sum_{m=0}^\infty e^{\iu \alpha m}
\B_0 I_m
\begin{pmatrix}
\phi^{\alpha,i}\\\phi^{\alpha,o}
\end{pmatrix} = \frac{\Psi_j^\diamond}{\omega - \omega_j^\diamond + c_j|\alpha-\alpha^\diamond|^2}\frac{\langle \Phi_j^\diamond, \B_0\Psi_j^\diamond\rangle  }{4\omega_j^\diamond|D_1|(\omega-\omega_j^\diamond)}h(\omega,\delta,d) + K_3.
$$
Equation \eqref{eq:int_eq} then reads
$$\frac{\langle \Phi_j^\diamond, \B_0\Psi_j^\diamond\rangle}{4\omega_j^\diamond|D_1|(\omega-\omega_j^\diamond)} = 1 + O\left(\frac{d}{\sqrt{\omega-\omega_j^\diamond}}\right),$$
which has precisely one solution $\omega=\omega_j(d)$, expanded as
$$\omega_j(d)= \omega_j^\diamond + \frac{\langle \Phi_j^\diamond, \B_0\Psi_j^\diamond\rangle}{4\omega_j^\diamond|D_1|} + O(d^{3/2}).$$
From \Cref{lem:C} it follows that $\omega_j(d)$ is inside the band gap precisely in the case $l_0 > 1/2$.
\end{proof}

\begin{rmk}
	It should be noted that the assumption of convexity made in this section is not an intrinsic part of the fictitious source method. This assumption was only needed for the arguments in the proof of \Cref{lem:C}. Indeed, the fictitious source method is repeatedly used in the rest of this work without any assumption of convexity.
\end{rmk}

\subsection{Integer unit length dislocation}\label{sec:compact}
In this section, we study the problem when the dislocation is an integer number of unit cell lengths. This is equivalent to the case when an integer multiple of dimers are removed from the original, periodic structure, thus creating a cavity.  We will model this defect cavity problem using the fictitious source superposition method \cite{defectSIAM}.

\subsubsection{Fictitious sources for a removed resonator}
Here, we describe the method of fictitious sources when a single resonator is removed. Throughout this subsection, $\Omega$ denotes a connected, bounded domain such that $\p \Omega \in \C^{1,s}$ and $U$ denotes a neighbourhood of $\Omega$. Although the argument can be made for general $\Omega$, we assume that $\Omega$ consists of two connected components $\Omega = \Omega_1 \cup \Omega_2$.

To study this problem, we consider the problem when the removed resonator $\Omega$ is reintroduced, along with fictitious dipole sources $g$ on the boundary. 
We assume we have a reference solution $u$ satisfying
$$\Delta {u}+ \omega^2 {u}  = 0 \qquad \text{in } U.$$
Let $\widetilde{u}$ satisfy the fictitious source problem
\begin{equation*} \label{eq:scattering_sources2}
\left\{
\begin{array} {ll}
\ds \Delta \widetilde{u}+ \omega^2 \widetilde{u}  = 0 & \text{in } U \setminus \p \Omega, \\
\nm
\ds  \widetilde{u}|_{+} -\widetilde{u}|_{-}  =0  & \text{on } \partial \Omega, \\
\nm
\ds  \delta \frac{\partial \widetilde{u}}{\partial \nu} \bigg|_{+} - \frac{\partial \widetilde{u}}{\partial \nu} \bigg|_{-} =g & \text{on } \p \Omega.
\end{array}
\right.
\end{equation*}
We want to determine the fictitious sources $g$ such that $u=\widetilde u$ inside $U$. Any solution $\widetilde u$ can be represented as
\begin{equation}\label{eq:utilde2}
\widetilde u = \begin{cases} \hat\S_\Omega^{\omega}[\phi^i] & \text{in } \Omega, \\ \S_\Omega^{\omega}[\phi^o] + H & \text{in } U\setminus \Omega, \end{cases}
\end{equation}
for some $H$ satisfying $\Delta  H+ \omega^2 H  = 0$ in $U$. Imposing $\widetilde{u} = u$ in $U$ is equivalent to
$$
\phi^i = \left(\hat\S_\Omega^\omega\right)^{-1}[u|_{\p \Omega}], \qquad \phi^o = 0, \qquad H = u.
$$
Moreover, using the jump conditions, we find the following expression of $g$.
\begin{prop} \label{prop:Bc}
	The fictitious sources $g$ and the layer density $\phi^i$ satisfy
	\begin{equation*}
	g = B(\omega,\delta) \phi^i, \quad B(\omega,\delta) = \left(\delta - 1\right) \left( -\frac{1}{2}I + \hat\K_\Omega^{\omega,*} \right).
	\end{equation*}
\end{prop}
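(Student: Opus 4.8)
The plan is to take the representation \eqref{eq:utilde2} at face value and read off $g$ directly from the transmission condition $\delta\,\p_\nu\widetilde u|_+ - \p_\nu\widetilde u|_- = g$, once the triple $\phi^i,\phi^o,H$ has been identified. First I would justify the identification $\phi^i = (\hat\S_\Omega^\omega)^{-1}[u|_{\p\Omega}]$, $\phi^o=0$, $H=u$ asserted just before the proposition. Since $u$ solves the homogeneous Helmholtz equation throughout $U$, setting $H=u$ and $\phi^o=0$ makes the outer piece of $\widetilde u$ in \eqref{eq:utilde2} equal to $u$ on $U\setminus\Omega$ identically. For the inner piece, both $u$ and $\hat\S_\Omega^\omega[\phi^i]$ satisfy $(\Delta+\omega^2)\,\cdot=0$ in $\Omega$, so by uniqueness for the interior Dirichlet problem --- valid because $\omega=O(\sqrt\delta)$ is small, hence $\omega^2$ is below the first Dirichlet eigenvalue of $-\Delta$ on $\Omega$ --- it suffices to match their traces on $\p\Omega$. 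Using that $\hat\S_\Omega^\omega$ is invertible for small $\omega$ (its diagonal blocks $\S_{\Omega_i}^\omega$ are close to the invertible $\S_{\Omega_i}^0$), matching traces forces $\phi^i = (\hat\S_\Omega^\omega)^{-1}[u|_{\p\Omega}]$, and the continuity requirement $\widetilde u|_+ = \widetilde u|_-$ is then automatic since both one-sided traces equal $u|_{\p\Omega}$.

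With $\phi^i,\phi^o,H$ fixed, the second step is to evaluate the two one-sided normal derivatives entering the transmission condition. From outside, $\widetilde u = \S_\Omega^\omega[\phi^o] + H = u$, so $\p_\nu\widetilde u|_+ = \p_\nu u|_{\p\Omega}$. From inside, $\widetilde u = \hat\S_\Omega^\omega[\phi^i]$, so the jump relation for the single-layer potential recorded in \Cref{sec:layerpot} gives $\p_\nu\widetilde u|_- = \big(-\tfrac12 I + \hat\K_\Omega^{\omega,*}\big)[\phi^i]$. The one point that needs care is to rewrite $\p_\nu u|_{\p\Omega}$ in terms of $\phi^i$ as well: because $u$ coincides with $\hat\S_\Omega^\omega[\phi^i]$ throughout $\overline\Omega$ and $u$ is smooth across $\p\Omega$ (so its normal derivative has no jump there), the \emph{interior} jump relation also yields $\p_\nu u|_{\p\Omega} = \p_\nu\hat\S_\Omega^\omega[\phi^i]\big|_- = \big(-\tfrac12 I + \hat\K_\Omega^{\omega,*}\big)[\phi^i]$.

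Finally, I would substitute both expressions into $g = \delta\,\p_\nu\widetilde u|_+ - \p_\nu\widetilde u|_-$ to obtain
$$g = \delta\Big(-\tfrac12 I + \hat\K_\Omega^{\omega,*}\Big)[\phi^i] - \Big(-\tfrac12 I + \hat\K_\Omega^{\omega,*}\Big)[\phi^i] = (\delta-1)\Big(-\tfrac12 I + \hat\K_\Omega^{\omega,*}\Big)[\phi^i],$$
which is exactly $g = B(\omega,\delta)\phi^i$ with $B(\omega,\delta)=(\delta-1)\big(-\tfrac12 I + \hat\K_\Omega^{\omega,*}\big)$. The only genuine obstacle is the bookkeeping in the second step: one must verify that $\p_\nu\widetilde u|_+$, computed from the outer representation, truly equals the interior trace $\big(-\tfrac12 I + \hat\K_\Omega^{\omega,*}\big)[\phi^i]$, and this rests on the smoothness of $u$ across $\p\Omega$ (there being no physical interface, since the resonator has been removed) rather than on any jump relation. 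Everything else is a direct application of the invertibility of $\hat\S_\Omega^\omega$, interior uniqueness, and the single-layer jump formula.
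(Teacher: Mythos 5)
Your proposal is correct and follows essentially the same route as the paper, which simply asserts the identification $\phi^i = (\hat\S_\Omega^\omega)^{-1}[u|_{\p \Omega}]$, $\phi^o = 0$, $H = u$ and then invokes the jump conditions; you have filled in precisely the details the paper leaves implicit (interior Dirichlet uniqueness plus invertibility of $\hat\S_\Omega^\omega$ for the identification, and the smoothness of $u$ across $\p\Omega$ to equate $\p_\nu \widetilde u|_+$ with $\bigl(-\tfrac12 I + \hat\K_\Omega^{\omega,*}\bigr)[\phi^i]$). No gaps.
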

Conversely, by the unique continuation property of the Helmholtz equation, if $g$ satisfies \Cref{prop:Bc}, then $\widetilde{u} = u$ in $U$.

\subsubsection{Integral equation for the dislocated system}\label{sec:inteq_compact}

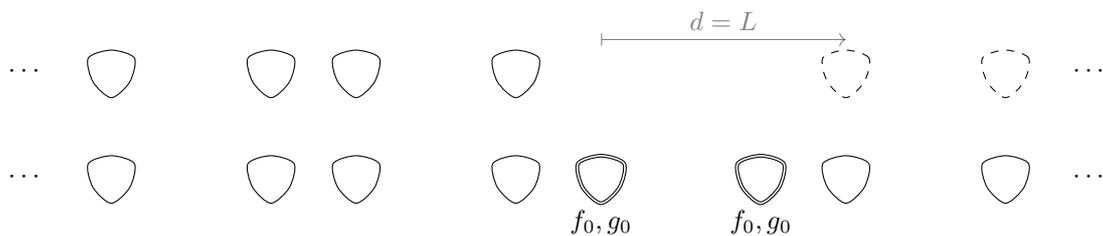
\begin{figure}[tbh]
	\centering
	\begin{tikzpicture}[scale=1.4]
	\pgfmathsetmacro{\rb}{0.25pt}
	\pgfmathsetmacro{\rs}{0.2pt}
	\coordinate (a) at (0.25,0);
	\coordinate (b) at (1.05,0);

	\draw plot [smooth cycle] coordinates {($(a)+(30:\rb)$) ($(a)+(90:\rs)$) ($(a)+(150:\rb)$) ($(a)+(210:\rs)$) ($(a)+(270:\rb)$) ($(a)+(330:\rs)$) };
	\draw plot [smooth cycle] coordinates {($(b)+(30:\rb)$) ($(b)+(90:\rs)$) ($(b)+(150:\rb)$) ($(b)+(210:\rs)$) ($(b)+(270:\rb)$) ($(b)+(330:\rs)$) };

	\begin{scope}[xshift=-2.3cm]
	\coordinate (a) at (0.25,0);
	\coordinate (b) at (1.05,0);
	\draw plot [smooth cycle] coordinates {($(b)+(30:\rb)$) ($(b)+(90:\rs)$) ($(b)+(150:\rb)$) ($(b)+(210:\rs)$) ($(b)+(270:\rb)$) ($(b)+(330:\rs)$) };
	\draw (a) node{$\cdots$};
	\end{scope}
	\begin{scope}[xshift=2.3cm]
	\coordinate (a) at (0.25,0);
	\coordinate (b) at (1.05,0);
	\draw plot [smooth cycle] coordinates {($(a)+(30:\rb)$) ($(a)+(90:\rs)$) ($(a)+(150:\rb)$) ($(a)+(210:\rs)$) ($(a)+(270:\rb)$) ($(a)+(330:\rs)$) };
	\draw[|->,opacity=0.5] ($(b)+(0,0.3)$) -- ($(b)+(2.3,0.3)$) node[pos=0.5,above]{$d=L$};
	\end{scope}
	\begin{scope}[xshift=4.6cm]
	\coordinate (a) at (0.25,0);
	\coordinate (b) at (1.05,0);
	\draw[dashed] plot [smooth cycle] coordinates {($(b)+(330:\rs)$) ($(b)+(30:\rb)$) ($(b)+(90:\rs)$) ($(b)+(150:\rb)$) ($(b)+(210:\rs)$) ($(b)+(270:\rb)$) };
	\end{scope}
	\begin{scope}[xshift=6.9cm]
	\coordinate (a) at (0.25,0);
	\coordinate (b) at (1.05,0);
	\draw[dashed] plot [smooth cycle] coordinates {($(a)+(330:\rs)$) ($(a)+(30:\rb)$) ($(a)+(90:\rs)$) ($(a)+(150:\rb)$) ($(a)+(210:\rs)$) ($(a)+(270:\rb)$) };
	\draw (b) node{$\cdots$};
	\end{scope}

	\begin{scope}[yshift=-1cm]
	\coordinate (a) at (0.25,0);
	\coordinate (b) at (1.05,0);

	\draw plot [smooth cycle] coordinates {($(a)+(30:\rb)$) ($(a)+(90:\rs)$) ($(a)+(150:\rb)$) ($(a)+(210:\rs)$) ($(a)+(270:\rb)$) ($(a)+(330:\rs)$) };
	\draw plot [smooth cycle] coordinates {($(b)+(30:\rb)$) ($(b)+(90:\rs)$) ($(b)+(150:\rb)$) ($(b)+(210:\rs)$) ($(b)+(270:\rb)$) ($(b)+(330:\rs)$) };

	\begin{scope}[xshift=-2.3cm]
	\coordinate (a) at (0.25,0);
	\coordinate (b) at (1.05,0);
	\draw plot [smooth cycle] coordinates {($(b)+(30:\rb)$) ($(b)+(90:\rs)$) ($(b)+(150:\rb)$) ($(b)+(210:\rs)$) ($(b)+(270:\rb)$) ($(b)+(330:\rs)$) };
	\draw (a) node{$\cdots$};
	\end{scope}
	\begin{scope}[xshift=2.3cm]
	\coordinate (a) at (0.25,0);
	\coordinate (b) at (1.05,0);
	\draw plot [smooth cycle] coordinates {($(a)+(30:\rb)$) ($(a)+(90:\rs)$) ($(a)+(150:\rb)$) ($(a)+(210:\rs)$) ($(a)+(270:\rb)$) ($(a)+(330:\rs)$) };
	\draw[double] plot [smooth cycle] coordinates {($(b)+(330:\rs)$) ($(b)+(30:\rb)$) ($(b)+(90:\rs)$) ($(b)+(150:\rb)$) ($(b)+(210:\rs)$) ($(b)+(270:\rb)$) } node[xshift=0pt, yshift=-7pt]{$f_0,g_0$};
	\end{scope}
	\begin{scope}[xshift=4.6cm]
	\coordinate (a) at (0.25,0);
	\coordinate (b) at (1.05,0);
	\draw[double] plot [smooth cycle] coordinates {($(a)+(330:\rs)$) ($(a)+(30:\rb)$) ($(a)+(90:\rs)$) ($(a)+(150:\rb)$) ($(a)+(210:\rs)$) ($(a)+(270:\rb)$) } node[xshift=0pt, yshift=-7pt]{$f_0,g_0$};
	\draw plot [smooth cycle] coordinates {($(b)+(330:\rs)$) ($(b)+(30:\rb)$) ($(b)+(90:\rs)$) ($(b)+(150:\rb)$) ($(b)+(210:\rs)$) ($(b)+(270:\rb)$) };
	\end{scope}
	\begin{scope}[xshift=6.9cm]
	\coordinate (a) at (0.25,0);
	\coordinate (b) at (1.05,0);
	\draw plot [smooth cycle] coordinates {($(a)+(330:\rs)$) ($(a)+(30:\rb)$) ($(a)+(90:\rs)$) ($(a)+(150:\rb)$) ($(a)+(210:\rs)$) ($(a)+(270:\rb)$) };
	\draw (b) node{$\cdots$};
	\end{scope}
	\end{scope}


	\end{tikzpicture}
	\caption[Unit dislocation]{The dislocated system with dislocation equal to a multiple of the length of the unit cell (\ie{} $d=NL$) is equivalent to the original array with the addition of so-called fictitious sources $f_m$, $g_m$, on the boundary of $D^m$ for $m=0,\dots,N-1$. The case $N=1$ is depicted here. Legend:
		\raisebox{2pt}{\tikz{\draw (0,0) -- (0.5,0)}} untouched resonator,
		\raisebox{2pt}{\tikz{\draw[double,thick] (0,0) -- (0.5,0)}} resonator with fictitious sources,
		\raisebox{2pt}{\tikz{\draw[dashed] (0,0) -- (0.5,0)}} dislocated resonator. } \label{fig:disloc_removed}
\end{figure}

We now assume that $2N$ resonators are removed, so that $u$ satisfies \eqref{eq:scattering_translated} with
\begin{equation} \label{eq:CN}
\C_d = \C_0 \setminus \left(\bigcup_{m=0}^{N-1} D^m \right).
\end{equation}
Again, we model this using the fictitious source method as in \eqref{eq:scattering_translated_sources}, following the approach of \Cref{sec:inteq_smalldis}. We put $f_m = 0$ for all $m$. Moreover, $g_m$ will be defined as in \Cref{prop:Bc} for all the removed resonators.

Assume we have a non-zero solution $u$ to \eqref{eq:scattering_translated}. Inside $Y^m, m = 0,1,\cdots,N-1$, we can define the layer density $\phi^i_m$ as
$$
\phi^i_m  = \big(\hat\S_\Omega^\omega\big)^{-1}\left[u|_{\p D^m}\right].$$
We then set the fictitious sources as
\begin{equation} \label{eq:sources2}
g_m = B_{D}\phi^i_m, \quad 0<m<N-1,
\end{equation}
and $g_m = 0$ otherwise. Here, $B_D$ are the operators defined in \Cref{prop:B} with the choice $\Omega = D$. Then, putting $\phi^o = 0$ and $H = u$, we obtain a solution $\widetilde u$ defined by \eqref{eq:utilde2}, which coincides with $u$ on $Y^m \setminus D^m$.

Conversely, if we have a non-zero solution $\widetilde u$ to \eqref{eq:scattering_translated_sources}, represented as \eqref{eq:utilde2} in $Y^m$ and with sources satisfying \eqref{eq:sources2}, then we can define a non-zero solution $u=\widetilde u$ to \eqref{eq:scattering_translated} coinciding with $\widetilde u$ on $Y^m \setminus D^m$.

We introduce the extended operator on $\left(L^2(\p D)\right)^2$,
$$B = \begin{pmatrix} 0 & 0 \\  B_D & 0 \end{pmatrix}.$$
For $\alpha\in Y^*$, define $\B^\alpha: \left(L^2(\p D) \right)^{2N} \rightarrow\left(L^2(\p D) \right)^{2}$ block-wise as
$$\B^\alpha  = \Big(\begin{matrix} B & e^{-\iu \alpha}B & \cdots & e^{-(N-1)\iu\alpha}B \end{matrix}\Big),$$
and define $E^\alpha: \left(L^2(\p D) \right)^{2} \rightarrow \left(L^2(\p D) \right)^{2N}$ block-wise as
$$ E^\alpha = \begin{pmatrix} I \\ e^{\iu \alpha}I \\ e^{2\iu\alpha}I \\ \vdots \\ e^{(N-1)\iu\alpha}I\end{pmatrix}.$$

Next, we follow the approach of \Cref{sec:inteq_smalldis} to derive the integral equation for the dislocated system. By taking the Fourier transform, we obtain \eqref{eq:scattering_quasi} together with the relation \eqref{eq:psimphim} for $\phi^i_m$ and $\phi^o_m$. Putting
$$
\Phi_N = \begin{pmatrix} \phi^i_0\\ \phi^o_0 \\ \vdots \\ \phi^i_{N-1} \\ \phi^o_{N-1} \end{pmatrix},
$$
we then obtain the following result.
\begin{prop}
	For $\C_d$ as in \eqref{eq:CN}, the mid-gap frequencies of \eqref{eq:scattering_translated}  are precisely the values $\omega$ such that there is a non-zero solution $\Phi_N \in \left(L^2(\p D)\right)^{2N}$ to the equation
	\begin{equation} \label{eq:int_eq_N}
	\Phi_N = -\frac{1}{2\pi}\left( \int_{Y^*}E^\alpha \big(\A^{\alpha}(\omega,\delta)\big)^{-1} \B^\alpha\dx \alpha \right) \Phi_N.
	\end{equation}
\end{prop}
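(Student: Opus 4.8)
The plan is to reproduce the derivation of the integral equation~\eqref{eq:int_eq} in \Cref{sec:inteq_smalldis}, now using the equivalence between the dislocated problem~\eqref{eq:scattering_translated} with $\C_d$ as in~\eqref{eq:CN} and the periodic-geometry problem~\eqref{eq:scattering_translated_sources} carrying the fictitious sources $f_m=0$ and $g_m=B_D\phi^i_m$ for $0\le m\le N-1$ (and vanishing otherwise), as established above. The structural simplification relative to the small-dislocation case is that only finitely many resonators carry sources and that $f_m\equiv 0$, so the quasiperiodic data reduce to $f^\alpha=0$ and $g^\alpha=\sum_{m=0}^{N-1}e^{-\iu\alpha m}B_D\phi^i_m$; bookkeeping this data is precisely the role of the block operators $\B^\alpha$ and $E^\alpha$.

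First I would Floquet-transform a non-zero solution $u$ and represent the $\alpha$-quasiperiodic component $u^\alpha$ through quasiperiodic layer potentials, so that the densities $(\phi^{\alpha,i},\phi^{\alpha,o})$ solve~\eqref{eq:phipsialpha}. Since $f^\alpha=0$ the right-hand side of~\eqref{eq:phipsialpha} is $(0,-g^\alpha)^\top$, and unwinding the definitions of $B$ and $\B^\alpha$ shows directly that $(0,-g^\alpha)^\top=-\B^\alpha\Phi_N$. As $\A^\alpha(\omega,\delta)$ is invertible for every $\alpha$ when $\omega$ lies strictly inside the band gap (so that $\omega\neq\omega_j^\alpha$ and \Cref{prop:Asing} applies), I obtain $(\phi^{\alpha,i},\phi^{\alpha,o})^\top=-(\A^\alpha)^{-1}\B^\alpha\Phi_N$.

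Next I would invert the Floquet transform. The stacking operator $E^\alpha$ composed with $\frac{1}{2\pi}\int_{Y^*}\dx\alpha$ realises exactly the relations~\eqref{eq:psimphim}, since $\frac{1}{2\pi}\int_{Y^*}e^{\iu\alpha m}\phi^{\alpha,i}\dx\alpha=\phi^i_m$ and likewise for $\phi^o_m$; collecting the blocks $m=0,\dots,N-1$ recovers $\Phi_N$. Substituting the expression for $(\phi^{\alpha,i},\phi^{\alpha,o})$ then closes the loop and produces~\eqref{eq:int_eq_N}. For the converse direction I would run this chain backwards: given a non-zero $\Phi_N$ solving~\eqref{eq:int_eq_N}, read off the sources $g_m$, solve the quasiperiodic problem, and invert the Floquet transform to obtain a non-zero solution of~\eqref{eq:scattering_translated_sources}, which by the equivalence yields a solution of~\eqref{eq:scattering_translated}; since only finitely many resonators are removed, the perturbation is compact and the associated mode is $L^2$-localized, so $\omega$ is a mid-gap frequency.

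The step that demands the most care is reconciling the two descriptions of $\phi^i_m$: the one built into the source construction, $\phi^i_m=(\hat\S_{D^m}^\omega)^{-1}[u|_{\p D^m}]$, and the one coming from the inverse Floquet transform in~\eqref{eq:psimphim}. These agree by the quasiperiodicity of $u^\alpha$, exactly as in~\eqref{eq:psijm}--\eqref{eq:phijm}: the contributions of the lattice translates $n\neq m$ assemble into a function satisfying the homogeneous Helmholtz equation inside $Y^m$, which is absorbed into the background $\widetilde H$ and leaves the stated interior density. Granting this, non-triviality is preserved in both directions, since $\Phi_N=0$ forces $g^\alpha=0$ and hence $u^\alpha\equiv0$; the mid-gap frequencies are therefore precisely the $\omega$ admitting a non-zero solution $\Phi_N$ of~\eqref{eq:int_eq_N}.
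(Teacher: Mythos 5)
Your proposal is correct and follows essentially the same route as the paper: the paper's own (very terse) derivation in Section 3.3.2 simply says to repeat the argument of Section 3.2.2 — Floquet transform, quasiperiodic layer-potential representation via \eqref{eq:phipsialpha}, invertibility of $\A^\alpha$ inside the band gap, and the identification \eqref{eq:psimphim} — and your write-up fleshes out exactly these steps, including the correct bookkeeping $(0,-g^\alpha)^\top=-\B^\alpha\Phi_N$ and the role of $E^\alpha$ in inverting the transform. The only points you elaborate beyond the paper (reconciling the two descriptions of $\phi_m^i$ via \eqref{eq:psijm}--\eqref{eq:phijm}, and noting that $\Phi_N=0$ forces $u\equiv 0$) are consistent with, and implicit in, the paper's argument.
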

In order to analyse \eqref{eq:int_eq_N}, we will need the following lemma, which is an immediate consequence of the structure of $B$.
\begin{lem}\label{lem:AB}
	We have
	\begin{equation}\label{eq:AB}
	\big(\A^{\alpha}(\omega,\delta)\big)^{-1}B = \begin{pmatrix} A_{12}B_{D}& 0 \\ A_{22}B_D & 0 \end{pmatrix}.
	\end{equation}
	As $\delta \rightarrow 0$ and $\omega = O(\sqrt{\delta})$, the operator $A_{12}$ can be approximated by \eqref{eq:A12} and \eqref{eq:A22}, respectively.
\end{lem}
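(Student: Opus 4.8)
The plan is to establish \eqref{eq:AB} by a single block-matrix multiplication, and then to observe that the second assertion requires nothing beyond the expansions already in hand. Recall that the inverse was written in block form as $\big(\A^{\alpha}(\omega,\delta)\big)^{-1} = \begin{pmatrix} A_{11} & A_{12} \\ A_{21} & A_{22} \end{pmatrix}$, and that, by construction, $B = \begin{pmatrix} 0 & 0 \\ B_D & 0 \end{pmatrix}$ has a vanishing second block-column and carries only the entry $B_D$ in its first block-column.

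First I would simply carry out the multiplication
\[
\begin{pmatrix} A_{11} & A_{12} \\ A_{21} & A_{22} \end{pmatrix}\begin{pmatrix} 0 & 0 \\ B_D & 0 \end{pmatrix} = \begin{pmatrix} A_{12}B_D & 0 \\ A_{22}B_D & 0 \end{pmatrix}.
\]
The vanishing second block-column of $B$ forces the second block-column of the product to vanish, while the first block-column of the product is obtained by applying the \emph{second} block-column of $(\A^\alpha)^{-1}$ --- namely $A_{12}$ on top and $A_{22}$ below --- to $B_D$. This is precisely the claimed identity, and it constitutes the entire content of the first part of the lemma; no analysis is involved.

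The reason for isolating \eqref{eq:AB} at this stage is that only $A_{12}$ and $A_{22}$ survive the multiplication, and these are exactly the two entries of $(\A^\alpha)^{-1}$ whose behaviour in the regime $\delta \to 0$, $\omega = O(\sqrt{\delta})$ was computed explicitly in \eqref{eq:A12} and \eqref{eq:A22}. Hence the second assertion follows by direct substitution of those expansions, and the analytically delicate left column $A_{11}, A_{21}$ --- which carries the full singular part of the inverse --- never enters. There is no real obstacle here; the only care needed is to keep the block ordering consistent with the identification $L^2(\p D) = L^2(\p D_1)\times L^2(\p D_2)$ used throughout, so that $A_{12}B_D$ and $A_{22}B_D$ act on the correct factors.
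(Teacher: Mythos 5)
Your proposal is correct and matches the paper's reasoning exactly: the paper states the lemma as ``an immediate consequence of the structure of $B$'', which is precisely the block multiplication you carry out, with the vanishing second block-column of $B$ killing the second column of the product and only $A_{12}$, $A_{22}$ surviving. The second assertion is, as you say, just a citation of the expansions \eqref{eq:A12} and \eqref{eq:A22} already established in \Cref{sec:Asing}, so nothing further is needed.
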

Due to the zero column in \eqref{eq:AB}, it is clear that \eqref{eq:int_eq_N} reduces to an equation for $\phi^i_m, \cdots, \phi^i_{N-1}$ only. In fact, from \eqref{eq:int_eq_N}, it follows that
\begin{equation}\label{eq:psi}
\Phi^i_N = -\frac{1}{2\pi} \int_{Y^*} \begin{pmatrix} 1& e^{\iu \alpha} & \cdots & e^{-(N-1)\iu\alpha}\\ e^{\iu \alpha} & 1 & \cdots & e^{-(N-2)\iu\alpha} \\ \vdots & \vdots & \ddots  & \vdots \\ e^{(N-1)\iu\alpha} & e^{(N-2)\iu\alpha}& \cdots & 1 \end{pmatrix}\begin{pmatrix} A_{12}B_{D}& 0 & \cdots & 0\\ 0 & A_{12}B_{D} & \cdots & 0 \\ \vdots & \vdots & \ddots  & \vdots \\ 0 & 0& \cdots & A_{12}B_{D} \end{pmatrix}\Phi^i_N\dx \alpha,
\end{equation}
where
$$\quad \Phi^i_N = \begin{pmatrix} \phi^i_0\\ \phi^i_1 \\ \vdots \\ \phi^i_{N-1} \end{pmatrix}.$$
From \Cref{lem:AB}, we obtain that, to leading order, $\phi^i_m$ is a linear combination of $\psi_1$ and $\psi_2$: 
$$\phi^i_m = c_m \psi_1 + d_m \psi_2 + O(\omega),$$ with respect to the $L^2(\p D)$-norm. Define, for $j=1,2$,
$$ t_{i,j}^m = \frac{1}{2\pi}\int_{Y^*}e^{\iu m\alpha}\left\langle \chi_{\p D_i}, \left(I+ A_{12}B_{D}\right)[\psi_j]\right\rangle \dx \alpha.$$
Then, taking inner products $\langle \chi_{\p D_i}, \cdot \rangle$ in equation \eqref{eq:psi} we find
$$\frac{1}{2\pi} \int_{Y^*} e^{\iu m\alpha}\begin{pmatrix} \left\langle \chi_{\p D_1},I+ A_{12}B_{D}[\phi^i_n] \right \rangle \\\left\langle \chi_{\p D_2}, I+ A_{12}B_{D}[\phi^i_n] \right\rangle\end{pmatrix} \dx \alpha = T_m \begin{pmatrix} c_n \\ d_n\end{pmatrix},$$
where $T_m$ denotes the $2\times 2$ matrix $\big(t_{i,j}^m\big)$.
We thus have
\begin{equation}\label{eq:main_N}
\T_N(\omega)C_N = 0,
\end{equation}
where we have defined
$$\T_N(\omega) = \begin{pmatrix} T_0& T_{-1} & \cdots & T_{-(N-1)}\\ T_{1} & T_0 & \cdots & T_{-(N-2)} \\ \vdots & \vdots & \ddots  & \vdots \\ T_{N-1} & T_{N-2} & \cdots & T_0  \end{pmatrix}, \qquad C_N = \begin{pmatrix} c_0 \\ d_0 \\ c_1 \\ \vdots \\ d_{N-1}\end{pmatrix}.$$
Observe that $\T_N$ is a block Toeplitz matrix generated by the symbol $\varphi$,
$$\varphi = \varphi(\alpha) = \begin{pmatrix}\varphi_{1,1} & \varphi_{1,2} \\ \varphi_{2,1} & \varphi_{2,2}\end{pmatrix}, \qquad \varphi_{i,j} = \big\langle \chi_{\p D_i}, \left(I+ A_{12}B_{D}\right)[\psi_j]\big\rangle, \quad i,j = 1,2.$$
In the following lemma, we compute $\varphi$.
\begin{lem}
	We have
	\begin{align*}
	\varphi(\alpha) = -\frac{\mathrm{Cap}_{D_1}}{2} \begin{pmatrix}
	\eta_1+\eta_2 & -e^{\iu \theta_\alpha}\left( \eta_1-\eta_2 \right) \\ -e^{-\iu \theta_\alpha}\left( \eta_1-\eta_2 \right) & \eta_1+\eta_2
	\end{pmatrix}, \qquad \det \varphi(\alpha) = \left(\mathrm{Cap}_{D_1}\right)^2\eta_1\eta_2,
	\end{align*}
	where
	$$\eta_j  =\frac{\left(\omega_j^\alpha\right)^2}{\omega^2-\left(\omega_j^\alpha\right)^2}, \quad j = 1,2. $$
\end{lem}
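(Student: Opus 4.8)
The plan is to evaluate the four scalar entries $\varphi_{i,j}=\langle \chi_{\p D_i},(I+A_{12}B_{D})[\psi_j]\rangle$ directly, feeding the explicit pole expansion \eqref{eq:A12} of $A_{12}$ against the functions $w_j:=B_D[\psi_j]$, where $B_D=(\delta-1)\left(-\tfrac12 I+\hat\K_D^{\omega,*}\right)$ by \Cref{prop:Bc}. The organizing observation is that $\psi_j=(\S_{D_j}^0)^{-1}[\chi_{\p D_j}]$ lies in $\ker\left(-\tfrac12 I+\hat\K_D^{0,*}\right)$: since $\S_{D_j}^0[\psi_j]$ is harmonic and constant ($\equiv 1$) inside $D_j$, its interior normal derivative vanishes. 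Hence $B_D[\psi_j]$ has no $O(1)$ part, and by the expansion $\K_{D_j}^{\omega,*}=\K_{D_j}^{0,*}+O(\omega^2)$ of \eqref{eq:Sk0exp} it is of size $O(\omega^2)$. Because $A_{12}=O(\omega^{-2})$, the product $A_{12}B_D[\psi_j]$ is $O(1)$, so only the leading pole terms of $A_{12}$ and the leading quadratic part of $w_j$ matter; in particular the $O(\omega^{-1})$ remainder in \eqref{eq:A12}, applied to the $O(\omega^2)$ density $w_j$, contributes only $O(\omega)$ and is negligible as $\delta\to 0$.

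The single genuinely analytic input is the total mass of $w_j$. Since $\psi_j$ is supported on $\p D_j$ and $\hat\K_D^{\omega,*}$ is block diagonal, I would apply the divergence theorem to $u=\S_{D_j}^\omega[\psi_j]$: using $\p_\nu u|_-=\left(-\tfrac12 I+\K_{D_j}^{\omega,*}\right)[\psi_j]$ and $(\Delta+\omega^2)u=0$ in $D_j$ gives
\[
\int_{\p D_j}\Bigl(-\tfrac12 I+\K_{D_j}^{\omega,*}\Bigr)[\psi_j]\dx\sigma=-\omega^2\int_{D_j}\S_{D_j}^\omega[\psi_j]\dx x=-\omega^2|D_j|+O(\omega^3),
\]
because $\S_{D_j}^0[\psi_j]\equiv 1$ in $D_j$. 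Thus $\int_{\p D_j}w_j\dx\sigma=(1-\delta)\omega^2|D_1|+O(\omega^3)$, using $|D_1|=|D_2|$. As $\chi_{\p D_i}$ and the vectors $\chi_k^\alpha$ are piecewise constant on $\p D$, every pairing $\langle\chi_{\p D_i},w_j\rangle$ and $\langle\chi_k^\alpha,w_j\rangle$ reduces to this one scalar times the known constant coefficients from \Cref{lem:evec}.

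With these masses in hand I would substitute into \eqref{eq:A12}, in which the resonance poles enter precisely through the factors $\eta_k=(\omega_k^\alpha)^2/(\omega^2-(\omega_k^\alpha)^2)$, and re-express everything via $u_1=\tfrac{1}{\sqrt2}(-e^{\iu\theta_\alpha}\psi_1+\psi_2)$, $u_2=\tfrac{1}{\sqrt2}(e^{\iu\theta_\alpha}\psi_1+\psi_2)$ as a combination of $\psi_1,\psi_2$; the final pairings use $\langle\chi_{\p D_i},\psi_k\rangle=-\delta_{ik}\mathrm{Cap}_{D_1}$ together with the symmetry $\mathrm{Cap}_{D_1}=\mathrm{Cap}_{D_2}$. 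A pleasant structural feature to record is that the identity term $\langle\chi_{\p D_i},\psi_j\rangle=-\delta_{ij}\mathrm{Cap}_{D_1}$ cancels, to leading order in $\delta$, against the non-pole ($\psi_j$) part of $A_{12}B_D[\psi_j]$, so that no $\eta$-independent entry survives; only the $\eta_1,\eta_2$ contributions remain, carrying the phase $e^{\pm\iu\theta_\alpha}$ on the off-diagonal. This collects the entries into the stated matrix, whence $\det\varphi$ is the elementary $2\times2$ computation $\tfrac{(\mathrm{Cap}_{D_1})^2}{4}\bigl[(\eta_1+\eta_2)^2-(\eta_1-\eta_2)^2\bigr]=(\mathrm{Cap}_{D_1})^2\eta_1\eta_2$, the phases $e^{\iu\theta_\alpha}e^{-\iu\theta_\alpha}$ cancelling in the cross term.

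The main obstacle is the divergence-theorem identity for the $O(\omega^2)$ mass of $B_D[\psi_j]$: it is the vanishing of the leading order together with the exact quadratic coefficient $|D_j|$ that both makes $A_{12}B_D$ a bounded $O(1)$ operator and forces the resonance frequencies $\omega_k^\alpha$ to appear in $\varphi$ through $\eta_k$. Everything after that is careful bookkeeping with the eigenvector relations of \Cref{lem:evec}, the only subtlety being to confirm that the subleading remainders in \eqref{eq:A12} and the $(1-\delta)$ prefactors contribute only at $O(\omega)$ and $O(\delta)$ respectively, hence do not affect the leading-order symbol.
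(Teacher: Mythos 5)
Your proposal is correct and follows essentially the same route as the paper's proof: show that $B_D[\psi_j]$ has mass $\langle\chi_{\p D_i}, B_D\psi_j\rangle = \omega^2|D_1|\delta_{ij} + O(\omega^3)$, feed this into the pole expansion \eqref{eq:A12} of $A_{12}$ so that only the $\eta_1,\eta_2$ terms survive after the identity term cancels the non-pole part, and finish with $\langle\chi_{\p D_i},\psi_k\rangle = -\delta_{ik}\mathrm{Cap}_{D_1}$ and $\mathrm{Cap}_{D_1}=\mathrm{Cap}_{D_2}$. The only difference is that where the paper cites \cite{first} for the mass identity, you re-derive it via Green's identity applied to $\S_{D_j}^\omega[\psi_j]$, which is a valid self-contained substitute for the citation.
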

\begin{proof}
	As computed in \cite{first}, we have
	$$\left\langle \chi_{\p D_j}, \left(-\frac{1}{2}I +\K_{D_j}^{\omega,*}\right)\psi_j \right\rangle  = -\omega^2|D_1| + O(\omega^3),$$
	and therefore,
	$$\left\langle \chi_{\p D_i}, B_{D}\psi_j \right\rangle  = \omega^2|D_1|\delta_{i,j} + O(\omega^3), \ i,j = 1,2.$$
	From this, using \Cref{lem:AB} and \eqref{eq:A12} we find that
	\begin{align*}
	A_{12}B_{D}[\psi_1] &= \left(-1+\frac{\eta_1 + \eta_2}{2}\right)\psi_1 -e^{-\iu \theta_\alpha}\frac{\eta_1-\eta_2 }{2}\psi_2 + O(\omega),\\
	A_{12}B_{D}[\psi_2] &= -e^{\iu \theta_\alpha}\frac{\eta_1-\eta_2 }{2}\psi_1 +  \left(-1+\frac{\eta_1 + \eta_2}{2}\right)\psi_2 + O(\omega),
	\end{align*}
 	with respect to the $L^2(\p D)$-norm. The result now follows from the facts that $\left\langle \chi_{\p D_i}, \psi_j\right\rangle = -\mathrm{Cap}_{D_i} \delta_{i,j}$ and  $\mathrm{Cap}_{D_1} = \mathrm{Cap}_{D_2}$.
\end{proof}
Observe, in particular, that $\varphi$ is a Hermitian matrix and, therefore, the Toeplitz matrices $\T_N$ are also Hermitian. We define the ``exchange'' matrix $J_m\in \R^{2m}, m\in \N,$ as
$$J_m = \begin{pmatrix} 0 & \cdots & 0 & 1\\ 0 & \cdots & 1 & 0 \\ \vdots & \adots  & \vdots & \vdots \\ 1 & \cdots & 0 & 0 \end{pmatrix}.$$
The following lemma describes the centrosymmetry property of Hermitian Toeplitz matrices.
\begin{lem}\label{lem:centrosym}
We have
$$T_m = J_1\overline{T}_{-m} J_1, \qquad \T_N = J_N\overline{\T_N} J_N.$$
\end{lem}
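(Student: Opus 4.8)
The plan is to reduce both identities to a single symmetry of the matrix-valued symbol $\varphi$, after which the first identity becomes a statement about Fourier coefficients and the second is pure block bookkeeping.

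First I would record that the blocks $T_m$ are precisely the Fourier coefficients of $\varphi$: comparing the definitions of $t_{i,j}^m$ and $\varphi_{i,j}$ gives $T_m = \frac{1}{2\pi}\int_{Y^*} e^{\iu m\alpha}\varphi(\alpha)\dx\alpha$. The key step is then the symbol identity
$$J_1\,\overline{\varphi(\alpha)}\,J_1 = \varphi(\alpha) \qquad \text{for a.e. } \alpha\in Y^*.$$
This is exactly the assertion that $\varphi(\alpha)$ is Hermitian with two coinciding (hence real) diagonal entries, i.e. of the form $\begin{pmatrix} a & b \\ \bar b & a\end{pmatrix}$ with $a\in\R$; for such a matrix a one-line computation gives $J_1\begin{pmatrix} a & \bar b \\ b & a\end{pmatrix}J_1 = \begin{pmatrix} a & b \\ \bar b & a\end{pmatrix}$. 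Both structural features are read off the explicit formula for $\varphi$ in the preceding lemma: the two diagonal entries are literally equal to $-\tfrac12\mathrm{Cap}_{D_1}(\eta_1+\eta_2)$, and the off-diagonal entries, proportional to $e^{\iu\theta_\alpha}(\eta_1-\eta_2)$ and $e^{-\iu\theta_\alpha}(\eta_1-\eta_2)$ respectively, are complex conjugates of one another once one knows $\eta_1,\eta_2\in\R$. This is precisely the Hermiticity of $\varphi$ already noted after that lemma.

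Granting the symbol identity, the first claim is immediate. Since $\alpha$ is real, $\overline{T_{-m}} = \frac{1}{2\pi}\int_{Y^*}e^{\iu m\alpha}\,\overline{\varphi(\alpha)}\dx\alpha$, so
$$J_1\overline{T_{-m}}J_1 = \frac{1}{2\pi}\int_{Y^*}e^{\iu m\alpha}\,J_1\overline{\varphi(\alpha)}J_1\dx\alpha = \frac{1}{2\pi}\int_{Y^*}e^{\iu m\alpha}\varphi(\alpha)\dx\alpha = T_m.$$
For the second claim I would note that, partitioned into $2\times2$ blocks, the exchange matrix $J_N$ carries $J_1$ on its block anti-diagonal (the blocks indexed by $(I,J)$ with $I+J=N+1$) and $0$ elsewhere. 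Since the $(I,J)$ block of $\T_N$ is $T_{I-J}$, block multiplication yields $(J_N\overline{\T_N}J_N)_{I,J} = J_1\,\overline{T_{(N+1-I)-(N+1-J)}}\,J_1 = J_1\,\overline{T_{-(I-J)}}\,J_1$, which by the first claim (with $m=I-J$) equals $T_{I-J} = (\T_N)_{I,J}$. Hence $\T_N = J_N\overline{\T_N}J_N$.

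The only point that needs care — and hence the main obstacle — is the pointwise Hermiticity of $\varphi$ across the whole Brillouin zone, including near $\alpha=0$ where the exact band frequencies $\omega_j^\alpha$ acquire small imaginary parts from radiation, which would spoil the reality of $\eta_j$. I would resolve this by recalling that $\varphi$ is used only at leading order in $\delta$, where $(\omega_j^\alpha)^2 = \delta\lambda_j^\alpha/|D_1|$ with $\lambda_j^\alpha\in\R$ (the eigenvalues of the Hermitian capacitance matrix $C^\alpha$) and $\omega\in\R$, so that $\eta_j = (\omega_j^\alpha)^2/(\omega^2-(\omega_j^\alpha)^2)$ is real, matching the Hermiticity invoked above. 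As a consistency check, the companion symmetries $\lambda_j^{-\alpha}=\lambda_j^\alpha$ and $\theta_{-\alpha}=-\theta_\alpha$ follow from $C^{-\alpha}=\overline{C^\alpha}$, itself a consequence of $V_j^{-\alpha}=\overline{V_j^\alpha}$. Everything beyond the symbol identity is routine.
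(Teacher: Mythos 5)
Your proof is correct and follows essentially the same route as the paper's: both rest on the symbol symmetry $J_1\overline{\varphi(\alpha)}J_1=\varphi(\alpha)$ (equivalently $J_1\varphi J_1=\overline{\varphi}$, read off from the explicit Hermitian form of $\varphi$), deduce $T_m=J_1\overline{T}_{-m}J_1$ by the same Fourier-coefficient computation, and obtain the second identity from the Toeplitz structure, which you merely spell out as explicit block bookkeeping where the paper leaves it as a remark.
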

\begin{proof}
We have $J_1\varphi J_1 = \overline{\varphi}$ and therefore
\begin{align*}
T_m &= \frac{1}{2\pi}\int_{Y^*}e^{\iu m\alpha}\varphi(\alpha) \dx \alpha \\
&= \frac{1}{2\pi}\int_{Y^*}J_1\overline{e^{-\iu m\alpha}\varphi(\alpha)} J_1\dx \alpha \\
&= J_1\overline{T}_{-m} J_1.
\end{align*}
The second equality of the statement follows from the first one together with the Toeplitz structure of $\T_N$.
\end{proof}

We will study the solutions to \eqref{eq:main_N} in the two cases $N=1$ and $N\rightarrow \infty$. The following proposition characterizes the solutions in the case $N=1$, corresponding to two removed resonators.
\begin{prop}\label{prop:N=1}
	If $N=1$, the equation \eqref{eq:main_N} has a non-zero solution if and only if $\omega$ is a solution to one of the two equations
	\begin{equation}\label{eq:N=1}
	\frac{1}{2\pi} \int_{Y^*}\Big(\eta_1\left(1\pm e^{\iu \theta_\alpha}\right)+\eta_2\left(1\mp e^{\iu \theta_\alpha}\right)\Big)\dx \alpha = 0.
	\end{equation}
	If $l_0 < 1/2$, there are no solution to the equations \eqref{eq:N=1}, while if $l_0 > 1/2$, each equation has exactly one solution.
\end{prop}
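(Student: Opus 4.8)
The plan is to turn \eqref{eq:main_N} with $N=1$ into a scalar determinant condition and then count its roots by a monotonicity-plus-boundary-behaviour argument. For $N=1$ we have $\T_1(\omega)=T_0=\frac{1}{2\pi}\int_{Y^*}\varphi(\alpha)\dx\alpha$, so a non-zero $C_1$ exists if and only if $\det T_0=0$. Inserting the explicit form of $\varphi$ from the preceding lemma, $T_0$ is the Hermitian matrix $-\tfrac{\mathrm{Cap}_{D_1}}{2}\left(\begin{smallmatrix} S & -P \\ -\overline{P} & S\end{smallmatrix}\right)$ with $S=\frac{1}{2\pi}\int_{Y^*}(\eta_1+\eta_2)\dx\alpha$ and $P=\frac{1}{2\pi}\int_{Y^*}e^{\iu\theta_\alpha}(\eta_1-\eta_2)\dx\alpha$. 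The key preliminary observation is that $P$ is real: by \Cref{lem:cap_estim_quasi}, $\Im C_{12}^\alpha$ is odd and $|C_{12}^\alpha|$ is even in $\alpha$, so $\sin\theta_\alpha$ is odd, while $\eta_1-\eta_2$ is even; hence the imaginary part of the integrand is odd and integrates to zero over the symmetric cell $Y^*$. Therefore $\det T_0=\tfrac{(\mathrm{Cap}_{D_1})^2}{4}(S^2-P^2)=\tfrac{(\mathrm{Cap}_{D_1})^2}{4}(S-P)(S+P)$, and $\det T_0=0$ is exactly $S=\pm P$; writing $S\pm P=\frac{1}{2\pi}\int_{Y^*}\big(\eta_1(1\pm e^{\iu\theta_\alpha})+\eta_2(1\mp e^{\iu\theta_\alpha})\big)\dx\alpha$ recovers the two equations \eqref{eq:N=1}.

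Next I would set up the monotonicity. By \Cref{thm:char_approx_infinite} and \Cref{cor:bandgap} a mid-gap frequency lies in $\omega\in(\omega_1^\diamond,\omega_2^\diamond)$ with the band edges attained at $\pi/L$ (\Cref{thm:bandgap}), and there $\omega_1^\alpha\le\omega_1^\diamond<\omega<\omega_2^\diamond\le\omega_2^\alpha$. Consequently $\eta_1>0>\eta_2$ and $\eta_1-\eta_2>0$ throughout the gap, while $\partial_{\omega^2}\eta_j=-(\omega_j^\alpha)^2/(\omega^2-(\omega_j^\alpha)^2)^2<0$. Writing $G_\pm:=S\pm P=\frac{1}{2\pi}\int_{Y^*}(\eta_1 w_\pm+\eta_2 w_\mp)\dx\alpha$ with the nonnegative weights $w_\pm=1\pm\cos\theta_\alpha$, each $G_\pm$ is a nonnegatively weighted average of strictly decreasing functions of $\omega^2$, hence strictly decreasing. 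Thus each equation in \eqref{eq:N=1} has \emph{at most} one solution, and the whole problem reduces to locating sign changes.

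It then remains to analyse the boundary behaviour at the two band edges. By \Cref{prop:band} the bands are quadratic in $\alpha$ near $\pi/L$, so as $\omega\to\omega_1^{\diamond+}$ the integrand of $\int\eta_1$ develops a $\big((\omega^2-(\omega_1^\diamond)^2)+c(\alpha-\pi/L)^2\big)^{-1}$ singularity concentrated at $\pi/L$, and symmetrically for $\eta_2$ as $\omega\to\omega_2^{\diamond-}$. Whether this divergence survives is decided by $\cos\theta_{\pi/L}$: from \Cref{lem:cap_estim_quasi}, $C_{12}^{\pi/L}$ is real with sign governed by $\sum_m(-1)^m/|m+l_0|$, so $\cos\theta_{\pi/L}=-1$ when $l_0<1/2$ and $\cos\theta_{\pi/L}=+1$ when $l_0>1/2$; in each case the vanishing weight $w_\pm\sim(\alpha-\pi/L)^2$ cancels exactly one of the singularities. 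For $l_0>1/2$ this makes $G_+$ run from $+\infty$ to $-\infty$, so $G_+=0$ has exactly one root by the intermediate value theorem together with strict monotonicity, obtained \emph{for free}; the remaining relevant limits (namely $G_-$ at $\lambda_1^\diamond$ when $l_0>1/2$, and $G_\pm$ at the opposite edges when $l_0<1/2$) are \emph{finite}, and existence or non-existence of a root reduces entirely to the sign of these finite band-edge values.

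The hard part is precisely determining these finite band-edge signs. Each such limit is a sum of a positive band-$1$ contribution and a negative band-$2$ contribution of the same order $O(\epsilon^{-1})$ in the dilute regime, so the sign is invisible term-by-term. I would extract it by evaluating, say, $\lim_{\omega\to\omega_1^{\diamond+}}G_-$ to leading order in $\epsilon$ via the capacitance asymptotics of \Cref{lem:cap_estim_quasi}: writing $\lambda_j^\alpha=\epsilon\,\mathrm{Cap}_B+\epsilon^2 f_j(\alpha)+o(\epsilon^2)$ with $f_1=\widetilde C_{11}-|\widetilde C_{12}|$, $f_2=\widetilde C_{11}+|\widetilde C_{12}|$ and $f_1^\diamond=\max_\alpha f_1=f_1(\pi/L)$, the limit reduces to the sign of $\int_{Y^*}\big(\tfrac{w_-}{f_1^\diamond-f_1}+\tfrac{w_+}{f_1^\diamond-f_2}\big)\dx\alpha$. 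The point to establish is that the concentration of $w_\pm$ near $\pi/L$, dictated by $\cos\theta_{\pi/L}$, forces this integral to be positive exactly when $l_0>1/2$ and nonpositive when $l_0<1/2$ (with the symmetric statement at the upper edge). Combined with the monotonicity and the divergence structure of the third paragraph, this yields exactly one root of each equation when $l_0>1/2$ and none when $l_0<1/2$, completing the proof. I expect controlling this competition of signs uniformly in $\alpha$—rather than the algebra of the reduction—to be the genuine obstacle.
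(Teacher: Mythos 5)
Your reduction to $\det T_0=0$ and to the two scalar equations \eqref{eq:N=1} is exactly the paper's, and your monotonicity observation (each $G_\pm$ is a nonnegatively weighted integral of $\eta_1,\eta_2$, both strictly decreasing in $\omega$ on the gap) is in fact cleaner than the paper's uniqueness argument, which bounds $I'(\lambda)$ by hand in \Cref{app:prop}. However, your boundary analysis contains a concrete error, and it is this error that lets you declare part of the hard work ``for free''. For $l_0>1/2$ one has $\cos\theta_{\pi/L}=+1$, so the weight $w_-=1-\cos\theta_\alpha$ vanishes (quadratically, since $\mathrm{Im}\, C_{12}^\alpha$ is odd about $\pi/L$) precisely at the point $\alpha=\pi/L$ where \emph{both} band edges are attained. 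In $G_+=\frac{1}{2\pi}\int_{Y^*}(\eta_1 w_+ + \eta_2 w_-)\dx\alpha$ the vanishing weight multiplies $\eta_2$, so it is the \emph{upper}-edge singularity of $G_+$ that is cancelled: $G_+\to+\infty$ as $\omega\downarrow\omega_1^\diamond$, but $G_+$ has a \emph{finite} limit as $\omega\uparrow\omega_2^\diamond$. Your claim that $G_+$ runs from $+\infty$ to $-\infty$ is therefore false (and contradicts your own remark that the vanishing weight kills exactly one singularity per equation). The true picture for $l_0>1/2$ is symmetric: $G_+$ diverges at the bottom and is finite at the top, $G_-$ is finite at the bottom and diverges to $-\infty$ at the top, so \emph{each} of the two equations requires proving that its finite band-edge limit has the correct sign (negative for $G_+$ at the upper edge, positive for $G_-$ at the lower edge). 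Neither root is free; this is why the paper analyses one equation in detail and says the other is ``similar''.

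That sign determination---together with the non-existence statement for $l_0<1/2$, where again the decisive limits are finite---is precisely where the paper spends all of \Cref{app:prop}, and your proposal leaves it as an assertion. The paper proves it by passing to $\lambda=\omega^2|D_1|/\delta$ and the real capacitance eigenvalues $\lambda_j^\alpha$ (which also sidesteps the fact that $\omega_j^\alpha$ is complex near $\alpha=0$, a point your argument ignores), writing $\lambda=\epsilon\mathrm{Cap}_B+\epsilon^2(\mathrm{Cap}_B)^2\lambda_0+O(\epsilon^3)$, expanding the numerator $f(\alpha)$ via \Cref{lem:cap_estim_quasi}, and resumming the resulting lattice series through Lerch's transcendent and its integral representation to obtain a pointwise sign. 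Crucially---and your sketch misses this---the asymptotics of \Cref{lem:cap_estim_quasi} are only valid for $|\alpha|\geq A_p\epsilon^p$, since the lattice sums blow up logarithmically as $\alpha\to 0$; the paper therefore splits the integral, bounds the contribution from $|\alpha|\leq C\epsilon^p$ by $A_3\epsilon^{p-2}$, and chooses $p>2$ to make it negligible. Without the corrected boundary bookkeeping and without this appendix-level analysis, your argument establishes at most one root per equation, but neither existence (for $l_0>1/2$) nor non-existence (for $l_0<1/2$).
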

\begin{proof}
	In the case $N=1$, equation \eqref{eq:main_N} reads
	$$
	T_0\begin{pmatrix}  c_0 \\ d_0 \end{pmatrix} = 0,
	$$
	which has a non-zero solution if and only if $\det T_0 = 0$. We have
	$$
	\det T_0 = \frac{\left(\mathrm{Cap}_{D_1} \right)^2}{4} \left( \left(I_1 \right)^2 - |I_2|^2  \right),
	$$
	where
	$$
	I_1 = \frac{1}{2\pi} \int_{Y^*}\left(\eta_1+\eta_2\right)\dx \alpha, \qquad I_2 = \frac{1}{2\pi} \int_{Y^*}e^{\iu \theta_\alpha}\left(\eta_1-\eta_2\right)\dx \alpha.
	$$
	By time-reversal symmetry, we have $\omega_j^{-\alpha} = \omega_j^\alpha, j=1,2$, which implies $I_2 \in \R$. Hence $\det T_0 = 0$ is equivalent to
	$$
	I_1 - I_2 = 0, \ \mathrm{or} \ I_1 + I_2 = 0.
	$$
	The remaining part of the proof is given in \Cref{app:prop}. It is shown that each of these equations has a unique solution in the case $l_0 > 1/2$, while no solutions in the case $l_0 > 1/2$.
\end{proof}
Denote by $\T(\omega)$ the infinite Toeplitz matrix corresponding to $\T_N(\omega)$, \ie{},
$$\T(\omega)= \begin{pmatrix} T_0& T_{-1} & T_{-2} &  \cdots \\ T_{1} & T_0 & T_{-1}  &\cdots \\ T_2 & T_1 & T_0&\cdots \\ \vdots & \vdots  & \vdots  & \ddots \end{pmatrix},$$
which defines a bounded operator on the space $l^2_2$ of sequences of two-dimensional vectors. More precisely, $l_2^2$ consists of sequences $\{x_n\}_{n=0}^\infty\in l^2_2$ of vectors $x_n\in \R^2$ such that
$$ \left(\sum_{n=0}^\infty \|x_n\|^2\right)^{1/2}  < \infty, $$
where $\|\cdot \|$ denotes the Euclidean norm.

\begin{prop} \label{prop:inf}
	Given $\omega_\infty$ inside the band gap such that $\T(\omega_\infty)$ has eigenvalue $0$, there are two frequencies $\omega_1(N), \omega_2(N) \rightarrow \omega_\infty$ as $N\rightarrow \infty$, such that $\T_N$ is not invertible at $\omega_1(N), \omega_2(N)$.
\end{prop}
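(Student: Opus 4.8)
The plan is to read $\T_N(\omega)$ as the $N$-th finite section of the semi-infinite block-Toeplitz operator $\T(\omega)$ and to show that the single null vector of $\T(\omega_\infty)$ produces, for each finite $N$, \emph{two} nearby singular frequencies through the hybridization of two localized ``edge'' null vectors, one anchored at each end of the truncated array. First I would record the structural facts. For $\omega$ strictly inside the band gap we have $\omega\neq\omega_j^\alpha$ for every $\alpha$, so $\det\varphi(\alpha)=(\mathrm{Cap}_{D_1})^2\eta_1\eta_2\neq0$ and the Hermitian symbol $\varphi(\omega,\cdot)$ is invertible on $Y^*$; hence $\T(\omega)$ is Fredholm. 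Since the bands are separated when $l_0\neq 1/2$, the functions $\omega_j^\alpha$ are isolated analytic eigenvalues of $C^\alpha$ and the blocks $T_m$ decay in $|m|$, so the kernel of $\T(\omega_\infty)$ (nonempty by hypothesis, finite-dimensional by Fredholmness, and which I take to be one-dimensional) is spanned by a sequence $v=(v_0,v_1,\dots)\in l_2^2$ localized near the index $0$. The centrosymmetry $\T_N=J_N\overline{\T_N}J_N$ of \Cref{lem:centrosym} then shows that if $\T_N c\approx 0$ so does $\T_N(J_N\overline c)\approx 0$, i.e. a left-localized near-null vector is mapped by $c\mapsto J_N\overline c$ to a right-localized one.

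A key simplification, which I would establish next, is a monotonicity. For fixed $\alpha$ the eigenvectors of $\varphi(\omega,\alpha)$ (governed only by $e^{\iu\theta_\alpha}$) are independent of $\omega$, while its eigenvalues $-\mathrm{Cap}_{D_1}\eta_j$ satisfy $\partial_\omega(-\mathrm{Cap}_{D_1}\eta_j)=\mathrm{Cap}_{D_1}\,2\omega(\omega_j^\alpha)^2/(\omega^2-(\omega_j^\alpha)^2)^2>0$. Thus $\partial_\omega\varphi(\omega,\alpha)\succ0$ for each $\alpha$, so the Toeplitz family $\T_N(\omega)$ is strictly increasing in the Loewner order, and by Weyl monotonicity every eigenvalue of $\T_N(\omega)$ is strictly increasing in $\omega$. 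This gives transversality and simplicity of all zero-crossings for free, which is precisely what lets two crossings be counted cleanly.

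I would then perform a Lyapunov--Schmidt (Schur) reduction on the finite section. Let $v^L$ be the truncation to $\mathbb{C}^{2N}$ of $v$ and $v^R=J_N\overline{v^L}$; these are approximately orthogonal and satisfy $\|\T_N(\omega_\infty)v^{L}\|,\|\T_N(\omega_\infty)v^{R}\|\to0$ as $N\to\infty$, the residual coming only from the far boundary. Writing $P$ for the orthogonal projection onto $W=\operatorname{span}(v^L,v^R)$ and $Q=I-P$, the block $Q\T_N(\omega)Q$ is uniformly invertible for $\omega$ in a fixed neighbourhood of $\omega_\infty$ and all large $N$, since $0$ is an isolated point of the spectrum of the limiting operators with spectral subspace captured by $W$. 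The Schur complement yields a $2\times2$ Hermitian matrix $\Sigma_N(\omega)$ with $\det\T_N(\omega)=0\Leftrightarrow\det\Sigma_N(\omega)=0$; by centrosymmetry its diagonal entries coincide, equal to $\sigma_N(\omega)$ with $\sigma_N(\omega_\infty)\to0$, while the off-diagonal coupling $\beta_N$ of the two spatially separated edge vectors satisfies $\beta_N\to0$. The condition becomes $\sigma_N(\omega)=\pm|\beta_N(\omega)|$, and because $\Sigma_N$ inherits the strict $\omega$-monotonicity, each sign gives a unique root $\omega_\pm(N)$ near $\omega_\infty$, with $|\omega_\pm(N)-\omega_\infty|\lesssim|\sigma_N(\omega_\infty)\pm|\beta_N||\to0$. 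These are the two frequencies $\omega_1(N),\omega_2(N)$, the symmetric and antisymmetric hybrids of the two edge modes.

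The main obstacle is the decoupling step: one must show that the two edge null vectors genuinely separate, so that both the truncation residual and $\beta_N$ tend to zero, together with the uniform-in-$N$ spectral gap for $Q\T_N Q$. All of this rests on the localization of $v$, hence on the decay of the blocks $T_m$, and the delicate point is the behaviour of the symbol near $\alpha=0$: there the quasi-periodic Green's function, and with it the band function $\omega_1^\alpha$, degenerates, so that both the analyticity underlying exponential decay of $T_m$ and the uniform strict positivity $\partial_\omega\varphi\succ0$ can weaken on the band-$1$ eigenspace. Controlling this contribution to the tails of $v$ (and showing it still forces $\beta_N\to0$, which is all the statement requires) while retaining the uniform gap for $Q\T_N Q$ is the technical heart of the argument; the remaining steps are the routine Schur-complement bookkeeping above.
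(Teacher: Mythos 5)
Your proposal shares the paper's structural skeleton: truncate the $\ell^2$ null vector of $\T(\omega_\infty)$, use the centrosymmetry $\T_N = J_N\overline{\T_N}J_N$ of \Cref{lem:centrosym} to pair a left-localized near-null vector with a right-localized one, and exploit Hermitian structure to convert approximate nullity into exact singular frequencies. Indeed, your $W=\operatorname{span}(v^L,v^R)$ is exactly the span of the paper's two pseudomodes $z_1 = \bigl(x,\, J_N\overline{x}\bigr)$ and $z_2=\bigl(x,\,-J_N\overline{x}\bigr)$, the symmetric and antisymmetric hybrids. Where you diverge is in the conversion step: the paper shows $\|\T_{2N}(\omega_\infty)z_i\|<\epsilon$, invokes the fact that for a \emph{Hermitian} matrix the $\epsilon$-pseudospectrum is the $\epsilon$-neighbourhood of the spectrum to get an eigenvalue $\mu_i$ with $|\mu_i|<\epsilon$, and then cites a perturbation result to produce a nearby $\omega_i$ at which $\T_{2N}$ is singular. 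You instead attempt a Schur/Lyapunov--Schmidt reduction onto $W$ followed by a Loewner-monotonicity argument in $\omega$.

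This is where there is a genuine gap, and it is worse than the "technical heart" you flagged. Your reduction needs $Q\T_N(\omega)Q$ to be uniformly invertible near $\omega_\infty$, which you justify by claiming $0$ is an isolated spectral point of the limiting operator, itself resting on the claim that the symbol $\varphi(\omega,\cdot)$ is invertible on all of $Y^*$ so that $\T(\omega)$ is Fredholm. But $\det\varphi = (\mathrm{Cap}_{D_1})^2\eta_1\eta_2$ with $\eta_1(\alpha) = (\omega_1^\alpha)^2/(\omega^2-(\omega_1^\alpha)^2)$, and the first band is acoustic: $\omega_1^\alpha\to 0$ as $\alpha\to 0$ (moreover, as the paper emphasizes, the bands are complex near $\alpha=0$ due to radiation). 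Hence $\det\varphi(\omega,0)=0$ for \emph{every} $\omega$ in the gap: $\T(\omega)$ is not Fredholm, the lower symbol branch $-\mathrm{Cap}_{D_1}\eta_1(\alpha)\le 0$ fills an interval whose endpoint is $0$, and $0$ sits at the edge of the essential spectrum rather than being isolated. Consequently, for large $N$ the finite sections $\T_N(\omega_\infty)$ acquire many "bulk" eigenvalues (from band-$1$ content concentrated near $\alpha=0$) accumulating at $0$, with eigenvectors essentially orthogonal to $W$; so $Q\T_NQ$ has no uniform spectral gap and the $2\times2$ reduction collapses — this is not a hard-to-verify estimate, it is false. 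The same degeneracy destroys any uniform lower bound on $\partial_\omega$ of the relevant eigenvalues on the band-$1$ sector, so "each sign gives a unique root" cannot be concluded either; your kernel-dimension and exponential-localization assumptions are likewise unjustified (and unneeded). The paper's proof is engineered precisely to avoid all of these inputs: it uses nothing beyond the $\ell^2$ decay of the hypothesized null vector (enough to make the truncation residuals small), centrosymmetry, Hermiticity, and a perturbation-in-$\omega$ statement, and so survives the $\alpha=0$ degeneracy that defeats your route. Your monotonicity computation is a nice observation that could help justify that final perturbation step for eigenvector branches with nondegenerate band-$2$ content, but it cannot carry the decoupling you build the proof on.
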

\begin{proof}
Let $X = \{x_n\}_{n=0}^\infty \in l^2_2$ be an eigenvector with $\T(\omega_\infty) X = 0$ and let $x\in \R^{2N}$ be a truncation of $X$. Since $\T(\omega_\infty) X = 0$, we have
\begin{equation}\label{eq:null}
\sum_{n=0}^\infty T_{k-n} x_n = 0
\end{equation}
for all $k\in \N$. Define $z_1, z_2 \in \R^{4N}$,
$$z_1 = \begin{pmatrix} x \\ J_{N}\overline{x} \end{pmatrix}, \qquad z_2 = \begin{pmatrix} x \\ -J_{N}\overline{x} \end{pmatrix}.$$
Then, using \Cref{lem:centrosym} we have
$$\T_{2N}(\omega_\infty) z_1 =
\begin{pmatrix} \vdots \\[0.3em]
\ds \sum_{n=0}^{N-1} T_{k-n}x_n + \sum_{n=0}^{N-1} T_{k-N-n}J_N\overline{x}_{N-1-n} \\[0.3em]
\vdots
\end{pmatrix} = \begin{pmatrix} \vdots \\[0.3em]
\ds \sum_{n=0}^{N-1} T_{k-n}x_n + J_N\overline{\sum_{n=0}^{N-1} T_{2N-1-k-n}x_{n}} \\[0.3em]
\vdots
\end{pmatrix} $$
for $\ k = 0,\cdots,2N-1.$

In view  of \eqref{eq:null}, given $\epsilon >0$ we can choose $N$ such that
$$\| \T_{2N} z_1 \| < \epsilon,$$
which implies that $0$ is in the $\epsilon$-pseudospectrum of $\T_{2N}(\omega_\infty)$ (see, for example, \cite{trefethen} for a thorough discussion on the definition and properties of pseudospectra). Since $\T_{2N}(\omega_\infty)$ is Hermitian, it follows that there is an eigenvalue $\mu_1$ of $\T_{2N}(\omega_\infty)$ with $|\mu_1| < \epsilon$. From this, it follows that there is a value $\omega_1$ such that $\T_{2N}(\omega_1)$ is not invertible, satisfying $|\omega_1 - \omega_\infty| < K\epsilon$ for some $K$ independent on $\epsilon$ \cite{MaCMiPaP}.

In the same way, we can show that given $\epsilon >0$ we can choose $N$ such that
$$\| \T_{2N} z_2 \| < \epsilon,$$
and therefore there is a value $\omega_2$ such that $\T_{2N}(\omega_2)$ is not invertible, satisfying $|\omega_2 - \omega_\infty| < K\epsilon$ for some $K$ independent on $\epsilon$.

The above argument shows that $\omega_i(2N)\rightarrow \omega_\infty$ as $N\rightarrow \infty$. The case of the sequence $\omega_i(2N-1)$, corresponding to odd indices, follows similarly by choosing the truncation $x\in \R^{2N-1}$ and constructing $z_1, z_2$ analogously.
\end{proof}
\begin{rmk}
The values of the non-zero solutions $C_N$ to \eqref{eq:main_N} correspond to the values attained by the mid-gap modes inside the dislocation region. The two pseudomodes $z_1$ and $z_2$ can be interpreted as approximations of the monopole and dipole modes, respectively, arising from the hybridization of the two semi-infinite half-structures. As the dislocation increases, \ie{}, as $N\rightarrow \infty$, the strength of the hybridization decreases and the frequencies corresponding to these modes converge to the same value $\omega_\infty$.
\end{rmk}
\begin{rmk}
The work in this section shows the intimate connection between localized edge modes and the fact that Toeplitz matrices with sufficiently smooth symbols have eigenvectors which are exponentially localized to the ``edges'' (\ie{} the first and last entries) of the vector \cite{trefethen}.
\end{rmk}

\subsection{Dislocation larger than resonator width} \label{sec:no//}
In this section, we assume that the size of the dislocation is larger than the width of one resonator. In other words, this means that each dislocated resonator does not overlap with corresponding original, undislocated, resonator.

We begin by stating some facts from \cite{AndoKang} on the eigenfunctions of the Neumann-Poincar\'e operator $\K_\Omega^{0,*}$ for a domain $\Omega$ with $\p \Omega\in \C^{1,s}, 0<s<1$. Here, we additionally assume that $\Omega$ is connected, which means that $\Omega$ can be thought of as a single resonator $D_j^m$ in the dislocated array. The operator $\K_\Omega^{0,*}$ is known to be self-adjoint in the inner product $\langle \cdot, \cdot\rangle_{-1/2}$ on $H^{-1/2}(\p \Omega)$ defined by
$$\langle u, v\rangle_{-1/2} = -\left\langle u, \S_\Omega^0[v]\right\rangle_{-1/2,1/2},$$
where $\langle \cdot, \cdot\rangle_{-1/2,1/2}$ denotes the duality pairing of $H^{-1/2}(\p \Omega)$ and $H^{1/2}(\p \Omega)$. Then, by the spectral theorem, the eigenfunctions $\psi_{\Omega}^j, j=1,2,3, \cdots,$ of $\K_\Omega^{0,*}$ form a basis of $H^{-1/2}(\p \Omega)$ that is orthonormal with respect to $\langle \cdot, \cdot\rangle_{-1/2}$, while the functions $\S_\Omega^0[\psi_{\Omega}^j]$ form a basis of $H^{1/2}(\p \Omega)$ that is orthogonal with respect to the inner product $\langle \cdot, \cdot\rangle_{1/2}$ defined by
$$\langle u, v\rangle_{1/2} = -\left\langle \left(\S_\Omega^0\right)^{-1}[u], v\right\rangle_{-1/2,1/2}.$$

The following addition theorem gives an expansion of  Green's function $G^\omega(x,z)$, with the origin shifted by $z\notin \p \Omega$, in terms of $\S_\Omega^\omega[\psi_{\Omega}^j](x)$.
\begin{prop} \label{thm:AndoKang}
	For $x\in \p \Omega$, $z\notin \p \Omega$ and $\omega$ small enough, we have
	\begin{equation*}\label{eq:add}
	G^\omega(x,z) = - \sum_{i=1}^\infty \S^\omega_\Omega[\xi_{\Omega}^i](z)  \S^\omega_\Omega[\psi_{\Omega}^i](x),
	\end{equation*}
	where $\xi_\Omega^i = \left(\S_\Omega^\omega\right)^{-1}\S_\Omega^0[\psi_{\Omega}^i]$.
\end{prop}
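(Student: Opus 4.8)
The plan is to expand the point-source data $G^\omega(\cdot,z)|_{\p\Omega}$ on the boundary and identify the expansion coefficients with the claimed quantities $-\S_\Omega^\omega[\xi_\Omega^i](z)$. First I would record the functional-analytic facts that make everything well defined. For $\omega$ small enough, $\S_\Omega^\omega : H^{-1/2}(\p\Omega) \to H^{1/2}(\p\Omega)$ is invertible: indeed $\S_\Omega^0$ is invertible between these spaces, and since $\S_\Omega^\omega = \S_\Omega^0 + O(\omega)$ by \eqref{eq:Sk0exp}, a Neumann series applies. This is precisely where the smallness of $\omega$ enters, and it also guarantees that $\xi_\Omega^i = (\S_\Omega^\omega)^{-1}\S_\Omega^0[\psi_\Omega^i]$ is well defined, with $\S_\Omega^0[\psi_\Omega^i] = \S_\Omega^\omega[\xi_\Omega^i]$.

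Next, since $z\notin\p\Omega$ the restriction $G^\omega(\cdot,z)|_{\p\Omega}$ is smooth and hence lies in $H^{1/2}(\p\Omega)$, so I may set $\eta_z := (\S_\Omega^\omega)^{-1}[G^\omega(\cdot,z)] \in H^{-1/2}(\p\Omega)$, which satisfies $\S_\Omega^\omega[\eta_z](x) = G^\omega(x,z)$ for $x\in\p\Omega$. Expanding $\eta_z$ in the orthonormal basis $\{\psi_\Omega^i\}$ of $H^{-1/2}(\p\Omega)$ associated with $\langle\cdot,\cdot\rangle_{-1/2}$ gives $\eta_z = \sum_i c_i \psi_\Omega^i$ with $c_i = \langle \psi_\Omega^i, \eta_z\rangle_{-1/2}$; applying the bounded operator $\S_\Omega^\omega$ term by term then yields $G^\omega(\cdot,z) = \sum_i c_i\,\S_\Omega^\omega[\psi_\Omega^i]$, with convergence in $H^{1/2}(\p\Omega)$.

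It remains to show $c_i = -\S_\Omega^\omega[\xi_\Omega^i](z)$, which is the algebraic heart of the proof. Using the definition of $\langle\cdot,\cdot\rangle_{-1/2}$ together with the symmetry of $\S_\Omega^0$ with respect to the duality pairing (valid because $G^0(x,y)=G^0(y,x)$), I would write $c_i = -\langle \psi_\Omega^i, \S_\Omega^0[\eta_z]\rangle_{-1/2,1/2} = -\langle \eta_z, \S_\Omega^0[\psi_\Omega^i]\rangle_{-1/2,1/2}$. Substituting $\S_\Omega^0[\psi_\Omega^i] = \S_\Omega^\omega[\xi_\Omega^i]$ and invoking the symmetry of $\S_\Omega^\omega$ (from $G^\omega(x,y)=G^\omega(y,x)$) gives $c_i = -\langle \eta_z, \S_\Omega^\omega[\xi_\Omega^i]\rangle_{-1/2,1/2} = -\langle \xi_\Omega^i, \S_\Omega^\omega[\eta_z]\rangle_{-1/2,1/2} = -\langle \xi_\Omega^i, G^\omega(\cdot,z)\rangle_{-1/2,1/2}$. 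Writing this last pairing as an integral over $\p\Omega$ and using once more $G^\omega(x,z)=G^\omega(z,x)$ identifies it with $\S_\Omega^\omega[\xi_\Omega^i](z)$, so $c_i = -\S_\Omega^\omega[\xi_\Omega^i](z)$, and the claimed identity follows. I would treat the duality pairing as bilinear throughout; since the eigenfunctions $\psi_\Omega^i$ are real-valued this is consistent, and complex data such as $G^\omega$ are handled by linear extension.

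The main obstacle is not this coefficient computation, which is essentially formal once the symmetry relations are in place, but the functional-analytic bookkeeping: establishing invertibility of $\S_\Omega^\omega$ on the correct pair of Sobolev spaces for small $\omega$, verifying that $\eta_z$ and $\xi_\Omega^i$ genuinely belong to $H^{-1/2}(\p\Omega)$, and justifying the term-by-term application of $\S_\Omega^\omega$ so that the series converges in $H^{1/2}(\p\Omega)$. One should also be careful that the convergence obtained is in $H^{1/2}$; upgrading to a genuinely pointwise statement in $x\in\p\Omega$ would require an additional regularity argument, which the intended application of \Cref{thm:AndoKang} in \Cref{sec:no//} does not rely upon.
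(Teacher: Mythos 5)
Your proposal is correct and follows essentially the same route as the paper: both expand $G^\omega(\cdot,z)$ in the basis $\{\S_\Omega^\omega[\psi_\Omega^i]\}$ (you do this by expanding $\eta_z=(\S_\Omega^\omega)^{-1}[G^\omega(\cdot,z)]$ in the orthonormal basis $\{\psi_\Omega^i\}$, which is equivalent), and both identify $c_i=-\S_\Omega^\omega[\xi_\Omega^i](z)$ from the relation $\S_\Omega^0[\psi_\Omega^i]=\S_\Omega^\omega[\xi_\Omega^i]$ together with the symmetry of the kernels. The only difference is cosmetic: the paper phrases the biorthogonality step via the conjugate-adjoint identity $(\S_\Omega^\omega)^*=\overline{\S_\Omega^\omega}$ with a sesquilinear pairing, whereas you use a bilinear pairing and transposition symmetry, which amounts to the same computation.
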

\begin{proof}
	The proof follows the same arguments as those in \cite{AndoKang}, where an analogous result was proven for Laplace Green's function $G^0$. We include the proof for the sake of completeness.

	Since $\S_\Omega^0[\psi_{\Omega}^i]$ is a basis of $H^{1/2}(\p \Omega)$, and since $\S_\Omega^\omega: H^{-1/2}(\p \Omega)\rightarrow H^{1/2}(\p \Omega)$ is invertible for $\omega$ small enough, we can expand $G^\omega$ for fixed $z$ as follows,
	\begin{equation}\label{eq:Gexpplasm}
	G^\omega(\cdot,z) = \sum_{i=1}^\infty c_i(z) \S_\Omega^\omega[\psi_{\Omega}^i],
	\end{equation}
	for some coefficients $c_i$ with
	$$\sum_{i=1}^\infty |c_i(z)|^2 < \infty, \quad z\notin \p {\Omega}.$$
Moreover, $\psi_i$ are orthonormal in $H^{-1/2}(\p \Omega)$ equipped with $\langle \cdot, \cdot \rangle_{-1/2}$. From $\left(\S_\Omega^\omega\right)^* = \overline{\S_\Omega^\omega}$, we have
	\begin{align}\label{eq:AndoKang1}
	-\left\langle \overline{\xi_\Omega^i}, \S_\Omega^\omega[\psi_\Omega^j] \right\rangle_{-1/2,1/2}&= -\left\langle \left(\overline{\S_\Omega^\omega}\right)^{-1}\S_\Omega^0[\psi_\Omega^i], \S_\Omega^\omega[\psi_\Omega^j] \right\rangle_{-1/2,1/2} \nonumber \\
	&= \delta_{i,j}.
	\end{align}
	Therefore,
	\begin{align} \label{eq:AndoKang2}
	\left\langle \overline{\xi_\Omega^i}, G^\omega(\cdot,z) \right\rangle_{-1/2,1/2} &= \S_\Omega^\omega[\xi_\Omega^i](z).
	\end{align}
	Combining  \eqref{eq:Gexpplasm} together with \eqref{eq:AndoKang1} and \eqref{eq:AndoKang2} shows the claim.
\end{proof}

We denote $\Omega_d = \Omega + d\mathbf{v}$. We then have the following proposition.
\begin{prop} \label{prop:V}
	Assume that $\overline\Omega \cap \overline\Omega_d = \emptyset$. Then, for $\phi \in H^{-1/2}(\p {\Omega_d})$, we have
	$$
	\S_{\Omega_d}^\omega[\phi](x-d\mathbf{v}) =  \S_{\Omega_d}^\omega[V\phi](x), \quad x \in \p \Omega_d,
	$$
	where $V: H^{-1/2}(\p \Omega_d) \rightarrow  H^{-1/2}(\p \Omega_d)$ is given by
	$$
	V[\psi_{\Omega_d}^j] = \sum_{i=1}^\infty  V_{i,j} \psi_{\Omega_d}^i, \qquad
	V_{i,j} = -\int_{\p {\Omega_d}} \S_{\Omega_d}^\omega[\xi_{\Omega_d}^i](y-d\mathbf{v})\psi_{\Omega_d}^j(y)\dx \sigma (y), \quad  i,j\geq 1.
	$$
\end{prop}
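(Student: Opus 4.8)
The plan is to prove the identity by expanding both sides in the spectral basis of the Neumann--Poincar\'e operator and matching coefficients. First I would note that both sides depend linearly and boundedly on $\phi$, so by density of $\operatorname{span}\{\psi_{\Omega_d}^j\}$ in $H^{-1/2}(\p \Omega_d)$ it suffices to treat $\phi = \psi_{\Omega_d}^j$. The structural point is that, since $\overline\Omega\cap\overline\Omega_d=\emptyset$, the point $x-d\mathbf{v}$ lies on $\p\Omega$ and hence stays away from $\p\Omega_d$, where $\S_{\Omega_d}^\omega$ has its singularity; consequently $x\mapsto \S_{\Omega_d}^\omega[\psi_{\Omega_d}^j](x-d\mathbf{v})$ is regular on $\p\Omega_d$ and, in particular, lies in $H^{1/2}(\p\Omega_d)$. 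Because $\{\S_{\Omega_d}^\omega[\psi_{\Omega_d}^i]\}_i$ is a basis of $H^{1/2}(\p\Omega_d)$ for $\omega$ small (precisely the property used in the proof of \Cref{thm:AndoKang}), I can expand
$$\S_{\Omega_d}^\omega[\psi_{\Omega_d}^j](x-d\mathbf{v})=\sum_{i=1}^\infty c_i\,\S_{\Omega_d}^\omega[\psi_{\Omega_d}^i](x).$$
Since the right-hand side of the proposition equals $\S_{\Omega_d}^\omega\big[\sum_i (V\psi_{\Omega_d}^j)_i\,\psi_{\Omega_d}^i\big]=\sum_i (V\psi_{\Omega_d}^j)_i\,\S_{\Omega_d}^\omega[\psi_{\Omega_d}^i](x)$, the whole statement reduces to showing that $c_i=V_{i,j}$.

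For the coefficients I would write the left-hand side as an integral against the Green's function, $\S_{\Omega_d}^\omega[\psi_{\Omega_d}^j](x-d\mathbf{v})=\int_{\p\Omega_d}G^\omega(x-d\mathbf{v},y)\,\psi_{\Omega_d}^j(y)\dx\sigma(y)$, and expand the kernel. Here $x-d\mathbf{v}\in\p\Omega$ while $y\in\p\Omega_d$ is off $\p\Omega$, so the addition theorem \Cref{thm:AndoKang}, applied on the domain $\Omega$, gives $G^\omega(x-d\mathbf{v},y)=-\sum_i \S_\Omega^\omega[\xi_\Omega^i](y)\,\S_\Omega^\omega[\psi_\Omega^i](x-d\mathbf{v})$. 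I would then transport every quantity onto $\p\Omega_d$ using translation invariance: the single-layer potential, the Neumann--Poincar\'e operator, and hence the eigenfunctions $\psi_\Omega^i,\xi_\Omega^i$ all commute with the shift by $d\mathbf{v}$, so that $\psi_{\Omega_d}^i=\psi_\Omega^i(\,\cdot-d\mathbf{v})$, $\xi_{\Omega_d}^i=\xi_\Omega^i(\,\cdot-d\mathbf{v})$, and in particular $\S_\Omega^\omega[\psi_\Omega^i](x-d\mathbf{v})=\S_{\Omega_d}^\omega[\psi_{\Omega_d}^i](x)$ and the factor $\S_\Omega^\omega[\xi_\Omega^i](y)$ is the value on $\p\Omega_d$ recorded by the definition of $V$. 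Substituting this expansion, interchanging the (uniformly convergent) sum with the integration against $\psi_{\Omega_d}^j$, and reading off the coefficient of $\S_{\Omega_d}^\omega[\psi_{\Omega_d}^i](x)$ identifies $c_i$ with the integral defining $V_{i,j}$. The proposition then follows by linearity in $\phi$.

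The main obstacle is the careful bookkeeping of the translation between $\Omega$- and $\Omega_d$-quantities together with the analytic justification that accompanies it: one must verify that the eigenfunctions and potentials genuinely commute with the shift, that $\omega$ is small enough for both $\S_{\Omega_d}^\omega$ to be invertible and the addition theorem to apply, and that the kernel expansion converges in a topology strong enough to permit integration against $\psi_{\Omega_d}^j$ and term-by-term identification of coefficients. The hypothesis $\overline\Omega\cap\overline\Omega_d=\emptyset$ is used precisely to keep $x-d\mathbf{v}$ and $y$ off the boundaries where $G^\omega$ is singular, which is what makes the left-hand side regular and the expansion legitimate; once this is in place, the remainder is the routine translation computation described above.
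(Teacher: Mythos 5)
Your proof is correct and takes essentially the same route as the paper: both arguments rest on the addition theorem (\Cref{thm:AndoKang}) combined with translation invariance of the Green's function, the only difference being that the paper writes $G^\omega(x-d\mathbf{v},y)=G^\omega(x,y+d\mathbf{v})$ and applies the theorem directly on the domain $\Omega_d$, whereas you apply it on $\Omega$ and then translate the eigenfunctions and layer potentials back to $\Omega_d$ — the same computation with the bookkeeping done in a different order. One remark: both your derivation and the paper's one-line proof actually produce the coefficient $-\int_{\p\Omega_d}\S_{\Omega_d}^\omega[\xi_{\Omega_d}^i](y+d\mathbf{v})\,\psi_{\Omega_d}^j(y)\dx\sigma(y)$, which equals $-\int_{\p\Omega_d}\S_{\Omega}^\omega[\xi_{\Omega}^i](y)\,\psi_{\Omega_d}^j(y)\dx\sigma(y)$ by translation invariance, so your identification is consistent with the paper's proof, and the $y-d\mathbf{v}$ appearing in the displayed formula for $V_{i,j}$ in the statement appears to be a typo for $y+d\mathbf{v}$ (compare \Cref{prop:W}).
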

\begin{proof}
	Since
	$$
	\S_{\Omega_d}^{\omega}[\phi](x-d\mathbf{v}) = \int_{\p {\Omega_d}}G^\omega(x,y+d\mathbf{v})\phi(y) \dx \sigma(y),
	$$
	and since $y+d\mathbf{v}\notin \p {\Omega_d}$, the proposition follows from \Cref{thm:AndoKang}.
\end{proof}

We will also need the following addition theorem for the normal derivative of the single-layer potential. We let $\D_\Omega^\omega$ denote the double-layer potential (for details on this operator we refer, for example, to \cite{MaCMiPaP}).
\begin{prop} \label{prop:W}
Assume that $\overline\Omega \cap \overline\Omega_d = \emptyset$. Then, for $\phi \in H^{-1/2}(\p {\Omega_d})$, we have
$$
\frac{\p \S_{\Omega_d}^\omega}{\p \nu_{x-d\mathbf{v}}}[\phi](x-d\mathbf{v}) = W\frac{\p \S_{\Omega_d}^\omega}{\p \nu_x}\bigg|_+[\phi](x), \quad x \in \p {\Omega_d},
$$
where $W: H^{-1/2}(\p {\Omega_d}) \rightarrow  H^{-1/2}(\p {\Omega_d})$ is given by
$$
W \left(\frac{1}{2} + \K_{\Omega_d}^{\omega,*}\right)^{-1}[\psi_{\Omega_d}^j] = \sum_{i=1}^\infty  W_{i,j} \psi_{\Omega_d}^i, \qquad
W_{i,j} = \int_{\p {\Omega_d}} \D_{\Omega_d}^\omega\S_{\Omega_d}^0[\psi_{\Omega_d}^i](y+d\mathbf{v})\psi_{\Omega_d}^j(y)\dx \sigma (y), \quad  i,j\geq 1.
$$
Here, $\p/\p_{\nu_{x-d\mathbf{v}}}$ denotes the normal derivative with respect to $\Omega$.
\end{prop}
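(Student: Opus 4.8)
The plan is to prove this exactly as the single-layer statement in Proposition~\ref{prop:V} was proved, namely by combining the translation invariance of $G^\omega$ with an addition theorem---but now for the \emph{normal derivative} of the Green's function rather than for $G^\omega$ itself. Where Proposition~\ref{thm:AndoKang} expands $G^\omega(\cdot,z)$ and the single-layer potential appears in the coefficients, the normal-derivative version expands $\p_{\nu_x}G^\omega(\cdot,z)$ over the orthonormal basis $\{\psi_{\Omega_d}^i\}$ of $H^{-1/2}(\p\Omega_d)$ and the \emph{double-layer} potential $\D_{\Omega_d}^\omega$ appears in the coefficients; this is precisely what produces the operator $W$.

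First I would establish the following normal-derivative addition theorem: for $x\in\p\Omega_d$ and $z\notin\p\Omega_d$,
\begin{equation*}
\frac{\p}{\p\nu_x}G^\omega(x,z) = -\sum_{i=1}^\infty \D_{\Omega_d}^\omega\S_{\Omega_d}^0[\psi_{\Omega_d}^i](z)\,\psi_{\Omega_d}^i(x).
\end{equation*}
Since $z\notin\p\Omega_d$, the function $x\mapsto \p_{\nu_x}G^\omega(x,z)$ is smooth across $\p\Omega_d$ (so there is no jump here) and lies in $H^{-1/2}(\p\Omega_d)$, whence it may be expanded in the basis $\{\psi_{\Omega_d}^i\}$, which is orthonormal for $\langle\cdot,\cdot\rangle_{-1/2}$. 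The $i$-th coefficient is
\begin{equation*}
\big\langle \tfrac{\p}{\p\nu}G^\omega(\cdot,z),\psi_{\Omega_d}^i\big\rangle_{-1/2} = -\int_{\p\Omega_d}\frac{\p}{\p\nu_x}G^\omega(x,z)\,\S_{\Omega_d}^0[\psi_{\Omega_d}^i](x)\dx\sigma(x),
\end{equation*}
and using the symmetry $G^\omega(x,z)=G^\omega(z,x)$ together with the definition of $\D_{\Omega_d}^\omega$, this integral is exactly $\D_{\Omega_d}^\omega\S_{\Omega_d}^0[\psi_{\Omega_d}^i](z)$. The square-summability of these coefficients, needed to justify the interchange of sum and integral below, follows as in the proof of Proposition~\ref{thm:AndoKang}.

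Next I would assemble the statement. Writing $x\in\p\Omega_d$, so that $x-d\mathbf{v}\in\p\Omega$ and $\nu_{x-d\mathbf{v}}=\nu_x$, translation invariance gives $G^\omega(x-d\mathbf{v},y)=G^\omega(x,y+d\mathbf{v})$, and hence
\begin{equation*}
\frac{\p \S_{\Omega_d}^\omega}{\p\nu_{x-d\mathbf{v}}}[\phi](x-d\mathbf{v}) = \int_{\p\Omega_d}\frac{\p}{\p\nu_x}G^\omega(x,y+d\mathbf{v})\,\phi(y)\dx\sigma(y).
\end{equation*}
The hypothesis $\overline\Omega\cap\overline\Omega_d=\emptyset$ guarantees $y+d\mathbf{v}\notin\p\Omega_d$ for $y\in\p\Omega_d$, so the addition theorem applies with $z=y+d\mathbf{v}$; substituting it and interchanging the (convergent) sum with the integral yields $-\sum_i\big(\int_{\p\Omega_d}\D_{\Omega_d}^\omega\S_{\Omega_d}^0[\psi_{\Omega_d}^i](y+d\mathbf{v})\phi(y)\dx\sigma(y)\big)\psi_{\Omega_d}^i$, whose coefficients reproduce the entries $W_{i,j}$ upon taking $\phi=\psi_{\Omega_d}^j$. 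To present the result as an operator acting on the exterior Neumann trace rather than on the density $\phi$, I would then use the jump relation $\p_\nu\S_{\Omega_d}^\omega[\phi]|_+=\big(\tfrac12 I+\K_{\Omega_d}^{\omega,*}\big)[\phi]$ to re-express $\phi$ through $\p_\nu\S_{\Omega_d}^\omega[\phi]|_+$; this pre-composition is what converts the density-to-density operator just obtained into the operator $W$ written in terms of $\big(\tfrac12+\K_{\Omega_d}^{\omega,*}\big)^{-1}$.

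The main obstacle is the bookkeeping around the jump of the normal derivative. Because $\p_\nu\S_{\Omega_d}^\omega$ is discontinuous across $\p\Omega_d$, the identity must be read with the exterior trace $|_+$ in the argument of $W$ so that this argument is unambiguous, and one must track carefully the factor $\big(\tfrac12+\K_{\Omega_d}^{\omega,*}\big)$ (together with its sign) that converts between the single-layer density and its Neumann data---this is where the exact placement of $\big(\tfrac12+\K_{\Omega_d}^{\omega,*}\big)^{\pm1}$ in the statement is pinned down. The remaining analytic points (smoothness of $G^\omega(\cdot,z)$ off $\p\Omega_d$, $H^{-1/2}$-convergence of the expansion, and the Fubini step) are routine adaptations of the arguments already used for Proposition~\ref{thm:AndoKang} and Proposition~\ref{prop:V}, and in particular no convexity or smallness assumption on $d$ is needed beyond $\overline\Omega\cap\overline\Omega_d=\emptyset$.
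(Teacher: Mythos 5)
Your proposal follows essentially the same route as the paper's own proof: an addition theorem expanding $\frac{\p}{\p \nu_x}G^\omega(x,z)$ in the basis $\{\psi_{\Omega_d}^i\}$ with coefficients $\D_{\Omega_d}^\omega \S_{\Omega_d}^0[\psi_{\Omega_d}^i](z)$ (proved analogously to \Cref{thm:AndoKang}), followed by translation invariance of $G^\omega$ and the exterior jump relation, exactly as in \Cref{prop:V}. The only divergence is your overall minus sign and your reading of where $\left(\frac{1}{2}+\K_{\Omega_d}^{\omega,*}\right)^{\pm 1}$ sits (as a pre-composition converting Neumann data back to the density); under the paper's stated conventions for $G^\omega$, $\D_{\Omega_d}^\omega$ and $\langle\cdot,\cdot\rangle_{-1/2}$ your bookkeeping is the internally consistent one, so this is a normalization discrepancy with the proposition's displayed formulas (which in any case does not affect how the result is used later, e.g.\ that $(W_j)_{1,n}=O(\omega^2)$), not a gap in your argument.
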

\begin{proof}
Analogously to the proof of \Cref{thm:AndoKang}, we can show that
$$
\frac{\p G^\omega}{\p \nu_{x}}(x,y) = \sum_{i=1}^\infty \D_\Omega^\omega\S^0_\Omega[\psi_{\Omega}^i](y) \psi_{\Omega}^i(x), \qquad x\in \p \Omega, \ y \notin \p \Omega.
$$
The result now follows by the same argument as the one in the proof of \Cref{prop:V}, using the jump relation
\begin{equation*}\frac{\p \S_{\Omega_d}^\omega}{\p \nu_x}\bigg|_+[\phi] = \left(\frac{1}{2} + \K_{\Omega_d}^{\omega,*}\right)[\phi].
\qedhere
\end{equation*}
\end{proof}

\subsubsection{Fictitious sources for the non-overlapping resonators}\label{sec:sources_no//}
Here we describe the method of fictitious sources when a single resonator $\Omega$ is dislocated by $d$ such that $\overline\Omega\cap \overline\Omega_d = \emptyset$, where $\Omega_d = \Omega + d\mathbf{v}$.

The arguments follow closely those of \Cref{sec:fic_small}. Again, we consider the two problems \eqref{eq:scattering_sources} and \eqref{eq:scattering} corresponding, respectively, to the original geometry with sources and to the dislocated geometry without sources. Representing the solutions as \eqref{eq:u} and \eqref{eq:utilde}, we again arrive at the equations given in \eqref{eq:A_dA}. Next, we will use \Cref{prop:V} to study these equations.

Let $U_0$ be a neighbourhood of $\Omega$ not containing $\Omega_d$. Imposing $u=\widetilde u$ in $U_0\setminus \Omega$ we find from \Cref{prop:V} that
$$
\Phi_d = \P_1\Phi, \quad \text{ where } \quad \P_1 := \begin{pmatrix} V^{-1} & 0 \\ 0 & V^{-1}\end{pmatrix}Q.
$$
As before, since $\Omega_d$ and $\Omega$ only differ by a translation, we can easily see that
\begin{equation*} \label{eq:A_Bno//}
\A_d  = Q\A Q^{-1}.
\end{equation*}
In $U_0$, we can represent $H$ as
$$H(x) = \sum_{i=1}^\infty c_i\S_{\Omega_d}^\omega(x), \quad x\in U_0,$$
for some constants $c_i, \ i=1,2,...$ This gives
$$
\begin{pmatrix} H \big|_{\p \Omega} \\ \delta \p_\nu H \big|_{\p \Omega} \end{pmatrix} = \P_2 \begin{pmatrix} H \big|_{\p \Omega_d} \\ \delta \p_\nu H \big|_{\p \Omega_d} \end{pmatrix}, \quad \text{ where } \quad \P_2 := Q\begin{pmatrix} V^{*} & 0 \\ 0 & W\end{pmatrix}.
$$
Here, $V^{*}: H^{1/2}(\p {\Omega_d}) \rightarrow  H^{1/2}(\p {\Omega_d})$ is defined by
$$
V^{*}\left[\S_{\Omega_d}^\omega[\psi_{\Omega_d}^j]\right] = \sum_{i=1}^\infty  V_{i,j} \S_{\Omega_d}^\omega[\psi_{\Omega_d}^i].
$$
Combining this together with \eqref{eq:A_dA} gives the following result.
\begin{prop} \label{prop:B_no//}
	The layer densities $\phi^i$ and $\phi^o$ and the fictitious sources $f$ and $g$ satisfy
	\begin{equation*}
	\begin{pmatrix}
	f \\ g
	\end{pmatrix} = B(\omega,\delta,d) \begin{pmatrix} \phi^i \\ \phi^o \end{pmatrix}, \qquad B(\omega,\delta,d) = \P_2 \A \P_1 - \A.
	\end{equation*}
\end{prop}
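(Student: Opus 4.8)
The proof is the direct analogue of that of \Cref{prop:B}, with the small-dislocation Taylor expansions replaced by the exact addition theorems \Cref{prop:V,prop:W}. In fact all the substantive ingredients have already been assembled in the paragraphs above: the density relation $\Phi_d = \P_1 \Phi$ (from imposing $u = \widetilde u$ on $U_0\setminus\Omega$ and invoking \Cref{prop:V}), the conjugation identity $\A_d = Q\A Q^{-1}$ (translation invariance of the layer operators), and the background-field relation
$$\begin{pmatrix} H|_{\p\Omega} \\ \delta\p_\nu H|_{\p\Omega}\end{pmatrix} = \P_2 \begin{pmatrix} H|_{\p\Omega_d} \\ \delta\p_\nu H|_{\p\Omega_d}\end{pmatrix}.$$
The remaining task is purely to combine these with the two identities in \eqref{eq:A_dA}.

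The plan is as follows. First I would substitute $\A_d = Q\A Q^{-1}$ and $\Phi_d = \P_1\Phi$ into the first identity of \eqref{eq:A_dA}, which then expresses the Cauchy data of $H$ on $\p\Omega_d$ in terms of $\Phi$. Next I would apply the $\P_2$ relation to transport this data from $\p\Omega_d$ back to $\p\Omega$; the translation operators $Q$ and $Q^{-1}$ carried inside $\A_d$, $\P_1$ and $\P_2$ cancel, leaving $\big(H|_{\p\Omega},\, \delta\p_\nu H|_{\p\Omega}\big)^\top = \P_2\A\P_1\Phi$. Finally I would insert this into the second identity of \eqref{eq:A_dA}, namely $\A\Phi = \big(H|_{\p\Omega},\,\delta\p_\nu H|_{\p\Omega}\big)^\top - (f,g)^\top$, and solve for $(f,g)^\top$, which gives $(f,g)^\top = (\P_2\A\P_1 - \A)\Phi$, as claimed.

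The algebra of this final combination is routine; the genuine content lies entirely in the three relations, and in particular in the $\P_2$ relation. Establishing it requires expanding the background field $H$ near $\Omega$ as a convergent series of single-layer potentials centred on $\Omega_d$ and then using \Cref{prop:V} and \Cref{prop:W} to transport, respectively, the Dirichlet trace $H|_{\p\Omega}$ and the Neumann trace $\p_\nu H|_{\p\Omega}$ across the translation. The hypothesis $\overline\Omega\cap\overline\Omega_d = \emptyset$ is precisely what makes the addition theorems applicable (it keeps the evaluation points $y + d\mathbf{v}$ off $\p\Omega_d$) and, together with $\omega$ small, secures invertibility of $\S_{\Omega_d}^\omega$ and convergence of the expansions in $H^{\pm 1/2}(\p\Omega_d)$. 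The step most in need of care is the bookkeeping of the translation operator $Q$: one must check that the factors of $Q$ and $Q^{-1}$ telescope correctly so that the compact formula $B = \P_2\A\P_1 - \A$ takes exactly the same shape as in the overlapping case, even though $\P_1$ and $\P_2$ are now built from the infinite-dimensional operators $V$, $V^{*}$, $W$ rather than from the first-order differential operators of \Cref{sec:fic_small}.
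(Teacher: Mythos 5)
Your proposal is correct and takes essentially the same approach as the paper: the three relations you isolate ($\Phi_d = \P_1\Phi$ from \Cref{prop:V}, the conjugation $\A_d = Q\A Q^{-1}$, and the $\P_2$-relation for the Cauchy data of $H$ obtained by expanding $H$ in single-layer potentials on $\Omega_d$ and applying \Cref{prop:V} and \Cref{prop:W}) are precisely what the paper establishes in the text preceding the proposition, which it then combines with \eqref{eq:A_dA} exactly as you describe to read off $B = \P_2\A\P_1 - \A$.
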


\subsubsection{Integral equation for dislocations larger than the resonator width}
We define $d_0$ as the width of one resonator in the $x_1$-direction, \ie{},
$$
d_0 = \inf\left\{d\in \R^+ \mid \overline{D_1}\cap \overline{D_1+d\mathbf{v}} = \emptyset \right\}.
$$
We define
$$\B_d = \hat{\P}_2\hat{\A}\hat{\P}_1 - \hat{\A},$$
where
$$\hat{\A} =  \begin{pmatrix}
\hat\S_D^{\omega} & - \hat\S_D^{\omega} \\ -\frac{1}{2}I + \hat\K_D^{\omega,*}  & -\delta\left(\frac{1}{2}I + \hat\K_D^{\omega,*} \right)
\end{pmatrix}, \quad \hat{\P}_1 = \begin{pmatrix} \hat{V}^{-1} & 0 \\ 0 & \hat{V}^{-1}\end{pmatrix}, \quad \hat{\P}_2 = \begin{pmatrix} \hat{V}^{*} & 0 \\ 0 & \hat{W}\end{pmatrix},$$
with
$$\hat{V} = \begin{pmatrix} V_1 & 0 \\ 0 & V_2\end{pmatrix}, \quad \hat{V}^{*} = \begin{pmatrix} V_1^{*} & 0 \\ 0 & V_2^{*}\end{pmatrix}, \quad \hat{W} = \begin{pmatrix} W_1 & 0 \\ 0 & W_2\end{pmatrix},$$
where $V_j, V_j^{*}, W_j$ are defined as in \Cref{sec:sources_no//} with $\Omega = D_j$, $j=1,2$. Then $\B_d$ describes the fictitious sources for the dimer. Following the same arguments as those in \Cref{sec:inteq_smalldis}, we obtain the following result.
\begin{prop}
	For $d > d_0$, the mid-gap frequencies of \eqref{eq:scattering_translated} are precisely the values $\omega$ such that there is a non-zero solution $\phi^{\alpha,i}, \phi^{\alpha,o} \in L^2(\p D\times Y^*)$ to the equation
	\begin{equation} \label{eq:int_eq_no//}
	\begin{pmatrix}
	\phi^{\alpha,i}
	\\[0.3em]
	\phi^{\alpha,o}
	\end{pmatrix}
	= -\big(\mathcal{A}^\alpha(\omega,\delta)\big)^{-1}\left(\sum_{m=0}^\infty e^{-\iu m\alpha} \B_d I_m\right)
	\begin{pmatrix}
	\phi^{\alpha,i}
	\\[0.3em]
	\phi^{\alpha,o}
	\end{pmatrix}.
	\end{equation}
\end{prop}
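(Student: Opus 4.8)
The plan is to reproduce verbatim the derivation carried out in \Cref{sec:inteq_smalldis} for the small-dislocation case, with the single change that the single-resonator fictitious-source operator is now the one supplied by \Cref{prop:B_no//}, valid precisely because $d > d_0$ guarantees $\overline{D_j^m}\cap\overline{D_j^m + d\mathbf{v}} = \emptyset$. First I would recast the dislocated spectral problem \eqref{eq:scattering_translated} as the periodic problem \eqref{eq:scattering_translated_sources}, in which the array $\C_0$ is left untouched and fictitious sources $f_m, g_m$ are placed on $\p D^m$ for $m\in\N$ to account for the translation of the right half of the structure. On each dislocated dimer the pair $(f_m, g_m)$ is linked to the local layer densities $(\phi_m^i,\phi_m^o)$ through the operator $\B_d$, that is, the dimer version of \Cref{prop:B_no//}; under the translation identification $L^2(\p D^m)\simeq L^2(\p D)$ these operators are independent of $m$.

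The next step is to verify the two-way equivalence of the spectral problems. Using the local representations \eqref{eq:u}--\eqref{eq:utilde} inside each cell $Y^m$, any solution $u$ of \eqref{eq:scattering_translated} determines densities $(\phi_m^i,\phi_m^o)$ and a solution $\widetilde u$ of \eqref{eq:scattering_translated_sources} agreeing with $u$ away from the dislocation region, and conversely. A key point here, in contrast to \Cref{sec:fic_small}, is that \Cref{prop:V} and \Cref{prop:W} are exact addition theorems rather than Taylor approximations, so the matching between the two configurations is exact and no $O(d)$ remainder is incurred. I would then apply the Floquet transform, obtaining the quasiperiodic source problem \eqref{eq:scattering_quasi} with data $f^\alpha, g^\alpha$ given by \eqref{eq:fg}.

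To close the equation I would use that $\omega$ lies in the subwavelength band gap, so by \Cref{cor:bandgap} the operator $\A^\alpha(\omega,\delta)$ is invertible for every $\alpha$; hence \eqref{eq:phipsialpha} can be solved as $(\phi^{\alpha,i},\phi^{\alpha,o})^\top = -(\A^\alpha)^{-1}(f^\alpha, g^\alpha)^\top$. The inverse Floquet transform then recovers the cell densities via the operators $I_m$ through \eqref{eq:psimphim}, and substituting the source relation $(f_m,g_m) = \B_d(\phi_m^i,\phi_m^o)$ for $m\ge 0$ (and zero for $m<0$) into \eqref{eq:fg} produces the self-consistent equation \eqref{eq:int_eq_no//}. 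A band-gap frequency is therefore a mid-gap frequency if and only if \eqref{eq:int_eq_no//} has a non-trivial solution in $L^2(\p D\times Y^*)$.

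The main obstacle I anticipate is not the algebra of assembling \eqref{eq:int_eq_no//} but justifying the functional-analytic steps underlying the fictitious-source reduction in the non-overlapping regime: checking that the series $\sum_{m\ge0} e^{-\iu m\alpha}\B_d I_m$ defines a bounded operator on $L^2(\p D\times Y^*)$, and that the expansions behind \Cref{prop:V} and \Cref{prop:W} converge in the relevant Sobolev norms, together with confirming that the correspondence between solutions preserves non-triviality in both directions, so that genuine mid-gap modes are neither created nor destroyed by the reformulation.
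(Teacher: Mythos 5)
Your proposal is correct and follows essentially the same route as the paper, whose entire proof consists of the remark that one repeats the arguments of \Cref{sec:inteq_smalldis} with the fictitious-source operator $\B_d$ of \Cref{prop:B_no//} in place of $\B_0$: recast \eqref{eq:scattering_translated} as the periodic problem with sources, establish the two-way equivalence, Floquet transform, invert $\A^\alpha$ in the band gap, and assemble the self-consistent equation. Your observation that \Cref{prop:V} and \Cref{prop:W} furnish exact addition theorems (rather than the $O(d^2)$ Taylor approximations of \Cref{sec:fic_small}) is precisely the point that makes this transcription legitimate for all $d>d_0$.
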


Our next goal is to show that as $d$ increases, any mid-gap frequency will remain inside the band gap. We begin by stating the following lemma, which is the analogue of \Cref{lem:C}.
\begin{lem}\label{lem:PhiBPsi}
	Assume that the resonators are in the dilute regime specified by (\ref{eq:dilute}). Then, for $d\in (d_0, \infty)$ and for small enough $\epsilon$ and $\delta$
	\begin{align*}
	|\langle \Phi_j^\diamond, \B_d \Psi_j^\diamond\rangle | > K > 0, \qquad  j = 1,2,
	\end{align*}
	for some constant $K$ independent of $d$.
\end{lem}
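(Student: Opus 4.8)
The plan is to evaluate the bilinear form $\langle \Phi_j^\diamond, \B_d \Psi_j^\diamond\rangle$ directly from the definition $\B_d = \hat\P_2 \hat\A \hat\P_1 - \hat\A$, exploiting the algebraic structure of the translation operators to reduce it to a scalar quantity that can be estimated in the dilute, high-contrast regime. The first step is to observe that the addition theorems of \Cref{prop:V} and \Cref{prop:W} encode the intertwining relation $\hat V^* \hat\S_D^\omega = \hat\S_D^\omega \hat V$ on the relevant bases (since $V^*[\S_{D}^\omega[\psi^j]] = \S_D^\omega[V\psi^j]$), so that the $(1,1)$ and $(1,2)$ blocks of $\hat\P_2 \hat\A \hat\P_1$ coincide \emph{exactly} with those of $\hat\A$, namely $\hat V^* \hat\S_D^\omega \hat V^{-1} = \hat\S_D^\omega$. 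Consequently the entire first block-row of $\B_d$ vanishes, and since $\Phi_j^\diamond = (-\delta u_j^\diamond, \chi_j^\diamond)^\top$ pairs the surviving bottom row only against $\chi_j^\diamond$, the form collapses to $\langle \chi_j^\diamond, (\B_d^{21} u_j + \B_d^{22} u_j^\diamond)\rangle$, where the bottom-row blocks are built from $\hat W(-\tfrac12 I + \hat\K_D^{\omega,*})\hat V^{-1}$ and $\delta\, \hat W(\tfrac12 I + \hat\K_D^{\omega,*})\hat V^{-1}$ respectively.

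Next I would extract the leading order in $\delta$ and $\epsilon$. Since $\omega = O(\sqrt\delta)$ and $u_j \in \ker(-\tfrac12 I + \hat\K_D^{0,*})$, the two ``bulk'' pieces inherited directly from $\hat\A$ contribute at order $\delta$ and can be computed explicitly using $\langle \chi_{\p D_i}, (-\tfrac12 I + \K_{D_i}^{\omega,*})\psi_i\rangle = -\omega^2 |D_1| + O(\omega^3)$, $\langle \chi_{\p D_i}, \psi_j\rangle = -\mathrm{Cap}_{D_i}\delta_{i,j}$, and $\omega_j^\diamond = \sqrt{\delta \lambda_j^\diamond / |D_1|}$. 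The $d$-dependence enters only through the coupling factors $\hat W(\cdots)\hat V^{-1}$; in the dilute regime these project onto the monopole eigenfunctions, on which \Cref{prop:V} and \Cref{prop:W} reduce $V$ and $W$ to multiplication by explicit Green's-function factors at separation $d$, via the expansion in \Cref{thm:AndoKang}. This yields a closed-form leading expression of the shape $\langle \Phi_j^\diamond, \B_d \Psi_j^\diamond\rangle = \delta\,\Gamma_j(d) + (\text{higher order})$, with $\Gamma_j$ an explicit function of $d$, $\lambda_j^\diamond$ and $\mathrm{Cap}_{D_1}$, refined when necessary by the expansions of the capacitance coefficients in \Cref{lem:cap_estim_quasi}.

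Finally I would prove $\inf_{d > d_0}|\Gamma_j(d)| > 0$. The function $\Gamma_j$ is continuous on $(d_0,\infty)$; as $d \to \infty$ the far-field decay of $V$ and $W$ forces the coupling contribution to vanish, so $\Gamma_j(d)$ converges to the $d$-independent self-energy associated with a single semi-infinite array, reflecting the decoupling of the two half-structures, and for finite $d$ one controls the coupling correction against this limit. I expect the main obstacle to be twofold. First, the limit $d \to \infty$ is singular at the level of the operators, since $V \to 0$ in norm while $\hat V^{-1}$ blows up, so one must verify that this blow-up is exactly compensated inside $\hat W(\cdots)\hat V^{-1}$ so that $\B_d$ remains bounded and its matrix element bounded below; the addition theorem \Cref{thm:AndoKang} is precisely the tool that makes this compensation explicit. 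Second, and more delicate, the leading $O(\delta)$ bulk terms partially cancel (the self-energy $\delta\lambda_j^\diamond$ being offset by an equal-order term of opposite sign coming from the $\delta(\tfrac12 I + \hat\K_D^{\omega,*})$ block, the two agreeing up to higher order in $\epsilon$), so that the true size of the form is set by finer contributions. Tracking these uniformly across the whole range $d \in (d_0,\infty)$ — and in particular in the near-touching regime $d \to d_0^+$, where the far-field expansions degrade and one must fall back on the explicit dilute monopole model together with a continuity and sign argument analogous to \Cref{lem:C} to exclude zeros of $\Gamma_j$ — is where the real work lies.
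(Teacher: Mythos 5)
Your opening move is correct, and in fact slightly sharper than what the paper does: since $V^*$ is defined on the basis $\S^\omega_{\Omega_d}[\psi^j_{\Omega_d}]$ by $V^*[\S^\omega_{\Omega_d}[\psi^j_{\Omega_d}]]=\sum_i V_{i,j}\S^\omega_{\Omega_d}[\psi^i_{\Omega_d}]$, the intertwining $\hat V^*\hat\S_D^\omega=\hat\S_D^\omega \hat V$ is exact, so the first block-row of $\B_d=\hat\P_2\hat\A\hat\P_1-\hat\A$ vanishes identically and the form collapses onto $\langle \chi_j^\diamond,\,\cdot\,\rangle$ applied to the second row (the paper instead retains these terms and estimates them as $O(\epsilon^2+\omega^2)$). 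You also correctly identify the resonance cancellation at order $\delta$: at $\omega=\omega_j^\diamond$ the bulk terms cancel because $\hat\A\Psi^{(0)}=O(\omega^3)$ for $\Psi^{(0)}=(u_j,u_j)^\top$, so the true size of the pairing is set by finer contributions.

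The gap is in the step that actually proves the lemma: the uniform-in-$d$ lower bound. Your plan for $\Gamma_j(d)$ --- continuity on $(d_0,\infty)$, a nonzero limit as $d\to\infty$, and a ``continuity and sign argument to exclude zeros'' near $d_0$ --- is not a proof: a continuous function with nonzero limiting behaviour at both ends of an interval can still vanish inside it, and you explicitly defer the uniform tracking of the coupling as ``where the real work lies''. The paper's resolution is structurally different: it shows that the leading surviving term is \emph{exactly $d$-independent}. Writing $\Psi_j^\diamond=\Psi^{(0)}+\Psi^{(1)}+O(\epsilon)$, where the only nonzero entry of $\Psi^{(1)}$ is $\epsilon u_j^{(1)}u_j$ in the second component, the whole leading contribution comes from the $-\hat\A$ part of $\B_d$ acting on $\Psi^{(1)}$, giving $\langle \Phi_j^\diamond,\B_d\Psi_j^\diamond\rangle=\delta\epsilon^2\mathrm{Cap}_B\,u_j^{(1)}+O(\omega^3+\epsilon^3\omega^2)$, with $u_j^{(1)}\neq 0$ (and of known sign) by the dilute band-structure expansion already used for \Cref{lem:C}. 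Every $d$-dependent term, i.e. every term containing $V$, $V^{-1}$, $V^*$ or $W$, is pushed into the uniform error by two structural facts your sketch never isolates: (i) in the dilute regime $V$ is rank one on the monopole, $(V_j)_{m,n}=\tfrac{\epsilon\mathrm{Cap}_B}{4\pi d}\delta_{m,1}\delta_{n,1}+O(\epsilon^3+\omega\epsilon)$ (a consequence of \Cref{prop:V} and \Cref{thm:AndoKang}); and (ii) the monopole row of $W$ satisfies $(W_j)_{1,n}=O(\omega^2)$, because $\D^0_{D_j}[\chi_{\p D_j}]$ vanishes outside $D_j$ (\Cref{prop:W}). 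Point (ii) is what neutralises the potentially dangerous term $\langle\chi_j^\diamond,\hat W(\tfrac12 I+\hat\K_D^{0,*})\hat V^{-1}u_j\rangle$ even though $\hat V^{-1}$ grows linearly in $d$ --- precisely the compensation you flag as your first obstacle but do not resolve. Without identifying the $d$-independence of the leading term and facts (i)--(ii), your $\Gamma_j(d)$ remains an unanalysed $d$-dependent quantity, and the claimed bound $|\langle\Phi_j^\diamond,\B_d\Psi_j^\diamond\rangle|>K>0$ uniformly in $d\in(d_0,\infty)$ does not follow from the argument as proposed.
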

The proof of this result is given in \Cref{app:lem2}. We are now ready to state and prove the main result of this section. Recall that we denote the edges of the band gap by
$$\omega_1^\diamond = \omega_1^{\pi/L}, \qquad \omega_2^\diamond = \omega_2^{\pi/L}.$$
We then have the following proposition.
\begin{prop}\label{prop:main_no//}
	For $d>d_0$ and $\delta$ small enough, any mid-gap frequency $\omega(d)$ is bounded away from the edges of the band gap, \ie{}
	$$|\omega(d)-\omega_j^\diamond| > c, \qquad j=1,2,$$
	for all $d > d_0$ and for some positive constant $c$ independent of $d$.
\end{prop}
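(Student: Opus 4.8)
My plan is to mirror the analysis of \Cref{prop:smalld} for small dislocations, but now to extract a \emph{lower} bound on the distance from the band edges rather than a vanishing one. The idea is to produce, for each edge $\omega_j^\diamond$ ($j=1,2$), a neighbourhood $U_\rho=\{\omega:|\omega-\omega_j^\diamond|<\rho\}$ whose radius $\rho$ is independent of $d$ and which contains no mid-gap frequency; taking the smaller of the two radii yields the stated $c$. Fixing $j$ and supposing $\omega$ is a mid-gap frequency in such a neighbourhood, I would work at the critical quasimomentum $\alpha^\diamond=\pi/L$, where by \Cref{prop:band} the band function behaves like $\omega_j^\alpha=\omega_j^\diamond-c_j(\alpha-\alpha^\diamond)^2+\cdots$, and use the residue expansion of $(\A^\alpha)^{-1}$ from \Cref{prop:Asing} to decompose any solution of the integral equation \eqref{eq:int_eq_no//} into a resonant part proportional to $\tfrac{\Psi_j^\diamond}{\omega-\omega_j^\diamond+c_j|\alpha-\alpha^\diamond|^2}$ plus a remainder bounded on $U_\rho$.

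Next I would derive the characteristic equation. Applying $\sum_{m\geq0}e^{-\iu m\alpha}\B_d I_m$ to this decomposition and evaluating the resulting singular $\alpha$-integral with \Cref{lem:intsum} (via \eqref{eq:intsum}) reproduces, exactly as in \Cref{prop:smalld}, a term $\tfrac{\B_d\Psi_j^\diamond}{2(\omega-\omega_j^\diamond)}h$ plus a controlled remainder; a further application of $-(\A^\alpha)^{-1}$ and projection onto the singular direction $\Psi_j^\diamond$ then gives
\[
\frac{\langle\Phi_j^\diamond,\B_d\Psi_j^\diamond\rangle}{4\omega_j^\diamond|D_1|\,(\omega-\omega_j^\diamond)} = 1 + O\!\left(\frac{1}{\sqrt{|\omega-\omega_j^\diamond|}}\right),
\]
the only change from \Cref{prop:smalld} being that the operator $\B_0=O(d)$ is replaced by the general (not small) operator $\B_d$, so that the error constant now scales with $\|\B_d\|$ instead of with $d$.

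The conclusion is then immediate from \Cref{lem:PhiBPsi}. Rewriting the characteristic equation as $\tfrac{\langle\Phi_j^\diamond,\B_d\Psi_j^\diamond\rangle}{4\omega_j^\diamond|D_1|}=(\omega-\omega_j^\diamond)+O(\sqrt{|\omega-\omega_j^\diamond|})$ and invoking the uniform lower bound $|\langle\Phi_j^\diamond,\B_d\Psi_j^\diamond\rangle|>K>0$ from \Cref{lem:PhiBPsi}, the left-hand side is bounded below in modulus by $\tfrac{K}{4\omega_j^\diamond|D_1|}$; hence the right-hand side cannot be arbitrarily small. Since the $O(\sqrt{|\omega-\omega_j^\diamond|})$ term dominates the linear term $(\omega-\omega_j^\diamond)$ as $\omega\to\omega_j^\diamond$, this forces $|\omega-\omega_j^\diamond|\geq c$ for a positive constant $c$ independent of $d$. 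Choosing the radius $\rho<c$ makes this incompatible with $\omega\in U_\rho$, so $U_\rho$ contains no mid-gap frequency, and the proposition follows with the final constant taken as the smaller of the two values obtained for $j=1$ and $j=2$.

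The main obstacle is to make all of this uniform in $d$. The lower bound from \Cref{lem:PhiBPsi} is the essential positive input, but it must be complemented by a \emph{uniform upper} bound $\sup_{d>d_0}\|\B_d\|<\infty$, without which the error constant — and hence the radius $\rho$ — could degenerate as $d\to\infty$. Such a bound should follow from the fact that the interaction operators $V,W$ of \Cref{prop:V} and \Cref{prop:W} entering $\B_d=\hat\P_2\hat\A\hat\P_1-\hat\A$ are governed by $G^\omega(\cdot,\cdot+d\mathbf{v})$, which decays as the separation $d$ grows, so that $\B_d$ stays bounded (indeed converges) as $d\to\infty$. Verifying this decay quantitatively, and checking that the residue decomposition of \eqref{eq:int_eq_no//} together with its remainder estimates holds uniformly on $U_\rho$ for every $d>d_0$, is the delicate part of the argument; everything else is a direct transcription of the computation already carried out for \Cref{prop:smalld}.
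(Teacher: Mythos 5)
Your proposal is correct and follows essentially the same route as the paper: the paper's proof likewise assumes a mid-gap frequency approaching $\omega_j^\diamond$, derives the characteristic equation $\frac{\langle\Phi_j^\diamond,\B_d\Psi_j^\diamond\rangle}{4\omega_j^\diamond|D_1|(\omega-\omega_j^\diamond)} = 1 + o(1)$ by repeating the argument of \Cref{prop:smalld} with $\B_0$ replaced by $\B_d$, and then invokes the $d$-uniform lower bound of \Cref{lem:PhiBPsi} to rule out any solution near the band edges. Your more explicit $O\big(1/\sqrt{|\omega-\omega_j^\diamond|}\big)$ error bookkeeping and the flagged need for a $d$-uniform bound on $\|\B_d\|$ are refinements of details the paper leaves implicit in its $o(1)$ term, not a different argument.
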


\begin{proof}
	We want to show that there are no solutions to \eqref{eq:int_eq_no//} that approaches the edges of the band gap. Assume the contrary, \ie{} that we have a solution $\omega \rightarrow \omega_j^\diamond$.
	Following the proof of \Cref{prop:smalld}, we obtain
$$\frac{\langle \Phi_j^\diamond, \B_d\Psi_j^\diamond\rangle}{4\omega_j^\diamond|D_1|(\omega-\omega_j^\diamond)} = 1 + o(1),$$
as $\omega\rightarrow \omega_j^\diamond$. But since $|\langle \Phi_j^\diamond, \B_d\Psi_j^\diamond\rangle| > K > 0$ for all $d$, this equation has no solution.
\end{proof}

\subsection{Theorem on mid-gap frequencies} \label{sec:mainthm}
We now combine the results of  the two previous sections, namely \Cref{prop:N=1}, \Cref{prop:inf} and \Cref{prop:main_no//}, into the following theorem.
\begin{thm} \label{thm:main}
	Assume that the resonators are in the dilute regime specified by \eqref{eq:dilute} and that $l_0 > 1/2$. Then, for small enough $\delta$ and $\epsilon$, there exists some $d_0=O(\epsilon)$ such that there are two mid-gap frequencies $\omega_1(d)$ and $\omega_2(d)$ 
	for all $d\in [d_0,\infty)$, both of which converge to the same value $\omega_\infty$ as $d\rightarrow \infty$.
\end{thm}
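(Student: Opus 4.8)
The plan is to treat the two explicitly accessible regimes as complementary handles on a single family of mid-gap frequencies, and to glue them together by a continuity argument in $d$ made rigorous by the uniform separation from the band-gap edges. Because the resonators are dilute, $d_0 = O(\epsilon)$ while $L$ is fixed, so for $\epsilon$ small enough $d_0 < L$; thus the anchor value $d = L$ (the case $N=1$) lies in the regime $d > d_0$ governed by \Cref{prop:main_no//}, and by \Cref{prop:N=1} it carries exactly two mid-gap frequencies precisely because $l_0 > 1/2$. Since the integer-dislocation description of \Cref{sec:compact} and the general description of \Cref{sec:no//} both model the same configuration $\C_d$ at $d = L$, their mid-gap frequency sets coincide there, so this count of two feeds directly into the continuity argument below.

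First I would establish that the infinite block-Toeplitz operator $\T(\omega)$ is singular at some $\omega_\infty$ in the interior of the band gap. For $\omega_1^\diamond < \omega < \omega_2^\diamond$ one has $\omega > \omega_1^\alpha$ and $\omega < \omega_2^\alpha$ for every $\alpha$, hence $\eta_1 > 0 > \eta_2$ and $\det\varphi(\alpha) = (\mathrm{Cap}_{D_1})^2\eta_1\eta_2 < 0$ uniformly on the interior of the gap. Therefore $\T(\omega)$ is self-adjoint and Fredholm of index $0$, with $0$ lying in a spectral gap of its essential spectrum, and the existence of a zero eigenvalue becomes a spectral-flow question: as $\omega$ sweeps the band gap the symbol $\varphi(\alpha)$ degenerates at each edge (one of $\eta_1,\eta_2$ blows up), so the signature of $\T(\omega)$ changes between the two edges and an eigenvalue must cross $0$. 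This crossing is exactly the edge mode of the semi-infinite array predicted by the bulk-edge correspondence in the nontrivial regime $l_0 > 1/2$, and I expect this existence step to be the main obstacle of the proof. Once $\omega_\infty$ is in hand, \Cref{prop:inf} supplies two frequencies $\omega_1(N), \omega_2(N) \to \omega_\infty$ at which $\T_N$ is singular, that is, two mid-gap frequencies for each $d = NL$ converging to $\omega_\infty$.

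Next I would promote the count of two from the anchor $d = L$ to all $d \in [d_0,\infty)$. The mid-gap frequencies solve \eqref{eq:int_eq_no//}, whose kernel condition depends continuously (indeed analytically) on $d$ through $\B_d$, and they are real by the self-adjointness of the problem at $\alpha = \pi/L$. By \Cref{prop:main_no//} every mid-gap frequency satisfies $|\omega(d) - \omega_j^\diamond| > c$ for a fixed $c > 0$ and all $d > d_0$, so no mid-gap frequency can enter or leave the band gap through an edge as $d$ varies. Restricting attention to the compact interval $[\omega_1^\diamond + c, \omega_2^\diamond - c]$, the number of mid-gap frequencies is a continuous integer-valued function of $d$, hence constant on $(d_0,\infty)$; since it equals $2$ at $d = L$, there are exactly two mid-gap frequencies $\omega_1(d), \omega_2(d)$ for every $d \in [d_0,\infty)$.

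Finally I would upgrade the convergence to the full limit $d \to \infty$. Along the subsequence $d = NL$ the two frequencies just identified must coincide with the pair produced by \Cref{prop:inf}, so both tend to $\omega_\infty$. For arbitrary $d$, I would observe that as $d \to \infty$ the coupling between the two semi-infinite half-arrays decays, since the off-diagonal Green's-function contributions to $\B_d$ joining the two halves vanish as the separation grows; consequently the operator in \eqref{eq:int_eq_no//} converges to the decoupled half-array problem whose singular frequencies are precisely the zeros of $\T$, and $\omega_1(d),\omega_2(d) \to \omega_\infty$ for all $d$, not merely the integer multiples. Combining the three steps gives exactly two mid-gap frequencies on $[d_0,\infty)$, both converging to the common value $\omega_\infty$, which is the assertion of \Cref{thm:main}.
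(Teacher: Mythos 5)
Your proposal takes essentially the same route as the paper: the paper's entire proof of \Cref{thm:main} is a one-sentence combination of \Cref{prop:N=1} (the anchor $d=L$, exactly two mid-gap frequencies when $l_0>1/2$), \Cref{prop:main_no//} (uniform confinement away from the band-gap edges), and \Cref{prop:inf} (a hybridized pair converging to $\omega_\infty$), and your three steps are precisely this combination with the glue made explicit. Two comments on that glue. First, your persistence step is the intended argument, but ``the number of mid-gap frequencies is a continuous integer-valued function of $d$'' is not a theorem as stated; to make it rigorous one should count characteristic values (with multiplicity) of the operator-valued function in \eqref{eq:int_eq_no//} inside a fixed contour around $[\omega_1^\diamond+c,\,\omega_2^\diamond-c]$ via the generalized argument principle of Gohberg--Sigal, using \Cref{prop:main_no//} and the realness of mid-gap frequencies to ensure nothing crosses the contour. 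Second, and more significantly, your existence step for $\omega_\infty$ addresses a point the paper itself leaves open: \Cref{prop:inf} is stated conditionally (``given $\omega_\infty$ \dots such that $\T(\omega_\infty)$ has eigenvalue $0$''), and the paper never verifies that hypothesis. Your observation that $\det\varphi(\alpha)<0$ throughout the gap, so that $0$ lies in a gap of the essential spectrum of the self-adjoint, index-zero operator $\T(\omega)$, is correct; but the assertion that ``the signature of $\T(\omega)$ changes'' so an eigenvalue must cross zero is heuristic --- in infinite dimensions this is a spectral-flow statement whose non-vanishing is exactly the edge index of the bulk-edge correspondence, i.e.\ essentially as hard as the result itself, and you rightly flag it as the main obstacle. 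A route staying within the paper's toolkit would be: use persistence to get two confined frequencies at every $d=NL$, extract an accumulation point $\omega_\infty$, and use the centrosymmetry $\T_N=J_N\overline{\T_N}J_N$ together with localization of the truncated kernel vectors to produce a singular Weyl sequence for $\T(\omega_\infty)$, whence $0\in\sigma(\T(\omega_\infty))$ and, being off the essential spectrum, $0$ is an eigenvalue. With that step supplied (by either method), your argument, including the decoupling argument for non-integer $d\to\infty$ which the quoted propositions also do not cover, is a faithful and in fact more complete rendering of the paper's proof.
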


\begin{corollary} \label{cor:interval}
	Assume that the resonators are in the dilute regime specified by \eqref{eq:dilute} and that $l_0 > 1/2$. Then, for small enough $\delta$ and $\epsilon$, there is an interval $\mathcal{I}=[\omega_1(d_0), \omega_2(d_0)]$ within the band gap such that if $\omega\in \mathcal{I}\setminus\{\omega_\infty\}$, then there exists some $d>d_0$ such that $\omega\in\{\omega_1(d),\omega_2(d)\}$.
\end{corollary}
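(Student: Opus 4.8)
The plan is to deduce \Cref{cor:interval} from \Cref{thm:main} by a continuity (intermediate value) argument. \Cref{thm:main} provides, for every $d\in[d_0,\infty)$, two mid-gap frequencies $\omega_1(d)$ and $\omega_2(d)$, both lying strictly inside the band gap and both converging to the common limit $\omega_\infty$ as $d\to\infty$. The essential additional structural fact I would use is that these two branches straddle $\omega_\infty$, i.e. $\omega_1(d)<\omega_\infty<\omega_2(d)$ for all finite $d$; this reflects the monopole/dipole (symmetric/antisymmetric) splitting of the two hybridizing half-space edge modes and is exactly the content of the two sign choices in \eqref{eq:N=1} of \Cref{prop:N=1} together with the two pseudomodes $z_1,z_2$ constructed in \Cref{prop:inf}. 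Granting this, the corollary becomes a statement about the ranges of the two continuous scalar functions $d\mapsto\omega_j(d)$.

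First I would establish that $d\mapsto\omega_j(d)$ is continuous on $[d_0,\infty)$. By the integral formulations of \Cref{sec:compact} and \Cref{sec:no//}, a mid-gap frequency is precisely a value $\omega$ at which the relevant characteristic operator (for instance $\T_N(\omega)$ in the integer case, or the operator $I+(\A^\alpha)^{-1}\sum_m e^{-\iu m\alpha}\B_d I_m$ of \eqref{eq:int_eq_no//}) fails to be invertible. This family depends continuously --- indeed real-analytically --- on $(\omega,d)$, and by \Cref{prop:main_no//} the associated characteristic values stay uniformly bounded away from the band edges $\omega_1^\diamond,\omega_2^\diamond$. Since each $\omega_j(d)$ is a simple, isolated such value (the two branches never coincide for finite $d$, merging only in the limit $d\to\infty$), a standard perturbation argument for the characteristic values of an analytic operator family yields that each branch is continuous in $d$.

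It then remains to run the intermediate value theorem. Fix $\mathcal{I}=[\omega_1(d_0),\omega_2(d_0)]$; since $\omega_1(d_0)<\omega_\infty<\omega_2(d_0)$, the point $\omega_\infty$ lies in the interior of $\mathcal{I}$. The image $J_1:=\omega_1([d_0,\infty))$ is a connected subset of $\R$, it contains $\omega_1(d_0)$, and because $\omega_1(d)\to\omega_\infty$ while $\omega_1(d)<\omega_\infty$ for all $d$ we have $\sup J_1=\omega_\infty\notin J_1$; hence $J_1\supseteq[\omega_1(d_0),\omega_\infty)$. Symmetrically $J_2:=\omega_2([d_0,\infty))\supseteq(\omega_\infty,\omega_2(d_0)]$. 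Therefore
$$ J_1\cup J_2\ \supseteq\ [\omega_1(d_0),\omega_\infty)\cup(\omega_\infty,\omega_2(d_0)]\ =\ \mathcal{I}\setminus\{\omega_\infty\}, $$
so every $\omega\in\mathcal{I}\setminus\{\omega_\infty\}$ is realized as $\omega_1(d)$ or $\omega_2(d)$ for some $d\geq d_0$ (and for some $d>d_0$ whenever $\omega$ is not an endpoint of $\mathcal{I}$), which is the claim.

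The main obstacle I anticipate is the continuity step, and in particular stitching together the two distinct analytical regimes: the integer-dislocation case $d=NL$ is controlled through the block-Toeplitz matrices $\T_N$ of \Cref{sec:compact}, whereas general $d>d_0$ is governed by the rather different operator in \eqref{eq:int_eq_no//}. One must check that these describe the same two branches and that no branch crosses $\omega_\infty$ or collides with the other at finite $d$; establishing this non-degeneracy (equivalently, that $\omega_\infty$ is attained only in the limit) is what guarantees both that the removed point in $\mathcal{I}\setminus\{\omega_\infty\}$ is genuinely a single point and that the two ranges $J_1,J_2$ together cover everything else.
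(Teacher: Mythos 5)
Your proposal is correct and follows essentially the same route as the paper, which in fact states \Cref{cor:interval} without any explicit proof, treating it as an immediate consequence of \Cref{thm:main}: the intended reasoning is exactly your continuity-plus-intermediate-value argument for the two branches $\omega_1(d)$, $\omega_2(d)$ converging to $\omega_\infty$, with the continuity of the branches left at the same implicit level in the paper as in your sketch. One small observation: the straddling property $\omega_1(d) < \omega_\infty < \omega_2(d)$, which you call essential (and which the paper never proves), is not actually needed for the covering claim—your own argument shows that the union of the two ranges contains every value strictly between $\omega_j(d_0)$ and $\omega_\infty$ for $j=1,2$, which already covers $\mathcal{I}\setminus\{\omega_\infty\}$ wherever $\omega_\infty$ happens to sit relative to $\mathcal{I}$.
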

\Cref{cor:interval} says that any frequency $\omega\in\mathcal{I}\setminus\{\omega_\infty\}$ is a mid-gap frequency of the structure for some dislocation $d$. From \Cref{prop:N=1}, we have an explicit way to compute the interval $\mathcal I$ and, as we will see from the numerical computations, this interval contains the middle region of the band gap. What we have shown is that we can choose a frequency in the middle of the band gap and create a structure having this as a resonant frequency, thus corresponding to exponentially localized edge modes that are stable under perturbations.

\Cref{prop:inf} and \Cref{prop:smalld} hint to the physical origin of the two mid-gap frequencies. For infinitely large dislocations, the system corresponds to two identical semi-infinite systems which each support edge modes with frequency $\omega_\infty$. As these two semi-infinite systems approach each other, they hybridize and $\omega_\infty$ splits into two frequencies, corresponding to monopole and dipole modes.

Seen from the other direction, $d=0$ corresponds to the periodic structure, which is known to have a band gap and no mid-gap frequencies. As $d$ increases from $0$, two mid-gap frequencies will emerge, one from each edge of the band gap.

\begin{rmk}
	The requirement that $d>d_0$ in \Cref{thm:main} was used in \Cref{sec:no//}. We assumed that the dislocation was sufficiently large that the translated resonators do not overlap with the originals. Since we are assuming that the structure is dilute and the size of each resonator is $O(\epsilon)$, $d_0=O(\epsilon)$. The non-overlapping assumption was made purely to simplify the analysis and not for any physical reason. Based on this, we conjecture that \Cref{thm:main} is true for all $d\in(0,\infty)$, which is in accordance with our numerical experiments. In this case, the interval $\mathcal I$ in \Cref{cor:interval} would include all of the band gap.
\end{rmk}


\section{Finite arrays of resonators} \label{sec:finite}
In this section, we will study the finite array of resonators which is a truncation of the system studied in \Cref{sec:infinite}. We will see that this structure, which represents the physical manifestation of our above analysis, shares the important properties of the infinite system. We will also conduct a stability analysis of the structure.

Consider the structure $D$, consisting of $M$ resonators, that is a truncation of the infinite, dislocated array $\C_d$ studied in \Cref{sec:infinite}. Let $M = 4K+2$ for some $K\in \Z_+$ and assume that $D$ is given by
\begin{equation} \label{eq:struc_finite}
D =  D_2^{-K-1}\cup\Bigg(\bigcup_{m=-1}^{-K} D_1^m\cup D_2^m\Bigg)\cup\Bigg(\bigcup_{m=0}^{K-1} (D_1^m \cup D_2^m) + d \mathbf{v}\Bigg) \cup \left( D_1^{K}+d\mathbf{v}\right),
\end{equation}
where $D_1^m, D_2^m$ are as in \Cref{sec:infinite}, so that the symmetry assumptions \eqref{eq:symmetry} are satisfied and $\mathbf{v}$ is, again, the unit vector along the $x_1$-axis. Moreover, we assume $l_0 > 1/2$ (recall that $l_0=l/L$), corresponding to the case where the array supports edge modes.


We model wave scattering by $D$ with the Helmholtz problem

\begin{equation*} \label{eq:scattering_finite}
\left\{
\begin{array} {ll}
\ds \Delta {u}+ \omega^2 {u}  = 0 \quad &\text{in } \R^3 \setminus \p D, \\
\nm
\ds  {u}|_{+} -{u}|_{-}  =0  \quad &\text{on } \partial D, \\
\nm
\ds  \delta \frac{\partial {u}}{\partial \nu} \bigg|_{+} - \frac{\partial {u}}{\partial \nu} \bigg|_{-} =0 \quad& \text{on } \partial D, \\
\nm
\ds |x| \left(\tfrac{\p}{\p|x|}-\iu \omega\right)u \to 0
&\text{as } {|x|} \rightarrow \infty.
\end{array}
\right.
\end{equation*}

\begin{figure}
	\centering
	\begin{tikzpicture}[scale=0.5]
	\draw (-6,0) circle (0.3);
	\draw (-5,0) circle (0.3);
	\draw (-3,0) circle (0.3);
	\draw (-2,0) circle (0.3);
	\draw (0,0) circle (0.3);
	\draw (1,0) circle (0.3);
	\draw (3,0) circle (0.3);
	\draw (7,0) circle (0.3);
	\draw (9,0) circle (0.3);
	\draw (10,0) circle (0.3);
	\draw (12,0) circle (0.3);
	\draw (13,0) circle (0.3);
	\draw (15,0) circle (0.3);
	\draw (16,0) circle (0.3);
	\draw[dotted] (-6.5,-1.5) rectangle (3.5,1.5);
	\draw[dotted] (6.5,-1.5) rectangle (16.5,1.5);
	\draw[<->] (3.5,0) -- (6.5,0) node[pos=0.5, yshift=7pt]{$d$};
	\draw[<->] (1,0.5) -- (3,0.5) node[pos=0.5, yshift=7pt]{$l$};
	\draw[<->] (0,-0.5) -- (3,-0.5) node[pos=0.5, yshift=-7pt]{$L$};
	\draw[<->] (7,0.5) -- (9,0.5) node[pos=0.5, yshift=7pt]{$l$};
	\draw[<->] (7,-0.5) -- (10,-0.5) node[pos=0.5, yshift=-7pt]{$L$};
	\end{tikzpicture}
	\caption{An array of 14 spherical resonators formed by separating an array of 7 dimers in the centre by a dislocation distance $d>0$.} \label{fig:finite}
\end{figure}
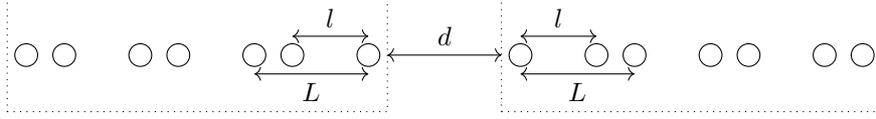


The resonant frequencies and eigenmodes of this finite system of resonators can be expressed in terms of the eigenpairs of the associated \emph{capacitance matrix}. Let $V_j, j=1,\cdots,M$, be the solution to
\begin{equation*} \label{eq:V}
\begin{cases}
\ds \Delta V_j =0 \quad &\mbox{in}~\R^3\setminus D,\\
\ds V_j = \delta_{ij} &\mbox{on}~\partial D_i, \ i=1,...,M,   	\\
\ds V_j(x) = O\left(\tfrac{1}{|x|}\right) & \text{as } |x|\rightarrow \infty.
\end{cases}
\end{equation*}
We then define the capacitance matrix $C=(C_{i,j})$ as
\begin{equation*} \label{eq:finite_capacitance}
C_{i,j} :=-\int_{\partial D_i}\frac{\partial V_j}{\partial \nu} \bigg|_+ \dx \sigma,\quad i,j=1,\cdots,M.
\end{equation*}

%

The following theorem, first proved in \cite{ammari2017double}, shows that the eigenvalues of $C$ determine the resonant frequencies of the finite structure.
\begin{thm} \label{thm:char_approx_finite}
	The subwavelength resonant frequencies $\omega_j=\omega_j(\delta),~j=1,\cdots,M$, of $\mathcal{A}(\omega,\delta)$ can be approximated as
	$$ \omega_j= \sqrt{\frac{\delta \lambda_j }{|D_1|}}  + O(\delta),$$
	where $\lambda_j,~j=1,\cdots,M$, are the eigenvalues of the capacitance matrix $C$ and $|D_1|$ is the volume of each individual resonator.
\end{thm}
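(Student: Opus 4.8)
The plan is to mirror the argument behind \Cref{thm:char_approx_infinite}, replacing the quasiperiodic layer potentials by their free-space counterparts and the $\alpha$-dependent matrix $C^\alpha$ by the finite capacitance matrix $C$. First I would represent the solution of the finite transmission problem by single-layer potentials, writing $u = \hat\S_D^{\omega}[\phi^i]$ inside $D$ and $u = \S_D^{\omega}[\phi^o]$ in $\R^3\setminus\overline{D}$, so that the Sommerfeld radiation condition is automatically satisfied by the exterior potential. Imposing the two transmission conditions on $\p D$ via the jump relations converts the PDE into the boundary-integral system $\A(\omega,\delta)\Phi = 0$, where $\Phi = (\phi^i,\phi^o)$ and $\A(\omega,\delta)$ is the block operator of the same shape as in \eqref{eq:A_quasi_defn}, now built from $\hat\S_D^{\omega}$, $\S_D^{\omega}$, $-\tfrac12 I + \hat\K_D^{\omega,*}$ and $-\delta(\tfrac12 I + \K_D^{\omega,*})$. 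The subwavelength resonant frequencies are exactly the characteristic values $\omega \mapsto \A(\omega,\delta)$ that scale as $O(\sqrt\delta)$.

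Next I would perform the joint low-frequency/low-contrast expansion. Using \eqref{eq:Sk0exp}, at $\omega = \delta = 0$ the operator $\A$ degenerates to one whose only singular block is $-\tfrac12 I + \hat\K_D^{0,*}$; since $\hat\K_D^{0,*}$ is block-diagonal, its kernel is spanned by the $M$ densities $\psi_j = (\S_{D_j}^{0})^{-1}[\chi_{\p D_j}]$, one per connected resonator. Hence exactly $M$ characteristic values bifurcate from the origin as $\delta \to 0$. As in \Cref{sec:Asing}, I would split $\tfrac12 I + \K_D^{0,*}$ into the finite-rank projection onto this kernel and a remainder that acts invertibly on the complement and contributes only to higher-order terms, then invert $\A$ block-wise as in the lemma preceding \Cref{prop:Asing}, isolating the singular finite-dimensional part.

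The heart of the matter is the reduction to the matrix eigenvalue problem. Projecting the kernel equation onto $\operatorname{span}\{\chi_{\p D_i}\}$ by testing against $\chi_{\p D_i}$, two ingredients appear: the diagonal identity $\langle \chi_{\p D_i}, (-\tfrac12 I + \K_{D_i}^{\omega,*})\psi_j\rangle = -\omega^2 |D_1|\,\delta_{ij} + O(\omega^3)$ from \cite{first}, which produces the term $\omega^2|D_1|\,I$, and the contrast block $-\delta(\tfrac12 I + \K_D^{0,*})$, whose pairing with the $\chi_{\p D_i}$ reproduces the capacitance matrix $C$ (exactly as the $C_{ij}^\alpha = -\int_{\p D_i}\psi_j^\alpha\dx\sigma$ arose in the quasiperiodic case). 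Writing $\phi^i = \sum_j a_j \psi_j + O(\omega)$ and collecting terms, the solvability condition becomes, to leading order,
\[
\left(\delta C - \omega^2 |D_1|\, I\right) a = O(\delta^{3/2}),
\]
so that $\omega_j^2 = \delta \lambda_j/|D_1| + O(\delta^{3/2})$, i.e. $\omega_j = \sqrt{\delta \lambda_j/|D_1|} + O(\delta)$, where the $\lambda_j$ are the eigenvalues of $C$.

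The step I expect to be the main obstacle is making this projection rigorous rather than formal: one must control the off-kernel component of $\Phi$ and confirm it enters only at the stated order, which requires the uniform invertibility of $\A$ restricted to the complement of the kernel for $\omega = O(\sqrt\delta)$ together with careful simultaneous bookkeeping of powers of $\omega$ and $\delta$ (recall $\omega^2 \sim \delta$, so the expansions in \eqref{eq:Sk0exp} must be tracked to the right joint order). I would close this gap with the Gohberg--Sigal theory for operator-valued analytic functions (the generalized argument principle), counting the characteristic values of $\A(\,\cdot\,,\delta)$ in a shrinking neighbourhood of $0$ and matching them with the $M$ roots of $\det(\delta C - \omega^2 |D_1|\, I)$; this is precisely the mechanism used in \cite{ammari2017double} and parallels \Cref{thm:char_approx_infinite}.
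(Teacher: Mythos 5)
Your proposal is correct and takes essentially the same route as the paper, which does not prove \Cref{thm:char_approx_finite} itself but cites \cite{ammari2017double}: the argument there is precisely your layer-potential representation, low-frequency expansion around the kernel of $-\frac{1}{2}I + \hat\K_D^{0,*}$, projection onto $\operatorname{span}\{\chi_{\p D_i}\}$ yielding the eigenvalue problem for $\delta C - \omega^2|D_1| I$, and Gohberg--Sigal theory to match characteristic values rigorously. Your reduction also mirrors the machinery the paper develops for the quasiperiodic case in \Cref{sec:Asing}, so nothing is missing.
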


\subsection{Behaviour for large dislocations}
As the separation distance $d$ becomes large, the capacitance matrix converges to a block diagonal form. This is because, for large $d$, we have two systems of $M/2$ resonators, the interactions between which diminish with increasing $d$. This is made precise by the following lemma.

\begin{lem} \label{lem:caplarged}
	As the dislocation size $d\to\infty$, the capacitance matrix has the form
	\begin{equation*}
	C=\begin{pmatrix}\widetilde{C} & 0 \\ 0 & \widetilde{C}^{\star}\end{pmatrix} + O(d^{-1}),
	\end{equation*}
	where $\widetilde{C}$ is the capacitance matrix of the $M/2$-resonator system $D_1\cup\dots\cup D_{M/2}$ and	$\widetilde{C}^{\star}$ is the rearranged matrix given by
	\begin{equation*}
	\widetilde{C}_{i,j}^{\star}:=C_{M+1-i,M+1-j}.
	\end{equation*}
\end{lem}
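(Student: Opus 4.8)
The plan is to reduce everything to the behaviour of the single-layer potential $\S_D^0$ as the two halves of $D$ move apart, and then to read off the block structure from the reflection symmetry of the array. Split $D = D^{(1)}\cup D^{(2)}$, where $D^{(1)} = D_1\cup\cdots\cup D_{M/2}$ is the undislocated first half and $D^{(2)}$ is the remaining half translated by $d\mathbf v$. First I would recall the standard layer-potential formula for the capacitance matrix: since $V_j = \S_D^0\big[(\S_D^0)^{-1}[\chi_{\partial D_j}]\big]$ is harmonic, equals $\delta_{ij}$ on $\partial D_i$, decays at infinity, and is constant inside each resonator, its exterior flux is $\partial_\nu V_j|_+ = (\S_D^0)^{-1}[\chi_{\partial D_j}]$, so that $C_{i,j} = -\int_{\partial D_i}(\S_D^0)^{-1}[\chi_{\partial D_j}]\dx\sigma$. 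The lemma is therefore a statement about the asymptotics of $(\S_D^0)^{-1}$.

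Next I would exhibit the decoupling of $\S_D^0$. With respect to the splitting $L^2(\partial D) = L^2(\partial D^{(1)})\oplus L^2(\partial D^{(2)})$, the diagonal super-blocks of $\S_D^0$ are $\S_{D^{(1)}}^0$ and $\S_{D^{(2)}}^0$, the latter coinciding with the single-layer potential of the unshifted second half by translation invariance of $G^0$. The two off-diagonal super-blocks have kernel $G^0(x,y) = -1/(4\pi|x-y|)$ with $x$ and $y$ lying in different halves, so $|x-y|\ge d - C_0$ for some fixed $C_0$; hence $G^0(x,\cdot)$ and all its $x$-derivatives are $O(1/d)$ uniformly, and these coupling operators have norm $O(1/d)$ in $\mathcal B\big(L^2(\partial D),H^1(\partial D)\big)$. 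Since $\S_{D^{(1)}}^0$ and $\S_{D^{(2)}}^0$ are invertible (\Cref{sec:layerpot}), the block-diagonal part $\mathcal{S}_0 := \mathrm{diag}\big(\S_{D^{(1)}}^0,\S_{D^{(2)}}^0\big)$ is invertible, and writing $\S_D^0 = \mathcal{S}_0 + \mathcal E$ with $\|\mathcal{S}_0^{-1}\mathcal E\|_{\mathcal B(L^2)} = O(1/d)$, a Neumann series gives $(\S_D^0)^{-1} = \mathcal{S}_0^{-1} + O(1/d)$. Substituting into the formula for $C_{i,j}$ shows that the cross entries are $O(1/d)$, that the top-left block converges to the capacitance matrix $\widetilde C$ of the isolated $D^{(1)}$, and that the bottom-right block converges to that of the isolated $D^{(2)}$.

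To identify the lower block with the rearranged matrix $\widetilde C^{\star}$, I would invoke the reflection symmetry of $D$. From \eqref{eq:symmetry} one checks that $D_1$ and $D_2$ are translates of one another and that the reflection of $\R^3$ in the plane $\{x_1 = (d-L)/2\}$ maps $D$ onto itself, carrying the resonator $D_i$ to $D_{M+1-i}$. As the capacitance matrix is invariant under rigid motions preserving $D$, this yields the exact relation $C_{i,j} = C_{M+1-i,M+1-j}$, valid for every $d$; this is precisely the centrosymmetry that turns the isolated second-half block into $\widetilde C^{\star}$. Combining it with the decoupling of the previous step produces the claimed form $C = \mathrm{diag}(\widetilde C,\widetilde C^{\star}) + O(d^{-1})$.

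The decay estimate itself is routine; I expect the main obstacles to be twofold: (i) carrying the $O(1/d)$ bound through the operator inversion in the correct spaces, since the coupling is small only as a map $L^2(\partial D)\to H^1(\partial D)$ and one must compose with $\mathcal{S}_0^{-1}\colon H^1\to L^2$ to close the Neumann argument; and (ii) the index bookkeeping needed to match the reflected second-half block precisely with $\widetilde C^{\star}$, which is where the symmetry assumption \eqref{eq:symmetry} and the parametrization $M = 4K+2$ enter.
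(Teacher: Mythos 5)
Your proposal follows essentially the same route as the paper: you write the capacitance entries as $C_{i,j} = -\int_{\partial D_i}(\S_D^0)^{-1}[\chi_{\partial D_j}]\dx\sigma$ via the jump relations, split the single layer potential into its half-wise block-diagonal part plus an $O(d^{-1})$ coupling (the paper's $S_I + S_{II}$ decomposition), and use a Neumann series to transfer the estimate to $(\S_D^0)^{-1}$ and hence to the entries of $C$. The only divergence is in the final bookkeeping: where you identify the lower block with $\widetilde{C}^{\star}$ through the exact reflection symmetry of the dislocated array about the plane $\{x_1=(d-L)/2\}$ (which indeed follows from \eqref{eq:symmetry} and gives $C_{i,j}=C_{M+1-i,M+1-j}$ for every $d$), the paper instead observes directly that the block with $i,j\geq M/2+1$ converges to the capacitance matrix of the isolated second half, which equals $\widetilde{C}^{\star}$ by that same symmetry — both handlings are correct.
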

\begin{proof}
	We can use the jump conditions to show that the capacitance coefficients $C_{i,j}$ are given by
	$$ C_{i,j} = - \int_{\partial D_i} \psi_j \dx\sigma,\quad i,j=1,\cdots,M,$$
	where the functions $\psi_j$ are defined 
	as
	$$\psi_j = (\S_D^{0})^{-1}[\chi_{\p D_j}].$$

	We make the identification $\p D=\p D_1 \times \dots\times\p D_M$ and use this to write the single layer potential $\mathcal{S}_D^0$ in a decomposed matrix form, as
	\begin{equation} \label{eq:s_decomp}
	\S_D^0=S_I + S_{II},
	\end{equation}
	where $S_I$ and $S_{II}$ are linear operators defined block-wise as
	\begin{align*}
	[S_I]_{ij}&:=\begin{cases}
	\mathcal{S}_{D_i}^0 |_{\p D_{j}}, & \text{if } i,j\leq M/2 \text{ or } i,j\geq M/2+1,\\
	0, & \text{otherwise},
	\end{cases}\\
	[S_{II}]_{ij}&:=\begin{cases}
	0, & \text{if } i,j\leq M/2 \text{ or } i,j\geq M/2+1,\\
	\mathcal{S}_{D_i}^0 |_{\p D_{j}}, & \text{otherwise}.
	\end{cases}
	\end{align*}

	The decomposition \eqref{eq:s_decomp} has been chosen so that $S_I$ contains precisely the parts of $\S_D^0$ that are unaffected by varying the parameter $d$. Conversely, based on the decay of Green's function $G^0$ we can see that, if $i\leq M/2$ and $j\geq M/2+1$ or vice versa, it holds that
	\begin{equation*}
	\| \mathcal{S}_{ D_j}^0|_{\p D_{i}}  \|_{\B(L^2(\p D_j), H^1(\p D_i))} =O(d^{-1}),
	\end{equation*}
	as $d\to\infty$, hence
	\begin{equation*}\label{SII_estim}
	\| S_{II}\|_{\B(L^2(\p D), H^1(\p D))} =O(d^{-1}).
	\end{equation*}
	Therefore, $\|S_I^{-1}S_{II}\| = O(d^{-1})$ so we may use a Neumann series to see that
	\begin{align*}
	(\mathcal{S}_D^{0})^{-1} [\chi_{\p D_j}] &= (S_I + S_{II})^{-1} [\chi_{\p D_j}] \nonumber
	\\
	&= (I + S_I^{-1} S_{II})^{-1} S_I^{-1}[\chi_{\p D_j}] \nonumber
	\\
	&= (I - S_I^{-1} S_{II} ) [\phi_j ] + O(d^{-1}), \label{SDalpha_inverse}
	\end{align*}
	where $\phi_j:= S_I^{-1}[\chi_{\p D_j}]$. Therefore,
	\begin{equation*}
	C_{i,j}=- \int_{\p D_i} (\mathcal{S}_D^0)^{-1} [\chi_{\p D_j}]\dx \sigma = - \int_{\p D_i}  (I - S_I^{-1} S_{II} ) [\phi_j] \dx \sigma + O(d^{-1}).
	\end{equation*}

	Suppose that $i\leq M/2$ and $j\geq M/2+1$, or vice versa. Then since $(S_I)^{-1}$ is also block diagonal we can see that $\phi_j|_{\p D_i}=0$ so $\int_{\p D_i} \phi_j \dx\sigma=0$. Thus, $C_{i,j}=O(d^{-1})$. Conversely, if $i,j\leq M/2$ then $(S_I^{-1} S_{II} ) [\phi_j]|_{\p D_i}=0$ so we find that
	\begin{align*}
	C_{i,j}&=- \int_{\p D_i}  \phi_j \dx \sigma + O(d^{-1})\\
	&= \widetilde{C}_{i,j} + O(d^{-1}).
	\end{align*}
	In the case that $i,j\geq M/2+1$ the result with $\widetilde{C}^\star$ follows similarly.
	%
	%
	%
\end{proof}

\begin{rmk}
	At its heart, \Cref{lem:caplarged} is a consequence of the decay of the Helmholtz Green's function in free space and not a particular property of the system studied here. The dislocation of any general collection of (finitely many) resonators would yield a similar result (albeit without such elegant notation for the two blocks, which is a consequence of the structure's symmetry).
\end{rmk}

\begin{rmk}
	$\widetilde{C}^\star$ corresponds to the capacitance matrix of the $M/2$-resonator system $D_{M/2+1}\cup\dots\cup D_M$. This is the same system as that for which $\widetilde{C}$ is the capacitance matrix, but with the resonators labelled in the reverse order. That they have the same eigenvalues is easy to see from the fact that $\widetilde{C}^\star=J\widetilde{C}J$, where $J$ is the exchange matrix (1 on the off-diagonal and 0 elsewhere). Thus, in the limit as $d\to\infty$ the eigenvalues of $C$ converge pairwise to $M/2$ values.
\end{rmk}

\begin{figure}
	\begin{center}
	\begin{minipage}{0.48\linewidth}
	\includegraphics[width=\linewidth,trim={1cm 1cm 1cm 0}]{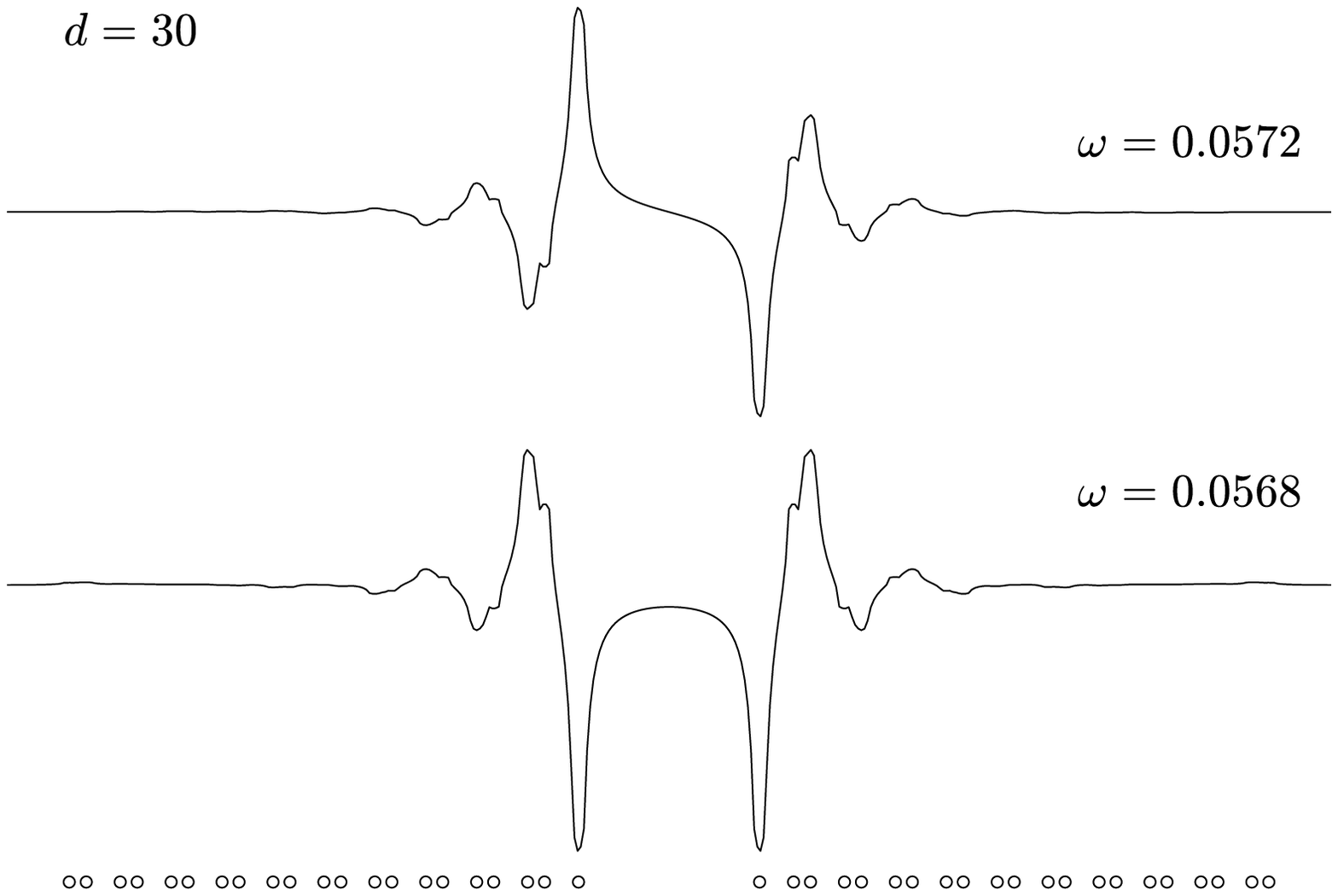}

	\includegraphics[width=\linewidth,trim={1.5cm 0 2cm 0},clip]{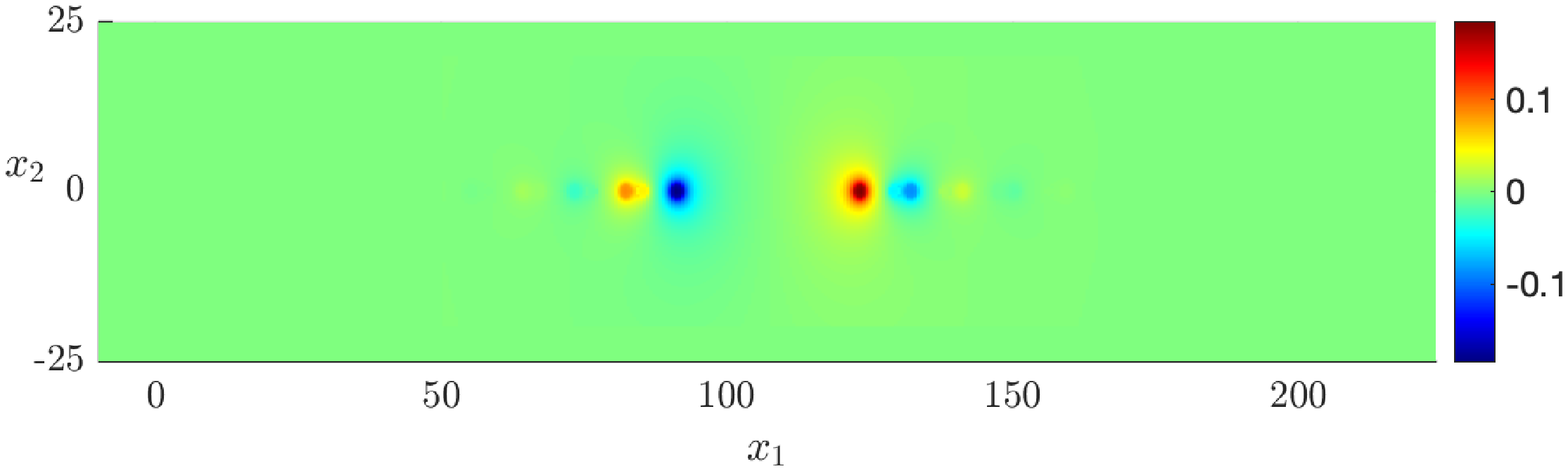}
	\end{minipage}
		\begin{minipage}{0.48\linewidth}
		\includegraphics[width=\linewidth,trim={1cm 0 -0.8cm 0}]{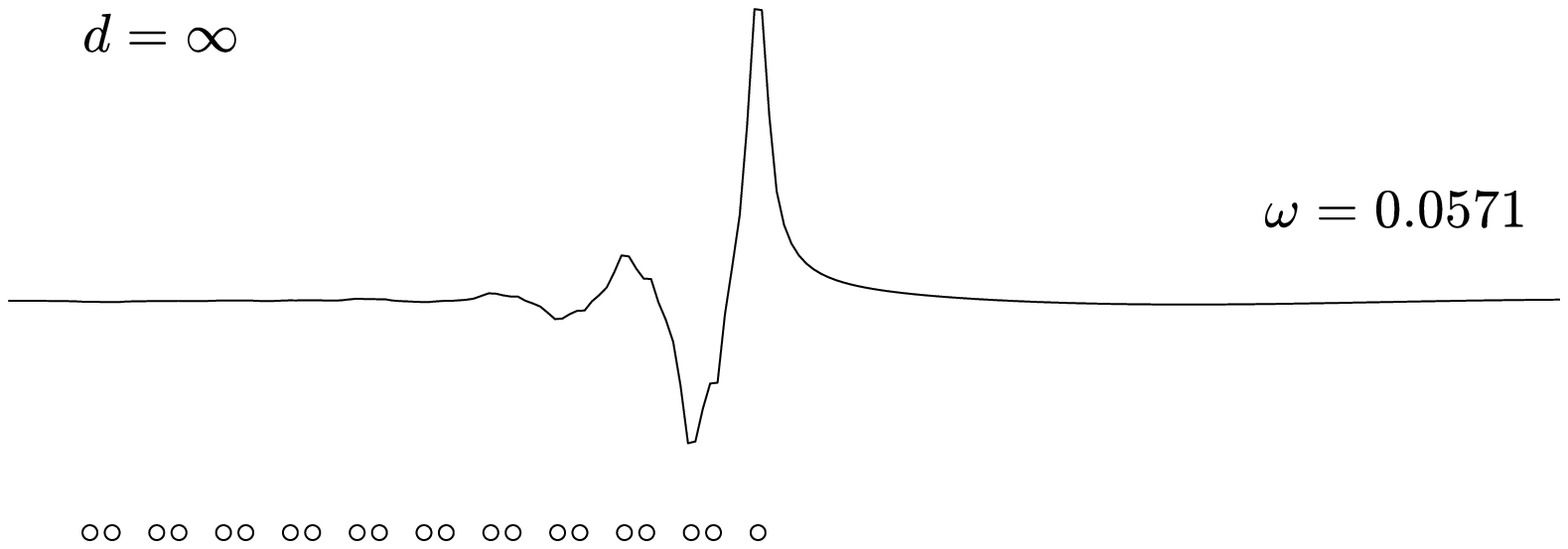}

		\includegraphics[width=\linewidth,trim={1.5cm 0 2cm 0}, clip]{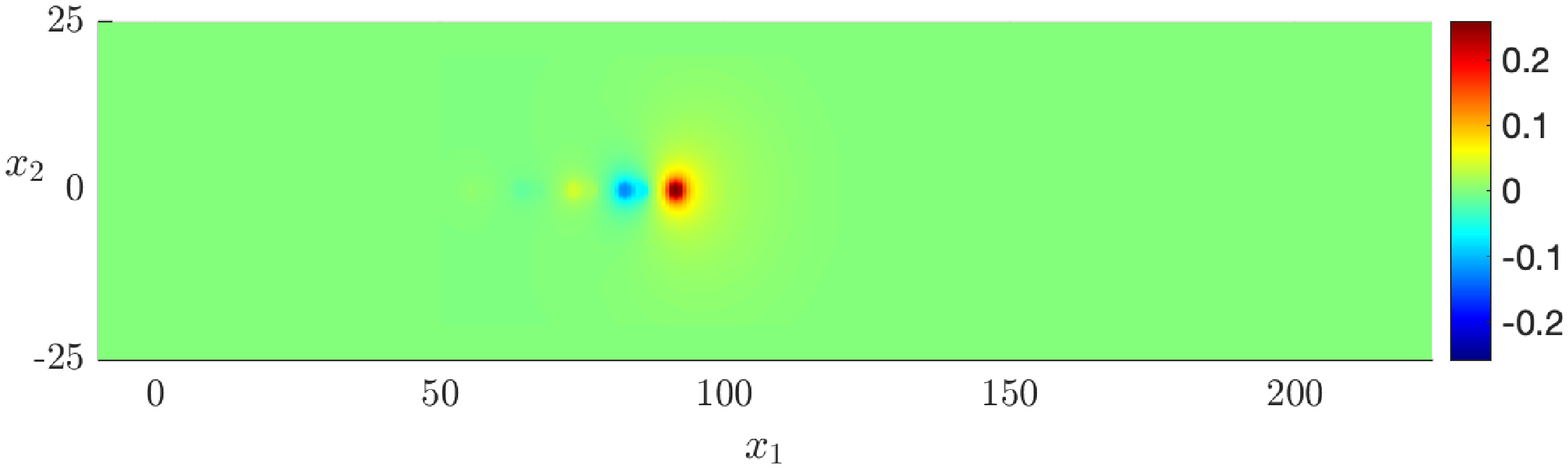}
		\end{minipage}

	\end{center}
	\caption{\textit{Left:} The two edge modes for an array of 42 spherical resonators with unit radius. Here, we simulate an array with parameters $L=9$, $l=6$, $d=30$ and $\delta=1/7000$ and plot the real parts of the edge modes along the line $x_2=0$, $x_3=0$. Below, we plot the $\omega=0.0572$ mode in the plane $x_3=0$, noting that the field has rotational symmetry about the $x_1$ axis. \textit{Right:} For comparison, the edge mode of the corresponding `half system' is shown, which can be thought of as the $d=\infty$ case.} \label{fig:eigenmodes}
\end{figure}

The behaviour for large $d$ can be understood by examining the eigenmodes, examples of which are given in \Cref{fig:eigenmodes}. The dislocation splits the structure into two ``half structures'' which interact with one another. This coupling leads to the creation of two resonant modes, with monopole- and dipole-like characteristics (\emph{cf.} \cite{ammari2017double}), which are the two edge modes.

\subsection{Stability analysis}
We consider the simplest example of a resonator array of the form \eqref{eq:struc_finite}, which has just six resonators arranged as three pairs. The geometry of this structure is parametrised by $l$ and $L$ (as in \Cref{fig:finite}). We wish to study how robust the system is with respect to variations in these parameters.

We know from \Cref{lem:caplarged} that as $d\to\infty$ this system will behave like two separate three-resonator systems. Even in the case of a three-resonator system, finding explicit representations for the entries of the capacitance matrix (with a view to \emph{e.g.} calculating its eigenvalues) is a challenging problem. Consider the case of a \emph{dilute} array of resonators: that is, a structure where the distances between the resonators ($l$ and $L$) are much larger than the size of each individual resonator. In this case, we can recall the following representation of the capacitance matrix, proved in \cite{ammari2019topological}.

\begin{lem}\label{lem:cap_estim}
	Consider a dilute system of $M$ identical subwavelength resonators with size of order $\epsilon$, given by
	\begin{equation*}
	D=\bigcup_{j=1}^M \left(\epsilon B + z_j\right),
	\end{equation*}
	where $0<\epsilon\ll1$, $B$ is a fixed domain of unit size and $z_j$ represents the translated position of each resonator. In the limit as $\epsilon\rightarrow 0$, the capacitance matrix is given by
	\begin{equation} \label{lem:c_dilute}
	C_{i,j} =
	\begin{cases}
	\displaystyle \epsilon \mathrm{Cap}_B + O(\epsilon^3), &\quad \text{if} \ i=j,\\
	\displaystyle -\frac{\epsilon^2(\mathrm{Cap}_B)^2}{4\pi|z_i - z_j|} + O(\epsilon^3), &\quad \text{if} \ i\neq j,\\
	\end{cases}
	\end{equation}
	where $\mathrm{Cap}_B:=-\int_{\p B} (\S_B^0)^{-1}[\chi_B]\dx\sigma$.
\end{lem}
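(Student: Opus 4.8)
The plan is to follow the rescaling-plus-Neumann-series strategy already employed in the proof of \Cref{lem:caplarged} and in \Cref{lem:cap_estim_quasi}, specialised to the free-space finite array. The starting point is the same density representation of the capacitance coefficients: using the jump relations one has $C_{i,j} = -\int_{\p D_i}\psi_j\dx\sigma$, where $\psi_j = (\S_D^0)^{-1}[\chi_{\p D_j}]$. First I would record the effect of the dilation $x = \epsilon\xi + z_j$ on the single-layer potential of a single resonator: since $G^0(\epsilon\xi+z_j,\epsilon\eta+z_j) = \epsilon^{-1}G^0(\xi,\eta)$ and the surface measure carries a factor $\epsilon^2$, one finds $\S_{D_j}^0[\phi](\epsilon\xi + z_j) = \epsilon\,\S_B^0[\hat\phi](\xi)$ with $\hat\phi(\xi)=\phi(\epsilon\xi+z_j)$. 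Consequently $(\S_{D_j}^0)^{-1}$ scales like $\epsilon^{-1}$, the self-interaction density $\psi_{D_j}:=(\S_{D_j}^0)^{-1}[\chi_{\p D_j}]$ is the $\epsilon^{-1}$-rescaling of $\psi_B$, and $\int_{\p D_j}\psi_{D_j}\dx\sigma = -\mathrm{Cap}_{\epsilon B} = -\epsilon\,\mathrm{Cap}_B$.

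Next I would split $\S_D^0 = S_I + S_{II}$ exactly as in \Cref{lem:caplarged}, with $S_I$ the block-diagonal part collecting the self-interactions $\S_{D_j}^0|_{\p D_j}$ and $S_{II}$ the off-diagonal cross-interaction blocks. The scaling above gives $\|S_I^{-1}\|=O(\epsilon^{-1})$. For $i\neq j$ and $x\in\p D_i$, $y\in\p D_j$ one has $|x-y| = |z_i-z_j| + O(\epsilon)$, so Taylor expanding $G^0$ and using the $O(\epsilon^2)$ surface measure shows that each off-diagonal block of $S_{II}$ has norm $O(\epsilon^2)$, with leading part acting by averaging against the constant $-\tfrac{1}{4\pi|z_i-z_j|}$. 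Hence $\|S_I^{-1}S_{II}\| = O(\epsilon)$ and the Neumann series
\[
(\S_D^0)^{-1} = S_I^{-1} - S_I^{-1}S_{II}S_I^{-1} + S_I^{-1}S_{II}S_I^{-1}S_{II}S_I^{-1} - \cdots
\]
converges. The decisive structural observation is that $S_{II}$ always maps the $k$-th block into the complementary blocks while $S_I^{-1}$ preserves blocks; thus a contribution to $C_{i,j}$ reaching block $i$ from block $j$ requires at least one factor of $S_{II}$, and a diagonal contribution returning to block $i$ requires an \emph{even} number of such factors.

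With this bookkeeping the two cases follow by direct substitution. For $i\neq j$ the leading contribution is the single-$S_{II}$ term: $S_I^{-1}[\chi_{\p D_j}]=\psi_{D_j}$, applying $S_{II}$ produces on $\p D_i$ the constant $\tfrac{\epsilon\,\mathrm{Cap}_B}{4\pi|z_i-z_j|}+O(\epsilon^2)$, and applying $-S_I^{-1}$ and integrating over $\p D_i$ (using $\int_{\p D_i}\psi_{D_i}\dx\sigma=-\epsilon\,\mathrm{Cap}_B$) yields $-\tfrac{\epsilon^2(\mathrm{Cap}_B)^2}{4\pi|z_i-z_j|}$. For $i=j$ the single-$S_{II}$ term lands only on off-diagonal blocks and integrates to zero against $\p D_i$, so the leading term is $-\int_{\p D_i}\psi_{D_i}\dx\sigma=\epsilon\,\mathrm{Cap}_B$ and the first correction is the double-$S_{II}$ term, which one checks is $O(\epsilon^3)$. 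I expect the main obstacle to be precisely this error accounting: one must verify that the $O(\epsilon)$ remainders from the Green's-function Taylor expansion, once acted on by the $\epsilon^{-1}$-singular operator $S_I^{-1}$ and integrated against the $O(\epsilon^2)$ boundary, genuinely collapse to $O(\epsilon^3)$, and that the $O(\epsilon^2)$ diagonal term cancels---a cancellation that rests entirely on the bipartite block structure of $S_{II}$ (absence of self-loops). Carrying out the operator-norm estimates in $\B(L^2(\p D),H^1(\p D))$ uniformly in $\epsilon$, rather than pointwise, is the only genuinely technical point.
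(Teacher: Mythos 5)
Your proposal is correct, and it is worth noting at the outset that the paper itself does not prove this lemma: it is quoted from \cite{ammari2019topological}. The relevant in-paper comparison is therefore with the proof of \Cref{lem:caplarged}, where the authors use exactly the decomposition $\S_D^0 = S_I + S_{II}$ and a Neumann series, but with $1/d$ (decay of the Green's function between distant resonators) rather than $\epsilon$ (smallness of the resonators) as the expansion parameter, and with the dilute expansions of the type \eqref{eq:Sdilute}--\eqref{eq:Kdilute} used in \Cref{app:lem1}. Your argument is the natural transplantation of that technique: the rescaling $\S_{D_j}^0 \sim \epsilon \S_B^0$, the bounds $\|S_I^{-1}\| = O(\epsilon^{-1})$, $\|S_{II}\| = O(\epsilon^2)$, hence $\|S_I^{-1}S_{II}\| = O(\epsilon)$, and the identities $\int_{\p D_j}\psi_{D_j}\dx\sigma = -\epsilon\,\mathrm{Cap}_B$ are all correct, as is the crucial structural point: since $S_{II}$ has vanishing diagonal blocks and $S_I^{-1}$ is block-diagonal, the single-$S_{II}$ term contributes nothing to $C_{ii}$, so the diagonal entry has no $O(\epsilon^2)$ correction, while for $i\neq j$ that same term produces exactly $-\epsilon^2(\mathrm{Cap}_B)^2/(4\pi|z_i-z_j|)$.

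Two small points. First, your statement that a diagonal contribution requires an \emph{even} number of $S_{II}$ factors is not quite right for $M\geq 3$ (the interaction graph is complete, so odd cycles such as $i\to k\to l\to i$ exist); what the argument needs, and what is true, is only that it requires \emph{at least two} factors, each worth $O(\epsilon)$ after composition with $S_I^{-1}$, which combined with the $O(\epsilon)$ factor from integrating over $\p D_i$ yields the claimed $O(\epsilon^3)$. Second, the error accounting you flag as the main technical obstacle does close as you anticipate, but the clean way to see it is to rescale to the fixed domain $B$ before inverting: the kernel-expansion remainder $e(x)=\int_{\p D_j}\bigl(G^0(x,y)+\tfrac{1}{4\pi|z_i-z_j|}\bigr)\psi_{D_j}(y)\dx\sigma(y)$ is smooth (the blocks are disjoint), and its pullback $\hat e$ to $\p B$ satisfies $\|\hat e\|_{H^1(\p B)} = O(\epsilon^2)$, so that $\int_{\p D_i} S_I^{-1}[e]\dx\sigma = \epsilon\int_{\p B}(\S_B^0)^{-1}[\hat e]\dx\sigma = O(\epsilon^3)$; this sidesteps the fact that $S_I^{-1}$ is not bounded on $L^2$ and that the $H^1(\p D_i)$ norm does not scale homogeneously under dilation, which is precisely why the paper introduces $\epsilon$-normalized norms in \Cref{app:lem1}.
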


In the case of a three-resonator system with $|z_1-z_2|=l$ and $|z_1-z_3|=L$, we can use the expansion \eqref{lem:c_dilute} to show that the eigenvalues of the capacitance matrix are given, as $\epsilon\to0$, by
\begin{equation} \label{eq:eigenvalues3}
\lambda_k=\epsilon \mathrm{Cap}_B+\epsilon^2\frac{(\mathrm{Cap}_B)^2\gamma}{2\sqrt{3}\pi}
\cos\left[
\frac{1}{3}\left(\arccos\left(\frac{-3\sqrt{3}}{lL(L-l)\gamma^{3}}\right)+2k\pi\right)
\right] +O(\epsilon^3),
\end{equation}
for $k=1,2,3$, where $\gamma=\gamma(l,L):=\sqrt{l^{-2}+L^{-2}+(L-l)^{-2}}$. The convergence of the six resonant frequencies of the six-resonator system to these three values is demonstrated in \Cref{fig:six_d}.

We know, from \Cref{sec:periodic}, that the undislocated structure ($d=0$) has a subwavelength band gap if it is asymmetric, \emph{i.e.} $l/L\neq 1/2$. In the case of a sufficiently asymmetric structure, we can show that the middle eigenvalue is more stable with respect to changes in the parameter $l$, which controls the relative positions of the two repeating resonators. This is achieved by \Cref{lem:derivs}, which describes the extent to which the eigenvalues \eqref{eq:eigenvalues3} are affected by variations in the parameters $l$ and $L$. In particular, it says that if $l':=L-l$ is sufficiently small then
\begin{equation*}
\left|\frac{\p \lambda_2}{\p l}\right| \ll \left|\frac{\p \lambda_1}{\p l}\right|, \quad \left|\frac{\p \lambda_2}{\p l}\right| \ll\left|\frac{\p \lambda_3}{\p l}\right|,
\end{equation*}
and that the dependence of all three eigenvalues on $L$ is comparatively negligible.

\begin{lem} \label{lem:derivs}
	%
	Let $l':=L-l$. As $l'\to0^+$, it holds that
	$$\left|\frac{\p \lambda_1}{\p l}\right|\to\infty,\quad
	\left|\frac{\p \lambda_2}{\p l}\right|=O(1), \quad
	\left|\frac{\p \lambda_3}{\p l}\right|\to\infty.$$
	%
	Meanwhile, for $k=1,2,3$,
	$$\left|\frac{\p \lambda_k}{\p L}\right| =O(l').$$
\end{lem}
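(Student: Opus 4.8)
The plan is to treat the three eigenvalues as the explicit functions of $(l,L)$ supplied by \eqref{eq:eigenvalues3} and to extract the two derivatives by an asymptotic expansion as $l':=L-l\to0^+$. Writing $\lambda_k=\epsilon\,\mathrm{Cap}_B+\epsilon^2 c_0\,g_k(l,L)+O(\epsilon^3)$ with $c_0=(\mathrm{Cap}_B)^2/(2\sqrt3\pi)$ and $g_k=\gamma\cos\!\big[\tfrac13(\arccos w+2k\pi)\big]$, where $w=-3\sqrt3/(lL l'\gamma^3)$, the constant leading term drops out of both $\partial_l\lambda_k$ and $\partial_L\lambda_k$, so everything reduces to differentiating $g_k$. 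Equivalently, one may differentiate implicitly the cubic $\mu^3-\gamma^2\mu-2/(lL l')=0$ satisfied by the eigenvalues of the interaction matrix underlying \Cref{lem:cap_estim}; I find the closed form slightly cleaner to bookkeep.

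First I would record the building-block asymptotics. From $\gamma^2=l^{-2}+L^{-2}+(l')^{-2}$ one gets $\gamma=(l')^{-1}+O(l')$, and since $l'\gamma^3=(l')^{-2}\big(1+O((l')^2)\big)$ the argument of the arccosine satisfies $w=-\tfrac{3\sqrt3}{lL}(l')^2+O((l')^4)\to0$. Hence $\arccos w=\tfrac\pi2-w+O(w^3)=\tfrac\pi2+O((l')^2)$, so the three cosine factors tend to $\cos\tfrac{5\pi}{6}=-\tfrac{\sqrt3}{2}$, $\cos\tfrac{3\pi}{2}=0$ and $\cos\tfrac{\pi}{6}=\tfrac{\sqrt3}{2}$ for $k=1,2,3$. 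The structural fact that drives the result is that the \emph{middle} factor vanishes to first order in $w$: with $\theta:=\arccos w$ one has $\cos\!\big[\tfrac13(\theta+4\pi)\big]=\sin\!\big(\tfrac{\theta-\pi/2}{3}\big)=O((l')^2)$. I would also note $\partial_l w=O(l')$ and $\partial_L w=O(l')$, whence $\partial_l\theta,\partial_L\theta=O(l')$ and each cosine factor has $l$- and $L$-derivatives of size $O(l')$.

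Next I differentiate in $l$. Using $\partial_l\gamma=(-l^{-3}+(l')^{-3})/\gamma\sim(l')^{-2}$ and the product rule $\partial_l g_k=(\partial_l\gamma)\,c_k+\gamma\,(\partial_l c_k)$, where $c_k$ denotes the cosine factor: for $k=1,3$ the factor $c_k\to\mp\sqrt3/2\neq0$, so the first term is of size $(l')^{-2}$ while the second is $\gamma\cdot O(l')=O(1)$, giving $|\partial_l\lambda_1|,|\partial_l\lambda_3|\to\infty$. For $k=2$ the vanishing $c_2=O((l')^2)$ multiplies the $(l')^{-2}$ growth of $\partial_l\gamma$ to leave an $O(1)$ contribution, and $\gamma\,\partial_l c_2=O(1)$ as well, so $\partial_l\lambda_2=O(1)$; this vanishing of the middle cosine factor is precisely the mechanism behind the enhanced stability of the central band.

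The $L$-derivative is where the real work lies and is the step I expect to be the main obstacle. Differentiating $\gamma$ in $L$ gives $\partial_L\gamma=(-L^{-3}-(l')^{-3})/\gamma$, whose leading part is again of size $(l')^{-2}$, so the naive term-by-term bound does \emph{not} by itself deliver $O(l')$. The claimed estimate therefore has to come from a cancellation between the singular growth carried by $\partial_L\gamma$ and the compensating variation of the $\arccos$/cosine factor, exploiting that the entire singular part of $g_k$ depends on $L$ only through the combination $l'=L-l$. The delicate point is to make this cancellation quantitative: to expand $\partial_L g_k$ keeping track of the $(l')^{-2}$, $O(1)$ and $O(l')$ contributions simultaneously and to verify that the surviving term is genuinely of order $l'$ for each $k$ (in particular that the apparent leading singularity annihilates). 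Once this is carried out, the $O(l')$ rate for $\partial_L\lambda_k$ follows, and comparing it with the $\partial_l$ bounds above yields the asserted stability hierarchy $|\partial_l\lambda_2|\ll|\partial_l\lambda_1|,|\partial_l\lambda_3|$ together with the comparative insensitivity of all three eigenvalues to $L$.
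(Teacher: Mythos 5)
Your treatment of the $l$-derivatives is correct and is essentially the paper's own argument: expand via \eqref{eq:eigenvalues3}, drop the constant leading term, use $\partial_l\gamma\sim(l')^{-2}$, the limits $\mp\sqrt{3}/2$ and $0$ of the cosine factors, the quadratic vanishing $c_2=\tfrac{\sqrt{3}}{L^2}(l')^2+O((l')^3)$ of the middle factor, and the $O(l')$ bound on the derivatives of the arccosine argument. That half stands as written.

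The $L$-derivative is where your proposal breaks down, and the gap is not merely an unfinished computation: the cancellation you are counting on does not exist. You differentiate at fixed $l$, so that $\partial_L\gamma\big|_l=(-L^{-3}-(l')^{-3})/\gamma\sim-(l')^{-2}$, and you hope that the variation of the $\arccos$/cosine factor compensates this singular growth. But by your own recorded estimate, $\partial_L w\big|_l=O(l')$ and hence $\partial_L c_k\big|_l=O(l')$, so the second product-rule term satisfies $\gamma\,\partial_L c_k\big|_l=O(1)$; it can never cancel the term $(\partial_L\gamma\big|_l)\,c_k\sim\mp\tfrac{\sqrt{3}}{2}(l')^{-2}$ for $k=1,3$. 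Moreover, your structural observation---that the singular part of $\lambda_k$ depends on $(l,L)$ only through $l'=L-l$---proves the opposite of what you want: writing that part as $F(l')$, the chain rule gives $\partial_L\big|_l F(l')=F'(l')=-\partial_l\big|_L F(l')$, so the fixed-$l$ derivative in $L$ blows up for $k=1,3$ exactly as the $l$-derivative does (and for $k=2$ it tends to the nonzero constant $\epsilon^2(\mathrm{Cap}_B)^2/(2\pi L^2)$, which is $O(1)$ but not $O(l')$). Under your interpretation, the second claim of \Cref{lem:derivs} is false for every $k$, so no amount of careful bookkeeping will complete the step you flagged as ``the real work''.

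The paper's proof resolves this by taking $\partial/\partial L$ at fixed $l'$: it regards $\lambda_k$ as a function of $(l',L)$, with $\gamma(l',L)^2=(l')^{-2}+L^{-2}+(L-l')^{-2}$, which is what the lemma's second statement means. In these coordinates there is no singular term to cancel: $\partial_L\gamma\big|_{l'}=\bigl(-L^{-3}-(L-l')^{-3}\bigr)/\gamma=O(l')$ because $\gamma\sim(l')^{-1}$, and likewise $\gamma\,\partial_L c_k\big|_{l'}=O(l')$, so the bound $|\partial_L\lambda_k|=O(l')$ follows by exactly the same term-by-term estimate you used in the first half---no delicate cancellation is needed. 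Physically: holding the small gap $l'$ fixed and moving the far resonator changes the spectrum only at order $l'$. Your proposal can be repaired by switching to the $(l',L)$ parametrization before differentiating in $L$.
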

\begin{proof}
	%
	Define the functions
	\begin{equation*}
	c(l',L,k):=\cos\left[
	\frac{1}{3}\left(\arccos\left(\frac{-3\sqrt{3}}{l'L(L-l')\gamma(l',L)^{3}}\right)+2k\pi\right)\right],\quad 0<l'<L,\,k=1,2,3,
	\end{equation*}
	and
	\begin{equation*}
	s(l',L,k):=\sin\left[
	\frac{1}{3}\left(\arccos\left(\frac{-3\sqrt{3}}{l'L(L-l')\gamma(l',L)^{3}}\right)+2k\pi\right)\right],\quad 0<l'<L,\,k=1,2,3.
	\end{equation*}

	As $l'\to0^+$ it holds that
	\begin{equation} \label{eq:cs_limits}
	\begin{aligned}
	c(l',L,1)&\to -\frac{\sqrt{3}}{2}, &c(l',L,2)&\to 0, &c(l',L,3)&\to \frac{\sqrt{3}}{2},\\
	s(l',L,1)&\to \frac{1}{2}, &s(l',L,2)&\to -1, &s(l',L,3)&\to \frac{1}{2}.
	\end{aligned}
	\end{equation}
	In addition to this, for fixed $L$ and $k$ we see that, as $l'\to0^+$,
	\begin{equation*} \label{eq:dl}
	\frac{\p \lambda_k}{\p l'}\sim\frac{ \epsilon^2(\mathrm{Cap}_B)^2}{2\sqrt{3}\pi}\left[-\frac{1}{(l')^2}c(l',L,k)-\frac{2\sqrt{3}}{L^2}s(l',L,k)\right],
	\end{equation*}
	where the notation $\sim$ is used to mean that $f\sim g$  if and only if $\lim f/g=1$. From this and \eqref{eq:cs_limits} we can see that, as $l'\to0^+$,
	\begin{equation*}
	\frac{\dx \lambda_1}{\dx l'}\to\infty,\quad \frac{\dx \lambda_3}{\dx l'}\to-\infty.
	\end{equation*}
	Conversely, using Taylor series expansions we can see that, as $l'\to0^+$,
	\begin{equation*}
	c(l',L,2)= \frac{\sqrt{3}}{L^2}(l')^2+O\left((l')^3\right),
	\end{equation*}
	hence as $l'\to0^+$ it holds that
	\begin{equation*}
	\frac{\p \lambda_2}{\p l'}\to \frac{\epsilon^2(\mathrm{Cap}_B)^2}{2\pi L^2}.
	\end{equation*}

	Likewise, the result for $\dx \lambda_k/\dx L$ follows from the fact that, as $l'\to0^+$,
	\begin{equation*} \label{eq:dL}
	\frac{\dx \lambda_k}{\dx L}\sim\frac{ \epsilon^2(\mathrm{Cap}_B)^2}{2\sqrt{3}\pi} \left[-\frac{2l'}{L^3}c(l',L,k)-\frac{2\sqrt{3}l'}{L^3}s(l',L,k)\right],
	\end{equation*}
	for $k=1,2,3$.
\end{proof}

\begin{figure}[t]
	\begin{center}
		\begin{subfigure}[b]{0.45\linewidth}
			\includegraphics[width=\linewidth]{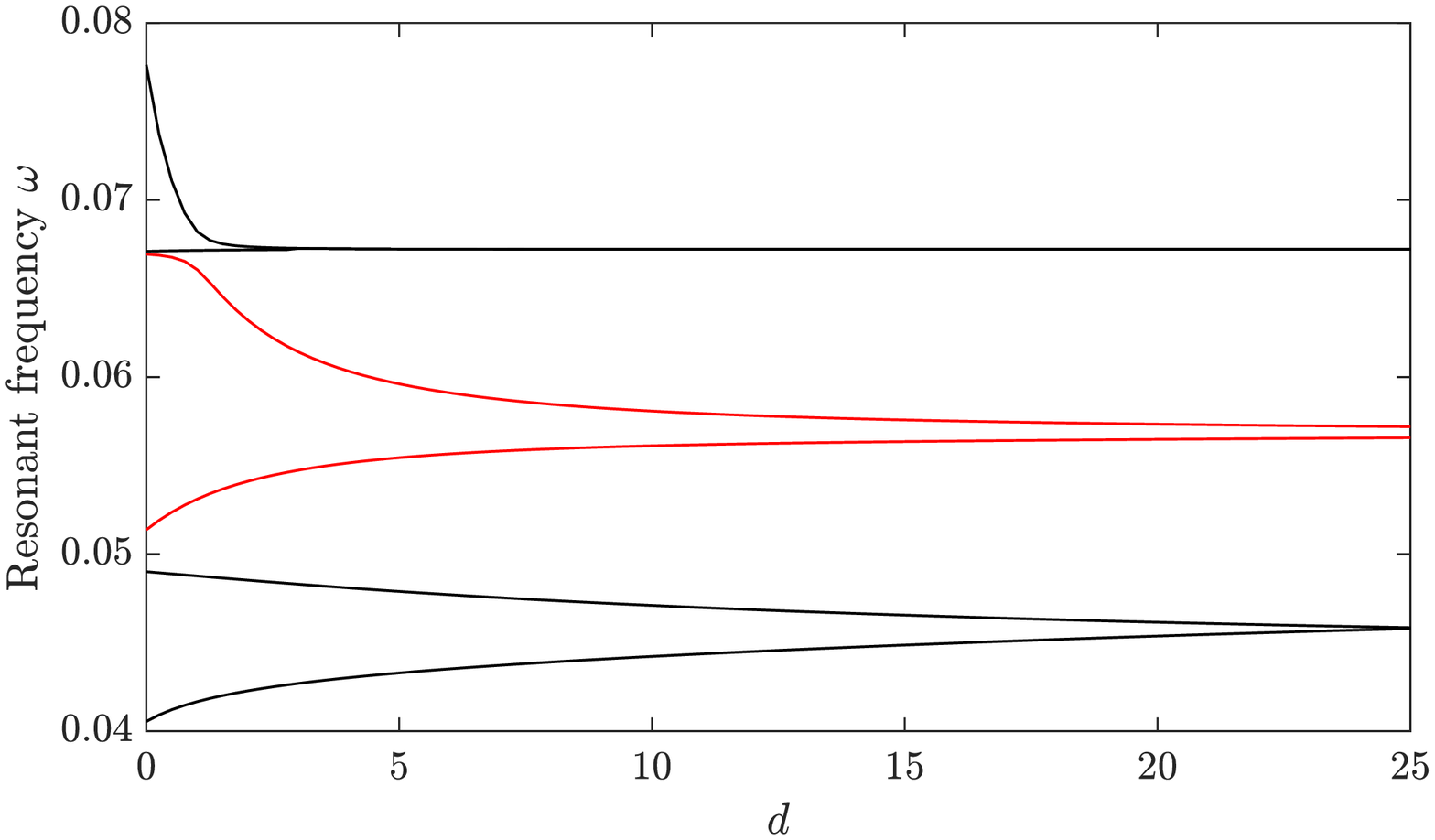}
			\caption{Resonator array with 6 resonators.} \label{fig:six_d}
		\end{subfigure}
		\hspace{10pt}
		\begin{subfigure}[b]{0.45\linewidth}
			\includegraphics[width=\linewidth]{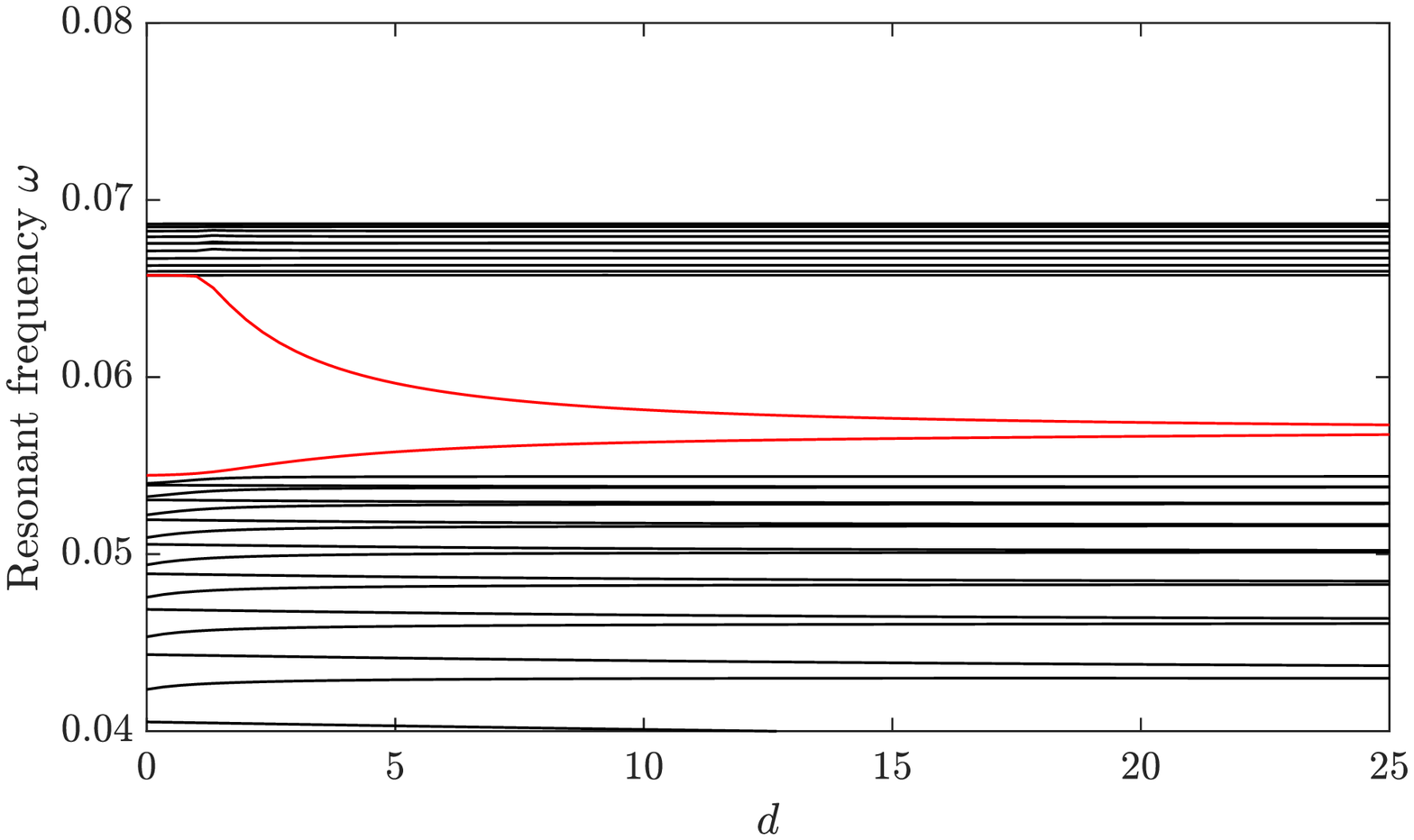}
			\caption{Resonator array with 42 resonators.} \label{fig:fortytwo_d}
		\end{subfigure}
		\caption{Simulation of the resonant frequencies of different subwavelength resonator arrays as the dislocation $d$ is increased.}\label{fig:num_find}
		\vspace{20pt}
		\begin{subfigure}[b]{0.45\linewidth}
			\includegraphics[width=\linewidth]{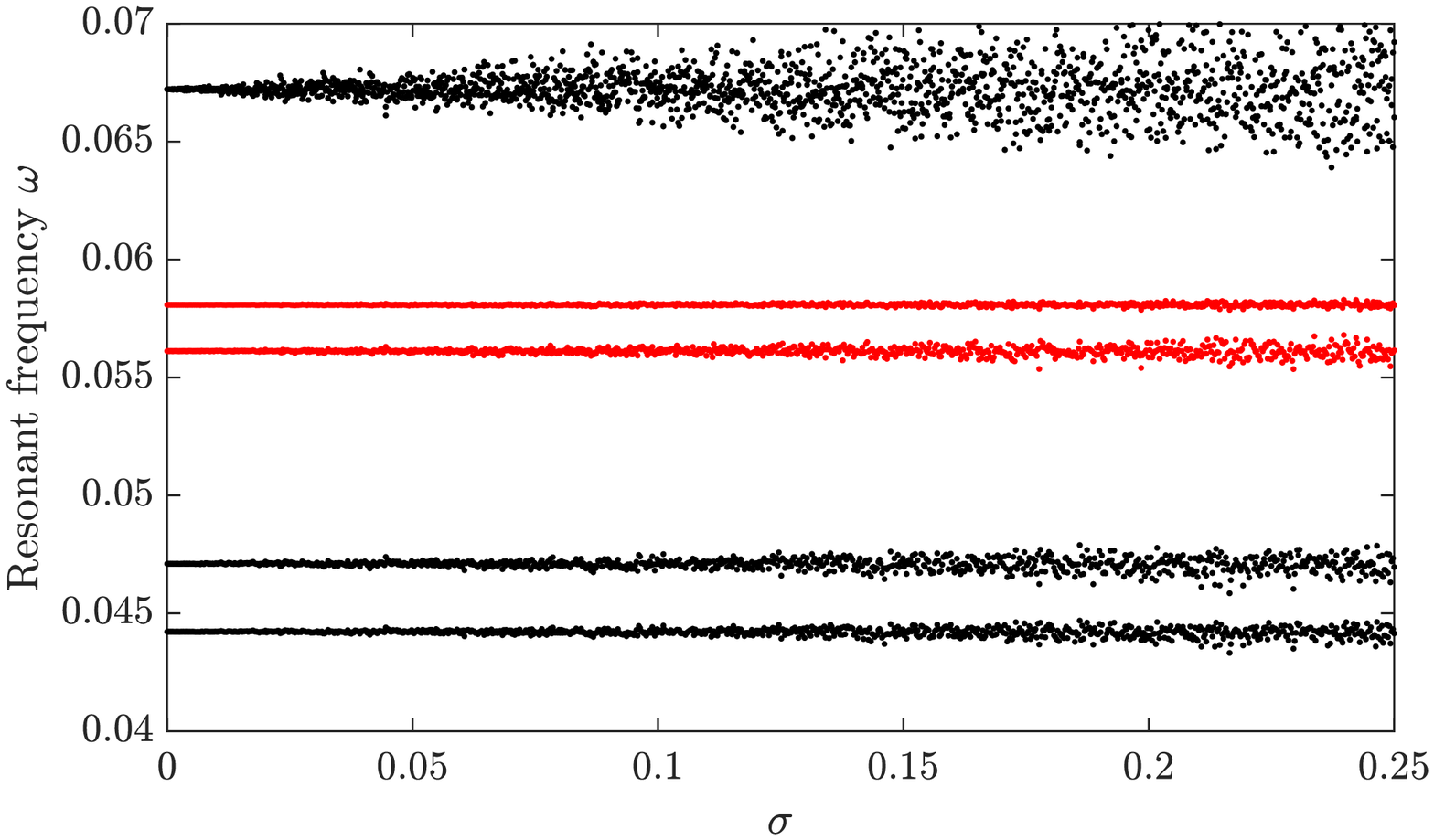}
			\caption{Increasing imperfection deviation $\sigma$, for $d=10$.} \label{fig:stabsig}
		\end{subfigure}
		\hspace{10pt}
		\begin{subfigure}[b]{0.45\linewidth}
			\begin{tikzpicture}
			\node at (0,0) {\includegraphics[width=\linewidth]{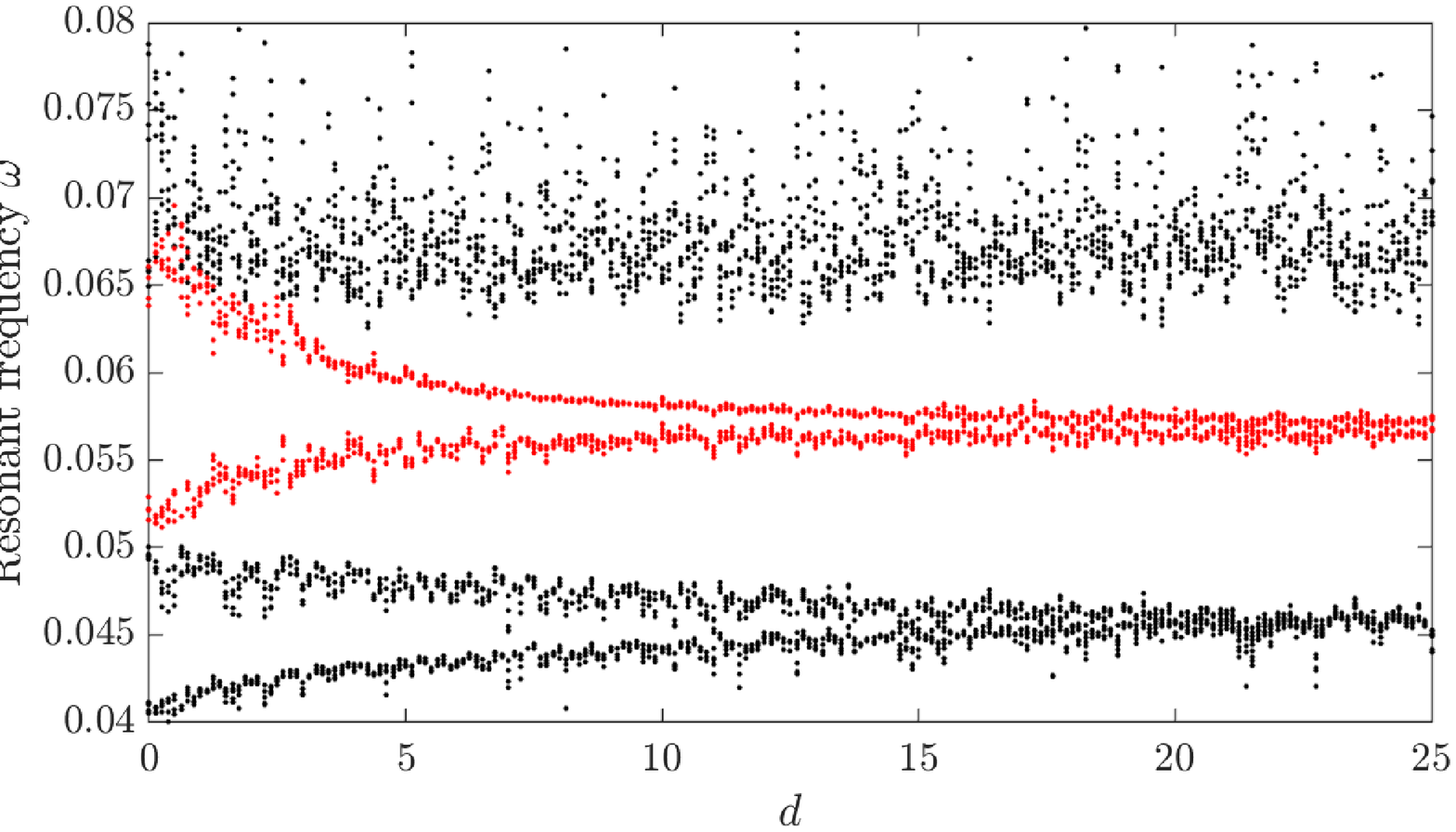}};
			\draw[->,blue,thick] (-0.3,0.3) -- (-0.5,0.15);
			\end{tikzpicture}
			\caption{Increasing dislocation $d$, for $\sigma=0.2$.} \label{fig:stabd}
		\end{subfigure}
		\caption{Analysis of the stability of the resonant frequencies of a system of six resonators. An array of six resonators with dislocation size $d$ is repeatedly simulated after random imperfections, drawn from the distribution $\mathcal{N}(0,\sigma^2)$, are introduced to the resonator positions. An arrow indicates the position of minimum variance.} \label{fig:num_stab}
		\caption*{In \Cref{fig:num_find,fig:num_stab}, simulations were performed on spherical resonators with radius 1 arranged with distances $L=9$ and $l=6$ (as depicted in \Cref{fig:finite}) and material contrast $\delta = 1/7000$. The multipole expansion method was used to find the subwavelength resonant frequencies associated to $\mathcal{A}$ (see the appendix of \cite{ammari2019topological} for details).}
	\end{center}
\end{figure}

The stability that is predicted by \Cref{lem:derivs} can be investigated numerically by repeatedly introducing random imperfections to the structure. In \Cref{fig:num_stab} we show the resonant frequencies for structures with random (Gaussian) perturbations added to the $x_1$ coordinate of the resonators' positions. It can, firstly, be observed that the middle eigenvalues (which both converge to $\lambda_2$, as defined in \eqref{eq:eigenvalues3}, as $d\to\infty$) are more stable, as expected. It is also interesting to observe how the stability varies as a function of the dislocation $d$. The minimal variance of any resonant frequency is observed for $\omega_4$ when $d\approx8$, as indicated by the arrow in \Cref{fig:stabd}. At this point, $\omega_4$ is in the centre of the band gap so it is as far as possible from the other (unlocalized) modes, consistent with the general principle \emph{e.g.} \cite{combes1973asymptotic, kuchment2}. This demonstrates the value of being able to control the position of mid-gap frequencies within the band gap.

\section{Concluding remarks}

In this paper, we have studied a one-dimensional array of subwavelength resonators capable of robustly manipulating waves on subwavelength scales and have proved that its properties can be fine tuned by adjusting geometric parameters. This takes advantage of the principle that eigenmodes corresponding to mid-gap frequencies that are far from the edges of that band gap will be strongly localized in space and will be robust with respect to structural imperfections. Thus, the goal was to design a structure that could be manipulated so as to place a mid-gap frequency at any given point within the band gap. This was achieved by introducing a dislocation to an array of subwavelength resonator pairs. We proved that the mid-gap frequencies emerge from the edges of the band gap and span an interval in the middle of the band gap.

Our study of the periodic structure was complemented by an analysis of the corresponding finite array of resonators. Created by truncating the infinite array, this physically-realizable structure shared the spectral behaviour of the infinite array. Further, a stability analysis confirmed the value of being able to fine-tune the structure in order to optimise robustness.

In the setting of the Schr\"odinger operator, two-dimensional structures exhibiting edge modes have been studied via the bulk-edge correspondence. It is well known that materials with non-zero bulk index can be achieved, for example, by perturbing honeycomb-like materials exhibiting \emph{Dirac cones} \cite{fefferman2012honeycomb, fefferman2016edge, drouot2019bulk}. Dirac cones have also been shown to exist in two-dimensional honeycomb structures of subwavelength resonators \cite{ammari2018honeycomb}, suggesting the potential for analogous results in this setting.

\appendix
\section{Proofs of \Cref{lem:C} and \Cref{lem:PhiBPsi}}
Here, we give proofs of \Cref{lem:C} and \Cref{lem:PhiBPsi}. Qualitatively, these results describe the strength of the fictitious source interactions in the two cases studied in \Cref{sec:smalldis} and \Cref{sec:no//}, respectively.

\subsection{Proof of \Cref{lem:C}} \label{app:lem1}
We will expand $\S_D^\omega$ and $\K_D^{\omega,*}$ in the dilute regime specified by \eqref{eq:dilute}. To keep the order of the norms in $L^2(\p D)$ and $H^1(\p D)$ constant as $\epsilon\rightarrow 0$, we let $\L$ and $\Hc$, respectively, denote the spaces $L^2(\p D)$ and $H^1(\p D)$ along with the inner products
$$\langle \cdot , \cdot\rangle_{\L} = \frac{1}{|\p D|}\langle \cdot , \cdot\rangle_{L^2(\p D)}, \qquad \langle \cdot , \cdot\rangle_{\Hc} = \frac{1}{|\p D|}\langle \cdot , \cdot\rangle_{H^1(\p D)}.$$
	Recall the matrix form of $\S_D^\omega$:
	$$\S_D^{\omega} = \begin{pmatrix} \S_{D_1}^\omega & \S_{D_2}^\omega\big|_{\p D_1} \\ \S_{D_1}^\omega\big|_{\p D_2} & \S_{D_2}^\omega \end{pmatrix} =
	\hat \S_D^\omega + \begin{pmatrix} 0 & \S_{D_2}^\omega\big|_{\p D_1} \\ \S_{D_1}^\omega\big|_{\p D_2} & 0 \end{pmatrix}.$$
	We define the centres $z_1, z_2$ of the resonators in the dilute regime specified by \eqref{eq:dilute}:
	$$z_1 = -\frac{l}{2}\mathbf{v}, \qquad z_2 = \frac{l}{2}\mathbf{v}.$$
	Then, as $\epsilon \rightarrow 0$, we have for $i\neq j$,
	\begin{align*}
	\S_{D_j}^0 \big|_{\p D_i} [\phi](x) &= \int_{\p D_j} \Big(G^0(x,z_j) + (y-z_j) \cdot \nabla_y G^0(x,y_0) \Big) \phi(y) \dx \sigma (y)
	\\
	&=-\frac{\chi_{\p D_i}(x)}{4\pi l}\int_{\p D_j} \phi(y) \dx \sigma(y)
	+ O\left( \epsilon \int_{\p D_j} |\phi(y)| \dx \sigma(y)\right).
	\end{align*}
	Here, $y_0$ means a point on the line segment joining $y$ and $z_j$. By the  Cauchy-Schwarz inequality we have $\int_{\p D_j} \phi = O(\epsilon^2\|\phi\|_\L)$. Hence we have
	\begin{align}\label{eq:Sdilute}
	\S_D^0 &= \hat{\S}_D^0 -\frac{1}{4\pi l}\begin{pmatrix} 0 & \langle \chi_{\p D_2}, \cdot \rangle \chi_{\p D_1} \\ \langle \chi_{\p D_1}, \cdot \rangle \chi_{\p D_2} & 0 \end{pmatrix} + O(\epsilon^3) \nonumber \\
	&=\hat{\S}_D^0 + \S_D^{(1)} + O(\epsilon^3),
	\end{align}
	where $\hat{\S}_D^0 = O(\epsilon)$ and $\S_D^{(1)} = O(\epsilon^2)$. Here, the error terms are with respect to the operator norm in $\B(\L,\Hc)$. In the same way, we can compute
	\begin{align}\label{eq:Kdilute}
	\K_D^{0,*} &= \hat{\K}_D^{0,*} + \frac{\mathbf{v}\cdot \nu}{4\pi l}\begin{pmatrix} 0 & -\langle \chi_{\p D_2}, \cdot \rangle \\ \langle \chi_{\p D_1}, \cdot \rangle & 0 \end{pmatrix} + O(\epsilon^3) \nonumber \\
	&=\hat{\K}_D^{0,*} + \K_D^{(1)} + O(\epsilon^3),
	\end{align}
	with respect to the operator norm in $\B(\L)$. Following the computations in the proof of Lemma~3.3 of \cite{ammari2019topological}, we have for $\alpha \neq 0$
	\begin{align*}
	\psi_1^\alpha &= \psi_1 + \epsilon\textrm{Cap}_{B}\sum_{m\neq 0}\frac{e^{\iu m\alpha L}}{4\pi |m|L}\psi_1
	+ \epsilon\textrm{Cap}_{B}\sum_{m\in\mathbb{Z}}\frac{e^{\iu m\alpha L}}{4\pi |l-mL|}\psi_2 + O(\epsilon), \\
	\psi_2^\alpha &= \psi_2 + \epsilon\textrm{Cap}_{B}\sum_{m\in\mathbb{Z}}\frac{e^{\iu m\alpha L}}{4\pi |l+mL|}\psi_1 +  \epsilon\textrm{Cap}_{B}\sum_{m\neq 0}\frac{e^{\iu m\alpha L}}{4\pi |mL|}\psi_2 + O(\epsilon),
	\end{align*}
	where the error terms are with respect the norm in $\L$. In these equations, observe that $\|\psi_i\|_{\L} = O(\epsilon^{-1})$. At $\alpha = \pi/L$, $u_j^\diamond$ and $u_j$ correspond to either monopole or dipole modes:
	$$u_j^\diamond = \frac{1}{\sqrt{2}}\left(\pm\psi_1^\diamond  + \psi_2^\diamond\right), \qquad u_j = \frac{1}{\sqrt{2}}\left(\pm\psi_1 + \psi_2\right).$$
	The sign is positive, corresponding to a monopole mode, if $l_0 < 1/2$ and  $j=1$ or $l_0 > 1/2$ and $j=2$, and negative if $l_0 < 1/2$ and $j=2$ or $l_0 > 1/2$ and $j=1$.
	Hence, from the expansions of $\psi_1^\alpha$ and $\psi_2^\alpha$ it follows that
	\begin{equation}\label{eq:udilute}
	u_j^\diamond = u_j + \epsilon u_j^{(1)}u_j + O(\epsilon),
	\end{equation}
	in $\L$, where
	$$u_j^{(1)} = \begin{cases}
	\ds \textrm{Cap}_{B}\left(\sum_{m\in\mathbb{Z}}\frac{(-1)^m}{4\pi |l+mL|} - \frac{\log(2)}{4\pi L} \right) & l_0 < 1/2, j=1 \quad \mathrm{or} \quad l_0 > 1/2, j=2 , \\
	\ds \textrm{Cap}_{B}\left(-\sum_{m\in\mathbb{Z}}\frac{(-1)^m}{4\pi |l+mL|} - \frac{\log(2)}{4\pi L} \right) & l_0 < 1/2, j=2 \quad \mathrm{or} \quad l_0 > 1/2, j=1 .
	\end{cases}$$
	From \cite{ammari2019topological} we have that
	$$\begin{cases} u_j^{(1)} < 0, \quad & l_0 < 1/2, j=1 \quad \mathrm{or} \quad l_0 < 1/2, j=2, \\
	u_j^{(1)} > 0, \quad & l_0 < 1/2, j=2 \quad \mathrm{or} \quad l_0 > 1/2, j=1.
	\end{cases}$$

	We are now ready to compute $B\Psi_j^\diamond$. Recall that $B = \P_2\A\P_1 - \A$. Since $$\P_i = I + d\P_i^{(1)} + O(d^2),$$
	with respect to the operator norm in $\B((L^2(\p D))^2)$ we have
	$$B = d\left(\P_2^{(1)}\A + \A \P_1^{(1)}\right) + O(d^2).$$
	Moreover, we compute
	\begin{align*}
	\A\P_1^{(1)}\begin{pmatrix}u_j \\ u_j^\diamond \end{pmatrix} = \begin{pmatrix} \mathbf{v}\cdot\left( \nabla \hat\S_D^{\omega}\big |_-[u_j] - \nabla\S_D^{\omega}\big |_+[u_j^\diamond]\right) \\ \left(-\frac{1}{2}+\hat\K_D^{\omega,*}\right)[\xi_1] -\delta \left(\frac{1}{2}+\K_D^{\omega,*}\right)[\xi_2] \end{pmatrix},
	\end{align*}
	where
	$$\xi_1 = \left(\hat\S_D^{\omega}\right)^{-1} \mathbf{v}\cdot \nabla \hat\S_D^{\omega}\big |_-[u_j], \qquad \xi_2 =  \left(\S_D^{\omega}\right)^{-1} \mathbf{v}\cdot \nabla \S_D^{\omega}\big |_+[u_j^\diamond].$$
	Hence 
	\begin{align*}
	\langle \Phi_j^\diamond, \A\P_1^{(1)} \Psi_j^\diamond\rangle &= -\delta\left\langle u_j^\diamond, \mathbf{v}\cdot\left( \nabla \hat\S_D^{0}\big |_-[u_j] - \nabla\S_D^{0}\big |_+[u_j^\diamond]\right) \right\rangle +  \left\langle \left(-\frac{1}{2}+\hat\K_D^{\omega}\right)[\chi_j^\diamond], \xi_1\right\rangle \\
	& -\delta\left\langle \left(\frac{1}{2}+\K_D^{0}\right)[\chi_j^\diamond],\xi_2\right\rangle \\
	&= \delta\left\langle u_j^\diamond, \mathbf{v}\cdot \nabla\S_D^{0}\big |_+[u_j^\diamond]\right\rangle + \omega^2\left\langle \hat\K_{D,2}[\chi_j^\diamond], \left(\hat\S_D^{0}\right)^{-1} \p_T\hat\S_D^{0}\big |_-[u_j]\right\rangle \\
	&\qquad \qquad -\delta\left\langle \left(\S_D^{0}\right)^{-1}[\chi_j^\diamond], \mathbf{v}\cdot \nabla \S_D^{0}\big |_+[u_j^\diamond] \right\rangle + O(\omega^3) \\
	&= \delta\left\langle u_j^\diamond, \mathbf{v}\cdot \nabla\S_D^{0}\big |_+[u_j^\diamond]\right\rangle -\delta\left\langle \left(\S_D^{0}\right)^{-1}[\chi_j^\diamond], \mathbf{v}\cdot \nabla \S_D^{0}\big |_+[u_j^\diamond] \right\rangle + O(\omega^3).
	\end{align*}
	Using the expansions in the dilute regime, we have to leading order in $\epsilon$,
	\begin{align*}
	\langle \Phi_j^\diamond, \A\P_1^{(1)} \Psi_j^\diamond\rangle &= \delta\left\langle u_j, \mathbf{v}\cdot\nabla\hat\S_D^{0}\big |_+[u_j] \right\rangle -\delta\left\langle \left(\hat\S_D^{0}\right)^{-1}[\chi_j^\diamond], \mathbf{v}\cdot \nabla \hat\S_D^{0}\big |_+[u_j] \right\rangle + O(\omega^3 + \omega^2\epsilon) \\
	&= \delta\left\langle u_j, (\mathbf{v}\cdot \nu) u_j \right\rangle - \delta\left\langle u_j, (\mathbf{v}\cdot \nu)u_j\right\rangle + O(\omega^3 + \omega^2\epsilon) \\
	&= O(\omega^3 + \omega^2\epsilon).
	\end{align*}
	Passing to higher orders in $\epsilon$ we have, after simplifications,
	\begin{align*}
	\langle \Phi_j^\diamond, \A\P_1^{(1)} \Psi_j^\diamond\rangle
	&= \delta\epsilon u_j^{(1)}\left\langle u_j, (\mathbf{v}\cdot \nu)u_j \right\rangle 
	+\delta\left\langle \left(\hat\S_D^{0}\right)^{-1}\S_D^{(1)}[u_j],  (\mathbf{v}\cdot \nu)u_j \right\rangle + O(\omega^3 + \omega^2\epsilon^2) \\
	&= \delta\epsilon \left(u_j^{(1)}\pm \frac{\textrm{Cap}_{B}}{4\pi l}\right)\left\langle u_j, (\mathbf{v}\cdot \nu)u_j \right\rangle + O(\omega^3 + \omega^2\epsilon^2),
	\end{align*}
	where $\pm$ is chosen as positive if $u_j$ is a monopole mode and negative if $u_j$ is a dipole mode. Due to the reflection symmetry of $D_1$ and $D_2$, we have $\left\langle u_j, (\mathbf{v}\cdot \nu)u_j \right\rangle = 0$, and hence
	$$\langle \Phi_j^\diamond, \A\P_1^{(1)} \Psi_j^\diamond\rangle = O(\omega^3 + \omega^2\epsilon^2).
	$$

	Next, we compute $\langle \Phi_j^\diamond, \P_2^{(1)}\A \Psi_j^\diamond\rangle$. Using  \eqref{eq:Sdilute}, \eqref{eq:Kdilute} and \eqref{eq:udilute} we can write
	$$\A = \A^{(0)} + \A^{(1)} + O(\epsilon^3), \qquad \Psi_j^\diamond = \Psi^{(0)} + \Psi^{(1)} + O(\epsilon),$$
	where the error terms are with respect to the norms in $\B(\L^2,\L\times\Hc)$ and $\L^2$, respectively. At $\omega = \omega_j^\diamond$, we have $\A^{(0)}\Psi^{(0)} = O(\omega^3)$, and hence
	$$\A\Psi_j^\diamond = \A^{(1)} \Psi^{(0)} + \A^{(0)}\Psi_1^{(1)} + O(\omega^3).$$
	We can see that
	$$\A^{(1)}\Psi^{(0)} = -\begin{pmatrix} \S_D^{(1)}[u_j] \\ \delta\K_D^{(1)}[u_j]\end{pmatrix}, \qquad \A^{(0)}\Psi^{(1)} = - \epsilon u_j^{(1)}\begin{pmatrix}\hat\S_D^\omega[u_j] \\ \delta\left(\frac{1}{2} + \hat\K_D^{\omega,*}\right)[u_j] \end{pmatrix}.$$
	Observe that $\S_D^{(1)}[u_j]$ and $\hat\S_D^\omega[u_j]$ are constant on $\p D$. Combining these results, we arrive at
	\begin{align*}
	\langle \Phi_j^\diamond, \P_2^{(1)}\A \Psi_j^\diamond\rangle &= -\delta \left\langle u_j, \K_D^{(1)}[u_j]\right\rangle  - \delta \left\langle\chi_j^\diamond, (2\tau-\p_T)\K_D^{(1)}[u_j]\right\rangle - \delta \epsilon u_j^{(1)}  \left\langle \chi_j^\diamond, (2\tau-\p_T) u_j \right\rangle +O(\omega^3+\epsilon\omega^2) \\
	&= - \delta \epsilon u_j^{(1)}  \left\langle \chi_j^\diamond, (2\tau-\p_T) u_j \right\rangle +O(\omega^3+\epsilon\omega^2) \\
	&= - \delta \epsilon u_j^{(1)}  \left\langle \chi_j^\diamond, 2\tau u_j \right\rangle +O(\omega^3+\epsilon\omega^2).
	\end{align*}
	Consequently, we obtain that
	$$\langle \Phi_j^\diamond,\B_0 \Psi_j^\diamond\rangle = - \delta \epsilon u_j^{(1)}  \left\langle \chi_j^\diamond, 2\tau u_j \right\rangle +O(\omega^3+\epsilon\omega^2).$$
	Observe that $\left\langle \chi_j^\diamond, u_j \right\rangle < 0$ and, in the case $D_1$ and $D_2$ are strictly convex, we have $\tau(x) > \tau_0 > 0$ for all $x\in D$, hence $\left\langle \chi_j^\diamond, 2\tau u_j \right\rangle < 0$. Combining this with the sign of $u_j^{(1)}$, the result follows. \qed

\subsection{Proof of \Cref{lem:PhiBPsi}} \label{app:lem2}
	We begin by computing the expansion of $\hat{V}$ in the dilute regime. Using $\psi_j$ as in the previous sections, that is, $\psi_j = (\hat \S_D^0)^{-1}[\chi_{D_j}]$, we have
	$$\psi_j = \sqrt{\epsilon \mathrm{Cap}_B}\psi_{D_j}^1, \quad j=1,2.$$
	Then
	\begin{align} \label{eq:lem1}
	\left(V_j\right)_{m,n} &= -\int_{\p D_j}\int_{\p D_j}G^\omega(x-d\mathbf{v},y) \xi_{D_j}^m(y)\psi_{D_j}^n(x)\dx \sigma(x)\dx \sigma(y) \nonumber \\
	&= -\int_{\p D_j}\int_{\p D_j}\big(G^\omega(d\mathbf{v},0) + (x-y)\cdot\nabla_x G^\omega(d\mathbf{v},0) \big)\xi_{D_j}^m(z)\psi_{D_j}^n(y)\dx \sigma(z)\dx \sigma(y) + O(\epsilon^3) \nonumber\\
	&= -\sqrt{\epsilon\mathrm{Cap}_B}G^\omega(d\mathbf{v},0)\delta_{n,1}\int_{\p D_j} \xi_{D_j}^m\dx\sigma + O(\epsilon^3)\nonumber \\
	&= \frac{\epsilon\mathrm{Cap}_B}{4\pi d}\delta_{m,1}\delta_{n,1} + O(\epsilon^3 + \omega\epsilon),
	\end{align}
	where we have used symmetry in the integration together the orthogonality relation
	$$
	\int_{\p D_j}\psi_{D_j}^m \dx \sigma = \sqrt{\epsilon \mathrm{Cap}_B}\delta_{m,1}.
	$$
	Observe that at $m = 1$ we have $\D_{D_j}^0[\chi_{D_j}] = 0$ outside $D_j$, and so
	\begin{equation}\label{eq:lem2}
	\left(W_j\right)_{1,n} = O(\omega^2)
	\end{equation}
	for all $n$. Recall the expansion, from the proof of \Cref{lem:C},
	$$\Psi_j^\diamond = \Psi^{(0)} + \Psi^{(1)} + O(\epsilon),\qquad  \Psi^{(0)} = \begin{pmatrix}u_j \\ u_j \end{pmatrix}, \qquad \Psi^{(1)} = \begin{pmatrix}0 \\ \epsilon u_j^{(1)} u_j \end{pmatrix},$$
	where, at $\omega = \omega_j^\diamond$, $\hat\A\Psi^{(0)} = O(\omega^3)$. Also, recall that
	$$\Phi_j^\diamond = \begin{pmatrix} -\delta u_j^\diamond \\
	\chi_j^\diamond \end{pmatrix} .$$
	Then we can compute
	$$\left\langle \Phi_j^\diamond, \hat\P_2\hat\A\hat\P_1 \Psi^{(0)}\right\rangle = O(\omega^3).$$
	Turning to higher orders of $\Psi_j^\diamond$, we have
	\begin{align*}
	\left\langle \Phi_j^\diamond, \left(\hat\P_2\hat\A\hat\P_1 - \hat{\A}\right)\Psi^{(1)}\right\rangle &= -\delta\epsilon u_j^{(1)}\bigg(\left\langle \chi_j^\diamond, W\left(\frac{1}{2} + \hat\K_D^{0,*}\right)[V^{-1}u_j]\right\rangle - \left\langle \chi_j^\diamond, \left(\frac{1}{2} + \hat\K_D^{0,*}\right)[u_j]\right\rangle \\
	&\qquad \quad  +\left\langle u_j^\diamond, V^{*}\S_D^0[V^{-1}u_j] \right\rangle - \left\langle u_j, \S_D^0[u_j]  \right\rangle  \bigg) + O(\omega^3).
	\end{align*}
	 From \eqref{eq:lem2}, it holds that
	$$\left\langle \chi_j^\diamond, W\left(\frac{1}{2} + \hat\K_D^{0,*}\right)[V^{-1}u_j]\right\rangle = O(\omega^2).$$
	Moreover, \eqref{eq:lem1} yields
	$$\left\langle u_j^\diamond, V^{*}\S_D^0[V^{-1}u_j] \right\rangle - \left\langle u_j, \S_D^0[u_j]  \right\rangle = O(\epsilon^2 + \omega^2).$$
	Finally, since $\left\langle \chi_j^\diamond, \left(\frac{1}{2} + \hat\K_D^{0,*}\right)[u_j]\right\rangle = \left\langle \chi_j^\diamond, u_j\right\rangle = \epsilon\mathrm{Cap}_B $, we have
	$$\left\langle \Phi_j^\diamond, \B_d\Psi_j^\diamond\right\rangle = \delta\epsilon^2\mathrm{Cap}_B u_j^{(1)} + O(\omega^3 + \epsilon^3\omega^2).$$
	Since the leading order is independent of $d$, the conclusion follows. \qed

\section{Proof of \Cref{prop:N=1}} \label{app:prop}

We will restrict the analysis to the equation
\begin{equation}\label{eq:N=1main}
\frac{1}{2\pi} \int_{Y^*}\Big(\eta_1\left(1- e^{\iu \theta_\alpha}\right)+\eta_2\left(1+ e^{\iu \theta_\alpha}\right)\Big)\dx \alpha = 0,
\end{equation}
since the proof of the equation in \eqref{eq:N=1} with the other sign is similar. Define
$$\lambda = \frac{\omega^2|D_1|}{\delta}, \qquad \lambda_1^\alpha = C_{11}^\alpha - |C_{12}^\alpha|, \qquad \lambda_2^\alpha = C_{11}^\alpha + |C_{12}^\alpha|.$$
Then, as $\delta \rightarrow 0$,
\begin{equation}\label{eq:int_delta}
\frac{1}{2\pi} \int_{Y^*}\Big(\eta_1\left(1- e^{\iu \theta_\alpha}\right)+\eta_2\left(1+ e^{\iu \theta_\alpha}\right)\Big)\dx \alpha =
\frac{1}{\pi} \int_{Y^*}\frac{\lambda\left(C_{11}^\alpha + \mathrm{Re}(C_{12}^\alpha)\right) - \lambda_1^\alpha\lambda_2^\alpha}{(\lambda-\lambda_1^\alpha)(\lambda-\lambda_2^\alpha)} \dx \alpha + O(\delta^{1/2}),
\end{equation}
where the imaginary part vanishes due to symmetry. Observe that for $\omega$ inside the band gap, we have $\lambda-\lambda_1^\alpha > 0$ and $\lambda-\lambda_2^\alpha < 0$. Define
\begin{equation*}
f(\alpha) = \lambda\left(C_{11}^\alpha + \mathrm{Re}(C_{12}^\alpha)\right) - \lambda_1^\alpha\lambda_2^\alpha.
\end{equation*}
We will now study the two cases $l_0 < 1/2$ and $l_0 >1/2$ separately. We will show that the right-hand side of \eqref{eq:N=1main} is always positive in the first case, while in the second case it has a sign depending on $\lambda$. We will do so by splitting the integral into two parts, one with $\alpha$ close to $0$ and one with $\alpha$ bounded away from $0$, and show that the first part is negligible.

\subsection{Case $l_0 < 1/2$} \label{app:propPart1}
In the dilute regime, as $\epsilon \rightarrow 0$, it follows from \Cref{lem:cap_estim_quasi} that the width of the band gap scales as $O(\epsilon^2)$. Moreover, if $\omega$ is inside the band gap then we are able to write that $$\lambda = \epsilon\mathrm{Cap}_B + \epsilon^2(\mathrm{Cap}_B)^2\lambda_0 + O(\epsilon^3)$$ for some $\lambda_0\in\R$. From the expansions of the capacitance coefficients in \Cref{lem:cap_estim_quasi}, and the fact that $\lambda_1^\alpha$ (resp. $\lambda_2^\alpha$) attains its maximum (resp. minimum) at $\alpha = \pi/L$, we have the following bounds on $\lambda_0$:
\begin{equation}\label{eq:lambda0}
-\frac{1}{4\pi L} \sum_{m \neq 0} \frac{e^{\iu \alpha m L}}{|m|} - \frac{1}{4\pi L} \sum_{m = -\infty}^\infty \frac{e^{\iu \alpha m L}}{|m+l_0|} < \lambda_0 < -\frac{1}{4\pi L} \sum_{m \neq 0} \frac{e^{\iu \alpha m L}}{|m|} + \frac{1}{4\pi L} \sum_{m = -\infty}^\infty \frac{e^{\iu \alpha m L}}{|m+l_0|}.
\end{equation}
We fix constants $C > 0$, $p\in \N$. Then, for $\alpha$ such that $|\alpha| > C\epsilon^p$, $f(\alpha)$ can be expanded in the dilute regime as
\begin{align}
f(\alpha) =& \epsilon^3(\mathrm{Cap}_B)^3\left(\lambda_0 - \frac{1}{4\pi l} + \frac{1}{4\pi L}\sum_{m \neq 0} \frac{\cos(m\alpha L)}{|m|} - \frac{1}{4\pi L}\sum_{m \neq 0} \frac{\cos(m\alpha L)}{|m+l_0|}\right) + o(\epsilon^3)\nonumber \\
=&\epsilon^3(\mathrm{Cap}_B)^3\left(\lambda_0 - \frac{1}{4\pi l} + \frac{1}{4\pi L}\sum_{m = 1}^\infty \cos(m\alpha L)\left( \frac{2}{m} - \frac{1}{m+l_0} - \frac{1}{m-l_0}\right)  \right)+ o(\epsilon^3).  \label{eq:fasym}
\end{align}
Define $g(\alpha)$ as
$$g(\alpha) = \sum_{m = 1}^\infty e^{\iu m\alpha L}\left( \frac{2}{m} - \frac{1}{m+l_0} - \frac{1}{m-l_0}\right).$$
We can rewrite $g$ as
\begin{align*}
g(\alpha) &= e^{\iu \alpha L}\sum_{m = 0}^\infty e^{\iu m\alpha L}\left( \frac{2}{m+1} - \frac{1}{m+1+l_0} - \frac{1}{m+1-l_0}\right) \\
& = e^{\iu \alpha L}\big( 2\Phi(e^{\iu \alpha L}, 1, 1) - \Phi(e^{\iu \alpha L}, 1, 1+l_0) - \Phi(e^{\iu \alpha L}, 1, 1-l_0)\big).
\end{align*}
Here, $\Phi(z,s,a)$ denotes Lerch's transcendent function, defined by the power series
\begin{equation*}
\Phi(z,s,a) = \sum_{m=0}^\infty \frac{z^m}{(a+m)^s},
\end{equation*}
for $z\in \mathbb{C}$ where this series converges, extended by analytic continuation elsewhere (for details on this function we refer, for example, to  \cite{bateman1953higher}). For arguments in the regime $\text{Re}(s) > 0, \text{Re}(a) > 0$ and $z\in \mathbb{C}\setminus [1,\infty)$, this function admits an integral representation as
\begin{equation*}
\Phi(z,s,a) = \frac{1}{\Gamma(s)}\int_0^\infty \frac{t^{s-1}e^{-at}}{1-ze^{-t}}\dx t,
\end{equation*}
where $\Gamma$ is the Gamma function. From this, we have a representation of $g(\alpha), \alpha\neq 0,$ as
\begin{align*}
g(\alpha) &= \int_0^\infty \frac{2e^{-t} - e^{-(1+l_0)t} - e^{-(1-l_0)t}}{1-e^{\iu \alpha L}e^{-t}}\dx t \\
&= \int_0^\infty \frac{\left(\cosh(l_0t) - 1\right)\left(e^{-t} - \cos(\alpha L)\right)}{\cosh(t)-\cos(\alpha L)}\dx t.
\end{align*}
From \eqref{eq:fasym}, using the bounds on $\lambda_0$ from \eqref{eq:lambda0} and for $\alpha$ such that $|\alpha| > C\epsilon^p$, we have
\begin{align*}
f(\alpha) &< \frac{\epsilon^3(\mathrm{Cap}_B)^3}{4\pi L}\left(\sum_{m = 1}^\infty \left(\cos(m\alpha L)-(-1)^m\right)\left( \frac{2}{m} - \frac{1}{m+l_0} - \frac{1}{m-l_0}\right)  \right)+ o(\epsilon^3)\nonumber \\
&= \frac{\epsilon^3(\mathrm{Cap}_B)^3}{4\pi L}\left(\mathrm{Re}\big(g(\alpha)\big) - g(\pi/L) \right) + o(\epsilon^3)\nonumber \\
&= \frac{\epsilon^3(\mathrm{Cap}_B)^3}{4\pi L}\int_0^\infty\left(\cosh(l_0t) - 1\right)\sinh(t)\left( \frac{1}{\cosh(t)+1} - \frac{1}{\cosh(t)-\cos(\alpha L)}\right) + o(\epsilon^3) \nonumber\\
&= A_1(\alpha)\epsilon^3 + o(\epsilon^3)
\end{align*}
for some $A_1(\alpha) \leq 0$ independent of $\epsilon$, with $A_1(\alpha) = 0$ precisely when $\alpha = \pi/L$. It follows that
\begin{equation} \label{eq:I2}
\frac{1}{\pi} \int_{Y^*\setminus[-C\epsilon^p,C\epsilon^p]}\frac{\lambda\left(C_{11}^\alpha + \mathrm{Re}(C_{12}^\alpha)\right) - \lambda_1^\alpha\lambda_2^\alpha}{(\lambda-\lambda_1^\alpha)(\lambda-\lambda_2^\alpha)} \dx \alpha  =  \frac{A_2}{\epsilon} + o(\epsilon^{-1})
\end{equation}
for some constant $A_2 > 0$. From the scaling property \eqref{eq:capcoeff_scale}, we know that $|f(\alpha)| < \epsilon^2 K_1$ for some $K_1>0$ independent on $\alpha$.  The minimum of $\big|(\lambda-\lambda_1^\alpha)(\lambda-\lambda_2^\alpha)\big|$ is attained at $\pi/L$ and, from \Cref{lem:cap_estim_quasi}, we have $\big|(\lambda-\lambda_1^\alpha)(\lambda-\lambda_2^\alpha)\big| > K_2 \epsilon^4$. Therefore, we have
\begin{align*}
\left| \frac{1}{\pi} \int_{[-C\epsilon^p,C\epsilon^p]}\frac{\lambda\left(C_{11}^\alpha + \mathrm{Re}(C_{12}^\alpha)\right) - \lambda_1^\alpha\lambda_2^\alpha}{(\lambda-\lambda_1^\alpha)(\lambda-\lambda_2^\alpha)} \dx \alpha \right|
&< A_3\epsilon^{p-2},
\end{align*}
for some constant $A_3$. Choosing $p > 2$, and combining this with \eqref{eq:I2}, we find that
$$\frac{1}{\pi} \int_{Y^*}\frac{\lambda\left(C_{11}^\alpha + \mathrm{Re}(C_{12}^\alpha)\right) - \lambda_1^\alpha\lambda_2^\alpha}{(\lambda-\lambda_1^\alpha)(\lambda-\lambda_2^\alpha)} \dx \alpha  > 0
$$
for $\epsilon$ small enough. Therefore, when $l_0 < 1/2$, by \eqref{eq:int_delta} we find that, for $\lambda$ sufficiently close to $\lambda_1^{\pi/L}$, we have
\begin{equation*}
\frac{1}{2\pi} \int_{Y^*}\Big(\eta_1\left(1- e^{\iu \theta_\alpha}\right)+\eta_2\left(1+ e^{\iu \theta_\alpha}\right)\Big)\dx \alpha > 0,
\end{equation*}
when $\epsilon$ and $\delta$ are small enough.

\subsection{Case $l_0 > 1/2$}
We will show that \eqref{eq:N=1main} has a solution. We denote the left-hand side by
$$
I(\lambda) := \frac{1}{2\pi} \int_{Y^*}\Big(\eta_1\left(1- e^{\iu \theta_\alpha}\right)+\eta_2\left(1+ e^{\iu \theta_\alpha}\right)\Big)\dx \alpha.
$$
From \Cref{lem:cap_estim_quasi}, we find that for $\epsilon$ small enough, $C_{12}^{\pi/L}>0$ in the case $l_0 > 1/2$. Hence $e^{\iu \theta_{\pi/L}} = 1$, so $I(\lambda) \rightarrow -\infty$ as $\lambda \rightarrow \lambda_2^{\pi/L}$. Next, we will show that $I(\lambda)$ is positive for $\lambda$ sufficiently close to $\lambda_1^{\pi/L}$.

Since $C_{12}^{\pi/L}$ is positive, we now have the following bounds for $\lambda_0$:
\begin{equation*}
-\frac{1}{4\pi L} \sum_{m \neq 0} \frac{e^{\iu \alpha m L}}{|m|} + \frac{1}{4\pi L} \sum_{m = -\infty}^\infty \frac{e^{\iu \alpha m L}}{|m+l_0|} < \lambda_0 < -\frac{1}{4\pi L} \sum_{m \neq 0} \frac{e^{\iu \alpha m L}}{|m|} - \frac{1}{4\pi L} \sum_{m = -\infty}^\infty \frac{e^{\iu \alpha m L}}{|m+l_0|}.
\end{equation*}
Fix some small $\kappa >0$ and choose $\lambda_0$ as
$$\lambda_0 = \kappa - \frac{1}{4\pi L} \sum_{m \neq 0} \frac{e^{\iu \alpha m L}}{|m|} + \frac{1}{4\pi L} \sum_{m = -\infty}^\infty \frac{e^{\iu \alpha m L}}{|m+l_0|}.$$
Observe that $\kappa \rightarrow 0$ corresponds to $\lambda \rightarrow \lambda_1^{\pi/L}$. Using \eqref{eq:fasym} and following the same subsequent steps, we find that
\begin{align*}
f(\alpha) &= \epsilon^3\left( \mathrm{Cap}_B)^3\kappa + A_1(\alpha) \right) + o(\epsilon^3).
\end{align*}
Then, analogously to \eqref{eq:I2}, we have
$$
\frac{1}{\pi} \int_{Y^*\setminus[-C\epsilon^p,C\epsilon^p]}\frac{\lambda\left(C_{11}^\alpha + \mathrm{Re}(C_{12}^\alpha)\right) - \lambda_1^\alpha\lambda_2^\alpha}{(\lambda-\lambda_1^\alpha)(\lambda-\lambda_2^\alpha)} \dx \alpha  =  \frac{A_2 + A_4\kappa}{\epsilon} + o(\epsilon^{-1}),
$$
where, again, $A_2$ is a constant $A_2 >0$ and $A_4$ is a constant $A_4 < 0$. Thus, for $\kappa$ small enough, we have that $A_2 + A_4\kappa > 0$, so we can proceed as in Section~\ref{app:propPart1} to show that
$$
I(\lambda) > 0,
$$
for $\lambda$ sufficiently close to $\lambda_1^{\pi/L}$ and for small enough $\epsilon$ and $\delta$. This, combined with the fact that $I(\lambda) < 0$ for $\lambda$ sufficiently close to $\lambda_2^{\pi/L}$, allows us to conclude that $I(\hat\lambda) = 0$ for some $\lambda_1^{\pi/L} < \hat\lambda < \lambda_2^{\pi/L}$.

In order to show that this solution $\hat\lambda$ is unique, we show that $I(\lambda)$ is strictly monotonic for $\lambda_1^{\pi/L} < \nolinebreak \lambda < \lambda_2^{\pi/L}$. Differentiating \eqref{eq:int_delta} gives
\begin{equation*}\label{eq:int_deriv}
I'(\lambda) =
\frac{1}{\pi} \int_{Y^*}\tfrac{\left(C_{11}^\alpha + \mathrm{Re}(C_{12}^\alpha)\right)(\lambda-\lambda_1^\alpha)(\lambda-\lambda_2^\alpha) - \left(\lambda\left(C_{11}^\alpha + \mathrm{Re}(C_{12}^\alpha)\right)- \lambda_1^\alpha\lambda_2^\alpha\right)\left(2\lambda-\lambda_1^\alpha-\lambda_2^\alpha\right) }{(\lambda-\lambda_1^\alpha)^2(\lambda-\lambda_2^\alpha)^2} \dx \alpha + O(\delta^{1/2}).
\end{equation*}
Then we have that
\begin{align}
&\left(C_{11}^\alpha + \mathrm{Re}(C_{12}^\alpha)\right) (\lambda-\lambda_1^\alpha)(\lambda-\lambda_2^\alpha) - \left(\lambda\left(C_{11}^\alpha + \mathrm{Re}(C_{12}^\alpha)\right)- \lambda_1^\alpha\lambda_2^\alpha\right)\left(2\lambda-\lambda_1^\alpha-\lambda_2^\alpha\right) \nonumber \\
&=\left(C_{11}^\alpha + \mathrm{Re}(C_{12}^\alpha)\right)(-\lambda^2+\lambda_1^\alpha\lambda_2^\alpha) + \lambda_1^\alpha\lambda_2^\alpha\left(2\lambda-\lambda_1^\alpha-\lambda_2^\alpha\right)\nonumber\\
&\leq \begin{cases}
\lambda_2^\alpha\left(-\lambda^2+\lambda_1^\alpha\lambda_2^\alpha + \lambda_1^\alpha\left(2\lambda-\lambda_1^\alpha-\lambda_2^\alpha\right)\right), & \quad \text{if} \ \lambda^2\leq \lambda_1^\alpha-\lambda_2^\alpha, \\[0.3em]
\lambda_1^\alpha\left(-\lambda^2+\lambda_1^\alpha\lambda_2^\alpha + \lambda_2^\alpha\left(2\lambda-\lambda_1^\alpha-\lambda_2^\alpha\right)\right), & \quad \text{if} \ \lambda^2> \lambda_1^\alpha-\lambda_2^\alpha,
\end{cases}\nonumber\\
&=\begin{cases}
-\lambda_2^\alpha\left(\lambda-\lambda_1^\alpha\right)^2, & \quad \text{if} \ \lambda^2\leq \lambda_1^\alpha-\lambda_2^\alpha,\\[0.3em]
-\lambda_1^\alpha\left(\lambda-\lambda_2^\alpha\right)^2, & \quad \text{if} \ \lambda^2> \lambda_1^\alpha-\lambda_2^\alpha.
\end{cases} \label{eq:h_bounds}
\end{align}
Using the bounds \eqref{eq:h_bounds} we have that if $\lambda_1^{\pi/L} < \lambda < \lambda_2^{\pi/L}$ then $I'(\lambda)<0$, provided $\delta$ is sufficiently small. Therefore, if $l_0>1/2$ then \eqref{eq:N=1main} has a unique solution, when $\epsilon$ and $\delta$ are small enough.

\bibliographystyle{abbrv}
\bibliography{edgemode}{}

\end{document}